\newtheorem*{theoo}{Theorem}
\newtheorem{lemma}{Lemma}[section]
\newtheorem{prop}[lemma]{Proposition}
\newtheorem{theo}[lemma]{Theorem}
\newtheorem{defin}[lemma]{Definition}
\newtheorem{rem}[lemma]{Remark}
\newtheorem{coro}[lemma]{Corollary}
\newtheorem{conve}[lemma]{Convention}
\DeclareMathOperator{\divv }{div}
\begin{document}

\title[Multicomponent fluid dynamics]{Maximal mixed parabolic-hyperbolic regularity for the full equations of multicomponent fluid dynamics}

\subjclass[2010]{35M33, 76N10, 35Q35, 35Q79, 35D35, 80A17, 80A32}	
\keywords{Multicomponent fluids, continuum thermodynamics, PDE system of mixed type, local-in-time well-posedness, maximal regularity}

\author[P.-E.~Druet]{Pierre-Etienne Druet}
\address{Weierstrass Institute, Mohrenstr. 39, 10117 Berlin, Germany}
\email{pierre-etienne.druet@wias-berlin.de}

\date{\today}

\maketitle

\begin{abstract}
We consider a Navier--Stokes--Fick--Onsager--Fourier system of PDEs describing mass, energy and momentum balance in a Newtonian fluid with composite molecular structure. For the resulting parabolic-hyperbolic system, we introduce the notion of optimal regularity of mixed type, and we prove the short-time existence of strong solutions for a typical initial boundary-value-problem. By means of a partial maximum principle, we moreover show that such a solution cannot degenerate in finite time due to blow-up or vanishing of the temperature or the partial mass densities. This second result is however only valid under certain growth conditions on the phenomenological coefficients. In order to obtain some illustration of the theory, we set up a special constitutive model for volume-additive mixtures.
\end{abstract}

\section{Transport and mechanics for a composite Newtonian fluid}

We consider a molecular mixture of $N\geq 2$ chemical substances $\ce{A_1,\ldots,A_N}$ that constitute a fluid phase. While being composite at the molecular level, the fluid is macroscopically homogeneous and it obeys the continuum principles of mass, energy and momentum conservation
\begin{align}
\label{mass0}\partial_t \varrho + \divv( \varrho \, v ) & = 0 \, ,  \\
\label{energy0} \partial_t (\varrho u) + \divv (\varrho  u \, v+ J^{\rm h}) & = (-p \, \mathbb{I} + \mathbb{S}) \, : \, \nabla v \, , \\
\label{momentum0}\partial_t (\varrho \, v) + \divv( \varrho \, v\otimes v - \mathbb{S}) + \nabla p & = \varrho \, b  \, , 
\end{align}
with the mass density $\varrho$ of the fluid and the velocity field $v= (v_1, \, v_2, \, v_3)$. The absolute temperature $T$ -- which does not yet occur explicitely in the PDE system \eqref{mass0}, \eqref{energy0}, \eqref{momentum0} -- is usually the chosen as the additional main state variable.
In \eqref{energy0}, the function $\varrho u$ is the internal energy density, while $J^{\rm h}$ denotes the heat flux. The thermodynamic pressure is denoted by $p$, and $\mathbb{S}$ is the viscous stress tensor. The field $b$ in \eqref{momentum0} denotes the external forces. 

To solve these PDEs in some domain $\Omega \times ]0,\bar{\tau}[$ of $\mathbb{R}^4$, closure equations relating $\varrho u$, $p$ and $J^{\rm h}$, $\mathbb{S}$, $b$ to the main variables are further needed. For single component fluids, the thermodynamic state is expressed by the absolute temperature $T$ and the density $\varrho$, and \eqref{mass0}, \eqref{energy0}, \eqref{momentum0} leads to the Navier--Stokes--Fourier system. 
On the contrary, for multicomponent fluids, all thermodynamic driving mechanisms are essentially affected by the molecular composition. Not only the constitutive equations for $\varrho u$ and $p$, but also the material parameters occuring in the definition of the heat flux (thermodynamic diffusivities) and the stress tensor (viscosity coefficients) depend on which composition of the $N$ substances is locally available. 

The thermodynamic state of a mixture is thus locally expressed by $N+1$ variables\footnote{Depending on the context, other sets of variables might arise, which is commented below.}
\begin{align}\label{basicvariables}
 T \,\, \text{-- absolute temperature,} \quad \rho = (\rho_1, \ldots,\rho_N) \,\, \text{ -- partial mass densities.}
\end{align}
The partial mass density $\rho_i$ is the mass of the component ${\rm A}_i$ available per unit volume \emph{of the mixture}\footnote{The same function was also called the \emph{apparent} mass density of ${\rm A}_i$, see \cite{lowe}.}.
With this definition, the mass density of the fluid is nothing else but the sum of the partial mass densities: $\varrho := \sum_{i=1}^N \rho_i$ for which reason it shall be called the total mass density. The mass transport in a  multicomponent fluid is described, instead of having only one scalar conservation law \eqref{mass0} for the mass, by $N$ partial mass balances driving the densities $\rho_1, \ldots,\rho_N$. It might also be practically useful to specify different body forces $b^1,\ldots,b^N$ for the different species (This occurs for instance for charge carriers). In general, the mass and energy transport, and the mechanical behaviour of a multicomponent fluids in $\Omega \times ]0, \, \bar{\tau}[$, are described by the equations 
\begin{align}
\label{mass}\partial_t \rho_i + \divv( \rho_i \, v + J^i) & = r_i\, , \qquad  \text{ for } i = 1,\ldots,N\, ,\\
\label{energy} \partial_t (\varrho u) + \divv (\varrho u \, v+ J^{\rm h}) & = (-p \, \mathbb{I} + \mathbb{S}) \, : \, \nabla v + \sum_{i=1}^N J^i\cdot b^i \, , \\
\label{momentum}\partial_t (\varrho \, v) + \divv( \varrho \, v\otimes v - \mathbb{S}) + \nabla p & = \sum_{i=1}^N \rho_i \, b^i \, . 
\end{align}
These PDEs \eqref{mass}, \eqref{energy}, \eqref{momentum} form the basic equation of multicomponent fluid dynamics.
With $\jmath^i$ denoting the mass flux of substance ${\rm A}_i$, the \emph{mass diffusions fluxes} $J^1,\ldots,J^N$ are defined via $$J^i := \jmath^i -\rho_i \, v \, ,$$ as the non-convective part of the mass fluxes. To remain consistent with the continuity equation \eqref{mass0}, one has to postulate that
\begin{align}\label{netmassfluxequalsrhotimesv}
 \sum_{i=1}^N \jmath^i = \varrho \, v \quad \text{or, equivalently,} \quad \sum_{i=1}^N J^i = 0 \, .
\end{align}
This means that diffusive transport does not result, at the continuum level, into a net mass flux -- which would have to be associated with mechanical motion. 
Likewise, the production functions $r_1, \ldots, r_N$ modelling chemical reactions conserve the total mass, and therefore we must have $\sum_{i=1}^N r_i = 0$.


Depending on the choice of the thermodynamic potential, which will be discussed just hereafter, different characterisations of $\varrho u$ are possible. In any case, the internal energy density possesses an expression 
\begin{align}\label{varrhoubasic}\varrho u = \epsilon(T, \, \rho_1,\ldots,\rho_N) \, ,
\end{align}
 with a certain constitutive function $\epsilon$ of the main thermodynamic state-variables \eqref{basicvariables}.
%

The constitutive equations for the diffusion fluxes and the heat flux rely on the laws of Fick, Onsager and Fourier, more specifically
\begin{align}\label{DIFFUSFLUX}
J^i  = & - \sum_{j=1}^N M_{ij} \, \Big( \nabla\frac{\mu_j}{T} - \frac{b^j}{T}\Big) + l_i \, \nabla \frac{1}{T} \, , \qquad \sum_{i=1}^N M_{ij} = 0\,  \text{ for all } j \quad\textrm{and}\quad \sum_{i=1}^N l_{i} = 0 \, ,\\
\label{HEATFLUX} J^{\rm h} = & -\sum_{j=1}^N \, l_j \,  \Big(\nabla\frac{\mu_j}{T} - \frac{b^j}{T}\Big) +\kappa \, \nabla \frac{1}{T}~.
 \end{align}
In \eqref{DIFFUSFLUX}, $\{M_{ij}\}$ is a symmetric, positive semi-definite $N \times N$ matrix, and $\kappa$ in \eqref{HEATFLUX} is a positive scalar. The coefficients $l_j$ with $j=1,\ldots,N$ allow to describe thermo-diffusion. The thermodynamic diffusion coefficients $M_{ij}$, $l_j$ and $\kappa$ in general exhibit a strong dependence on the state variables $T$ and $\rho_1, \ldots,\rho_N$.

Hereby, as shown for instance in  \cite{bothedruetMS}, section D, consistency with the second law of thermodynamics requires that the $(N+1) \times (N+1)$ matrix 
\begin{align}\label{extendedM}
 \mathcal{M}(T, \, \rho_1, \ldots, \rho_N) =
 \begin{pmatrix}
  \{M(T, \, \rho_1, \ldots, \rho_N)\} & (l(T, \, \rho_1, \ldots, \rho_N))\\
  (l(T , \, \rho_1, \ldots, \rho_N) \, )^{\sf T} & \kappa(T, \, \rho_1, \ldots, \rho_N)
 \end{pmatrix}\, 
\end{align}
is positive semi-definite on all states $(T, \, \rho_1,\ldots,\rho_N) \in \mathbb{R}^{N+1}_+$.\footnote{We define $\mathbb{R}_+ := ]0, \, + \infty[$ and $\mathbb{R}^N_+ = (\mathbb{R}_+)^N$.} This can be achieved for instance requiring
\begin{align}\label{extendedcondi}
 l_i = - \sum_{j=1}^N M_{ij} \, \tilde{l}_j \quad  \text{ for } i=1,\ldots,N, \quad \kappa = \tilde{\kappa} + M \tilde{l} \cdot \tilde{l} \, , 
\end{align}
where $\tilde{\kappa} > 0$ and $\tilde{l}_1, \ldots,\tilde{l}_N$ are functions of the state variables.

If the matrix $M(T,\, \rho)$ which occurs in \eqref{DIFFUSFLUX} possesses rank $N-1$, then $\mathcal{M}(T, \, \rho)$ possesses rank $N$ and, due to the constraint in \eqref{DIFFUSFLUX}$_2$, the kernel of $\mathcal{M}$ is the span of the vector $(1,\ldots,1, 0)$ in $\mathbb{R}^{N+1}$. The entries in $\mathcal{M}$ are the so-called thermodynamic diffusivities. Details concerning their modelling by the Fick--Onsager or Maxwell--Stefan equations are to be found for instance in \cite{bothedreyer}, \cite{bothedruetMS} and further references given there.


In \eqref{DIFFUSFLUX} and \eqref{HEATFLUX} we have introduced the (mass-based) chemical potentials $\mu_1,\ldots,\mu_N$. In this paper, the constitutive theory relating the thermodynamic state variables follows the postulates of local thermodynamics and the theory of irreversible processes in the fashion of \cite{MR59}, \cite{dGM63}.\footnote{Personally I learned to know these notions with the more recent \cite{bothedreyer}, \cite{guhlkethesis} or \cite{dreyerguhlkemueller19}.} A very comprehensive overview of the models of ideal gas dynamics, and beyond, is offered by the book \cite{giovan}.

We start with the assumption that the entropy of the system possesses only a bulk contribution with density $\varrho s$, which is of the special form
\begin{align}\label{ENTROPE}
 \varrho s = - h(\rho_1, \ldots, \rho_N, \, \varrho u) \, .
\end{align}
The constitutive function $h: \, \mathcal{D} \subseteq \mathbb{R}^N_+ \times \mathbb{R} \rightarrow \mathbb{R}$ is assumed strictly convex and sufficiently smooth in its domain $\mathcal{D}$. It is to remark that, in local thermodynamics, the natural variables of the entropy functional are not $T$ and $\rho_1 ,\ldots,\rho_N$ but rather $\varrho u$ and $\rho_1,\ldots,\rho_N$ which are the quantities directly subject to conservation laws. However, attempts to construct the entropy functional from available data frequently use other variables like $T$ and $\rho_1, \ldots, \rho_N$ or, even better, $T$, the thermodynamic pressure $p$, and the mole fractions $x_1 ,\ldots,x_N$. We refer to \cite{bothedreyerdruet}, or to \cite{giovan}, Section 6 for details. In Section \ref{IDMIX}, we shall discuss an explicit example for the constitutive theory of volume-additive mixtures.

With the entropy functional at hand, we have the following well-known definitions relating the chemical potentials and the internal energy density to the state variables
\begin{align}
\label{TEMPCHEMPOT} -\frac{1}{T} & = \partial_{\varrho u} h( \rho_1, \ldots, \rho_N, \, \varrho u)  \quad \text{ and } \quad  \frac{\mu_i}{T} =  \partial_{\rho_i}h(\rho_1, \ldots, \rho_N, \, \varrho u) \, \, \text{ for } i = 1,\ldots,N \, .
\end{align}
We introduce the $(N+1)-$vector of extended state variables
\begin{align}\label{Defw}
w_i := \rho_i \quad \text{ for } i =1,\ldots,N \,, \quad \quad  w_{N+1} := \varrho u \,,
\end{align}
to denote the variables occurring in the entropy functional. The combinations
\begin{align}\label{Defwprime}
w^*_i := \frac{\mu_{i}}{T} \quad \text{ for } i = 1, \ldots, N \, , \quad \quad w^*_{N+1} := - \frac{1}{T}\, ,
\end{align}
are the dual variables, also called the ''entropic variables'' (See \cite{giovan}, Ch.\ 8.4 or \cite{juengel15}).

The thermodynamic pressure $p$ that occurs in \eqref{energy} and \eqref{momentum} obeys the Gibbs-Duhem equation
\begin{align}\label{GIBBSDUHEMEULER}
p = -\varrho u + T \, \varrho s  + \sum_{i=1}^N \rho_i \, \mu_i  \, .
\end{align}
With the variables in \eqref{Defw}, we also have
\begin{align}\label{PRESSUREDEF}
p = T \, \left(-\varrho u \, \frac{1}{T} + \varrho s + \sum_{i=1}^N \rho_i \, \frac{\mu_i}{T} \right)
= T \, \Big( - h(w) + \sum_{i=1}^{N+1} w_i \, \partial_{w_i}h(w)\Big) = T \, g(\nabla_w h(w)) \, ,
\end{align}
where $g$ is the classical Legendre transform of $h$.

In the Navier-Stokes equations \eqref{momentum}, the viscous stress tensor is denoted $\mathbb{S}$. We restrict to the choice
\begin{equation}\label{thermo5}
    \mathbb{S}  =\lambda(T,\rho_1, \ldots,\rho_N)\, (\divv  v)\, \mathbb{I}+ 2\,\eta(T,\rho_1, \ldots, \rho_N)\, (\nabla  v)_\textrm{sym} \quad\textrm{with}\quad
    \eta\geq 0,\quad \lambda+\frac{2}{3}\eta\geq 0~.
\end{equation}
In general, the viscosity coefficients are functions of the state variables $T$ and $\rho$ as well. The vector fields $b^1,\ldots,b^N$ occurring in \eqref{momentum} are external body forces, depending on space and time and assumed given.

To close the equations \eqref{mass}, \eqref{energy}, \eqref{momentum} it remains to explain the reaction terms $r_i$. We do not enter the details how to choose these functions, and we assume that $r_i = r_i(T, \, \rho_1,\ldots,\rho_N)$ are certain function of the state variables. As explained above, these $r_i$ are subject to $\sum_{i=1}^N r_i \equiv 0 $. In order to reach thermodynamic consistency with the second law without cross-effects between the different dissipative mechanisms, it is usual to require that $\sum_{i=1}^N r_i \, \mu_i/T \leq 0$. Examples and some discussions are to be found, a.o.\ in \cite{augnerbothe,herbergpruess,dredrugagu20}, and of course in many other references.
\\[0.5ex]


%

\paragraph{
\textbf{A boundary-value-problem.}} We shall investigate the system \eqref{mass}, \eqref{energy}, \eqref{momentum} in a cylinder $Q_{\bar{\tau}} := \Omega \times ]0, \, \bar{\tau}[$ with bounded cross-section $\Omega \subset \mathbb{R}^3$ and $\bar{\tau}>0$.

We impose initial conditions for the state variables and the velocity field via
\begin{align}\label{initial} \begin{split}
\rho_i(x,\, 0) & = \rho^0_i(x) \quad \text{ for } \quad i = 1,\ldots, N \, ,\\
 T(x, \, 0) & = T_0(x)\, ,\\
v_j(x, \, 0) & = v^0_j(x) \quad \text{ for } \quad j = 1,2,3\, ,
\end{split}
\qquad \text{ for } x \in \Omega \, .
\end{align}
Then, under the assumption \eqref{varrhoubasic}, we also obtain the initial state of the internal energy via
\begin{align}\label{initialeps}
\varrho_0 u_0(x) := \varrho u(x, \, 0) = \epsilon(T_0(x), \, \rho^0_1(x), \ldots,\rho^0_N(x)) \quad \text{ for }  x \in \Omega \, .
\end{align}
On the lateral surface $\partial \Omega \times ]0, \, \bar{\tau}[ = S_{\bar{\tau}}$, we for simplicity consider no-slip boundary conditions for the velocity field, and conditions of reaction-type for the transport of mass and energy. These read
\begin{align}\label{lateral} 
\begin{split}
v  = 0 \, , & \\
\nu \cdot J^i  = -r_i^{\Gamma} + J^{\Gamma}_i \text{ for } i = 1,\ldots,N &\, , \quad \nu \cdot J^{\rm h} = -r_{\rm h}^{\Gamma} + J^{\Gamma}_{{\rm h}} \, ,
\end{split} \quad \text{ on } S_{\bar{\tau}} \, ,
\end{align}
where $r^{\Gamma}_i$ and $r^{\Gamma}_{\rm h}$ are certain functions of the state variables $T$ and $\rho_1 ,\ldots, \rho_N$ and also of $(x,t)$. The contributions $J^{\Gamma}_i$ and $J^{\Gamma}_{{\rm h} }$ are given functions of space and time. The data $r^{\Gamma}_i$ and $J^{\Gamma}_i$ are moreover subject to the side conditions
\begin{align*}
\sum_{i=1}^N r^{\Gamma}_i = 0 \, ,\quad  \sum_{i=1}^N J^{\Gamma}_i = 0 \, .
\end{align*}
The conditions \eqref{lateral} for the mass fluxes describe exchange processes at the boundary: sorption phenomena or chemical reactions with adjacent matter. In \cite{dredrugagu20}, we for instance derived this type of boundary conditions from the more general principles of surface thermodynamics for electrolyte-electrode interactions exposed in \cite{guhlkethesis}, \cite{dreyerguhlkemueller19}. We also refer to \cite{augnerbothe,augnerbothe2} for interesting discussions in the context of catalysis, or to \cite{bothepruess}. For the heat flux, typical are for instance cooling conditions like $-r^{\Gamma}_{\rm h}(x,t, \, T) = \alpha \, (T-T^{\text{ext}}(x,t))$, where $T^{\text{ext}}$ is the given external temeprature and $\alpha$ a positive coefficient.\\[0.5ex]

\paragraph{\textbf{Thermodynamic state variables.} }

Beside the main variables $T$ and $\rho_1, \ldots,\rho_N$ introduced in \eqref{basicvariables}, we encounter in this investigation different sets of thermodynamic variables. At first, the natural variables of the entropy functional are
\begin{align}\label{variables1}
(\rho_1,\ldots,\rho_N, \,\varrho u) \, .
\end{align}
With the molar masses $M_1, \ldots, M_N > 0$ of the constituents, we introduce 
\begin{align*}
 n_i := \frac{\rho_i}{M_i} \, \, \text{ --mole density},& \quad n = \sum_{i= 1}^N n_i \, \, \text{--total mole density}, \, \quad
 x_i = \frac{n_i}{n} \, \, \text{--mole fraction.}
\end{align*}
A second equivalent set of variables is
\begin{align}\label{variables2}
 (T, \, p, \, x_1, \ldots,x_N) \, ,
\end{align}
which shall be used in connection with the discussion of ideal mixtures and the construction of the entropy functional.

For the PDE-analysis, the relevant set of variables relies partly on the dual variables and reads
\begin{align}\label{variables3}
\Big(\varrho, \, \frac{\mu_1-\mu_N}{T}, \ldots, \ \frac{\mu_{N-1}-\mu_N}{T}, \, -\frac{1}{T}\Big) \, . 
\end{align}
It can be shown that, in the stable fluid phase, the sets of state variables \eqref{basicvariables}, \eqref{variables1}, \eqref{variables2} and \eqref{variables3} are equivalent, meaning that there exist smooth bijections transforming these vectors into one another. These bijections shall be introduced below whenever needed. 

When we estimate a function of the state variables (free energy, pressure, internal energy, etc.) or want to calculate its derivatives using the chain rule, there is the typical problem how to indicate in which variables this function is currently expressed. The clearest way is certainly using superspripts. For instance, if $\varrho \psi$ is the Helmholtz free energy, we denote $\widetilde{\varrho \psi}$ its representation in $(T, \rho_1, \ldots,\rho_N)$, with $\widehat{\varrho \psi}$ its representation in $(T,p,x_1, \ldots,x_N)$, and so on. However, since we encounter here at least four different sets of variables, we often shall prefer to rely on the context than to introduce a completely consequent, but heavy system of notations.

\section{Multicomponent systems and mathematical analysis}

Although fluids with composite molecular structure are omnipresent in every-day life and applications, the basic PDEs describing their behaviour have been, comparatively to their relevance, rather less studied from the viewpoint of mathematical analysis. 
There might exist several reasons explaining this fact, among them certainly the two following ones. At first, the complexity of the underlying thermodynamic models makes investigations in this area long and involved. At second there are tough, yet unsolved mathematical problems already in the single-component case, like the global existence of regular solutions or handling the quadratic terms on the right-hand of the energy equation. Such difficulties are rather amplified by the multicomponent character.

Nevertheless, through the effort of several research groups, it has become clear since several years that multicomponent fluid dynamics raises, in PDE analysis too, interesting problems which are worth being investigated for themselves. The consistent coupling of chemistry and mechanics necessary to handle the basic problems of multicomponent gas dynamics leads, as exhibited in the book \cite{giovan}, to interesting PDE systems of mixed parabolic-hyperbolic type. We refer to the section 9, in particular 9.4, of \cite{giovan} for first results on existence of strong solutions in a neighourhood of the chemical equilibrium for this type of problems.

In several subsequent investigations, the equations \eqref{mass}, \eqref{energy}, \eqref{momentum} or the constitutive equations \eqref{DIFFUSFLUX}, \eqref{HEATFLUX} have been partly investigated.
The paper \cite{feipetri08} deals with the global weak solution analysis of multicomponent flow models, for the case of mixtures of mono-atomic gases. Here the diffusion fluxes are assumed of the Fickian form $J^i = - D \, \nabla y_i$ with a single common diffusivity $D > 0$ and $y_1, \ldots,y_N$ being the mass fractions. This constitutive law preserves the condition $\sum_{i=1}^N J^i = 0$, but the pressure and temperature contributions to the thermodynamic driving forces and the fluxes do not occur. The constitutive theory in \cite{feipetri08} yields chemical potentials of the form $\mu_i = \mu_0(T, \,\varrho) - T \, s_i$ where $s_i $ denote the partial specific entropy of the species, there assumed constant. Hence this form of the fluxes results from another modelling strategy than \eqref{DIFFUSFLUX}. In \cite{feipetri08}, the equation of state for the pressure depends only on $T$ and $\varrho$.
In this way, the mechanical behaviour is essentially independent on the molecular composition. (The coupling between partial mass balance equations and the mechanics is unilateral). This is quite different a situation than the one considered in the present investigation. 

In \cite{chenjuengel}, \cite{mariontemam} and \cite{bothepruess}, the total mass density $\varrho$ is assumed constant, so that \eqref{momentum} reduces to the usual incompressible Navier-Stokes equations. In this way the hyperbolic component and the bilateral coupling of chemistry and mechanics are avoided. This type of models can be well justified for dilute mixtures, see \cite{bothesoga}. Similar remarks apply to available investigations of the classical Nernst--Planck model of electrochemistry, or to Navier--Stokes--Nernst--Planck: see \cite{roubignoles}, \cite{bofisa16}, \cite{fisal} or \cite{schmuck}, \cite{constantin}. In \cite{mupoza15}, beside diverse simplifications, a ''parabolic regularisation'' (Bresch--Desjardins technique) is employed to obtain a control on the density gradient. In this case too, the hyperbolic character is relaxed.

In \cite{dredrugagu20} (electrolytes), \cite{druetmaxstef} (Maxwell--Stefan diffusion), we separate the hyperbolic from the parabolic components in the spirit of the book \cite{giovan} to obain well-posedness results in classes of weak solutions for isothermal systems. In \cite{bothedruet} we perform, still in the isothermal case, the local-in-time strong solution analysis for a general thermodynamic model. Other teams have investigated the models of ideal gas dynamics exposed in \cite{giovan} with different methods and focusses: Concerning strong solutions in \cite{piashiba19}, \cite{piashiba19pr}, concerning (stationary) weak solutions in \cite{bujupoza}, \cite{Axman}.

The purpose of the present paper is mainly to prove the local well-posedness of the full system \eqref{mass}, \eqref{momentum}, \eqref{energy} subject to a general thermodynamic model. This result is lacking in the literature.
We show that the approach of \cite{bothedruet} extends to non-isothermal systems driven by a general entropy functional of Legendre--type.  In comparison to the isothermal case, the temperature is an additional \emph{constrained parabolic variable}. We deal with this new situation by combining the setting of optimal mixed regularity for Navier--Stokes (see \cite{solocompress} for the inspiring precursor) applied in \cite{bothedruet}, and a partial maximum principle for strong solutions. The latter technique shows that a solution with mixed regularity cannot blow-up due to the temperature alone. 

In order to illustrate our concept, we moreover develop a thermodynamically consistent explicit constitutive model for volume-additive mixtures.\\[0.5ex]

Since multicomponent systems are not only encountered in the context of fluid dynamics, we conclude this section on the state of the art with a few additional remarks.

The system \eqref{mass} occurs in the context of reaction-diffusion systems, with or without energy equation.
These problems are essentially parabolic, but techniques like the entropic variables are obviously related to the ones in use in multicomponent fluid dynamics. We mention \cite{justel13,juengel15,juengelbook} or, for non-isothermal problems, \cite{piashiba20,buliro,helmer,eitrig} with recent interesting advances on thermodynamically-driven reaction-diffusion equations.

The paper \cite{dreyerguhlkemueller} raised a few years ago -- in the context of the Nernst--Planck model of electrochemistry -- the interesting question how pressure is handled in similar investigations. Very often indeed, the thermodynamic constitutive hypotheses combined with the Gibbs--Duhem equation do not yield isobaric systems. An exception is the paper \cite{herbergpruess}, where the free energy is modelled to result into an isobaric system. However, \cite{herbergpruess} assumes a constant total mass density $\varrho$. 

In the introduction to \cite{augnerbothe2}, we find interesting reflexions on how mechanics is handled in reaction-diffusion systems. 

As far as reaction-diffusion systems for fluids are concerned, it is sometimes claimed that other velocity fields are used (mainly the \emph{volume-averaged} velocity) than the one occurring in Newton's equations.  
Another interesting idea is using the Darcy velocity $v = -\nabla p$ in combination with the Gibbs--Duhem equation. The resulting mass fluxes $\jmath^i = -\rho_i \, \nabla p + J^i$ yield a full-rank reaction-diffusion parabolic system if $J^i$ obeys \eqref{DIFFUSFLUX}. The Darcy velocity was for instance used in \cite{druetjuengel}. However, the question why, for certain situations, the Darcy law or other velocities can be substituted to momentum balance, seems not to have been yet completely answered for multicompenent fluids.
Sometimes $v = 0$ has been postulated (See \cite{dreyerguhlkemueller}, \cite{fuhrmann15}, \cite{zubkokovtu}, \cite{helmer} or \cite{druetparaweak,druetparastrong}) as the fastest way to come to reaction--diffusion systems, but often the only solution consistent with momentum balance is then the hydrostatic and chemical equilibrium. In non-equilibrium, the Navier--Stokes equations cannot be simultaneously fulfilled.\\[0.5ex]

Mixture models also occur in the context of Cahn--Hilliard equations, that have been intensively studied from the viewpoint mathematical analysis. In the context of Cahn--Hilliard--Navier--Stokes models, a coupling to mechanics quite in the spirit of the system \eqref{mass}, \eqref{energy}, \eqref{momentum} has been set up, too. We refer to \cite{lowe} for the modelling of two-phase (binary) mixtures quite in the same spirit than the present investigation. 

Very often, however, significant differences occur. One important aspect are the variables. In many investigations on Cahn--Hilliard--Navier--Stokes equations, the volume-fractions are used to describe the molecular composition of the mixture: See for instance \cite{boyer,abelsgarckegruen} or, more recently, \cite{celorawagner}, section 3.
Then the models resemble the system \eqref{mass}, \eqref{energy}, \eqref{momentum} but the definition of the mechanical velocity is sometimes different, and the conservation of mass is handled differently in both approaches. 

Interesting questions arise for instance from the choice of diffusion constants: Cahn--Hilliard models often set the mobilities (corresponding to the matrix $M$) \emph{constant}, while the Maxwell--Stefan approach sets them proportional by a constant factor -- the Fickian diffusivities -- to the mass densities, see \cite{bothedruetMS}.

Despite a profound relashionship between the models, from the viewpoint of mathematical analysis, the techniques to handle reaction-diffusion systems, Cahn--Hilliard systems and the model studied in this paper are in fact quite different. We therefore do not further discuss this point here, referring to the Section \ref{discussion} for some additional reflexions on the modelling topic.

\section{Main results}

We will first formulate our main result in terms of the main state-variables $(T, \, \rho, \, v)$ which is a vector of size $1+N+3$. From purely structural PDE considerations, these variables are  not the most natural ones, though, since parabolic and hyperbolic components are not clearly separated.
The concept of optimal mixed regularity that we shall introduce afterwards relies on so-called entropic variables\footnote{From the point of view of the mathematical structure of multicomponent fluid dynamics the relevant distinction is the one between the total mass density, driven by hyperbolic dynamics, and other suitable components with parabolic dynamics.}, see \eqref{Defwprime}. 

Let $\Omega \subset \mathbb{R}^3$ be a bounded domain, and $\bar{\tau}$ a positive number. We denote by $Q_{\bar{\tau}}$ the cylinder $\Omega \times ]0,\bar{\tau}[$ in $\mathbb{R}^4$.
To formulate our results, we need several usual function spaces. We refer to standard monographs, like for instance the second Chapter of \cite{ladu} or, for up-to-date language and the most recent results, to \cite{denkhieberpruess} with references. We introduce the parabolic Sobolev spaces
\begin{align*}
 W^{2,1}_p(Q_{\bar{\tau}}) := & \{u \in L^p(Q_{\bar{\tau}}) \, : \, \partial_{t}^{\beta} \, \partial_x^{\alpha} u \in L^p(Q_{\bar{\tau}}) \text{ for all } 0 < 2\beta+|\alpha| \leq 2 \} \, ,\\
 W^{1,0}_p(Q_{\bar{\tau}}) := & \{u \in L^p(Q_{\bar{\tau}}) \, : \,   \partial_x^{\alpha} u \in L^p(Q_{\bar{\tau}}) \text{ for all } |\alpha|  = 1 \} \, ,
\end{align*}
and the anisotropic Sobolev spaces over the cylinder domain
\begin{align*}
 W^{1,1}_{p,q}(Q_{\bar{\tau}}) := \{u \in L^{p,q}(Q_{\bar{\tau}}) \, : \, D^{\alpha} u \in L^{p,q}(Q_{\bar{\tau}}) \text{ for all } |\alpha| = 1\} \, .
\end{align*}
Here, $L^{p,q}(Q_{\bar{\tau}})$ is the Banach--space of all Lebesgue--measurable classes over $Q_{\bar{\tau}}$ with finite integral $\int_{0}^{\bar{\tau}} ( \int_{\Omega} |u(x,\tau)|^p \, dx)^{q/p} \, d\tau$ or, for $q = +\infty$, with finite $\underset{\tau \in [0, \, \bar{\tau}]}{\text{ess sup}} \int_{\Omega} |u(x,\tau)|^p \, dx$.

We denote by $W^{1,1}_{p,\infty}(Q_{\bar{\tau}})$ the closure of $C^1(\overline{\Omega} \times [0, \, \bar{\tau}])$ with respect to the $L^{p,\infty}-$norm. Thus, elements of this space possess continuous-in-time weak derivatives with values in $L^p(\Omega)$. We moreover need the trace spaces
\begin{align*}
 & W^{2-\frac{2}{p}}_p(\Omega) := \Big\{u \in W^{1,p}(\Omega) \, : \, [\partial_{x_i}u]^{(1-\frac{2}{p})}_{p,\Omega} < + \infty \text{ for } i = 1,\ldots,N \Big\} \, \\
\text{ with }  \quad &  [u]^{(\lambda)}_{p,\Omega} = \left( \int_{\Omega} \int_{\Omega} \frac{|u(x) - u(y)|^p}{|x-y|^{3+\lambda \, p} } \, dxdy\right)^{\frac{1}{p}}\, ,\\
& W^{1-\frac{1}{p}, \frac{1}{2}-\frac{1}{2p}}_p(S_{\bar{\tau}}) := \Big\{u \in L^p(S_{\bar{\tau}}) \, : \, [u]_{p,x,S_{\bar{\tau}}}^{(1-\frac{1}{p})} + [u]_{p,t,S_{\bar{\tau}}}^{(\frac{1}{2}-\frac{1}{2p})} < +\infty \Big\}\\
\text{ with } \quad & [u]_{p,x,S_{\bar{\tau}}}^{(\lambda)} := \left(\int_0^{\bar{\tau}} \int_{\partial\Omega}\int_{\partial \Omega} \frac{|u(x,t) - u(y,t)|^p}{|x-y|^{2 + p\lambda}} \, dydxdt\right)^{\frac{1}{p}} \, ,\\
& [u]_{p,t,S_{\bar{\tau}}}^{(\lambda)} := \left(\int_{\partial\Omega} \int_{0}^{\bar{\tau}}\int_{0}^{\bar{\tau}} \frac{|u(x,t) - u(x,s)|^p}{|t-s|^{1 + p\lambda}} \, dtdsdx\right)^{\frac{1}{p}} \, .
 \end{align*}
 For a function $r^{\Gamma} = r^{\Gamma}(x, \, t, \, T, \,\rho_1, \ldots,\rho_N)$, defined on $S_{\bar{\tau}} \times \mathbb{R}^{N+1}_+$ we write 
 \begin{align*}
 r^{\Gamma} \in C^{\lambda,\frac{\lambda}{2},1}(S_{\bar{\tau}} \times \mathbb{R}^{N+1}_+) \, ,
 \end{align*}
 iff $r^{\Gamma}$ is continuous in $(x,t)$ for all $(T, \, \rho)$, continuously differentiable in $(T, \, \rho)$ for all $(x,t)$ and, for all $K \subset \mathbb{R}^{N+1}_+$ compact, we have
  \begin{align*}
 & [r^{\Gamma}]_{x}^{(\lambda)} := \sup_{x \neq y \in \partial \Omega, \, (t, \, T, \, \rho) \in ]0,\bar{\tau}[ \times K\, } \frac{|r^{\Gamma}(x, \, t, \, T, \, \rho) -  r^{\Gamma}(y, \, t, \, T, \, \rho)}{|x-y|^{\lambda}} < +\infty\\
 & [r^{\Gamma}]^{(\frac{\lambda}{2})}_{t} := \sup_{t \neq s \in ]0, \, \bar{\tau}[, \, (x, \, T, \, \rho) \in \partial \Omega \times K} \frac{|r^{\Gamma}(x, \, t, \, T, \, \rho) -  r^{\Gamma}(x, \, s, \, T, \, \rho)}{|t-s|^{\frac{\lambda}{2}}} < + \infty \, ,\\
& \|D r^{\Gamma}\|_{C(S_{\bar{\tau}} \times K)} :=  \sup_{(x,t) \in S_{\bar{\tau}}, \, (T,\rho) \in K} |D_{T,\rho}^1 r^{\Gamma}(x,t, \, T, \, \rho)| < + \infty  \, .
\end{align*}
To formulate the necessary compatibility conditions on the line $\partial\Omega \times \{0\}$, we also need the Slobodecki spaces $W^s_p(\partial \Omega)$. The definitions are well-known too.
 
We also define $\mathcal{P}: \, \mathbb{R}^{N+1} \rightarrow \{(1^N, \, 0)\}^{\perp}$ to be the orthogonal projection onto the orthogonal complement of the span of $(1^N, \, 0) = (1,\ldots,1, 0)$ in $\mathbb{R}^{N+1}$. We choose 
\begin{align}\label{Dgrund}
 \mathcal{D} := \{(\rho, \, \varrho u) \in \mathbb{R}^N_+ \times \mathbb{R} \, : \, \varrho u > \epsilon^{\min}(\rho)\} \, ,
\end{align}
where we assume that $\epsilon^{\min} \in C( \mathbb{R}^N_+)$ is a given convex function, or $\epsilon^{\min} \equiv -\infty$.
We next formulate our main theorem.
\begin{theo}\label{MAIN}
We fix $p > 3$, and we assume that \begin{enumerate}[(a)]
\item $\Omega \subset \mathbb{R}^3$ is a bounded domain of class $\mathcal{C}^2$;
\item $M: \, \mathbb{R}^{N+1}_+ \rightarrow \mathbb{R}^{N\times N}$ is a mapping of class $C^2(\mathbb{R}^{N+1}_+; \, \mathbb{R}^{N\times N})$ into the positive semi-definite matrices of rank $N-1$ with constant kernel $1^N = \{1, \ldots, 1\}$; $l \in C^2(\mathbb{R}^{N+1}_+; \, \mathbb{R}^{N})$ maps into the orthognal complement of $1^N$; $\kappa \in C^2(\mathbb{R}^{N+1}_+)$ is a strictly positive function. These functions are chosen such that the matrix $\tilde{M}(T, \, \rho)$ of \eqref{extendedM} is positive semi-definite at all $(T, \, \rho) \in \mathbb{R}^{N+1}_+$, for instance by means of the construction\eqref{extendedcondi};
\item $\eta, \, \lambda \in C^2(\mathbb{R}^{N+1}_+)$ are subject to the restrictions in \eqref{thermo5}; 
\item $\mathcal{D}$ is an open convex set of the epigraphial form \eqref{Dgrund}; $h: \, \mathcal{D} \subset \mathbb{R}^N_+ \times \mathbb{R} \rightarrow \mathbb{R}$ is of class $C^3(\mathcal{D})$ and, together with its convex conjugate $h^*$ on the domain $\mathbb{R}^N \times \mathbb{R}_- =: \mathcal{D}^*$, it forms a pair of Legendre--type;
\item $r \in C^1(\mathbb{R}^{N+1}_+; \, \mathbb{R}^N)$ maps into the orthogonal complement of $1^N$;
\item $r^{\Gamma}_1, \ldots,r^{\Gamma}_N, \, r^{\Gamma}_{\rm h} \in C^{\lambda, \,  \frac{\lambda}{2},\, 1}(S_{\bar{\tau}} \times \mathbb{R}^{N+1}_+)$ with $\lambda > 1-1/p$, and $r^{\Gamma} = (r^{\Gamma}_1, \ldots,r^{\Gamma}_N)$ maps into the orthogonal complement of $1^N$;
\item \label{force} The external forcing $b^1,\ldots,b^N$ satisfies 
\begin{gather*}
b - 1^N \cdot b/N \, 1^N \in W^{1,0}_p(Q_{\bar{\tau}}; \, \mathbb{R}^{N\times 3}) \cap C([0,T]; \, W^{1-\frac{2}{p}}_p(\Omega; \, \mathbb{R}^{N\times 3})) \, ,\\
(b - 1^N \cdot b/N \, 1^N)  \cdot \nu \in W^{1-\frac{1}{p},\frac{1}{2}-\frac{1}{2p}}_p(S_{\bar{\tau}};\, \mathbb{R}^N) \, ,
\end{gather*}
and $1^N \cdot b \in L^p(Q_{\bar{\tau}}; \, \mathbb{R}^{3})$. 
\item The functions $J^{\Gamma}_1, \ldots, J^{\Gamma}_N, \, J^{\Gamma}_{\rm h}$ all belong to $W^{1-\frac{1}{p},\frac{1}{2}-\frac{1}{2p}}_p(S_{\bar{\tau}})$ and $\sum_{i=1}^N J^{\Gamma}_i = 0$;
\item The initial data $T_0, \, \rho^0_{1}, \ldots \rho^0_{N}: \, \Omega \rightarrow \mathbb{R}_+$ are strictly positive measurable functions satisfying the following conditions:
\begin{itemize}
\item The initial total mass density $\varrho_0 := \sum_{i=1}^N \rho_{i}^0$ is of class $W^{1,p}(\Omega)$, and there is $m_0 > 0$ such that $ 0 < m_0 \leq \varrho_0(x)$ for all $x \in \Omega$;
\item The initial reciprocal temperature $-1/T_0$ belongs to $W^{2-2/p}_p(\Omega)$, and there is $\theta_1 > 0$ such that $0 < T_0(x) \leq \theta_1$ for all $x \in \Omega$;
 \item Defining $\varrho_0 u_0$ via \eqref{initialeps}, the vector field $w^{0*} := \nabla_{w}h(\rho^0_{1}, \ldots \rho^0_{N}, \, \varrho_0u_0)$ (initial dual variables) satisfies $\mathcal{P} \,w^{0*} \in W^{2-\frac{2}{p}}_p(\Omega; \, \mathbb{R}^{N+1})$;
\item The initial and boundary data satisfy, as traces in $W^{1-\frac{3}{p}}_p(\partial \Omega)$, the compatibility conditions
\begin{align*}
 & - \nu \cdot \Big(\sum_{j=1}^N M_{ij}(T_0,\rho^0) (\nabla w^{0*}_j -b^j(0)/T_0) + l_i(T_0,\rho^0) \, \nabla w^{0*}_{N+1}\Big) \\
 & \quad =- r^{\Gamma}_i(x, \, 0, \, T_0, \, \rho^0) + J^{\Gamma}_i(x,0) \, , \quad \text{ for } i = 1,\ldots,N \, ,\\
& -\nu \cdot \Big(\sum_{j=1}^N \, l_j(T_0,\rho^0) \,   (\nabla w^{0*}_j- b^j(0)/T_0) + \kappa(T_0,\rho^0) \, \nabla w^{0*}_{N+1}\Big) \\
& \quad
 = -r^{\Gamma}_{\rm h}(x, \, 0, T_0,\rho^0) + J^{\Gamma}_{\rm h}(x, \, 0) \, .
\end{align*}
%
%
%
%
%
\end{itemize}
\item The initial velocity $v^0$ belongs to $W^{2-\frac{2}{p}}_{p}(\Omega; \, \mathbb{R}^3)$ with $v^0 = 0$ in $W^{2-\frac{3}{p}}_{p}(\partial \Omega; \, \mathbb{R}^3)$.
\end{enumerate}
Then, there exists $0 < t^* \leq \bar{\tau}$ such that the problem \eqref{mass}, \eqref{energy}, \eqref{momentum} with closure relations \eqref{DIFFUSFLUX}, \eqref{HEATFLUX}, \eqref{GIBBSDUHEMEULER} and boundary conditions \eqref{initial}, \eqref{lateral} ($=: (P)$) possesses a unique solution in the class
\begin{align*}
T \in W^{2,1}_p(Q_{t^*}; \, \mathbb{R}_+), \quad  \rho \in W^{1}_{p}(Q_{t^*}; \, \mathbb{R}^N_+), \quad v \in W^{2,1}_p(Q_{t^*}; \, \mathbb{R}^3) \, .
\end{align*}
Moreover, the vector $w^* := \nabla_{w}h(\rho, \, \epsilon(T, \, \rho))$ satisfies $ \mathcal{P} \,w^* \in  W^{2,1}_p(Q_{t^*}; \, \mathbb{R}^{N+1})$. 
\end{theo}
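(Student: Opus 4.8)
The plan is to recast $(P)$ in the entropic variables \eqref{Defwprime}, which separate its hyperbolic and parabolic parts. Summing \eqref{mass} over $i$ and using $\sum_i J^i = \sum_i r_i = 0$ gives the pure transport equation $\partial_t \varrho + \divv(\varrho\, v) = 0$ for the total mass density $\varrho = 1^N\cdot\rho$; this is the hyperbolic component. Because the matrix $\mathcal{M}$ of \eqref{extendedM} annihilates $(1^N,0)$, the fluxes \eqref{DIFFUSFLUX}, \eqref{HEATFLUX} depend on $w^* = \nabla_w h(w)$ only through $q := \mathcal{P} w^*$; the remaining $N-1$ independent combinations of \eqref{mass} together with the energy balance \eqref{energy}, rewritten via \eqref{TEMPCHEMPOT}--\eqref{PRESSUREDEF} as equations for $q$ (whose last component is $-1/T$, the ``constrained parabolic variable''), form a quasilinear second-order parabolic system whose principal part is assembled from $\mathcal{M}$ and $\nabla^2 h$. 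Coupled with the Navier--Stokes equation \eqref{momentum} for $v$, whose principal part is the Lamé operator of \eqref{thermo5}, this is the parabolic component. By the Legendre duality of hypothesis (d), $T$ and each $\rho_i$ --- and hence the pressure $p = T\, g(\nabla_w h(w))$, the internal energy $\epsilon$, and all coefficients --- are smooth functions of $(\varrho, q)$ on the relevant range; note that $\nabla p$ involves $\nabla\varrho$, which will only ever be controlled in $L^p$.

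The unknowns are thus $(\varrho, q, v)$, and the result follows from a Banach fixed-point argument on a ball $\mathcal{B}$, of radius fixed by the data, in the mixed-regularity class
\[
\varrho \in C([0,t^*];W^{1,p}(\Omega))\cap W^{1,1}_{p,\infty}(Q_{t^*}),\quad q\in W^{2,1}_p(Q_{t^*};\mathbb{R}^{N+1}),\quad v\in W^{2,1}_p(Q_{t^*};\mathbb{R}^3),
\]
with $t^*\le\bar\tau$ to be chosen small. On $\mathcal{B}$ one recovers $(T,\rho)$ from $(\varrho,q)$ by inverting the smooth bijection between the variable sets \eqref{basicvariables}, \eqref{variables1}, \eqref{variables3}; since $p>3$ makes $\varrho$ and $q$ continuous on $\overline{Q_{t^*}}$, and since for $t^*$ small the pair $(\rho,\varrho u)$ stays in a compact subset of $\mathcal{D}$ with $T,\rho_i$ bounded away from $0$ and $\infty$, this is licit and all nonlinearities are Lipschitz on the range. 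Given $(\bar\varrho,\bar q,\bar v)\in\mathcal{B}$, the image is defined in three steps: first solve the linear transport equation $\partial_t\varrho+\bar v\cdot\nabla\varrho = -\varrho\,\divv\bar v$ with datum $\varrho_0\in W^{1,p}$ --- here $p>3$ is crucial, since $\nabla\bar v\in L^p(0,t^*;W^{1,p}(\Omega))\hookrightarrow L^p(0,t^*;L^\infty(\Omega))$ and the method of characteristics (or a mollification/energy estimate) yields $\varrho\in C([0,t^*];W^{1,p})\cap W^{1,1}_{p,\infty}$ with norm bounded by $(1+Ct^*\|\bar v\|)\,\|\varrho_0\|$; second, solve the linear parabolic system for $q$ with coefficients frozen at $(\bar\varrho,\bar q)$ and right-hand side built from the lower-order terms, the reactions $r,r^\Gamma$, the forces $b^i$ and the boundary data $J^\Gamma$ --- maximal $L^p$-regularity on $C^2$ domains (\cite{ladu}, \cite{denkhieberpruess}) gives a unique $q\in W^{2,1}_p$, and it is precisely here that the compatibility conditions of (i) are needed; third, solve the linearized Navier--Stokes problem for $v$ with viscosities frozen at the recovered $(T,\rho)$ and the pressure term $\nabla p(\bar\varrho,\bar q)$ on the right, using Solonnikov-type maximal regularity for the compressible system (\cite{solocompress}, \cite{bothedruet}), which gives $v\in W^{2,1}_p$.

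That this map $\Phi$ sends $\mathcal{B}$ into itself follows because each linear estimate has the form $\|\text{output}\|\le C(\text{data})+\omega(t^*)\,\|\text{input}\|$ with $\omega(t^*)\to 0$: the small time absorbs the dependence on the radius, using $\|f\|_{L^p(Q_{t^*})}\le (t^*)^{1/p}\|f\|_{L^\infty(0,t^*;L^p)}$ and the analogous time-gain in the anisotropic trace spaces of (f)--(h) (via the standard extension/interpolation lemmas), together with the Lipschitz continuity of the frozen coefficients and of $\nabla p$. Contraction is obtained by running the same estimates on differences in a weaker norm --- one space-derivative lower, e.g.\ $C([0,t^*];L^p)$ for $\varrho$ and $W^{1,0}_p$-type control for $q,v$ --- which closes the derivative loss produced by the frozen coefficients, exactly as in the isothermal argument of \cite{bothedruet}. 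The fixed point is a solution with the stated regularity; uniqueness in the full class follows either from the contraction in the weaker norm or from a direct energy estimate for the difference of two solutions.

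The main obstacle is the genuinely mixed type: $\varrho$ carries only the one-derivative regularity $W^{1,p}$ propagated by transport, it never improves, yet $\nabla\varrho$ enters the pressure gradient in \eqref{momentum}, forcing the velocity equation with a right-hand side that lies only in $L^p(Q_{t^*})$ --- this is the compressible Navier--Stokes difficulty, and it dictates both the ``optimal mixed'' solution class and the use of Solonnikov's scheme rather than a cleanly parabolic one. The second, non-isothermal, difficulty is that $T$ is not a free parabolic variable: the energy balance must be built in as the $(N+1)$-st component of the parabolic system for $q=\mathcal{P}w^*$, and one must check that --- after exploiting the Legendre structure of $h$ and the positive semidefiniteness of $\mathcal{M}$ --- the resulting $(N+1)\times(N+1)$ principal operator is normally parabolic, its only degeneracy lying in the direction killed by $\mathcal{P}$, which is why the projection is unavoidable. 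Verifying this parabolicity, matching the boundary conditions of (i) to the projected system, and keeping $(T,\rho)$ strictly inside $\mathbb{R}^{N+1}_+\cap\mathcal{D}$ for positive time is where the real work lies; once the linear theory and this bookkeeping are in place, the fixed-point mechanics is routine.
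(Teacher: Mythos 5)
Your proposal is correct and follows essentially the same route as the paper: passage to the entropic variables with $\varrho$ as the hyperbolic component and the projected dual variables $q$ (including $-1/T$) plus $v$ as parabolic components, a linearization that solves the transport equation for $\varrho$ first and then the frozen-coefficient parabolic systems for $q$ and $v$ by maximal $L^p$-regularity, a small-time self-map on a ball respecting the constraints $\varrho>0$, $q_N<0$, and convergence of the iteration in a lower-order norm as in the isothermal precursor. The details you flag as "where the real work lies" (parabolicity of the reduced system, state-dependent viscosities via localization, nonzero boundary fluxes, the subcritical $|\nabla v|^2$ term, and keeping $q_N$ away from $0$) are exactly the points the paper treats in Section 5 and the appendix.
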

From the purely mathematical viewpoint, there seem to be a more natural formulation of this result. Indeed, the variables $(T, \, \rho, \, v)$ do not account for the mixed parabolic-hyperbolic structure of the PDE system. In order to formulate results in classes of optimal regularity, we use other variables:
\begin{alignat}{2}
\label{parabvar} & \left. \begin{matrix}
\frac{\mu_1 - \mu_N}{T}, \ldots,  \frac{\mu_{N-1} - \mu_N}{T} & \text{ relative chemical potentials}\\[0.5ex]
 -\frac{1}{T}  & \text{ reciprocal of temperature}\\[0.5ex]
 v_1, \, v_2, \, v_3  & \text{ velocities}
 \end{matrix} \right\} \quad & & \text{ the parabolic variables,}\\[0.1ex]
\label{hyperbvar} & \varrho \, \, \text{ total mass density}  & & \text{the hyperbolic variable.}
\end{alignat}
The origin of a distinction of parabolic and hyperbolic variables is retraced in \cite{giovan}, Chapter 8. The name ''relative chemical potentials'' was used in \cite{dredrugagu20}.\footnote{In other contexts, one sometimes find the definition of chemical potentials as $\mu_i := \partial_{y_i} \psi$ with $y_1, \ldots,y_{N-1}$ being the  $N-1$ \emph{independent} mass fraction, and $\psi$ the specific Helmholtz free energy as a function of $T$, $\varrho$ and $y_1, \ldots,y_N$. Then, by definition, there are only $N-1$ chemical potentials, the ''relative'' ones.}
\begin{defin}
A solution for the PDE system \eqref{mass}, \eqref{energy}, \eqref{momentum} is said to possess optimal mixed regularity (of index $p$, on the interval $]0, \, \bar{\tau}[$) if all associated parabolic variables in \eqref{parabvar} belong to $W^{2,1}_p(Q_{\bar{\tau}})$ and the hyperbolic component $\varrho$ belongs to $W^{1,1}_{p,\infty}(Q_{\bar{\tau}})$.
\end{defin}
The statement of Theorem \ref{MAIN} can be equivalently reformulated in more simple fashion.
\begin{theoo}
 Under the assumptions of Theorem \ref{MAIN}, there exists $0 < t^* \leq \bar{\tau}$ such that the problem $(P)$ possesses a unique solution of optimal mixed regularity with index $p$ on $]0,t^*[$. 
\end{theoo}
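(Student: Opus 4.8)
The stated \emph{Theorem} is only a reformulation of Theorem~\ref{MAIN}, so the whole content is to prove the latter; the plan is to pass to the mixed parabolic--hyperbolic form of the system and to run a contraction argument in the class of optimal regularity. First I would record the equivalence of the two formulations. Since $h$ is strictly convex of class $C^3$ and $(h,h^*)$ is of Legendre type, $w=(\rho,\varrho u)\mapsto w^*=\nabla_w h(w)$ is a $C^2$-diffeomorphism of $\mathcal{D}$ onto $\mathcal{D}^*$; composing with $T\mapsto\varrho u=\epsilon(T,\rho)$ of \eqref{varrhoubasic} and with the projection $\mathcal{P}$, one checks --- exactly in the spirit of the equivalence of the variable sets \eqref{variables1} and \eqref{variables3} noted in Section~1 --- that $(T,\rho)\mapsto(\varrho,\mathcal{P}w^*)$ is a $C^2$-diffeomorphism onto its image, the components of $\mathcal{P}w^*$ being, up to a fixed linear isomorphism, the relative chemical potentials and $-1/T$ of \eqref{parabvar}. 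Hence $T\in W^{2,1}_p$, $\rho\in W^1_p$, $\mathcal{P}w^*\in W^{2,1}_p$ together with $\varrho\in W^{1,1}_{p,\infty}$ is equivalent to optimal mixed regularity of index $p$, and the data hypotheses (h)--(j) translate into the natural compatibility and trace conditions for the parabolic subproblems below.

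Next I would rewrite \eqref{mass}, \eqref{energy}, \eqref{momentum} as three coupled blocks in the unknowns $q:=\mathcal{P}w^*\in\mathbb{R}^{N}$, the velocity $v$, and the total density $\varrho$. Using \eqref{DIFFUSFLUX}, \eqref{HEATFLUX} and $\ker\mathcal{M}=\mathrm{span}\,(1^N,0)$: summing the $N$ mass balances and using $\sum_i J^i=0$, $\sum_i r_i=0$ yields the transport equation $\partial_t\varrho+v\cdot\nabla\varrho=-\varrho\,\divv v$ for the hyperbolic variable; the $N$ mass balances and the energy balance are, via the diffeomorphism above, equivalent to a quasilinear second-order system $\partial_t q-\divv(\mathbb{A}(\varrho,q)\nabla q)=F(\varrho,q,v,\nabla\varrho,b)$ for $q$, whose principal part is normally elliptic because $\mathcal{M}$ restricted to $(1^N,0)^\perp$ is positive definite, and the reaction-type conditions \eqref{lateral} become Robin-type boundary conditions for $q$ satisfying the Lopatinskii--Shapiro condition; the momentum equation \eqref{momentum}, after division by $\varrho$ and insertion of \eqref{thermo5}, is an elliptic--parabolic system of Lam\'e type for $v$ with state-dependent coefficients $\eta/\varrho$, $\lambda/\varrho$, forced by $\sum_i\rho_i b^i$ and by $\nabla p$ with $p=T\,g(\nabla_w h)$. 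I would then define a solution operator $\Phi$ sending $(\bar q,\bar v)$, in a ball of $W^{2,1}_p(Q_{t^*})$, to $(q,v)$ obtained by: (i) solving the transport equation with velocity $\bar v$ for $\varrho$ --- here $p>3$ gives $\bar v\in L^1(0,t^*;C^1(\overline\Omega))$ by anisotropic embedding, hence a well-defined flow, $\nabla\varrho\in C([0,t^*];L^p)$, i.e. $\varrho\in W^{1,1}_{p,\infty}$, and a lower bound $\varrho\geq m_0/2$ for $t^*$ small; (ii) solving the parabolic $q$-system with coefficients frozen at $(\varrho,\bar q)$ by maximal $L^p$-regularity (cf.\ \cite{denkhieberpruess,ladu}); (iii) solving the Navier--Stokes block for $v$ with coefficients frozen at $(\varrho,\bar q)$, again by maximal $L^p$-regularity. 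Since $p>3$ gives $W^{2,1}_p(Q_{t^*})\hookrightarrow C(\overline{Q_{t^*}})$, the state remains in $\mathbb{R}^{N+1}_+\cap\mathcal{D}$ on a short interval, so all constitutive functions stay $C^2$ and the maximal-regularity constants can be fixed uniformly on $[0,\bar\tau]$. A routine argument then shows $\Phi$ maps a ball --- determined only by the data and the constant-in-time extension of the initial values --- into itself and is a contraction for $t^*$ small, since the nonlinear remainders all carry a positive power of $t^*$; the fixed point is the desired solution, and uniqueness follows from the same difference estimates together with Gronwall's inequality.

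The step I expect to be the main obstacle is closing this loop against the coupling of the hyperbolic variable into the parabolic blocks through the pressure. The gradient $\nabla p$ contains the term $(\partial_\varrho p)\,\nabla\varrho$, and $\nabla\varrho$ gains no regularity from the transport equation: it is merely of class $L^{p,\infty}$, exactly the borderline integrability admissible as a right-hand side of a $W^{2,1}_p$ velocity. Controlling this critical term requires the device of \cite{solocompress}, already exploited in the isothermal analysis of \cite{bothedruet}: one splits the pressure into a regular part and the density-gradient part, tracks the latter through the energy estimates, and uses that it enters multiplied by coefficients small in time, or absorbs it using the structure of the continuity equation. A secondary but genuine point is that the entropic variables force one to work with the internal energy $\varrho u$ rather than with $T$ --- so the ``temperature equation'' is really the projected energy balance sitting inside the $q$-system --- and that one must verify the normal ellipticity of $\mathbb{A}$ on $(1^N,0)^\perp$ and the complementing condition for the boundary operators coming from \eqref{lateral}; these are otherwise routine given the hypotheses (b)--(d).
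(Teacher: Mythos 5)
Your proposal is correct and follows essentially the same route as the paper: pass to the entropic variables $(q,\varrho,v)$, solve the continuity equation as a transport problem with the given velocity, apply maximal $L^p$-regularity to the frozen-coefficient parabolic blocks for $q$ and for $v$ (cf.\ \eqref{linearT1}--\eqref{linearT3}), obtain a short-time self-map because the estimates degenerate favourably as $t\to 0$ and the embedding $W^{2,1}_p\hookrightarrow C$ keeps the state in $\mathbb{R}_+\times\mathcal{H}^N_-$, and conclude by a fixed-point and difference argument. The only minor deviations are that the paper closes the iteration by convergence in a lower-order norm (following \cite{bothedruet}) rather than a strict contraction in the full $W^{2,1}_p$-ball, and that the state-dependent viscosities and the inhomogeneous boundary fluxes are handled by the localisation and homogenisation arguments of the appendix.
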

We shall also raise an additional question concerning blow-up criteria and the maximal existence-time for the strong solution. Let $(T, \, \rho,  \, v) $ be a solution of optimal mixed regularity for $(P)$ on $]0, \, \bar{\tau}[$, and assume that $\sup_{(x,\tau) \in Q_t} T(x, \, \tau)$ is bounded for all $t < \bar{\tau}$.
Then, the parabolic regularity of the reciprocal temperature $- 1/T$ \emph{alone} does not prevent possible blow-up of the temperature for $t \rightarrow \bar{\tau}-$. In other words, $\|T\|_{L^{\infty}(Q_t)} \rightarrow + \infty$ implies certainly that $\sup_{Q_t} (-1/T)\rightarrow 0-$, but this is no contradiction to $-1/T$ remaining finite and even smooth.\\[0.2ex]

Hence, in the context of solutions with optimal mixed regularity of index $p$, a maximum principle for the temperature has to be proved independently. We shall show how to achieve such a result by imposing certain growth restrictions on the data in the thermodynamic model. This can be done even in the general case. However it looks in our eyes more meaningful to formulate the result for a particular example. In this way, we exhibit the general procedure well enough, while being able to calculate explicit growth conditions. We refer to the section \ref{IDMIX} for the construction of a particular constitutive model for volume-additive mixtures, and to Theorem \ref{MAIN2} for the formulation of the maximum principle in this case.


\section{Transformation to a parabolic-hyperbolic system}

The system \eqref{mass}, \eqref{momentum} for constant temperature has been studied in \cite{bothedruet}. Here we show that the extension to energy systems can be dealt with by means of essentially the same method.

\subsection{Change of variables}\label{changevariables}

We recall the definitions \eqref{Defw}, \eqref{Defwprime} of primal variables $w = (\rho, \, \varrho u) \in \mathcal{D}$ and dual variables $w^* := (\mu/T, \, -1/T) \in \mathbb{R}^N \times \mathbb{R}_-$. With the help of the conjugate convex function $h^*$ to $h$, we can invert the relations 
\begin{align*}
\frac{\mu_i}{T} = \partial_{\rho_i}h(\rho, \, \varrho u) \quad \text{ for } i=1,\ldots,N \, , \quad \quad 
-\frac{1}{T} = \partial_{\varrho u} h(\rho, \, \varrho u) \, ,
\end{align*}
which more compactly now read as $ w^* = \nabla_w h (w)$.
\begin{lemma}\label{Legendre}
We assume that $h$ is a function of {\em Legendre--type} on $\mathcal{D} \subseteq \mathbb{R}^N_+ \times \mathbb{R}$ open, convex. We define $\mathcal{D}^* = \mathbb{R}^N \times \mathbb{R}_-$ and assume that the image of $\nabla_w h$ on $\mathcal{D}$ is equal to $\mathcal{D}^*$. Then, the convex conjugate $h^*(w^*) := \sup_{w \in \mathcal{D}} \{w^* \cdot w - h(w)\}$ is a function of Legendre type on $\mathcal{D}^*$. The gradients $\nabla_w h$ on $\mathcal{D}$ and $\nabla_{w^*} h^*$ on $\mathcal{D}^*$ are inverse to each other. Moreover $h \in C^k(\mathcal{D})$ iff $h^* \in C^k(\mathcal{D}^*)$ for all $k \geq 1$.
\end{lemma}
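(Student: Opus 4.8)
The plan is to invoke the classical theory of Legendre-type convex functions, essentially as developed by Rockafellar, and to supply the handful of verifications that are specific to our setting (namely that the domains pair up correctly and that smoothness is inherited). Recall that a convex function $h$ on an open convex set $\mathcal{D}$ is said to be of \emph{Legendre type} if $h$ is differentiable on $\mathcal{D}$, strictly convex on $\mathcal{D}$, and $|\nabla h(w^{(k)})| \to +\infty$ whenever $w^{(k)} \to w \in \partial \mathcal{D}$. First I would record the general fact (Rockafellar, \emph{Convex Analysis}, Theorem 26.5) that if $h$ is of Legendre type on $\mathcal{D}$, then its convex conjugate $h^*$ is of Legendre type on the open convex set $\Int(\dom h^*)$, that $\nabla h: \mathcal{D} \to \Int(\dom h^*)$ is a bijection with inverse $\nabla h^*$, and that $h^*$ is strictly convex there. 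This immediately gives the bijectivity statement and the fact that the two gradients are mutual inverses, \emph{provided} we can identify $\Int(\dom h^*)$ with $\mathcal{D}^* = \mathbb{R}^N \times \mathbb{R}_-$.

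The identification of the conjugate domain is the one place where the hypothesis ``the image of $\nabla_w h$ on $\mathcal{D}$ equals $\mathcal{D}^*$'' is used. Since $h$ is of Legendre type, $\nabla h(\mathcal{D}) = \Int(\dom h^*)$ by the cited theorem; combined with the hypothesis $\nabla h(\mathcal{D}) = \mathcal{D}^*$ we get $\Int(\dom h^*) = \mathcal{D}^*$. (It should also be noted for consistency that $\mathcal{D}^* = \mathbb{R}^N \times \mathbb{R}_-$ is indeed open and convex, and that $\Int(\dom h^*)$ being nonempty is automatic once $h$ is proper, closed and convex, which it is, being finite and convex on the open set $\mathcal{D}$ — one may pass to its closure $\bar h$ without affecting $h^*$.) With $\Int(\dom h^*) = \mathcal{D}^*$ in hand, the statement ``$h^*$ is of Legendre type on $\mathcal{D}^*$'' and ``$\nabla_w h$, $\nabla_{w^*} h^*$ are inverse bijections between $\mathcal{D}$ and $\mathcal{D}^*$'' follow directly.

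It remains to prove the equivalence $h \in C^k(\mathcal{D}) \iff h^* \in C^k(\mathcal{D}^*)$ for $k \geq 1$. The engine here is the inverse function theorem applied to the bijection $\nabla h: \mathcal{D} \to \mathcal{D}^*$. The subtlety is that $C^1$-ness of $\nabla h$ alone does not make it a $C^1$-diffeomorphism unless its Jacobian $D^2 h$ is everywhere nonsingular; this is exactly where strict convexity enters, but strict convexity only gives $D^2 h \succeq 0$ with no zero eigenvalue in a weak sense — one genuinely needs $D^2 h(w) \succ 0$ pointwise. For Legendre-type functions this positivity is in fact automatic: if $D^2 h(w_0)$ had a nontrivial kernel, strict convexity would still hold but $\nabla h$ would fail to be locally injective to first order, and a short argument (or again Rockafellar's development, where essential strict convexity for a differentiable function forces the Hessian, where it exists, to be positive definite on the relevant subspace) rules this out. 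Granting $D^2 h \succ 0$ on $\mathcal{D}$, the inverse function theorem gives that $\nabla h$ is a $C^{k-1}$-diffeomorphism when $h \in C^k$, hence $\nabla h^* = (\nabla h)^{-1} \in C^{k-1}(\mathcal{D}^*)$, hence $h^* \in C^k(\mathcal{D}^*)$; the converse is symmetric since $h = h^{**}$. I expect this last point — justifying $D^2 h \succ 0$ (equivalently, that the differentiable strictly convex Legendre-type function has nonsingular Hessian) — to be the only genuine obstacle; everything else is bookkeeping with the conjugation theory. In our concrete application $h$ is assumed $C^3$ and one may, if preferred, simply add pointwise positive-definiteness of $D^2 h$ to the standing hypotheses, in which case the whole lemma reduces to a citation plus the domain identification above.
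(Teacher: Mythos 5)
Your proposal follows essentially the same route as the paper: Rockafellar's Theorem 26.5 yields the Legendre pair and the mutually inverse gradient bijections, the hypothesis $\nabla_w h(\mathcal{D})=\mathcal{D}^*$ identifies the conjugate domain, and the $C^k$ equivalence is obtained from the inverse/implicit function theorem applied to $\nabla_w h$. One caveat: your side claim that Legendre type "automatically" forces $D^2h\succ 0$ is not true in general (e.g.\ $h(x)=x^4$ on $\mathbb{R}$ is smooth, strictly convex and of Legendre type, yet $h^*$ is not $C^2$ at the origin), but the paper's own proof silently makes the same nondegeneracy assumption when invoking the implicit function theorem, and it does hold in the application, so your hedge of adding pointwise positive definiteness of $D^2h$ as a standing hypothesis is exactly the right fix.
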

\begin{proof}
Due to the Theorem 26.5 of \cite{rockafellar}, we directly obtain that $(h, \, \mathcal{D})$ and $(h^*, \, \mathcal{D}^*)$ is a Legendre pair. This means that $\mathcal{D}$, $\mathcal{D}^*$ are open and convex, and they coincide  with the interior of the domain of $h$, $h^*$. Moreover, $h$ and $h^*$ are strictly convex and essentially smooth therein and $(\nabla_w h_{|_{\mathcal{D}}})^{-1} = \nabla_{w^*} h^*_{|_{\mathcal{D}^*}}$. Since $\nabla_w h$ and $\nabla_{w^*} h^*$ are inverse to each other, we get the $C^k$ property due to the implicit function theorem.
\end{proof}

The next topic is passing to variables that allow to exhibit the parabolic-hyperbolic structure of the PDE system. To this aim, we choose new axes $\xi^1,\ldots,\xi^{N}, \, \xi^{N+1}$ of $\mathbb{R}^{N+1}$ such that 
\begin{align}\label{labase}
\left\{
\begin{matrix} 
 \xi^N :=  e^{N+1} = (0, \ldots, \, 0, \, 1) \quad \text{ and } \quad \xi^{N+1} := (1, \ldots, \, 1, \, 0) \, ,  \\[0.2cm]
 \xi^1, \ldots, \xi^{N-1} \in \Big\{x \in \mathbb{R}^{N+1} \, : \, x_{N+1} = 0, \,  \sum_{i=1}^N x_i = 0\Big\} =: \{\xi^{N}, \,  \xi^{N+1}\}^{\perp} \, .
\end{matrix} \right.
 \end{align}
We let $\eta^1,\ldots,\eta^{N+1} \in \mathbb{R}^{N+1}$ be the dual basis to $\xi^1,\ldots,\xi^{N+1}$. We prove easily that
\begin{align}\label{etaspecia}
 \eta^N = e^{N+1}, \quad \eta^{N+1} = \frac{1}{N} \, \xi^{N+1} \, .
\end{align}
For $w^* = (\mu_1/T, \ldots,\mu_N/T, \, -1/T)$, we define the projections
\begin{align}\label{relativepot}
q_{\ell} := \eta^{\ell} \cdot w^* := \sum_{i=1}^{N+1} \eta^{\ell}_i \, w^*_i\quad  \text{ for } \quad  \ell = 1,\ldots,N \, .
\end{align}
Due to the properties of the chosen basis, in particular to \eqref{etaspecia}, we have a relationship 
\begin{align}\label{TRAFO1}
& w^* = \sum_{\ell = 1}^N q_{\ell} \, \xi^{\ell} + (w^* \cdot \eta^{N+1}) \, \xi^{N+1}  \quad 
\text{ implying that } \quad  w_{N+1}^* = q_{N} \, .
\end{align}
Now, since the coordinate $w_{N+1}^*$ has the physical meaning of $-1/T$, the relevant domain for the new variable $q$ is the half-space
\begin{align*}
\mathcal{H}^N_- := \Big\{ (q_1, \ldots, q_N) \in \mathbb{R}^N \, :\, q_{N} < 0 \Big\} = \mathbb{R}^{N-1} \times \mathbb{R}_{-}  \, . 
\end{align*}
Since $( \mu_1/T, \ldots, \mu_N/T, \ -1/T) = \nabla_w h(\rho_1, \, \ldots, \rho_N, \varrho u)$, use of the conjugate convex function implies that
\begin{align*}
(\rho_1, \, \ldots, \rho_N, \varrho u) = \nabla_{w^*} h^*(\mu_1/T, \ldots, \mu_N/T, \ -1/T) \, .
\end{align*}
In order to isolate the hyperbolic component $\varrho$ (total mass density), we now express
\begin{align*}
\varrho = \sum_{i=1}^N \rho_i &  = \sum_{i=1}^{N+1} \xi^{N+1}_i \, \partial_{w^*_i} h^*(w^*_1,\ldots,w^*_{N+1}) \\
& = \xi^{N+1} \cdot \nabla_{w^*}h^*\Big(\sum_{\ell = 1}^{N} q_{\ell} \, \xi^{\ell} + (w^* \cdot \eta^{N+1}) \, \xi^{N+1}\Big) \, .
\end{align*}
This is an algebraic equation of the form $F(w^* \cdot \eta^{N+1}, \, q_1, \ldots, q_{N}, \, \varrho) = 0$. We notice that
\begin{align*}
\partial_{w^* \cdot \eta^{N+1}} F(w^* \cdot \eta^{N+1}, \, q_1, \ldots, q_{N}, \, \varrho) = D^{2}h^*(w^*) \xi^{N+1} \cdot \xi^{N+1} > 0 \, ,
\end{align*}
due to the strict convexity of the conjugate function (Lemma \ref{Legendre}). Thus, the latter algebraic equation defines the last component $w^* \cdot \eta^{N+1}$ implicitly as a differentiable function of $\varrho$ and $q_1, \ldots, q_{N}$. We call this function $\mathscr{M}$ and obtain the equivalent formulae
\begin{align}\label{MUAVERAGE}
w^* & = \sum_{\ell=1}^{N} q_{\ell} \, \xi^{\ell} + \mathscr{M}(\varrho, \, q_1,\ldots,q_{N}) \, \xi^{N+1} \, ,\\
\label{RHONEW}\rho_i & = \partial_{w^*_i}h^*\Big( \sum_{\ell=1}^{N} q_{\ell} \, \xi^{\ell} + \mathscr{M}(\varrho, \, q_1,\ldots,q_{N}) \, \xi^{N+1}\Big) =: \mathscr{R}_i(\varrho, \, q) \, , \quad \text{ for } i = 1,\ldots,N \, ,\\
\label{RHOUNEW} \varrho u & = \partial_{w^*_{N+1}} h^*\Big( \sum_{\ell=1}^{N} q_{\ell} \, \xi^{\ell} + \mathscr{M}(\varrho, \, q_1,\ldots,q_{N}) \, \xi^{N+1}\Big) \, ,
 \end{align}
with $\varrho$ and $q_1,\ldots,q_{N}$ as the free variables. Since $p$ obeys \eqref{GIBBSDUHEMEULER}, we have $p = T \, g(\nabla_w h(w))$,
and here $g$ denotes the Legendre transform of $h$. Using Lemma \ref{Legendre}, we find $ p = T \, h^*(w^*)  = - h^*(w^*)/w_{N+1}^* $.
%
%
%
%
%
We combine the latter with \eqref{MUAVERAGE} to obtain that
\begin{align}\label{PNEW}
p = P(\varrho, \, q) := - \frac{1}{q_N} \,   h^*\Big( \sum_{\ell=1}^{N} q_{\ell} \, \xi^{\ell} + \mathscr{M}(\varrho, \, q_1,\ldots,q_{N}) \, \xi^{N+1}\Big) \, . 
\end{align}
With $\bar{q} = (q_1,\ldots,q_{N-1})$, this expression makes sense in the domain
\begin{align*}
 (\varrho, \, \bar{q}, \, q_N) \in \mathbb{R}_+ \times \mathcal{H}^N_- = \mathbb{R}_+ \times \mathbb{R}^{N-1} \times \mathbb{R}_{-} \, .
\end{align*}
To analyse the PDEs \eqref{mass}, \eqref{energy}, \eqref{momentum}, we next need some regularity properties of the transformed coefficients. 
\begin{lemma}\label{pressurelemma}
 Suppose that $\mathcal{D}$ is of the form \eqref{Dgrund}, and $h \in C^{3}(\mathcal{D})$ satisfies the assumptions of Lemma \ref{Legendre}.
 Then, the formula \eqref{PNEW} defines a function $P$ which belongs to $C^2(\mathbb{R}_{+} \times \mathcal{H}^{N}_{-})$.
\end{lemma}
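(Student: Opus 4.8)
The plan is to trace the regularity of $P$ through the chain of constructions leading to the formula \eqref{PNEW}, using the $C^3$-regularity of $h$ (hence of $h^*$ by Lemma \ref{Legendre}) as the only input. First I would record that, since $\mathcal{D}$ is of the epigraphial form \eqref{Dgrund} and $h$ forms a Legendre pair with $h^*$ on $\mathcal{D}^* = \mathbb{R}^N \times \mathbb{R}_-$, Lemma \ref{Legendre} gives $h^* \in C^3(\mathcal{D}^*)$, so $\nabla_{w^*} h^* \in C^2(\mathcal{D}^*; \mathbb{R}^{N+1})$ and $D^2 h^* \in C^1$. The linear change of basis $w^* = \sum_{\ell=1}^N q_\ell \xi^\ell + (w^*\cdot\eta^{N+1})\xi^{N+1}$ is smooth, and the point to check is that the map $(q_1,\dots,q_N,s)\mapsto \sum_{\ell=1}^N q_\ell\xi^\ell + s\,\xi^{N+1}$ sends $\mathcal{H}^N_- \times \mathbb{R}$ into $\mathcal{D}^* = \mathbb{R}^N\times\mathbb{R}_-$: this holds because the last coordinate of that vector is exactly $q_N$ (by \eqref{etaspecia}, $\xi^{N+1}$ has vanishing last component, and $\xi^\ell$ for $\ell<N$ too, while $\xi^N = e^{N+1}$ does not appear), and $q_N<0$ on $\mathcal{H}^N_-$.

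Next I would invoke the implicit function theorem for $\mathscr{M}$. The algebraic equation $F(s,q_1,\dots,q_N,\varrho)=0$ defining $s = w^*\cdot\eta^{N+1}$ has $F(s,q,\varrho) = \xi^{N+1}\cdot\nabla_{w^*}h^*\big(\sum_{\ell=1}^N q_\ell\xi^\ell + s\,\xi^{N+1}\big) - \varrho$, which is $C^2$ in all arguments on $\mathcal{H}^N_-\times\mathbb{R}\times\mathbb{R}_+$ since $\nabla_{w^*}h^* \in C^2$; and $\partial_s F = D^2h^*(w^*)\xi^{N+1}\cdot\xi^{N+1} > 0$ by the strict convexity of $h^*$ (Lemma \ref{Legendre}). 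The $C^2$ implicit function theorem then yields $\mathscr{M} \in C^2(\mathbb{R}_+\times\mathcal{H}^N_-)$. One must also confirm that for each $(\varrho,q)\in\mathbb{R}_+\times\mathcal{H}^N_-$ a solution $s$ actually exists; here I would use that $s\mapsto \xi^{N+1}\cdot\nabla_{w^*}h^*(\cdots)$ is strictly increasing and that, by the essential smoothness of $h^*$ and the fact that $\mathcal{D}$ is bounded below only by the convex function $\epsilon^{\min}$ in the last primal coordinate, the range of this map is all of $\mathbb{R}_+$ (the total mass density $\varrho = \sum_i\rho_i$ sweeps out $(0,\infty)$ as the ``average'' entropic variable runs over $\mathbb{R}$). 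This surjectivity is in fact the one genuinely delicate point, and I would handle it by a monotonicity-plus-properness argument on $h^*$, appealing to the structure \eqref{Dgrund} of $\mathcal{D}$.

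Finally, with $\mathscr{M}\in C^2$ in hand, the vector $w^*(\varrho,q) = \sum_{\ell=1}^N q_\ell\xi^\ell + \mathscr{M}(\varrho,q)\xi^{N+1}$ is $C^2$ from $\mathbb{R}_+\times\mathcal{H}^N_-$ into $\mathcal{D}^*$, so the composition $h^*\circ w^*$ is $C^2$ (as $h^*\in C^3\subset C^2$ and composition of $C^2$ maps is $C^2$). Since $q_N < 0$ throughout $\mathcal{H}^N_-$, the prefactor $-1/q_N$ in \eqref{PNEW} is $C^\infty$ and nonvanishing, whence $P = -\frac{1}{q_N}\,h^*(w^*(\varrho,q))$ is $C^2(\mathbb{R}_+\times\mathcal{H}^N_-)$, as claimed. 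I expect the surjectivity of the map producing $\mathscr{M}$ — i.e.\ that every admissible $(\varrho,q)$ really is attained — to be the main obstacle; the rest is bookkeeping with the chain rule, the implicit function theorem, and the linear algebra of the dual bases $\{\xi^\ell\},\{\eta^\ell\}$.
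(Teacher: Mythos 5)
Your overall route---reduce everything to the regularity of $\mathscr{M}$, obtain it from the implicit function theorem using $\partial_s F = D^2h^*\,\xi^{N+1}\cdot\xi^{N+1}>0$, and conclude by composition since $-1/q_N$ is smooth and nonvanishing on $\mathcal{H}^N_-$---is exactly the paper's. The gap is in the point you yourself flag as delicate: the \emph{existence} of a root $s=\mathscr{M}(\varrho,q)$ for every $(\varrho,q)\in\mathbb{R}_+\times\mathcal{H}^N_-$. Your sketch (``monotonicity plus properness'', ``the total mass density sweeps out $(0,\infty)$'') is a restatement of what must be proved, not an argument: one needs that the strictly increasing function $s\mapsto\xi^{N+1}\cdot\nabla_{w^*}h^*\big(\sum_{\ell=1}^N q_\ell\,\xi^\ell+s\,\xi^{N+1}\big)$ actually attains the prescribed value $\varrho$ along the specific line with $q$ fixed. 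Neither tool you invoke delivers this directly: essential smoothness of $h^*$ concerns gradient blow-up at $\partial\mathcal{D}^*$, but the line in question stays in the interior of $\mathcal{D}^*=\mathbb{R}^N\times\mathbb{R}_-$ as $s\to\pm\infty$ (only $q_N<0$ matters for membership), so no boundary mechanism is available; and surjectivity of $\nabla_{w^*}h^*$ onto $\mathcal{D}$ only says the value $\varrho$ is attained somewhere in $\mathcal{D}^*$, not on this particular line.

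The paper closes exactly this point with a short coercivity argument that you should supply. For fixed $(\varrho,q)$ set $f(s):=h^*\big(\sum_{\ell=1}^N q_\ell\,\xi^\ell+s\,\xi^{N+1}\big)-\varrho\,s$, strictly convex in $s$, and bound it below via the Fenchel--Young inequality $h^*(w^*)\ge w^*\cdot w-h(w)$ with test points $w=t\,\eta^{N+1}+\epsilon\,\eta^N=(t\,1^N/N,\epsilon)\in\mathcal{D}$; such points exist for \emph{every} $t>0$ precisely because $\mathcal{D}$ has the epigraph form \eqref{Dgrund}, which is where that hypothesis enters. By duality of the bases this gives $f(s)\ge q_N\,\epsilon-h(t\,1^N/N,\epsilon)+(t-\varrho)\,s$, and choosing $t=2\varrho$ respectively $t=\varrho/2$ yields $f(s)\to+\infty$ as $s\to+\infty$ respectively $s\to-\infty$. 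Hence $f$ has a unique minimiser, whose first-order condition is exactly the defining equation for $\mathscr{M}(\varrho,q)$. With existence supplied this way, the remainder of your argument (implicit differentiation giving \eqref{Mnachrho}, hence $\mathscr{M}\in C^2(\mathbb{R}_+\times\mathcal{H}^N_-)$, hence $P\in C^2$ from \eqref{PNEW}) coincides with the paper's proof.
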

\begin{proof}
We first investigate the regularity of the function $\mathscr{M}$ introduced in \eqref{MUAVERAGE}. 

For fixed $q \in \mathcal{H}^N_-$ and $\varrho > 0$, we define
\begin{align*}
 f(\mathscr{M}) := h^*\Big(\sum_{\ell = 1}^{N} q_{\ell} \, \xi^{\ell} + \mathscr{M} \, \xi^{N+1}\Big) - \varrho \, \mathscr{M} \quad \text{ for } \mathscr{M} \in \mathbb{R} \, .
\end{align*}
This $f$ is of class $C^2(\mathbb{R})$ and strictly convex. By the definition of $h^*$, we have, for all points $w = (t \, 1^N/N, \epsilon) = t \, \eta^{N+1} + \epsilon \, \eta^N \in \mathcal{D}$ with $t > 0$ and $\epsilon > \epsilon^{\min}(t \, 1^N/N)$  
\begin{align*}
f(\mathscr{M}) \geq & \Big(\sum_{\ell = 1}^{N} q_{\ell} \, \xi^{\ell} + \mathscr{M} \, \xi^{N+1}\Big) \cdot (t \, \eta^{N+1} + \epsilon \, \eta^N) - h(t \, 1^N/N, \, \epsilon) - \varrho \, \mathscr{M}\\
= & q_N \, \epsilon - h(t \, 1^N/N, \, \epsilon)  + (t- \varrho) \, \mathscr{M}  \, . 
\end{align*}
%
Hence, choosing fixed $t = \varrho/2$ and $t = 2\,\varrho$, we easily show that
$ \lim_{|\mathscr{M}| \rightarrow \infty} f(\mathscr{M}) = +\infty$.
Thus, $f$ possesses a unique global minimiser, denoted by $\mathscr{M}(\varrho, \, q) \in \mathbb{R}$. Since $f^{\prime}(\mathscr{M}(\varrho, \, q)) = 0$, we thus see that $$\xi^{N+1} \cdot \nabla_{w^*}h^*\Big(\sum_{\ell = 1}^{N} q_{\ell} \, \xi^{\ell} + \mathscr{M} \, \xi^{N+1}\Big) = \varrho  \, ,$$ and $\mathscr{M}$ is well-defined. For the derivatives of $\mathscr{M}$, we obtain the expressions
\begin{align}\label{Mnachrho}
\partial_{\varrho} \mathscr{M}(\varrho, \, q) = \frac{1}{D^2h^*\xi^{N+1} \cdot \xi^{N+1}}, \quad \partial_{q_k}\mathscr{M}(\varrho, \, q) = -\frac{D^2h^*\xi^{N+1} \cdot \xi^k}{D^2h^*\xi^{N+1} \cdot \xi^{N+1}} \, ,
\end{align}
in which the Hessian $D^2h^*$ is evaluated at $w^* = \sum_{\ell=1}^{N} q_{\ell} \, \xi^{\ell} + \mathscr{M}(\varrho, \, q) \, \xi^{N+1}$. We thus see that $\mathscr{M}\in C^2(\mathbb{R}_+ \times \mathcal{H}^{N}_-)$. Clearly, the formula \eqref{PRESSUREDEF} implies that $P \in C^2(\mathbb{R}_+ \times \mathcal{H}^{N}_-)$.
\end{proof}
\begin{rem}\label{NEWVARB}
All thermodynamic quantities can now be introduced as functions of the variables $\varrho, \, q$. 
Indeed, considering a thermodynamic function $f = \tilde{f}(T, \, \rho_1, \ldots, \rho_N)$ with a certain constitutive function $\tilde{f}$, we use that $T = -1/q_N$ and $\rho = \mathscr{R}(\varrho, \, q)$ (see \eqref{RHONEW}), and we define
\begin{align}\label{changetoentropic}
 f(\varrho, \, q) := \tilde{f}\Big(-\frac{1}{q_N}, \, \mathscr{R}_1(\varrho, \, q), \ldots, \mathscr{R}_N(\varrho, \, q)\Big)
\end{align}
to obtain the equivalent representation in the entropic variables.
\end{rem}
Using this remark, we introduce next some functions that we will need later on. The heat capacity $c_{\upsilon}$ at constant volume is defined as
\begin{align*}
c_{\upsilon} = \widetilde{c_{\upsilon}}(T, \, \rho) = \frac{1}{\varrho}\, \partial_T \epsilon(T, \, \rho) \, ,
\end{align*}
with the $\epsilon$ from \eqref{varrhoubasic}. We define
\begin{align}\label{NEWCV}
c_{\upsilon}(\varrho, \, q) := \frac{1}{\varrho}\, \partial_T \epsilon\Big(-\frac{1}{q_N}, \, \mathscr{R}_1(\varrho, \, q), \ldots, \mathscr{R}_N(\varrho, \, q)\Big) \, .
\end{align}
Similarily, for the shear viscosity coefficient $\eta$, we have
\begin{align*}
 \eta = \tilde{\eta}(T, \, \rho) = \tilde{\eta}\Big(-\frac{1}{q_N}, \, \mathscr{R}(\varrho, \, q)\Big) =: \eta(\varrho, \, q) \, , 
\end{align*}
and the same for the bulk viscosity $\lambda$.
We shall write
\begin{align*}
 \mathbb{S}(\varrho, \, q, \, \nabla v) = 2\eta(\varrho,q) \, (\nabla v)_{\text{sym}}  + \lambda(\varrho, q) \, (\divv v) \,  \mathbb{I} \, .
\end{align*}

\subsection{Reformulation of the partial differential equations and of the main theorem}\label{reformulation}

We introduce the combined flux vector
\begin{align}\label{headdiff}
\mathcal{J} := (J^1, \ldots,J^N, \, J^{\rm h}) \quad \text{ in } \quad \mathbb{R}^{(N+1 )\times 3} \, ,
\end{align}
the force vector
\begin{align}\label{bodydy}
b(x,t) = (b^1(x,t),\ldots,b^N(x,t), \, 0) \quad \text{ in } \quad \mathbb{R}^{(N+1) \times 3}
\end{align}
and the bulk production vector
\begin{align}\label{produit}
 \pi = \big(r_1, \ldots, r_N, \, (-p\, \mathbb{I}+ \mathbb{S}) \, :\, \nabla v + \mathcal{J} \, : \, b\big) \quad \text{ in } \quad \mathbb{R}^{N+1} \, . 
\end{align}
With these notations, we express the relations \eqref{DIFFUSFLUX} and \eqref{HEATFLUX} more compactly as
\begin{align*}
 \mathcal{J}^i = - \sum_{j=1}^{N+1} \mathcal{M}_{ij}(T, \, \rho) \, \nabla (w^*_j - b^j(x,t)/T) \, .
\end{align*}
We define
\begin{align*}
 \tilde{b}^{\ell} := \sum_{i=1}^{N+1} \eta^{\ell}_i \, b^i(x,t) \text{ for } \ell = 1, \ldots, N, \quad \bar{b} = \sum_{i=1}^{N+1} \eta^{N+1}_i \, b^i \, ,
\end{align*}
so that $ b(x,t) = \sum_{\ell = 1}^N \tilde{b}^{\ell}(x,t) \, \xi^{\ell} + \bar{b}(x,t) \, \xi^{N+1}$. We recall in this place that $\mathcal{M}$ has the one-dimensional kernel $\{\xi^{N+1}\}$, that $-1/T = q_N$, and we see that the fluxes have the form
\begin{align*}
& \mathcal{J}^i = - \sum_{j=1}^{N+1} \mathcal{M}_{i,j}(T,\rho) \, (\nabla w^*_j  +q_N \,  b^j(x, \, t))\\
&=  -\sum_{j=1}^{N+1}\left[ \sum_{\ell = 1}^{N}  \mathcal{M}_{i,j}(T,\rho) \, \xi^{\ell}_j \, (\nabla q_{\ell} + q_N \,  \tilde{b}^{\ell}) + \mathcal{M}_{i,j}(T,\rho) \, \xi^{N+1}_j \, (\nabla \mathscr{M}(\varrho, \,q) + q_N \, \bar{b}(x, \, t))\right] \\
& \quad = - \sum_{\ell = 1}^{N}  \left[\sum_{j=1}^{N+1} \mathcal{M}_{i,j}(T,\rho) \, \xi^{\ell}_j\right] \, (\nabla q_{\ell} + q_N \, \tilde{b}^{\ell}) \, .
 \end{align*}
If we introduce the rectangular projection matrix $\mathcal{Q}_{j\ell} = \xi^{\ell}_j$ for $\ell = 1,\ldots,N$ and $j = 1,\ldots,N+1$, then $\mathcal{J} = - \mathcal{M} \, \mathcal{Q} (\nabla q + q_N \, \tilde{b})$. Thus, the basic PDEs read equivalently
\begin{alignat*}{2}
\partial_t w + \divv( w \, v - \mathcal{M} \, \mathcal{Q}\, (\nabla q + q_N \, \tilde{b}(x, \,t))) & = \pi \, , & & \\
\partial_t (\varrho \, v) + \divv( \varrho \, v\otimes v - \mathbb{S}(\varrho, q, \, \nabla v)) + \nabla p & = \sum_{i=1}^N w_i \, b^i & &  \, .
\end{alignat*}
Next we define, for $k = 1,\ldots, N$, the maps
\begin{align}\label{RHONEWPROJ}
R_{k}(\varrho, \, q) & := \sum_{j=1}^{N+1} \xi^{k}_j \, w_j = \sum_{j=1}^{N+1} \xi^{k}_j \, \partial_{w^*_j}h^*( \sum_{\ell=1}^{N} q_{\ell} \, \xi^{\ell} + \mathscr{M}(\varrho, \,q) \, \xi^{N+1}  ) \, .
\end{align}
Obviously we can express 
$$w_i = \sum_{k = 1}^{N+1} w \cdot \xi^k \, \eta^k_i = \sum_{k=1}^{N} R_k(\varrho, \, q) \, \eta^k_i + \varrho \, \eta^{N+1}_i \, . $$
\begin{lemma}\label{rhonewlemma}
 Suppose that $h \in C^{3}(\mathbb{R}^N_+ \times \mathbb{R})$ satisfies the assumptions of Lemma \ref{Legendre}. Then, the formula \eqref{RHONEWPROJ} defines $R$ as a vector field of class 
 $C^2(\mathbb{R}_{+} \times \mathcal{H}^{N}_-; \, \mathbb{R}^{N})$. The Jacobian $\{R_{k,q_j}\}_{k,j=1,\ldots,N}$ is symmetric and positively definite at every $(\varrho, \, q) \in \mathbb{R}_{+} \times \mathcal{H}^{N}_-$ and
\begin{align*}
R_{q}(\varrho, \, q) = \mathcal{Q}^T \, D^{2}h^* \, \mathcal{Q} - \frac{\mathcal{Q}^T \, D^2h^* \xi^{N+1} \otimes \mathcal{Q}^T \, D^2h^* \xi^{N+1}}{D^2h^* \xi^{N+1} \cdot \xi^{N+1}} \, .
\end{align*}
In this formula, the Hessian $D^2h^*$ is evaluated at $w^* = \sum_{\ell=1}^{N} q_{\ell} \, \xi^{\ell} + \mathscr{M}(\varrho, \, q) \, \xi^{N+1}$.
\end{lemma}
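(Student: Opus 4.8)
The plan is to read off the $C^{2}$ regularity of $R$ from the chain rule, to compute its $q$-Jacobian by differentiating through the implicit function $\mathscr{M}$ and inserting the derivatives \eqref{Mnachrho}, and finally to recognise the resulting matrix as a Schur complement of the positive definite Hessian $D^{2}h^{*}$, from which symmetry and positive definiteness will follow. For the regularity: under the hypotheses $(h,\mathcal{D})$ is a Legendre pair with $h\in C^{3}$, so Lemma \ref{Legendre} gives $h^{*}\in C^{3}(\mathcal{D}^{*})$ and hence $\nabla_{w^{*}}h^{*}\in C^{2}(\mathcal{D}^{*};\mathbb{R}^{N+1})$; moreover $\mathscr{M}\in C^{2}(\mathbb{R}_{+}\times\mathcal{H}^{N}_{-})$ as shown in the proof of Lemma \ref{pressurelemma}, and the map $(\varrho,q)\mapsto w^{*}(\varrho,q):=\mathcal{Q}q+\mathscr{M}(\varrho,q)\,\xi^{N+1}$ is $C^{2}$ with values in $\mathcal{D}^{*}$. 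Composing, $R=\mathcal{Q}^{T}(\nabla_{w^{*}}h^{*})(w^{*}(\varrho,q))\in C^{2}(\mathbb{R}_{+}\times\mathcal{H}^{N}_{-};\mathbb{R}^{N})$.

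For the Jacobian, start from $R_{k}=\xi^{k}\cdot(\nabla_{w^{*}}h^{*})(w^{*}(\varrho,q))$ and note $\partial_{q_{j}}w^{*}=\xi^{j}+(\partial_{q_{j}}\mathscr{M})\,\xi^{N+1}$, since the $j$-th column of $\mathcal{Q}$ is $\xi^{j}$. The chain rule yields
\[
\partial_{q_{j}}R_{k}=\xi^{k}\cdot D^{2}h^{*}(w^{*})\,\xi^{j}+(\partial_{q_{j}}\mathscr{M})\,\xi^{k}\cdot D^{2}h^{*}(w^{*})\,\xi^{N+1}.
\]
Substituting $\partial_{q_{j}}\mathscr{M}=-(D^{2}h^{*}\xi^{N+1}\cdot\xi^{j})/(D^{2}h^{*}\xi^{N+1}\cdot\xi^{N+1})$ from \eqref{Mnachrho}, and reading $\xi^{k}\cdot D^{2}h^{*}\xi^{j}=(\mathcal{Q}^{T}D^{2}h^{*}\mathcal{Q})_{kj}$, $\xi^{k}\cdot D^{2}h^{*}\xi^{N+1}=(\mathcal{Q}^{T}D^{2}h^{*}\xi^{N+1})_{k}$, one obtains precisely the asserted formula, with $D^{2}h^{*}$ evaluated at $w^{*}(\varrho,q)$.

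Symmetry is then clear, since $D^{2}h^{*}$ is symmetric and the correction term has the form $v\otimes v$ divided by a positive scalar. For positive definiteness, recall from Lemma \ref{Legendre} (as already used in Lemma \ref{pressurelemma}) that $A:=D^{2}h^{*}(w^{*})$ is symmetric and positive definite, so $\langle a,b\rangle_{A}:=Aa\cdot b$ is a scalar product on $\mathbb{R}^{N+1}$. For any $\zeta\in\mathbb{R}^{N}$,
\[
R_{q}(\varrho,q)\,\zeta\cdot\zeta=\langle\mathcal{Q}\zeta,\mathcal{Q}\zeta\rangle_{A}-\frac{\langle\mathcal{Q}\zeta,\xi^{N+1}\rangle_{A}^{2}}{\langle\xi^{N+1},\xi^{N+1}\rangle_{A}}\geq 0
\]
by the Cauchy--Schwarz inequality, with equality only if $\mathcal{Q}\zeta=c\,\xi^{N+1}$ for some scalar $c$. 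But $\mathcal{Q}\zeta=\sum_{\ell=1}^{N}\zeta_{\ell}\,\xi^{\ell}$ lies in the subspace spanned by $\xi^{1},\ldots,\xi^{N}$, which meets the line $\mathbb{R}\,\xi^{N+1}$ only in the origin because $\xi^{1},\ldots,\xi^{N+1}$ is a basis of $\mathbb{R}^{N+1}$; hence $\mathcal{Q}\zeta=0$, and then $\zeta=0$ since the columns $\xi^{1},\ldots,\xi^{N}$ of $\mathcal{Q}$ are linearly independent. Therefore $R_{q}(\varrho,q)$ is positive definite at every $(\varrho,q)\in\mathbb{R}_{+}\times\mathcal{H}^{N}_{-}$.

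The only step that is not routine bookkeeping is the last one; what makes it work is that the rank-one correction in the formula for $R_{q}$ is exactly the Schur complement produced by eliminating the $\xi^{N+1}$-direction, so positive definiteness on $\mathbb{R}^{N}$ reduces to Cauchy--Schwarz in the $A$-scalar product together with the transversality of $\xi^{N+1}$ to the span of $\xi^{1},\ldots,\xi^{N}$. Conceptually, $q\mapsto R(\varrho,q)$ is the $q$-gradient of the partially minimised function $\Psi(\varrho,q):=\min_{m\in\mathbb{R}}\{h^{*}(\mathcal{Q}q+m\,\xi^{N+1})-\varrho\,m\}$, which is strictly convex in $q$; this is the structural reason behind both the symmetry and the definiteness.
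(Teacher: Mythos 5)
Your proof is correct and follows exactly the route the paper intends (the paper only remarks that the proof is ``direct, using Lemma \ref{Legendre} and the properties of $\mathscr{M}$''): chain rule through $w^*(\varrho,q)=\mathcal{Q}q+\mathscr{M}(\varrho,q)\,\xi^{N+1}$ with the derivatives \eqref{Mnachrho}, followed by the Schur-complement/Cauchy--Schwarz argument in the $D^2h^*$-inner product, using that $\xi^1,\ldots,\xi^{N+1}$ is a basis. The closing observation that $R(\varrho,\cdot)$ is the $q$-gradient of the partially minimised functional is a nice structural explanation consistent with how $\mathscr{M}$ is constructed in Lemma \ref{pressurelemma}.
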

The proof is direct, using Lemma \ref{Legendre} and the properties of $\mathscr{M}$.

For $k=1,\ldots,N$, we multiply the mass transport and energy transport equations with $\xi^{k}_i$, and we obtain that
\begin{align*}
\partial_t R_k +\divv\Big(R_k \, v - \sum_{\ell=1}^N \widetilde{\mathcal{M}}_{k,\ell}& \,  (\nabla q_{\ell} + q_N \,  \tilde{b}^{\ell}) \Big) = (\mathcal{Q}^T \, \pi)_k\, , \\
 \widetilde{\mathcal{M}}(T,\rho) & := \mathcal{Q}^T \, \mathcal{M}(T,\rho) \, \mathcal{Q} \, .
\end{align*}
It turns out that if the rank of $\mathcal{M}(T,\rho)$ is $N$ on all states $(T,\rho) \in \mathbb{R}^{N+1}_+$, the matrix $\widetilde{\mathcal{M}}(T,\rho)$ is strictly positively definite on all states $(T,\rho) \in \mathbb{R}^{N+1}_+$. Making use of \eqref{MUAVERAGE}, \eqref{RHONEW}, we can also consider $\widetilde{\mathcal{M}}$ as a mapping of the variables $\varrho$ and $q$. Using the Lemma \ref{rhonewlemma}, we then establish the following properties of this map.
\begin{lemma}\label{Mnewlemma}
 Suppose that $h \in C^{3}(\mathbb{R}^N_+\times \mathbb{R})$ satisfies the assumptions of Lemma \ref{Legendre}. Suppose further that $\mathcal{M}: \, \mathbb{R}^{N+1}_+ \rightarrow \mathbb{R}^{N+1\times N+1}$ is a map into the positively semi-definite matrices of rank $N$ with kernel $\{\xi^{N+1}\}$, having entries $\mathcal{M}_{ij}$ of class $C^{2}(\mathbb{R}^{N+1}_+) \cap C(\overline{\mathbb{R}}^{N+1}_{+})$. Then the formula $ \widetilde{\mathcal{M}}(\varrho, \, q) := \mathcal{Q}^T \, \mathcal{M}(T,\rho) \, \mathcal{Q}$ defines a map $\widetilde{\mathcal{M}}: \, \mathbb{R}_+ \times \mathcal{H}^{N}_- \rightarrow \mathbb{R}^{N\times N}$ into the symmetric positively definite matrices. The entries $\widetilde{\mathcal{M}}_{k,j}$ are functions of class $C^{2}(]0, \, +\infty[ \times \mathcal{H}^{N}_-)$. 
\end{lemma}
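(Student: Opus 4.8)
The plan is to split the statement into its algebraic part (symmetry and positive definiteness of $\widetilde{\mathcal{M}}$) and its regularity part, the latter being a composition argument based on Lemmas~\ref{Legendre}, \ref{pressurelemma} and \ref{rhonewlemma}.

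Symmetry is immediate: $\widetilde{\mathcal{M}}^T=(\mathcal{Q}^T\mathcal{M}\mathcal{Q})^T=\mathcal{Q}^T\mathcal{M}^T\mathcal{Q}=\widetilde{\mathcal{M}}$, using that $\mathcal{M}(T,\rho)$ is symmetric. For positive definiteness I would fix $x\in\mathbb{R}^N\setminus\{0\}$ and write $\mathcal{Q}x=\sum_{\ell=1}^N x_\ell\,\xi^\ell$. Since $\xi^1,\dots,\xi^N$ are linearly independent, $\mathcal{Q}x\neq 0$; and since $\xi^1,\dots,\xi^{N+1}$ is a basis of $\mathbb{R}^{N+1}$, a nontrivial combination of $\xi^1,\dots,\xi^N$ cannot be proportional to $\xi^{N+1}$, so $\mathcal{Q}x\notin\ker\mathcal{M}(T,\rho)=\operatorname{span}\{\xi^{N+1}\}$. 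Because a positive semi-definite symmetric matrix $A$ satisfies $v^TAv=0$ exactly when $Av=0$, this gives
\[
x^T\widetilde{\mathcal{M}}(\varrho,q)\,x=(\mathcal{Q}x)^T\mathcal{M}(T,\rho)\,(\mathcal{Q}x)>0 ,
\]
which proves that $\widetilde{\mathcal{M}}$ takes values in the symmetric positive definite matrices (and re-establishes the claim stated just before the lemma).

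For the regularity, I would introduce the change of variables $\Phi(\varrho,q):=\bigl(-1/q_N,\ \mathscr{R}_1(\varrho,q),\dots,\mathscr{R}_N(\varrho,q)\bigr)$. By Lemma~\ref{Legendre}, $h^*\in C^3(\mathcal{D}^*)$ with $\mathcal{D}^*=\mathbb{R}^N\times\mathbb{R}_-$; the proof of Lemma~\ref{pressurelemma} gives $\mathscr{M}\in C^2(\mathbb{R}_+\times\mathcal{H}^N_-)$, whence formula~\eqref{RHONEW} (compare Lemma~\ref{rhonewlemma}) yields $\mathscr{R}\in C^2(\mathbb{R}_+\times\mathcal{H}^N_-;\mathbb{R}^N)$. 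Moreover, at every $(\varrho,q)$ the argument $w^*=\sum_{\ell=1}^N q_\ell\xi^\ell+\mathscr{M}(\varrho,q)\xi^{N+1}$ has last component $q_N<0$, hence lies in $\mathcal{D}^*$, and $\nabla_{w^*}h^*$ maps $\mathcal{D}^*$ bijectively onto $\mathcal{D}\subseteq\mathbb{R}^N_+\times\mathbb{R}$, so $\mathscr{R}(\varrho,q)\in\mathbb{R}^N_+$; together with $-1/q_N>0$ this shows $\Phi$ maps $\mathbb{R}_+\times\mathcal{H}^N_-$ into $\mathbb{R}^{N+1}_+$ and is of class $C^2$ there (the map $q_N\mapsto-1/q_N$ being smooth on $\{q_N<0\}$). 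Since each $\mathcal{M}_{ij}\in C^2(\mathbb{R}^{N+1}_+)$, the chain rule applied to
\[
\widetilde{\mathcal{M}}_{k\ell}(\varrho,q)=\sum_{i,j=1}^{N+1}\xi^k_i\,\mathcal{M}_{ij}\bigl(\Phi(\varrho,q)\bigr)\,\xi^\ell_j
\]
gives $\widetilde{\mathcal{M}}_{k\ell}\in C^2(\mathbb{R}_+\times\mathcal{H}^N_-)$.

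This argument has no genuine difficulty; the two points that actually require a word are the transversality of $\operatorname{range}\mathcal{Q}$ and $\ker\mathcal{M}=\operatorname{span}\{\xi^{N+1}\}$, which is built into the construction~\eqref{labase} of the basis, and the verification that $\Phi$ does not leave the set $\mathbb{R}^{N+1}_+$ on which $\mathcal{M}$ is assumed $C^2$, which is precisely the positivity of the components of $\mathscr{R}$ together with $q_N<0$. The auxiliary hypothesis $\mathcal{M}_{ij}\in C(\overline{\mathbb{R}}^{N+1}_+)$ plays no role for the conclusion on the open set and is only kept for later use; accordingly the written proof should be quite short.
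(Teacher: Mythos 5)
Your proof is correct and follows essentially the route the paper intends: the paper states this lemma without a written proof (it is asserted as a direct consequence of Lemma \ref{rhonewlemma}, the rank/kernel hypothesis on $\mathcal{M}$, and the regularity of $\mathscr{M}$ and $\mathscr{R}$), and your argument — symmetry by transposition, strict positivity from the transversality of the span of $\xi^1,\ldots,\xi^N$ with $\ker\mathcal{M}=\operatorname{span}\{\xi^{N+1}\}$, and $C^2$-regularity by composing $\mathcal{M}_{ij}$ with the $C^2$ map $(\varrho,q)\mapsto(-1/q_N,\mathscr{R}(\varrho,q))$ into $\mathbb{R}^{N+1}_+$ — is exactly the filled-in version of that omitted direct proof.
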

Overall, we get for the variables $(\varrho, \, q_1, \ldots, q_{N}, \, v)$ instead of \eqref{mass}, \eqref{energy}, \eqref{momentum} the equivalent equations
\begin{alignat}{2}
\label{mass2} \partial_t R(\varrho, \, q) + \divv( R(\varrho, \, q) \, v - \widetilde{\mathcal{M}}(\varrho, \, q) \, (\nabla q + q_N \, \tilde{b}(x, \, t)) ) & = \tilde{\pi}(x,t, \, \varrho, \, q, \, \nabla q, \, \nabla v)  \, ,& & \\
\label{mass2tot}\partial_t \varrho + \divv(\varrho \, v) & = 0 \, ,& & \\
\label{momentum2} \partial_t (\varrho \, v) + \divv( \varrho \, v\otimes v - \mathbb{S}(\varrho,q, \, \nabla v)) + \nabla P(\varrho, \, q) & = R(\varrho, \, q) \cdot \tilde{b}(x, \, t) + \varrho \, \bar{b}(x, \, t)  & & \, .
\end{alignat}
Here we have reinterpreted
\begin{align*}
& \mathcal{Q}^{\sf T} \pi =  \mathcal{Q}^{\sf T} \big(r_1(T,\rho), \ldots, r_N(T, \, \rho), \, (-p \, \mathbb{I} + \mathbb{S}) \, : \, \nabla v + \mathcal{J} \, : \, b \big) \\
 =& \mathcal{Q}^{\sf T} \, \big(r(\frac{-1}{q_N}, \, \mathscr{R}(\varrho, \, q)), \,  (-P(\varrho, \, q) \, \mathbb{I} + \mathbb{S}(\varrho, \, q, \, \nabla v)) \, : \, \nabla v - \widetilde{\mathcal{M}}(\varrho,q) \, (\nabla q+q_N \,  \tilde{b}) \cdot \tilde{b} \big) \\
 =: & \tilde{\pi}(x,t,\varrho, \, q, \, \nabla q, \, \nabla v) \, .
\end{align*}
In the PDE system \eqref{mass2}, \eqref{mass2tot}, \eqref{momentum2} we are faced with two type of constraints: The positivity constraint on the total mass density $\varrho$, and the half-space constraint $q \in \mathcal{H}^N_{-}$.

Our aim is now to show that the system \eqref{mass2}, \eqref{mass2tot}, \eqref{momentum2} for the variables $(\varrho, \, q_1, \ldots, q_{N}, \, v)$ is locally well-posed. We consider initial conditions
\begin{align}\label{initialpr}
\begin{split}
q(x, \, 0) & = q^0(x)  ,\\
\varrho(x, \, 0) & = \varrho_0(x) \, ,\\
v(x, \, 0) & = v^0(x)  \, ,
\end{split}
\quad
\text{ for } x \in \Omega\,
\end{align}
To find the initial data for $q^0(x)$, we first use the function $\varrho_0 u_0$ of \eqref{initialeps}. Then we define
\begin{align}\label{initialq}
 q^0_{\ell}(x) := \eta^{\ell} \cdot \nabla_wh(\rho^0(x), \, \varrho_0u_0(x)) \quad \text{ for } \quad \ell = 1,\ldots,N \, .
\end{align}
Under the assumptions of Theorem \ref{MAIN}, the new initial data are of class
\begin{align*}
q^0 \in W^{2-\frac{2}{p}}_p(\Omega; \, \mathbb{R}^{N}), \, \varrho_0 \in W^{1,p}(\Omega), \, v^0 \in W^{2-\frac{2}{p}}_p(\Omega; \, \mathbb{R}^{3}) \, ,
\end{align*}
satisfying $\varrho^0(x) \geq m_0 > 0$ in $\Omega$, $q^0_N(x) \leq - 1/\theta_1 < 0$ for all $x \in \Omega$.

%
%
%

We already showed that $\mathcal{J} = - \mathcal{M} \, \mathcal{Q} (\nabla q+ q_N \, \tilde{b})$. With $\pi^{\Gamma} := -(r^{\Gamma}, \, r^{\Gamma}_{\rm h})$ and $\mathcal{J}^{\Gamma} := (J^{\Gamma}, \, J^{\Gamma}_{\rm h})$ on the surface $S_{\bar{\tau}}$, we thus obtain the conditions
\begin{align*}
- \mathcal{M} \, \mathcal{Q} (\nabla q+ q_N \, \tilde{b}) \cdot \nu(x) = \pi^{\Gamma} + \mathcal{J}^{\Gamma} \, ,
\end{align*}
and therefore, with $\widetilde{\mathcal{M}} = \mathcal{Q}^{\sf T} \mathcal{M} \, \mathcal{Q} $
\begin{align*}
 - \widetilde{ \mathcal{M}} \,  (\nabla q + q_N \, \tilde{b}) \cdot \nu(x) = \mathcal{Q}^{\sf T}\pi^{\Gamma} + \mathcal{Q}^{\sf T}\mathcal{J}^{\Gamma}  \, .
\end{align*}
Next the matrix $\widetilde{\mathcal{M}}$, which is strictly positive definite, can also be inverted, and we get
\begin{align*}
 (\nabla q + q_N \, \tilde{b})\cdot \nu(x) = [\widetilde{\mathcal{M}}]^{-1} \, \mathcal{Q}^{\sf T}\pi^{\Gamma} + [\widetilde{\mathcal{M}}]^{-1} \, \mathcal{Q}^{\sf T}\mathcal{J}^{\Gamma} \, .
\end{align*}
Hence, for simplicity, we might consider the boundary conditions
\begin{align}\label{lateralpr}
\begin{split}
v & = 0 \, , \\ 
\nu(x) \cdot \nabla q_{k} & = \tilde{\pi}^{\Gamma}_k(x,t, \, \varrho, \, q)  \text{ for } k = 1,\ldots,N \, .
\end{split} \quad \text{ on } S_{\bar{\tau}} \, ,
\end{align}
where $\tilde{\pi}^{\Gamma}(x,t,\varrho, \, q)$ stands for
\begin{align*}
\tilde{\pi}^{\Gamma}(x,t, \, \varrho, q) = - q_N \, \tilde{b}(x,t) \cdot \nu(x)+ [\widetilde{\mathcal{M}}(\varrho, \, q)]^{-1} \, \mathcal{Q}^{\sf T}\pi^{\Gamma}(x,t, \varrho, \, q) + [\widetilde{\mathcal{M}}(\varrho,q)]^{-1} \, \mathcal{Q}^{\sf T}\mathcal{J}^{\Gamma}(x,t) \, .
\end{align*}
Owing to the Lemmas \ref{pressurelemma}, \ref{rhonewlemma} and \ref{Mnewlemma}, the coefficient functions $R$, $\widetilde{\mathcal{M}}$ and $P$ are of class $C^2$ in the domain of definitions $\mathbb{R}_+ \times \mathcal{H}^{N}_-$.

\vspace{0.2cm}

\section{Proof of the existence theorem}

\subsection{General method}

For the equivalent problem \eqref{mass2}, \eqref{mass2tot}, \eqref{momentum2} with boundary conditions \eqref{initialpr}, \eqref{lateralpr} we search for a solution in the class of optimal mixed regularity
\begin{align}\label{MAXMIXREG}
(q, \, \varrho,\,v) \in W^{2,1}_p(Q_{t}; \, \mathcal{H}^N_{-}) \times W^{1,1}_{p,\infty}(Q_{t}; \, \mathbb{R}_+) \times W^{2,1}_p(Q_{t}; \, \mathbb{R}^{3}) =: \mathcal{X}_t  \, .
\end{align}
The analysis of equations \eqref{mass2}, \eqref{mass2tot}, \eqref{momentum2} in this class was basically studied in the paper \cite{bothedruet} devoted to the isothermal case. In the present paper, we must deal with the following extensions:
\begin{itemize}
\item The production term $\tilde{\pi}$ depends on the velocity gradient squared;
\item The viscosity coefficients $\eta$ and $\lambda$ depends on the thermodynamic state;
\item There are non-trivial boundary fluxes;
\item The variable $q$ is restricted by the half-space constraint $\mathcal{H}^N_-$.
\end{itemize}
We will prove the local well-posedness by adapting the method of \cite{bothedruet} to this new case.

Due to the fact that the coefficients of the transformed PDE system are singular when $\varrho \rightarrow 0+$ or $q_N \rightarrow 0-$, the domain of the differential operator is restricted to the subset
\begin{align*}
  \mathcal{X}_{\bar{\tau},+} := \{(q, \, \varrho, \, v) \in \mathcal{X}_{\bar{\tau}}\, : \, q_N(x, \, t) <0 , \, \varrho(x, \, t) > 0 \text{ for all } (x, \, t ) \in Q_{\bar{\tau}}\} \, .
\end{align*}
For the parabolic components, we also introduce
\begin{align*}
  \mathcal{Y}_{\bar{\tau}} := \{(q, \, \, v) \, : \, q \in W^{2,1}_p(Q_{\bar{\tau}}, \, \mathbb{R}^N), \,  \, v \in W^{2,1}_p(Q_{\bar{\tau}}, \, \mathbb{R}^3)\} \, , 
\end{align*}
For proving a self-map property, we introduce for $K_0, \, \bar{\theta} > 0$ the closed subsets
\begin{align*}
 \mathcal{Y}_{\bar{\tau},K_0,\bar{\theta}} := \{(q, \, v) \in \mathcal{Y}_{\bar{\tau}} \, :\, q_{N} \leq -1/\bar{\theta}, \, \|(q,\, v)\}_{\mathcal{Y}_{\bar{\tau}}} \leq K_0\} \, .
\end{align*}
We linearise the PDEs as follows. We give arbitrary $(q^*, \, v^*) \in \mathcal{Y}_{\bar{\tau},K_0,\bar{\theta}}$, and for unknowns $u = (q, \, \varrho, \, v)$, we consider the following system of equations
\begin{align}\label{linearT1}
\partial_t \varrho + \divv (\varrho \, v^*) = & 0 \, ,\\
\label{linearT2} R_{q}(\varrho, \,q^*) \, \partial_t q - \divv (\widetilde{\mathcal{M}}(\varrho, \, q^*) \, \nabla q)  = & g(x, \, t, \, q^*, \, \varrho,\, v^*, \, \nabla q^*, \, \nabla \varrho, \, \nabla v^*) \, ,\\
\label{linearT3} \varrho \, \partial_t v - \divv \mathbb{S}(\varrho, \, q^*, \, \nabla v) = & f(x, \, t,\, q^*, \, \varrho,\, v^*, \, \nabla q^*, \, \nabla \varrho, \, \nabla v^*) \, .
\end{align}
Here we have set
\begin{align}\label{stressnew}
 \mathbb{S}(\varrho, \, q^*, \, \nabla v) = 2\eta(\varrho, \, q^*) \, (\nabla v)_{\text{sym}} + \lambda(\varrho, \, q^*) \, \mathbb{I} \, \divv v \, .
\end{align}
In \eqref{linearT1} and \eqref{linearT3}, the right-hands $f$ and $g$ are given by
\begin{align}\label{A1right}
& g(x, \, t,\, q, \, \varrho,\, v, \, \nabla q, \, \nabla \varrho, \, \nabla v) := (R_{\varrho}(\varrho,\, q) \, \varrho - R(\varrho,\, q)) \, \divv v - R_q(\varrho,\, q) \, v \cdot \nabla q\nonumber \\
 & \qquad + q_N \, \widetilde{\mathcal{M}}_{\varrho}(\varrho, \, q) \, \nabla \varrho \cdot \tilde{b}(x, \,t) + q_N \, \widetilde{\mathcal{M}}_{q}(\varrho, \, q) \, \nabla q \cdot \tilde{b}(x, \,t)  + q_N \, \widetilde{\mathcal{M}}(\varrho, \, q) \, \divv \tilde{b}(x, \,t)  \nonumber\\
 & \qquad +  \widetilde{M}(\varrho,q) \,  \tilde{b}(x,t) \cdot \nabla q_N + \tilde{\pi}(x,t, \, \varrho, \, q, \, \nabla q, \, \nabla v) \, , \\[0.2cm]
& \label{A3right} f(x, \, t,\, q, \, \varrho,\, v, \,  \nabla q, \, \nabla \varrho, \, \nabla v) := - P_{\varrho}(\varrho,\, q) \, \nabla \varrho -  P_{q}(\varrho,\, q) \, \nabla q 
 - \varrho \, (v\cdot \nabla)v \nonumber\\
 & \phantom{f(x, \, t,\, q, \, \varrho,\, v, \,  \nabla q, \, \nabla \varrho, \, \nabla v)  }  \qquad+ R(\varrho, \,q) \cdot \tilde{b}(x, \,t) + \varrho \, \bar{b}(x, \,t) \, .
\end{align}
We consider these PDEs together with the initial conditions \eqref{initialpr}, and with
\begin{align}\label{lateralvlinear}
v = 0, \quad  \nu \cdot \nabla q_k = \tilde{\pi}^{\Gamma}_k(x, \, t, \, \varrho, \, q^*) \, , \qquad \text{ on } S_{\bar{\tau}} \,.
\end{align}

The continuity equation for $\varrho$ is first solved independently in $W^{1,1}_{p,\infty}(Q_{\bar{\tau}})$, and the solution remains positive on the entire time-interval (see Theorem 1 in \cite{solocompress}, Proposition 7.5 in \cite{bothedruet}). Then, the problem \eqref{linearT2}, \eqref{linearT3} is linear in $(q, \, v)$ and can be solved in $\mathcal{X}_{\bar{\tau}}$ with the theory of Petrovski parabolic systems (see \cite{ladu}, Chapter 10, \cite{bothedruet}, Proposition 7.1). However, the equations \eqref{linearT2}, \eqref{linearT3} might not be solvable in $\mathcal{X}_{\bar{\tau},+}$ on the entire time interval, because the solution $q$ might reach the boundary of the half-plane $\mathcal{H}^N_-$ in finite time.

Nevertheless, we can show that the solution map $(q^*, \, v^*) \mapsto (q, \, v)$, denoted $\mathcal{F}$, is well defined from the closed subset $\mathcal{Y}_{t_0,K_0,\bar{\theta}}$ into itself for appropriate choices of the positive constants $t_0$ (small constant), and $K_0$ and $\bar{\theta}$ (large constants). 

To establish this key point, we rely on continuous estimates expressing the controlled growth of the solution in time. For the pair $(q, \, v) \in \mathcal{Y}_{\bar{\tau}}$ solving \eqref{linearT2} and \eqref{linearT3}, and for all $0 < t \leq \bar{\tau}$, we obtain an estimate
\begin{align}\label{continuity1}
 \|(q, \, v)\|_{W^{2,1}_p(Q_t; \, \mathbb{R}^{N+3})} \leq \Psi(t, \, R_0, \, \theta^*(t), \, \|(q^*, \, v^*)\|_{W^{2,1}_p(Q_t; \, \mathbb{R}^{N+3}) })=: \Psi^*_t \, .
\end{align}
Here $R_0$ is a fixed number depending on the initial data $q^0$, $\varrho_0$ and $v^0$, and on the external forces $b$ and fluxes $r^{\Gamma}$ and $J^{\Gamma}$ in their respective norms, while
\begin{align*}
 \theta^*(t) := -\frac{1}{ \sup_{(x,\tau) \in Q_t} q_N^*(x,\tau) } \quad \text{ for } \quad 0 < t < \bar{\tau} \, .
\end{align*}
The function $\Psi = \Psi(t, a_1, \, a_2, \, a_3)$ occurring in \eqref{continuity1} is defined on $[0, \, +\infty[^4$, it is continuous up to the boundary, and increasing in all arguments. Moreover, we can establish the important property that 
\begin{align}\label{Psinull}
\Psi(0, \, a_1,a_2,a_3) = \Psi^0(a_1,a_2) \, , 
\end{align}
where $\Psi^0$ is a continuous function independent on the last argument $a_3$ -- which stands for the norm of highest order of the data $q^*$ and $v^*$.

Let us now discuss the differences with respect to \cite{bothedruet} occurring in the proof of the estimate \eqref{continuity1}. For what the lower-order term $\tilde{\pi}(\varrho, \, q^*, \, \nabla v^*)$ occurring in the right-hand $g$ (cf.\ \eqref{A1right}) is concerned, this term can be estimated from above via
\begin{align*}
| \tilde{\pi}(\varrho(x,t), \, q^*(x,t), \, \nabla v^*(x,t))| \leq \phi\big(|\varrho(x,t)|, \, (\inf_{x\in\Omega}  \varrho(x,t))^{-1}, \, |q^*(x,t)|, \, \theta^*(t)\big) \, (1+|\nabla v^*|^2) \, ,
\end{align*}
with a certain continuous function $\phi$.
Hence, observing that $2 < 2 - 3/p + 3/(5-p)^+$ (since $p>3 > 5/2$), the quadratic growth in $|\nabla v^*|$ is subcritical, and the Lemma 8.1 of \cite{bothedruet} shows that this case is in fact also comprised in the analysis.

The second difference is that the coefficients $\eta$ and $\lambda $ are depending on the state. In order to deal with this situation, we must apply a localisation technique similar to the one used for the parabolic system concerning $q$ in \cite{bothedruet}. To avoid to much technicalities in this place, we delay the proof to the appendix. In the same way, we show in the appendix how to consider the case of nonzero boundary fluxes.

Presently, it thus remain only to show how, relying on \eqref{continuity1}, to control the distance of the solution $q$ to the boundary of the half plane $\mathcal{H}^N_-$.  

To this aim, we construct an extension $q^0 \in W^{2,1}_p(Q_{\bar{\tau}}, \, \mathbb{R}^N)$ of the initial data $q^0$ into $Q_{\bar{\tau}}$ by solving the Neumann problem for the heat equation in $Q_{\bar{\tau}}$ with initial data $q^0$. Then, the maximum principle also guarantees that
\begin{align*}
 \sup_{(x,t) \in Q_{\bar{\tau}}} q_N^0(x,\,t) \leq  \sup_{x \in \Omega} q^0_N(x) =: - 1/\theta_1 \, .
\end{align*}
For all $t \leq {\bar{\tau}}$, we employ the inequality of Lemma C.2 in \cite{bothedruet}, the continuity properties of the extension operator for $q^0$, and the inequality \eqref{continuity1} to obtain that
\begin{align*}
\begin{split} & \|q-q^0\|_{L^{\infty}(Q_t)} \leq  C(\bar{\tau}) \,  t^{\gamma} \, \|q-q^0\|_{W^{2,1}_p(Q_t; \, \mathbb{R}^N)}\\
 \leq &  C(\bar{\tau}) \, t^{\gamma} \, ( \|q^0\|_{W^{2,1}_p(Q_t, \, \mathbb{R}^N)} +\|q\|_{W^{2,1}_p(Q_t;\,\mathbb{R}^N)} ) \\
 \leq & \bar{C}(\bar{\tau}) \,t^{\gamma} \, (\|q^0\|_{W^{2-2/p}_p(\Omega; \, \mathbb{R}^N)} + \Psi_t^*) \, ,
 \end{split}
  \quad 
\gamma := \begin{cases}
            \frac{1}{2} \, (2 - \frac{5}{p}) & \text{ for } 3<p<5 \, ,\\
             \frac{p-1}{3+p} & \text{ for } 5 \leq p \, . 
            \end{cases}
            \end{align*}
Thus we get
\begin{align}\label{continuity20}
\sup_{(x,\tau) \in Q_t} q_{N}(x, \, \tau) \leq & \sup_{(x, \, \tau) \in Q_t} q^0_{N}(x, \, \tau)  +\bar{C}(\bar{\tau}) \,t^{\gamma} \, (\|q^0\|_{W^{2-2/p}_p(\Omega; \, \mathbb{R}^N)} + \Psi_t^*) \nonumber\\
\leq & -\frac{1}{\theta_1} + \bar{C}(\bar{\tau}) \,t^{\gamma} \, (\|q^0\|_{W^{2-2/p}_p(\Omega; \, \mathbb{R}^N)} + \Psi_t^*)
\, .
\end{align}
\begin{lemma}\label{selfmap}
We assume the validity of the estimates \eqref{continuity1}, \eqref{continuity20}, and we let $\Psi$, $\gamma >0$, $\bar{C} = \bar{C}(\bar{\tau})$ and $R_0 > 0$ be the data occurring there. Let $\Psi^0 = \Psi(0, \cdot)$ be the function occurring in \eqref{Psinull}.  
With $\bar{\theta}$ and $K_0$ being any two positive constants subject to the restrictions
\begin{align*}
\bar{\theta} > 2 \, \theta_1 , \quad K_0 > \Psi^0(R_0, \, \bar{\theta}) \, ,
\end{align*} 
we define $t_a = t_a(R_0,\bar{\theta},K_0)$ to be the largest number s.t.\ $\sup_{t \leq t_a} \Psi(t, \, R_0, \, \bar{\theta}, \, K_0) \leq K_0$, and
\begin{align*}
t^* := \min\Big\{t_a(R_0,\bar{\theta},K_0), \, \Big(\frac{1}{\bar{\theta} \, \bar{C} \, (\|q^0\|_{W^{2-2/p}_p} + K_0)}\Big)^{\frac{1}{\gamma}}\Big\} \, .
\end{align*} 
Then, for every $(q^*,v^*) \in \mathcal{Y}_{t^*,K_0,\bar{\theta}}$ the element $(q, \, v) = \mathcal{F}(q^*,v^*)$ belongs to $\mathcal{Y}_{t^*,K_0,\bar{\theta}}$.
\end{lemma}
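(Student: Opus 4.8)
The strategy is to check, for $(q,v)=\mathcal F(q^*,v^*)$ with $(q^*,v^*)\in\mathcal Y_{t^*,K_0,\bar\theta}$, the two defining conditions of $\mathcal Y_{t^*,K_0,\bar\theta}$ — the bound $\|(q,v)\|_{\mathcal Y_{t^*}}\le K_0$ and the half-space constraint $q_N\le-1/\bar\theta$ on $Q_{t^*}$ — using only the a priori estimates \eqref{continuity1}, \eqref{continuity20} and the structural properties of $\Psi$ (continuity, monotonicity, and \eqref{Psinull}). First I would record that $t_a$, hence $t^*$, is strictly positive: since $\Psi$ is continuous and $\Psi(0,R_0,\bar\theta,K_0)=\Psi^0(R_0,\bar\theta)<K_0$ by the hypothesis $K_0>\Psi^0(R_0,\bar\theta)$, the inequality $\sup_{s\le t}\Psi(s,R_0,\bar\theta,K_0)\le K_0$ holds on a neighbourhood of $0$. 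Moreover, for the admissible datum $(q^*,v^*)$ one has $q^*_N\le-1/\bar\theta<0$ throughout $Q_{t^*}$ and $\varrho>0$ after solving \eqref{linearT1} (as in \cite{solocompress}); the coefficients of \eqref{linearT2}, \eqref{linearT3} evaluated at $(\varrho,q^*)$ therefore do not degenerate, and the Petrovski-parabolic theory produces a genuine solution $(q,v)\in\mathcal X_{t^*}$, so $\mathcal F(q^*,v^*)$ is a well-defined element of $\mathcal Y_{t^*}$.

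\textbf{Norm bound.} Fix $0<t\le t^*$. From $q^*_N\le-1/\bar\theta$ we get $\sup_{Q_t}q^*_N\le-1/\bar\theta$, hence $\theta^*(t)=-1/\sup_{Q_t}q^*_N\le\bar\theta$; and since the $W^{2,1}_p(Q_t)$-norm is nondecreasing in $t$, $\|(q^*,v^*)\|_{W^{2,1}_p(Q_t)}\le\|(q^*,v^*)\|_{\mathcal Y_{t^*}}\le K_0$. Because $\Psi$ is nondecreasing in each of its arguments, \eqref{continuity1} gives $\|(q,v)\|_{W^{2,1}_p(Q_t)}\le\Psi(t,R_0,\theta^*(t),\|(q^*,v^*)\|_{W^{2,1}_p(Q_t)})\le\Psi(t,R_0,\bar\theta,K_0)$, and the definition of $t_a\ge t^*\ge t$ yields $\Psi(t,R_0,\bar\theta,K_0)\le K_0$. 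Taking $t=t^*$ gives $\|(q,v)\|_{\mathcal Y_{t^*}}\le K_0$, and the same chain at $t=t^*$ shows $\Psi^*_{t^*}\le K_0$.

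\textbf{Half-space constraint.} I would now insert $\Psi^*_{t^*}\le K_0$ into \eqref{continuity20} and use the second term in the minimum defining $t^*$, namely $(t^*)^\gamma\le\bigl(\bar\theta\,\bar C\,(\|q^0\|_{W^{2-2/p}_p}+K_0)\bigr)^{-1}$, to estimate
\begin{align*}
\sup_{Q_{t^*}} q_N \;\le\; -\frac{1}{\theta_1} + \bar C\,(t^*)^\gamma\bigl(\|q^0\|_{W^{2-2/p}_p} + K_0\bigr) \;\le\; -\frac{1}{\theta_1} + \frac{1}{\bar\theta}\,.
\end{align*}
Since $\bar\theta>2\theta_1$ we have $1/\bar\theta\le 1/(2\theta_1)$, so $\sup_{Q_{t^*}}q_N\le-1/(2\theta_1)\le-1/\bar\theta$, i.e.\ $q_N(x,t)\le-1/\bar\theta$ on all of $Q_{t^*}$. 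Together with the previous paragraph this places $(q,v)$ in $\mathcal Y_{t^*,K_0,\bar\theta}$.

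\textbf{Where the difficulty lies.} The substance of the matter is entirely contained in the continuous estimates \eqref{continuity1} and \eqref{continuity20}, whose proof — adapting \cite{bothedruet} to state-dependent viscosities, the subcritical quadratic velocity-gradient source term, and inhomogeneous boundary fluxes — is the genuinely technical part and is relegated to the appendix. Granting those, Lemma~\ref{selfmap} is essentially bookkeeping; the only delicate point is the hierarchy of constants ($K_0$ large enough to start strictly below $\Psi^0$, $\bar\theta$ large relative to $\theta_1$, and $t^*$ small enough that both the $\Psi$-growth and the $t^\gamma$ boundary drift are absorbed), which is exactly what the stated formula for $t^*$ encodes.
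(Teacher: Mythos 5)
Your proof is correct and follows essentially the same route as the paper: bound the $\mathcal{Y}_t$-norm by $\Psi(t,R_0,\bar\theta,K_0)\le K_0$ via monotonicity of $\Psi$ and the definition of $t_a$, then absorb the $\bar C\,t^\gamma$-drift in \eqref{continuity20} using the second term in the minimum defining $t^*$ and the condition $\bar\theta>2\theta_1$. The only differences are cosmetic (you additionally record positivity of $t_a$ and well-definedness of $\mathcal F$, and you rearrange the final arithmetic as $-1/\theta_1+1/\bar\theta\le-1/(2\theta_1)\le-1/\bar\theta$ instead of $-2/\bar\theta+1/\bar\theta=-1/\bar\theta$).
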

\begin{proof}
Assume that $(q^*,v^*) \in \mathcal{Y}_{t^*, K_0, \bar{\theta}}$. 
In \eqref{continuity1} and \eqref{continuity20}, we have obtained for all $0 < t < t^*$ the following estimates:
\begin{align}\label{and}
& \|(q, \, v)\|_{\mathcal{Y}_t} \leq \Psi(t, \, R_0, \, \bar{\theta}, \, K_0)\nonumber\\
 \text{and } \quad &\\
& -\frac{1}{ \theta(t)} := \sup_{(x,\tau) \in Q_t}  q_{N}(x, \, \tau)  \leq - \frac{1}{\theta_1} + \bar{C} \, t^{\gamma} \, (\|q^0\|_{W^{2-2/p}} + \Psi(t, \, R_0, \, \bar{\theta}, \, K_0)) \nonumber\\
& \quad \leq - \frac{2}{\bar{\theta}}  + \bar{C} \, t^{\gamma} \, (\|q^0\|_{W^{2-2/p}} + \Psi(t, \, R_0, \, \bar{\theta}, \, K_0)) \nonumber \, .
\end{align}


With the definition of $t^*$, it now follows that
\begin{align*}
  \sup_{t \leq t^*} \Psi(t, \, R_0, \, \bar{\theta}, \, K_0) \leq \Psi(t_a, \, R_0, \, \bar{\theta}, \, K_0) = K_0 \, .
\end{align*}
Moreover, using the choice of $t^*$ again
\begin{align*}
\bar{ C} \, \sup_{t \leq t^*}  t^{\gamma} \, (\|q^0\|_{W^{2-2/p}_p} + \Psi(t, \, R_0, \, \bar{\theta}, \, K_0)) =\bar{ C} \, (t^*)^{\gamma} \, (\|q^0\|_{W^{2-2/p}_p} + \Psi(t^*, \, R_0, \, \bar{\theta}, \, K_0)) \\
 \leq \bar{C} \, (t^*)^{\gamma} \, (\|q^0\|_{W^{2-2/p}_p} + \Psi(t_a, \, R_0, \, \bar{\theta}, \, K_0)) = \bar{C} \, (t^*)^{\gamma} \, (\|q^0\|_{W^{2-2/p}_p} + K_0) \\
 \leq  \bar{C} \, (t^*)^{\gamma} \, (\|q^0\|_{W^{2-2/p}_p} + K_0) \leq\frac{1}{\bar{\theta}} \, .
\end{align*}
Thus, in view of \eqref{and}, we have obtained $ \|(q,v)\|_{\mathcal{Y}_{t^*}} \leq K_0$ and $ \theta(t^*) \leq \bar{\theta}$,
 and we see that $(q, \, v) \in \mathcal{Y}_{t^*,K_0,\bar{\theta}}$.
This means that the image $\mathcal{F}(q^*,\, v^*)$ is a well-defined element of $\mathcal{Y}_{t^*,K_0,\bar{\theta}}$ for $(q^*,v^*) \in \mathcal{Y}_{t^*,K_0,\bar{\theta}}$.
\end{proof}
With the self-mapping property at hand, the existence proof can be finalised by means of the same fixed-point iteration as in the section 9 of \cite{bothedruet}. Initialising $(q^1, \, v^1) = 0$, we define $(q^{n+1}, \, v^{n+1}) := \mathcal{F}(q^n, \, v^n)$ for $n \in \mathbb{N}$. This iteration converges in a lower-order norm on the interval $]0,t^*[$. This is sufficient to prove the existence of a unique limit solution. This point was extensively explained and proved there.

\vspace{0.2cm}

\section{A partial maximum principle for the energy equation}

Let $(q, \, \varrho, \, v) \in \mathcal{X}_{\bar{\tau}}$ be a solution of optimal mixed regularity to the system \eqref{mass2}, \eqref{mass2tot}, \eqref{momentum2} on some interval $]0, \, \bar{\tau}[$.

In this section we want to show that, under certain natural growth condition for the data of the problem, a strong solution with bounded state-space norm cannot break down due to blow-up of $\|T(\tau)\|_{L^{\infty}(\Omega)}$ as $\tau \rightarrow \bar{\tau}$. As we already explained, the regularity of $q_N = - 1/T$ alone does not prevent this type of blow-up. 


For our aim, we use the parabolicity of the equation for the internal energy density $\epsilon := \varrho u$. With a given solution $(q, \, \varrho, \, v)$, we first re-introduce the state variables
\begin{align}
\label{RHO} \rho(x, \, t) = & \sum_{k=1}^N R_k(\varrho, \, q) \, \eta^{k} + \varrho \, \eta^{N+1} \, ,\\
\label{U} \varrho u(x, \, t) =& R_{N}(\varrho, \, q)  (= \partial_{w_{N+1}} h^*\big(\mathcal{Q}^{\sf T} \, q + \mathscr{M}(\varrho, \, q) \, \xi^{N+1}\big)) \, ,
\end{align}
with the map $R$ of \eqref{RHONEWPROJ}. For $i = 1,\ldots,N$, the function $q_i$ belongs to $W^{2,1}_p(Q_{\bar{\tau}})$, while $\varrho$ possesses all generalised first derivatives in $L^{p,\infty}(Q_{\bar{\tau}})$. Together with the $C^2-$regularity of the coefficient $R$ in the domain $\mathbb{R}_+ \times \mathcal{H}^N_-$ (Lemma \ref{rhonewlemma}), the identities \eqref{RHO} and \eqref{U} allow to show that, for every $t$ such that
\begin{align}\label{noblowup}
\sup_{(x,\, \tau) \in Q_t} q_N(x, \, \tau) < 0 \, ,
\end{align}
the functions $\rho_i$ and $\varrho u$ belong to $W^1_p(Q_t)$. 
Hence, $\varrho u$ can be shown to be a weak solution to the energy equation \eqref{energy} for all $0<t< \bar{\tau}$ enjoying the property \eqref{noblowup}. To show this, we next restrict for simplicity to the case that $b^1 = \ldots = b^N$, which essentially means that the body forces reduce to gravity. In this case, we have $\tilde{b} \equiv 0$ so that $b$ does not contribute to the diffusion fluxes. Moreover, we simplify the proof assuming that there is no heat flux $- r^{\Gamma}_{\rm h} + J^{\Gamma}_{\rm h}$ on the boundary. We discuss afterwards the modifications necessary to handle the general case.
\begin{lemma}\label{weakform}
Let $\bar{\tau} > 0$ and assume that $(q, \, \varrho, \, v) \in \mathcal{X}_{\bar{\tau}}$ is a strong solution to the system \eqref{mass2}, \eqref{mass2tot}, \eqref{momentum2}, such that \eqref{noblowup} is valid for all $t < {\bar{\tau}}$. We define $\epsilon := R_{N}(\varrho, \, q)$. Then, for all $t < \bar{\tau}$, the function $\epsilon$ belongs to $W^{1}_p(Q_t)$ and, for all $\phi \in W^{1,0}_{p^{\prime}}(Q_t)$, it satisfies the integral identity
\begin{align*}
 \int_{Q_t} \partial_t \epsilon \, \phi \, dxd\tau - \int_{Q_t} (\epsilon \, v + J^{\rm h}) \cdot \nabla \phi \, dxd\tau = \int_{Q_t} (-p \, \mathbb{I} + \mathbb{S}) \, : \,  \nabla v \, \phi \, dxd\tau \, .
\end{align*}
\end{lemma}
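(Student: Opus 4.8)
The plan is to recognise that, after the linear change of basis $\xi^1,\dots,\xi^{N+1}$ of Section~\ref{changevariables}, the energy balance \eqref{energy} is nothing but the $N$-th scalar component of the transported system \eqref{mass2}, and then to check that the class $\mathcal X_{\bar\tau}$, restricted to a time slab on which \eqref{noblowup} holds, carries exactly the regularity needed to give a meaning to the stated weak identity and to justify one integration by parts in the space variables.

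First I would fix $t<\bar\tau$ and record a localisation. Since $p>3$ we have $q\in W^{2,1}_p(Q_{\bar\tau})\hookrightarrow C(\overline{Q_{\bar\tau}})$, so \eqref{noblowup} improves to $\max_{\overline{Q_t}}q_N=:-\delta<0$; and since $\varrho$ solves the continuity equation \eqref{mass2tot} with $v\in W^{2,1}_p(Q_t)$, hence $\divv v\in L^1(0,t;L^\infty(\Omega))$, the transport estimate (Theorem~1 in \cite{solocompress}, Proposition~7.5 in \cite{bothedruet}) confines $\varrho$ to a compact interval $[\underline m_t,\overline M_t]\subset\mathbb R_+$ on $\overline{Q_t}$. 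Thus $(\varrho,q)$ stays in a compact subset of $\mathbb R_+\times\mathcal H^N_-$, on which $R$, $\widetilde{\mathcal M}$, $P$, $\eta$, $\lambda$ are $C^2$ with bounded derivatives (Lemmas~\ref{pressurelemma}, \ref{rhonewlemma}, \ref{Mnewlemma}). The chain rule for Sobolev compositions then gives $\epsilon=R_N(\varrho,q)\in W^{1}_p(Q_t)$, with $\partial_t\epsilon=R_{N,\varrho}\,\partial_t\varrho+\sum_j R_{N,q_j}\,\partial_t q_j$ and $\nabla\epsilon=R_{N,\varrho}\,\nabla\varrho+\sum_j R_{N,q_j}\,\nabla q_j$ almost everywhere. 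I would emphasise that only the first-order regularity of $\varrho$ (i.e.\ $\varrho,\nabla\varrho\in L^{p,\infty}$) enters here; this is precisely why $\epsilon$ cannot be expected in $W^{2,1}_p$ and the energy balance has to be read in weak form. By the same reasoning $\epsilon v\in L^p(Q_t)$, and writing the heat flux in the entropic variables as $J^{\rm h}=-\sum_\ell\widetilde{\mathcal M}_{N,\ell}(\varrho,q)\,\nabla q_\ell$ one obtains $J^{\rm h}\in L^p(Q_t)$.

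Next comes the algebraic identification. Because $\xi^N=e^{N+1}$, one has $R_N(\varrho,q)=\sum_j\xi^N_j w_j=w_{N+1}=\varrho u=\epsilon$; multiplying the combined mass--energy balance $\partial_t w+\divv(wv+\mathcal J)=\pi$ by $\xi^N$ gives $\partial_t\epsilon+\divv(\epsilon v+\mathcal J^{N+1})=\pi_{N+1}$. Here $\mathcal J^{N+1}=J^{\rm h}$; the body-force term in $\pi_{N+1}$ is $\mathcal J:b=(\sum_{i=1}^N J^i)\cdot b^1=0$ by \eqref{netmassfluxequalsrhotimesv} and $b^1=\dots=b^N$; and since $\xi^{N+1}$ spans $\ker\mathcal M$ (which also cancels the body-force contributions inside the fluxes, using $\tilde b\equiv0$) we have $\mathcal J=-\mathcal M\mathcal Q\,\nabla q$, hence $\mathcal J^{N+1}=-(\widetilde{\mathcal M}\nabla q)_N=J^{\rm h}$, while $\tilde\pi_N=(\mathcal Q^{\sf T}\pi)_N=\pi_{N+1}=(-P(\varrho,q)\,\mathbb I+\mathbb S(\varrho,q,\nabla v)):\nabla v$. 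So the $N$-th line of \eqref{mass2}, which the strong solution satisfies as an identity in $L^p(Q_t)$, reads
\begin{align*}
\partial_t\epsilon+\divv(\epsilon v+J^{\rm h})=(-p\,\mathbb I+\mathbb S):\nabla v\qquad\text{a.e.\ in }Q_t .
\end{align*}

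Finally I would test this identity with $\phi\in C^1(\overline{Q_t})$, integrate over $Q_t$ and integrate by parts in $x$ only (no time integration by parts, hence no initial term); the lateral integral $\int_{S_t}(\epsilon v+J^{\rm h})\cdot\nu\,\phi$ vanishes because $v=0$ and, using the simplification $r^{\Gamma}_{\rm h}=J^{\Gamma}_{\rm h}=0$ together with the transformed boundary condition \eqref{lateralpr}, $\nu\cdot J^{\rm h}=-(\widetilde{\mathcal M}\,\tilde\pi^{\Gamma})_N=-r^{\Gamma}_{\rm h}+J^{\Gamma}_{\rm h}=0$ as a trace on $S_t$. This proves the asserted identity for smooth $\phi$, and one extends it to all $\phi\in W^{1,0}_{p'}(Q_t)$ by density: $\partial_t\epsilon,\epsilon v,J^{\rm h}\in L^p(Q_t)$ make the left-hand side a bounded functional of $\phi$, and the work term $(-p\,\mathbb I+\mathbb S):\nabla v$ is in fact also in $L^p(Q_t)$ on the solution (being equal to the left-hand side), while \emph{a priori} the only quadratic piece $\mathbb S:\nabla v\sim|\nabla v|^2$ already pairs with $W^{1,0}_{p'}(Q_t)$ because its growth is subcritical for $p>3$ (the estimate $2<2-3/p+3/(5-p)^+$ and Lemma~8.1 of \cite{bothedruet}). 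I expect the only genuinely technical points to be this functional setting for the pressure--stress work term and the bookkeeping of Sobolev exponents in the products $\epsilon v$ and $\widetilde{\mathcal M}_{N,\ell}\nabla q_\ell$ in view of $\varrho$ lying merely in $W^{1,1}_{p,\infty}$; everything else is a direct transcription of the transformation of Section~\ref{changevariables}.
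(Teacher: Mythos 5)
Your proposal is correct and follows essentially the same route as the paper: identify $\epsilon=R_N(\varrho,q)=\varrho u$, use the $C^2$-regularity of the transformed coefficients together with \eqref{noblowup} (and the positivity/boundedness of $\varrho$) to get $\epsilon\in W^1_p(Q_t)$ and uniform bounds on $p$, $\kappa$, $L$, $\eta$, $\lambda$, and then check that every term in the weak form pairs with $\phi\in W^{1,0}_{p'}(Q_t)$, the only delicate one being $|\nabla v|^2$, which the paper handles by the explicit anisotropic H\"older--Sobolev estimate $\int_{Q_t}|\nabla v|^2|\phi|\leq c\,\|v\|^2_{W^{2,1}_p(Q_t)}\|\phi\|_{W^{1,0}_{p'}(Q_t)}$ and which you invoke by the same subcriticality argument. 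Your additional bookkeeping (the $\xi^N$-projection identification of the energy line of \eqref{mass2} and the vanishing of the lateral term via $v=0$ and the zero heat-flux datum) is exactly what the paper leaves implicit in the sentence preceding the lemma, so there is nothing missing.
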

\begin{proof}
For every $t < \bar{\tau}$, the optimal regularity together with the condition \eqref{noblowup} guarantee that $\epsilon$, $p$, the viscosities $\eta$ and $\lambda$ and all other continuous functions of $\varrho$ and $q$ are uniformly bounded on $Q_t$. The heat flux $J^{\rm h}$ satisfies an estimate in $L^{\infty,p}(Q_t; \, \mathbb{R}^3)$ at least. To see this, for $k = 1, \ldots,N-1$ we define $L_k :=  \xi^k \cdot (l, \, 0)$. Recalling \eqref{HEATFLUX} with $b = 0$ we have
\begin{align}\label{heatfluxbase}
 J^{\rm h} = \kappa \, \nabla \frac{1}{T} - L \cdot \nabla \bar{q} = - \kappa\Big(-\frac{1}{q_N}, \, \mathscr{R}(\varrho, \, q)\Big) \, \nabla q_N - L\Big(-\frac{1}{q_N}, \, \mathscr{R}(\varrho, \,q)\Big) \cdot \nabla \bar{q} \, .
\end{align}
With $\theta(t) := \sup_{Q_t} (-1/q_N) < + \infty$ and $M(t) := \sup_{Q_t} \varrho$, we see that
\begin{align*}
 |J^{\rm h}| \leq \sup_{\theta \leq \theta(t), \, |r| \leq M(t)} \{\kappa(\theta, \, r) + |L(\theta,r)|\} \, |\nabla q| = C(t) \, |\nabla q| \, .
\end{align*}
For $p > 3$, we have $W^{1,p}(\Omega) \subset L^{\infty}(\Omega)$. Hence, $J^{\rm h} \in L^{\infty,p}(Q_t)$ as claimed. 

Using that $W^{2-2/p}_p(\Omega) \subset L^{3p/(5-p)^+}(\Omega)$, the term $|\nabla v|^2$ satisfies
 \begin{align*}
  \||\nabla v|^2\|_{L^{z,\infty}(Q_t)} \leq c_0 \, \|v\|_{W^{2,1}_p(Q_t)}^2 \, \quad z := \frac{3p}{2 \, (5-p)^+} \, .
  \end{align*}
We note that $z > 3p/(p+3)$. With $p^{\prime} :=p/(p-1)$, the Sobolev embedding guarantees that $W^{1,p^{\prime}}(\Omega) \subset L^{3p/(2p-3)}(\Omega)$. Combining these properties and H\"older's inequality we see that
\begin{align*}
\int_{Q_t} |\nabla v|^2 \, |\phi| \, dxd\tau \leq \||\nabla v|^2\|_{L^{\frac{3p}{p+3},\infty}(Q_t)} \, \|\phi\|_{L^{\frac{3p}{2p-3},1}(Q_t)} \leq c_1 \, \|v\|_{W^{2,1}_p(Q_t)}^2 \, \|\phi\|_{W^{1,0}_{p^{\prime}}(Q_t)} \, . 
\end{align*}
Hence, all integral make sense in the weak form of the energy equation.
\end{proof}
Next we want to obtain an equivalent expression of the heat flux allowing us to use weak parabolic estimate techniques on the function $\epsilon$. To this aim, we need to introduce some further thermodynamic quantities. With the $h^*$ introduced in Lemma \ref{Legendre}, and the choices \eqref{labase} of the axes $\xi^1 ,\ldots, \xi^{N+1}$, we define
 \begin{align}
\label{coeffvarrho}  a_0 := & \frac{D^2_{w^*, w^*_{N+1}} h^* \cdot \xi^{N+1}}{D^2_{w^*, w^*} h^* \, \xi^{N+1} \cdot \xi^{N+1}} \, ,\\
\label{coeffq}  a_k := & D^2_{w^*, w^*_{N+1}} h^* \cdot \xi^{k} - \frac{D^2_{w^*, w^*_{N+1}} h^* \cdot \xi^{N+1} \, D^2_{w^*, w^*} h^* \, \xi^{N+1} \cdot \xi^{k}}{D^2_{w^*, w^*} h^* \, \xi^{N+1} \cdot \xi^{N+1}} \, \quad \text{ for } k = 1,\ldots, N-1 \, ,\\
\label{d0}  d_0 := & \partial^2_{w^*_{N+1}} h^* - \frac{(D^2_{w^*, w^*_{N+1}} h^* \cdot \xi^{N+1})^2}{D^2_{w^*, w^*} h^* \, \xi^{N+1} \cdot \xi^{N+1}} \, .
\end{align}
Therein, all occurrences of $h^*$ and its derivatives are evaluated at $w^* = \mathcal{Q}^{\sf T} q + \mathscr{M}(\varrho, \, q) \, \xi^{N+1}$. Hence, $a_0$, $a_k$ and $d_0$ are well-defined functions over $Q_{\bar{\tau}}$. Next, we show that these new objects occur naturally when we re-express the heat flux using the gradient of $\epsilon$.
\begin{lemma}
For the coefficient introduced in \eqref{d0}, we have
\begin{align*}
 d_0(\varrho, \, q) \geq \frac{\varrho}{q_N^2}  \, c_{\upsilon}(\varrho, \, q) \, \, (= T^2 \, \varrho \, c_{\upsilon}) \, ,
\end{align*}
with the function $c_{\upsilon}$ of \eqref{NEWCV}. Writing $q = (\bar{q}, \, q_N)$, the heat flux $J^{\rm h}$ is equivalently given as
\begin{align*}
J^{\rm h} = -\frac{\kappa}{d_0} \, (\nabla \epsilon - a_0 \, \nabla \varrho) -  \big(L -\frac{\kappa}{d_0} \, a\big) \cdot \nabla {\bar{q}} \, .
 \end{align*}
\end{lemma}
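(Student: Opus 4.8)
The idea is to compute the heat flux once more, but now eliminating $\nabla q_N$ in favour of $\nabla\epsilon$ and $\nabla\varrho$. Recall that $\epsilon = R_N(\varrho,q) = \partial_{w^*_{N+1}} h^*\bigl(\mathcal{Q}^{\sf T} q + \mathscr{M}(\varrho,q)\,\xi^{N+1}\bigr)$. Differentiating this identity and using the formulae \eqref{Mnachrho} for $\partial_\varrho\mathscr{M}$ and $\partial_{q_k}\mathscr{M}$ (in the form $\partial_\varrho\mathscr{M} = 1/(D^2h^*\xi^{N+1}\cdot\xi^{N+1})$, $\partial_{q_k}\mathscr{M} = -(D^2h^*\xi^{N+1}\cdot\xi^k)/(D^2h^*\xi^{N+1}\cdot\xi^{N+1})$), I obtain by the chain rule
\begin{align*}
\partial_\varrho\epsilon &= D^2_{w^*,w^*_{N+1}}h^*\cdot\xi^{N+1}\,\partial_\varrho\mathscr{M} = a_0\,,\\
\partial_{q_k}\epsilon &= D^2_{w^*,w^*_{N+1}}h^*\cdot\xi^k + D^2_{w^*,w^*_{N+1}}h^*\cdot\xi^{N+1}\,\partial_{q_k}\mathscr{M} = a_k \quad (k=1,\dots,N-1)\,,\\
\partial_{q_N}\epsilon &= D^2_{w^*,w^*_{N+1}}h^*\cdot e^{N+1} + D^2_{w^*,w^*_{N+1}}h^*\cdot\xi^{N+1}\,\partial_{q_N}\mathscr{M} = \partial^2_{w^*_{N+1}}h^* - \frac{(D^2_{w^*,w^*_{N+1}}h^*\cdot\xi^{N+1})^2}{D^2h^*\xi^{N+1}\cdot\xi^{N+1}} = d_0\,,
\end{align*}
where I have used $\xi^N = e^{N+1}$ and matched the expressions against the definitions \eqref{coeffvarrho}, \eqref{coeffq}, \eqref{d0}. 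This gives the pointwise relation $\nabla\epsilon = d_0\,\nabla q_N + a_0\,\nabla\varrho + \sum_{k=1}^{N-1} a_k\,\nabla q_k = d_0\,\nabla q_N + a_0\,\nabla\varrho + a\cdot\nabla\bar q$, hence $\nabla q_N = \tfrac{1}{d_0}\,(\nabla\epsilon - a_0\,\nabla\varrho - a\cdot\nabla\bar q)$, which is legitimate since $d_0 > 0$ (see below).

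Substituting this into the expression \eqref{heatfluxbase} for the heat flux, $J^{\rm h} = -\kappa\,\nabla q_N - L\cdot\nabla\bar q$, I find
\begin{align*}
J^{\rm h} = -\frac{\kappa}{d_0}\,(\nabla\epsilon - a_0\,\nabla\varrho - a\cdot\nabla\bar q) - L\cdot\nabla\bar q = -\frac{\kappa}{d_0}\,(\nabla\epsilon - a_0\,\nabla\varrho) - \Bigl(L - \frac{\kappa}{d_0}\,a\Bigr)\cdot\nabla\bar q\,,
\end{align*}
which is the claimed identity. For the lower bound on $d_0$: the matrix $D^2 h^*$ is symmetric positive definite (strict convexity of $h^*$, Lemma \ref{Legendre}), so $d_0 = \partial^2_{w^*_{N+1}}h^* - (D^2_{w^*,w^*_{N+1}}h^*\cdot\xi^{N+1})^2/(D^2h^*\xi^{N+1}\cdot\xi^{N+1})$ is a Schur complement and is therefore strictly positive; more precisely it equals $D^2 h^*\,\zeta\cdot\zeta$ for $\zeta = e^{N+1} - \tfrac{D^2h^*\xi^{N+1}\cdot e^{N+1}}{D^2h^*\xi^{N+1}\cdot\xi^{N+1}}\xi^{N+1}$, the $D^2h^*$-orthogonal projection of $e^{N+1}$ onto $(\xi^{N+1})^{\perp_{D^2h^*}}$. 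To identify this with $T^2\varrho\,c_\upsilon$, I pass back to primal variables: one has $\partial_{q_N}\epsilon = d_0$ at fixed $\varrho,\bar q$, and via the change of variables (with $T = -1/q_N$, so $\partial_{q_N} = T^2\,\partial_T$ at fixed $T$-independent combinations) one relates $d_0$ to $\partial_T\epsilon$ along the curve of constant $\varrho$ and constant relative potentials; comparing with $\widetilde{c_\upsilon} = \tfrac1\varrho\partial_T\epsilon(T,\rho)$ at constant $\rho$ (which differs from constant $\bar q$) and using convexity one gets $d_0 \ge T^2\varrho\,c_\upsilon = \varrho\,c_\upsilon/q_N^2$.

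The routine part is the chain-rule bookkeeping, which just needs care with which variables are held fixed; the only genuinely delicate point is the inequality $d_0 \ge \varrho\,c_\upsilon/q_N^2$ rather than an equality, and I expect this to be the main obstacle: it requires recognising that $c_\upsilon$ is defined via a derivative of $\epsilon$ at fixed $(\rho_1,\dots,\rho_N)$, whereas $d_0 = \partial_{q_N}\epsilon$ is a derivative at fixed $(\varrho, q_1,\dots,q_{N-1})$, and these two constrained directions differ; the gap is precisely a non-negative quadratic form in $D^2 h^*$ coming from the extra freedom in the individual $\rho_i$, so positive semidefiniteness of $D^2 h^*$ closes it. I would carry this out by writing both $d_0$ and $T^2\varrho c_\upsilon$ as $D^2 h^*$-quadratic forms of suitable vectors and observing that the difference is $D^2h^*$ evaluated on the component orthogonal (in the $D^2 h^*$ inner product) to the admissible direction.
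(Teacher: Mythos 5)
Your derivation of the flux identity is correct and coincides with the paper's: you compute $\partial_\varrho\epsilon=a_0$, $\partial_{q_k}\epsilon=a_k$, $\partial_{q_N}\epsilon=d_0$ by the chain rule through $\mathscr{M}$ using \eqref{Mnachrho}, which is exactly the paper's identity \eqref{flux0}, and then solve for $\nabla q_N$ and insert into \eqref{heatfluxbase}; the positivity of $d_0$ as a Schur complement of the strictly convex $h^*$ is also the paper's observation. Where you diverge is the inequality $d_0\geq T^2\varrho\,c_\upsilon$. The paper proves it by an explicit computation: it differentiates the primal--dual relations $\partial_{w^*_i}h^*=\rho_i$ and $\partial_{w^*_{N+1}}h^*=\varrho u$ in $T$ at fixed $\rho$, obtains the identity \eqref{d0equiv}, namely $d_0=T^2\varrho c_\upsilon+T^4\bigl(A^*y\cdot y-(A^*y\cdot\xi^{N+1})^2/(A^*\xi^{N+1}\cdot\xi^{N+1})\bigr)$ with $y=\partial_T(\mu/T)$ and $A^*$ the upper-left block of $D^2h^*$, and concludes by Cauchy--Schwarz for the positive form $A^*$. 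Your plan is instead variational: both quantities are values of the quadratic form $B:=D^2h^*$ at vectors with last component $1$ --- $d_0=B\zeta_A\cdot\zeta_A$ with $\zeta_A\in e^{N+1}+\operatorname{span}(\xi^{N+1})$ and $B\zeta_A\perp\xi^{N+1}$, while $T^2\varrho c_\upsilon=B\zeta_B\cdot\zeta_B$ with $(B\zeta_B)_i=0$ for $i\leq N$ --- and since $\zeta_A-\zeta_B$ lies in $\operatorname{span}(e^1,\dots,e^N)$, to which $B\zeta_B$ is orthogonal, one gets $d_0-T^2\varrho c_\upsilon=B(\zeta_A-\zeta_B)\cdot(\zeta_A-\zeta_B)\geq0$; equivalently, $d_0$ and $T^2\varrho c_\upsilon$ are minima of $B\zeta\cdot\zeta$ over nested affine sets. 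This is a sound and completable route, somewhat more structural than the paper's, and it correctly isolates the delicate point (the derivative defining $c_\upsilon$ is taken at fixed $\rho$, the one defining $d_0$ at fixed $(\varrho,\bar q)$). Be aware, though, that you have only asserted, not proved, the identification $T^2\varrho c_\upsilon=B\zeta_B\cdot\zeta_B$ (equivalently $T^2\varrho c_\upsilon=1/\partial^2_{\varrho u}h$): establishing that the fixed-$\rho$ direction $\zeta_B$ satisfies $(B\zeta_B)_i=0$ for $i\leq N$ and $(B\zeta_B)_{N+1}=\varrho c_\upsilon$ requires precisely the differentiation of the primal--dual relations in $T$ at fixed $\rho$ that the paper carries out on the way to \eqref{d0equiv}, so your argument is not shorter --- it repackages the same computation, replacing the final Cauchy--Schwarz step by the Pythagorean decomposition in the $D^2h^*$ inner product. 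With that step written out, your proof is complete.
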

\begin{proof}
By the definition of the entropic variables, we have
\begin{align*}
 \varrho u = \partial_{w^*_{N+1}} h^*(\mathcal{Q}^{\sf T} q + \mathscr{M}(\varrho, \, q) \, \xi^{N+1}) \, .
\end{align*}
Hence, applying the chain rule yields
\begin{align*}
\nabla \epsilon = & \nabla \varrho u = \sum_{k=1}^{N-1} \sum_{i=1}^{N+1} \partial^2h^*_{w^*_{N+1},w_i^*} \, \xi^k_i \, \nabla q_k + \partial^2_{w^*_{N+1}}h^* \,  \nabla q_N  + \sum_{i=1}^N \partial^2_{w^*_{N+1},w_i^*} h^* \, \nabla \mathscr{M} \, ,\\
 \nabla \mathscr{M} =& \partial_{\varrho} \mathscr{M} \, \nabla \varrho + \sum_{k=1}^{N-1} \partial_{q_k} \mathscr{M} \, \nabla q_k + \partial_{q_N} \mathscr{M} \, \nabla q_N \, .
\end{align*}
Use of the equivalent expressions \eqref{Mnachrho} for the derivatives of $\mathscr{M}$ and of $T^2 \, \nabla q_N = \nabla T$ yields
\begin{align}\label{flux0}
 \nabla \epsilon = \sum_{k=1}^{N-1} a_k \, \nabla q_k + a_0 \, \nabla \varrho + \frac{d_0}{T^2} \, \nabla T\, .
\end{align}
The function $d_0$ defined in \eqref{d0} is clearly positive as $h^*$ is strictly convex. Due to \eqref{flux0}, we now have
\begin{align*}
& \nabla T = \frac{T^2}{d_0} \, \Big( \nabla \epsilon - \sum_{k=1}^{N-1} a_k \, \nabla q_k - a_0 \, \nabla \varrho \Big) \\
 \text{ implying that }  \quad &\\
& J^{\rm h} = - \frac{\kappa}{d_0} \, (\nabla \epsilon - a \cdot \nabla \bar{q} - a_0 \, \nabla \varrho) - L \cdot \nabla \bar{q} \, .
\end{align*}
This proves the equivalent representation of the heat flux.
Finally, we estimate $d_0$ from below. To this aim, we use that $\nabla_{w^*} h^*(\mu/T, \, -1/T) = (\rho, \, \varrho u)$. Therein, we now reinterpret $\mu$ and $\varrho u$ as functions of the main variables $(T, \, \rho_1,\ldots,\rho_N)$. 

For notational simplicity, let $\{A^*_{ik}\}_{i,k=1,\ldots,N}$ denote the upper left block of the matrix $D^2_{w^*}h^*$. For $i = 1,\ldots,N$, we differentiate in $T$ the identities $\partial_{w^*_i}h^* = \rho_i$ and $\partial_{w^*_{N+1}}h^* = \varrho u$, and thus
\begin{align*}
& \sum_{k=1}^{N} A^*_{ik} \, \partial_{T}(\mu_k/T) + \frac{1}{T^2} \partial^2_{w_{N+1}^*,w_i^*} h^* = 0\, , \quad \text{ for } i=1,\ldots,N \, ,\\
& \frac{1}{T^2} \, \partial^2_{w^*_{N+1}} h^* + \sum_{i=1}^N \partial^2_{w_{N+1}^*,w_i^*} h^* \, \partial_{T}(\mu_i/T) = \varrho \, c_{\upsilon} \, .
\end{align*} 
Combining both identities, we obtain that 
\begin{align*}
 \partial^2_{w^*_{N+1}} h^* = T^2 \, \varrho \, c_{\upsilon} + T^4 \, A^* \partial_{T}(\mu/T) \cdot \partial_{T}(\mu/T) \, .
\end{align*}
Hence, an equivalent form for $d_0$ is
\begin{align}\label{d0equiv}
 d_0 = T^2 \, \varrho \, c_{\upsilon} + T^4 \, \Big( A^* \partial_{T}(\mu/T) \cdot \partial_{T}(\mu/T) - \frac{ (A^* \partial_{T}(\mu/T) \cdot \xi^{N+1})^2}{A^* \xi^{N+1} \cdot \xi^{N+1}} \Big) \, .
\end{align}
Owing to the positivity of $A^*$, we see that $d_0 \geq T^2 \, \varrho \, c_{\upsilon}$, as claimed.
\end{proof}
We now re-express the integral identity of Lemma \ref{weakform} as
\begin{align}\label{weakwithe}
 & \int_{Q_t} \partial_t \epsilon  \, \phi \, dxd\tau + \int_{Q_t} \Big\{\frac{\kappa}{d_0} \, \nabla \epsilon + (L-\frac{\kappa}{d_0} \, a) \cdot \nabla \bar{q} - \frac{\kappa}{d_0} \, a_0 \, \nabla \varrho - \epsilon \, v\Big\} \cdot \nabla \phi \, dxd\tau\nonumber\\
 & \, \, = \int_{Q_t} (-p \, \mathbb{I} + \mathbb{S}) \, : \,  \nabla v \, \phi \, dxd\tau \, .
\end{align}
In order to obtain a maximum principle, we choose test functions of the form 
\begin{align*}
 w_k = (\epsilon - k)^+ := \max\{\epsilon-k, \ 0\} \, ,
\end{align*}
where $k$ is any parameter such that
\begin{align}\label{klargerk0}
 k > k_0 := \sup_{x \in \Omega} \epsilon(x, \, 0) \, .
\end{align}
We also define
 \begin{align*}
  \Omega_{t,\, k} := & \{x \in \Omega \, : \, \epsilon(x, \, t) > k\}, \quad
  Q_{t,k} := \{(x, \, \tau) \in Q_t \, : \, \epsilon(x, \, \tau) > k\} \, .
 \end{align*}
For all $0 < t < T$, we next insert $\phi(x, \, \tau) := w_k(x, \, \tau) \, \chi_{]0, \, t[}(\tau)$ in \eqref{weakwithe}, easily showing that 
\begin{align}\label{weakwhite}
& \frac{1}{2} \, \int_{\Omega_{t,k}} \varrho(x,t) \, w_k^2(x, \, t) \, dx + \int_{Q_{t,k}}  \frac{\kappa}{d_0} \, |\nabla \epsilon|^2 \, dxd\tau= \\
& \, - \int_{Q_{t,k}}\Big\{(L-\frac{\kappa}{d_0} \, a)  \cdot \nabla \bar{q} + \frac{\kappa}{d_0} \, a_0 \, \nabla \varrho + \epsilon \, v\Big\} \cdot \nabla \epsilon \, dxd\tau + \int_{Q_{t,k}} (-p \, \mathbb{I} + \mathbb{S}) \, : \,  \nabla v \, w_k \, dxd\tau \, . \nonumber
\end{align}
We also notice that
\begin{align*}
 \int_{Q_{t,k}} \epsilon \, v \cdot \nabla \epsilon \, dxd\tau & = \int_{Q_{t,k}} (\epsilon-k) \, v \cdot \nabla \epsilon \, dxd\tau + k \, \int_{Q_{t,k}} v \cdot \nabla \epsilon \, dxd\tau \\
 & = -\int_{Q_{t,k}} \divv v \, \Big(\frac{1}{2}\, w_k + k\Big) \, w_k \, dxd\tau \, .
\end{align*}
Using that $\frac{1}{2}\, w_k + k \leq \frac{3}{2}  \, \epsilon $ if $\epsilon \geq k$, we see that
\begin{align*}
 \left|\int_{Q_{t,k}} \epsilon \, v \cdot \nabla \epsilon \, dxd\tau \right| \leq \frac{3}{2} \,  \int_{Q_{t,k}} |\divv v| \, \epsilon \, w_k \, dxd\tau \, .
\end{align*}
By means of Young's inequality, we moreover estimate
\begin{align*}
\Big| \big\{(L-\frac{\kappa}{d_0} \, a)  \cdot \nabla \bar{q} + \frac{\kappa}{d_0} \, a_0 \, \nabla \varrho\big\} \cdot \nabla \epsilon\Big| \leq \frac{\kappa}{2d_0} \, |\nabla \epsilon|^2 + \frac{d_0}{2\kappa} \, \{|L-\frac{\kappa}{d_0} \, a|  \, |\nabla \bar{q}| + \frac{\kappa}{d_0} \, |a_0| \, |\nabla \varrho|\}^2 \, .
\end{align*}
Combining these ideas, \eqref{weakwhite} implies that
\begin{align}\label{weakwhite2}
& \frac{1}{2}\, \int_{\Omega_{t,k}} \varrho(x, \, t) \, w_k^2(x, \, t) \, dx + \int_{Q_{t,k}}  \frac{\kappa}{2d_0} \, |\nabla \epsilon|^2 \, dxd\tau \\
\leq & \int_{Q_{t,k}}\Big( \frac{d_0}{\kappa} \{|L-\frac{\kappa}{d_0} \, a| \, |\nabla \bar{q}| + \frac{\kappa}{d_0} \, |a_0| \, |\nabla \varrho|\}^2 +  \{|p| \, |\nabla v| + |\mathbb{S}| \, |\nabla v| + \frac{3}{2} \, \epsilon \, |\nabla\cdot v|\} \,  \, w_k\Big) \, dxd\tau \, .\nonumber
\end{align}
Depending on the asymptotic behaviour of the coefficients $\kappa$, $L$, $a$, etc.\ for large values of $\epsilon$ and fixed or bounded entropic variables $(\varrho, \, q)$, we can now obtain a maximum principle. In order to make the discussion more simple from the viewpoint of notations, we use the following convention.
\begin{conve}\label{Asi}
Consider a function $f$ of the thermodynamic state variables $(T, \, \rho)$ and $\alpha > 0$. We let $(\varrho, \, q)$ being the associated entropic variables and $\epsilon = \epsilon(T, \, \rho)$ the associated internal energy. If there are:
\begin{itemize}
\item a positive, continous function $k_1$ on $]0, \, +\infty[^2$;
\item a continuous, nonnegative function $\phi$ (a continuous, strictly positive function $\psi$) on $\mathbb{R}_+ \times \overline{\mathcal{H}^N_-}$
\end{itemize}
such that
\begin{align*}
f \leq \phi(\varrho, \, q) \, \epsilon^{\alpha}, \quad  (f \geq \, \psi(\varrho, \, q) \, \epsilon^{\alpha}) \qquad \text{ for all } \epsilon \geq k_1( \varrho, \, |q|) \, ,
\end{align*}
then we write $f \precsim \epsilon^{\alpha}, \, (f \succsim \epsilon^{\alpha})$.
We write $f \eqsim \epsilon^{\alpha}$ if both $f \precsim \epsilon^{\alpha}$ and $f \succsim \epsilon^{\alpha}$ hold.
\end{conve}

\begin{prop}\label{TECHON}
Assume that there are numbers $\beta \geq 1$ and $\beta_i \geq 0$ ($i = 0, \ldots,3$) such that the following estimates for the functions $\kappa$, $d_0$, $a_0$, $(a)$, $(L)$, $p$, $\eta$ and $\lambda$ are valid:
\begin{align}\label{growth} 
\begin{split}\frac{\kappa}{d_0} \succsim  \epsilon^{2(\beta-1)}, \quad & \frac{d_0}{\kappa} \, |L|^2 + \frac{\kappa}{d_0} \, |a|^2 \precsim  \epsilon^{\beta_0} , \quad \frac{\kappa}{d_0} \, |a_0|^2 \precsim  \epsilon^{\beta_1} \, , \\
& |p | \precsim  \epsilon^{\beta_2}, \quad \eta + |\lambda| \precsim \epsilon^{\beta_3} \, .\end{split}
\end{align}
%
We assume that for all $t < \bar{\tau}$, $\epsilon \in W^1_p(Q_{t})$ satisfies the weak form \eqref{weakwhite} and that $(\varrho, \, q, \, v) \in \mathcal{X}_{\bar{\tau}}$ is a solution of optimal regularity. We suppose, moreover, that $\epsilon \in L^{1,\infty}(Q_{\bar{\tau}})$. Then, under the conditons that $p > 5$ and that the exponents $p$, $\beta$ and $\beta_i$ occurring in the conditions \eqref{growth} are subject to
\begin{gather*}
  \max\{\beta_0, \, \beta_3+1\} <
\frac{6}{5} \, (1+\beta) \, , \qquad
 \beta_1 < \frac{6}{5}  \, (1+\beta - \frac{5}{3p}) \, ,\\
\beta_2 < \frac{1}{5} \, (6\beta+1) \, ,
 \end{gather*}
we have $\limsup_{t \rightarrow \bar{\tau}-} \|\epsilon\|_{L^{\infty}(Q_{t})} < + \infty$.
\end{prop}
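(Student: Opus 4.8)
The strategy is a De~Giorgi--Stampacchia truncation and iteration over levels applied to the weak parabolic inequality \eqref{weakwhite2}. The plan is to keep testing the energy equation, as is already done there, with $w_k=(\epsilon-k)^+$ for levels $k>k_0$, to derive a recursion for the weighted super-level measures $A(k):=\iint_{Q_{\bar\tau,k}}(1+|\nabla\varrho|^2+|\nabla q|^2+|\nabla v|^2)\,dxd\tau$, and to show $A(k)=0$ for all $k$ beyond a finite threshold $k^*$; this gives $\epsilon\le k^*$ a.e. on $Q_{\bar\tau}$, hence $\limsup_{t\to\bar\tau-}\|\epsilon\|_{L^\infty(Q_t)}\le k^*$. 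The structural input that makes this work is the degenerate ellipticity \eqref{growth}$_1$: since $\kappa/d_0\succsim\epsilon^{2(\beta-1)}$ and $\epsilon\ge k$ on $Q_{t,k}$, the dissipation term in \eqref{weakwhite2} controls $\iint_{Q_t}|\nabla G_k(\epsilon)|^2$ with $G_k(s)=\beta^{-1}(s^\beta-k^\beta)^+$, i.e.\ it is genuinely coercive in the gradient of (a truncation of) $\epsilon^\beta$. Combined with the $L^\infty(0,t;L^2(\Omega))$-bound from the $\int_{\Omega_{t,k}}\varrho\,w_k^2$ term --- recall $\varrho$ is bounded above and below on $Q_t$ whenever \eqref{noblowup} holds --- the anisotropic parabolic embedding $V_2(Q_t)\hookrightarrow L^{10/3}(Q_t)$, valid in spatial dimension $3$ (see \cite{ladu}, Ch.~II), upgrades this into a gain of integrability: a truncation of $\epsilon^{\beta}$ lies in $L^{10/3}(Q_{t,2k})$ with norm controlled by the energy. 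The number $\tfrac{10}{3}=2+\tfrac43$ is the origin of the $\tfrac65$ appearing in the exponent conditions.

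The next step is to estimate, term by term, the right-hand side of \eqref{weakwhite2} using the growth hypotheses \eqref{growth}. On $Q_{t,k}$ one has $\epsilon\ge k$, so for $k$ beyond the thresholds of Convention~\ref{Asi} all the ``$\precsim$''-bounds are in force; moreover $|\mathbb{S}|\lesssim(\eta+|\lambda|)\,|\nabla v|\precsim\epsilon^{\beta_3}|\nabla v|$, and $\epsilon=w_k+k$ there. The gradients $\nabla q$ and $\nabla v$ are \emph{bounded} on $\overline{Q_{\bar\tau}}$ because $q,v\in W^{2,1}_p(Q_{\bar\tau})$ with $p>5$ --- this is precisely why the sharper assumption $p>5$ is imposed here --- whereas $\nabla\varrho$ is only in $L^{p,\infty}(Q_{\bar\tau})$, and this weaker control is the source of the $\tfrac{5}{3p}$ correction in the condition on $\beta_1$. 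The genuinely dangerous factors are therefore the powers $\epsilon^{\beta_0},\epsilon^{\beta_1},\epsilon^{\beta_2},\epsilon^{\beta_3}$ and the factor $\epsilon$ in the convective term: these are a priori only in a weak-$L^1$-type space through the hypothesis $\epsilon\in L^{1,\infty}(Q_{\bar\tau})$, and they must be absorbed by interpolating $\epsilon\in L^{1,\infty}$ against the gained $L^{10/3}$-integrability of $\epsilon^{\beta}$. Carrying out the five resulting H\"older/interpolation estimates --- one for each term in the bracket of \eqref{weakwhite2} --- produces exactly the five scalar inequalities among $p,\beta,\beta_0,\beta_1,\beta_2,\beta_3$ in the statement: each inequality is the requirement that the corresponding term be finite \emph{and} carry a strictly positive surplus power of $A(k)$ (the pressure term, which unlike the others is not quadratic in a gradient, is the one giving the asymmetric bound $\beta_2<\tfrac15(6\beta+1)$).

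With these estimates in hand the iteration closes in the usual manner: writing $\mathcal{E}(k):=\sup_{t<\bar\tau}\int_{\Omega_{t,k}}\varrho\,w_k^2\,dx+\iint_{Q_{\bar\tau,k}}\tfrac{\kappa}{2d_0}|\nabla\epsilon|^2\,dxd\tau$, the above yields $\mathcal{E}(k)\le C\,A(k)^{1+\delta_0}$ for some $\delta_0>0$, while for $h>k$ the embedding together with $(h-k)\,\chi_{Q_{\bar\tau,h}}\le G_k(\epsilon)^{1/\beta}$ and Chebyshev give $A(h)\le C\,(h-k)^{-\sigma}\mathcal{E}(k)^{\rho}$ with $\sigma,\rho>0$ and $\rho(1+\delta_0)>1$. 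The classical Stampacchia iteration lemma (see \cite{ladu}, Ch.~II) then forces $A(k^*)=0$ for a finite $k^*$ depending only on $A(k_0)$, the constants in \eqref{growth}, the norm of $(q,\varrho,v)$ in $\mathcal{X}_{\bar\tau}$ and $\|\epsilon\|_{L^{1,\infty}(Q_{\bar\tau})}$.

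I expect the main obstacle to be the exponent bookkeeping in the second step, and in particular the reconciliation of the two pieces of the energy estimate: the dissipation is coercive in $\nabla(\epsilon^\beta)$ whereas the time-uniform bound is on $(\epsilon-k)^+$, a different power of $\epsilon$ as soon as $\beta>1$; aligning these (and thereby recovering a $V_2$-type bound for a single truncated power of $\epsilon$, with the loss quantified by $\beta$) is what dictates the hypothesis $\beta\ge1$ and is where an off-by-an-exponent slip would break the argument. Two further technicalities I would relegate, as the excerpt itself does: the appearance of $\nabla\varrho$ only in $L^{p,\infty}$ is handled by keeping it inside the weighted measure $A(k)$ and paying the $\tfrac{5}{3p}$; and the passage from the simplified situation ($b^1=\dots=b^N$, no heat flux on $S_{\bar\tau}$) to the general one only adds boundary integrals of $r^\Gamma_{\rm h},J^\Gamma_{\rm h}$ over $S_{\bar\tau}$, which are controlled by the trace inequalities already in use and do not alter the growth conditions.
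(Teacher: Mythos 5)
Your proposal follows essentially the same route as the paper's proof: testing with $w_k=(\epsilon-k)^+$, turning the growth hypotheses \eqref{growth} into a Caccioppoli-type bound coercive in $\nabla w_k^{\beta}$, gaining integrability through the mixed interpolation of Lemma \ref{interpomoi} (which is exactly the resolution of the power mismatch you flag, yielding the exponent $r=\tfrac43+2\beta$ in place of a direct $V_2\hookrightarrow L^{10/3}$ bound for $w_k^\beta$), interpolating against $\epsilon\in L^{1,\infty}(Q_{\bar\tau})$, and closing with the Stampacchia-type level-set Lemma \ref{lemboundmax}. The only cosmetic deviations are that the paper keeps the gradient factors in H\"older norms $\|G_i\,\epsilon^{a_i}\|_{L^s(Q_t)}$ against powers of $|Q_{t,k}|$ rather than a weighted measure $A(k)$, and that the restriction $p>5$ is traced there to $|\nabla\varrho|^2\in L^{p/2,\infty}$ requiring an integrability exponent $>\tfrac52$ (the boundedness of $\nabla q,\nabla v$ you invoke is a byproduct of $p>5$), which leads to the same exponent conditions as your bookkeeping.
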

\begin{proof}
Since $(\varrho, \, q, \, v) \in \mathcal{X}_{\bar{\tau}}$ by assumption, the norms $\|\varrho\|_{L^{\infty}(Q_{\bar{\tau}})}$ and $\|q\|_{L^{\infty}(Q_{\bar{\tau}}; \, \mathbb{R}^N)}$ are finite. Moreover, $\inf_{Q_{\bar{\tau}} } \varrho > 0$ follows from the continuity equation. We introduce $m(t) := \inf_{Q_t} \varrho > 0$, $M(t):= \sup_{Q_t} \varrho$ and $Q(t) := \|q\|_{L^{\infty}(Q_t)}$. Due to the assumptions \eqref{growth}, for all $$\epsilon \geq  \sup_{m(\bar{\tau}) \leq r \leq M(\bar{\tau}), \, |\xi| \leq Q(\bar{\tau})}  k_{1}(r, \, |\xi|) =: \bar{k}_1(\bar{\tau}) \, ,$$ we can rely on a bound
\begin{align*}
 \frac{\kappa}{d_0} \geq \psi(\varrho, \, q) \,   \epsilon^{2(\beta-1)} \geq \Big(\inf_{m(\bar{\tau}) \leq r \leq M(\bar{\tau}), \, |\xi| \leq Q(\bar{\tau})} \psi(r, \, \xi)\Big) \, \epsilon^{2(\beta-1)} = c(\bar{\tau}) \, \epsilon^{2(\beta-1)} \, . 
\end{align*}
Similarly, \eqref{growth} guarantees that
\begin{align*}
\frac{d_0}{\kappa} \, |L|^2 + \frac{\kappa}{d_0} \, |a|^2 \leq C(\bar{\tau}) \,   \epsilon^{\beta_0} , \quad \frac{\kappa}{d_0} \, |a_0|^2 \leq C(\bar{\tau}) \epsilon^{\beta_1} \, , \, \text{etc.} \quad \text{ for all } \epsilon \geq \bar{k}(\bar{\tau}) \, ,
\end{align*}
where the number $\bar{k}(\bar{\tau})$ is obtained as the maximum among all threshold functions $k_1$ associated with the coefficients (See the convention \ref{Asi}).
%
In \eqref{weakwhite2}, we restrict to $k > \max\{k_0, \, \bar{k}(\bar{\tau})\}$. Use of \eqref{weakwhite2} and of the growth conditions on $\kappa/d_0$ and the other coefficients yields
\begin{align}\label{weakwhite3}
& \frac{m(\bar{\tau})}{2} \, \int_{\Omega_{t,k}} w_k^2(x, \, t) \, dx + c(\bar{\tau}) \, \int_{Q_{t,k}}  \epsilon^{2(\beta-1)} \,  |\nabla \epsilon|^2 \, dxd\tau \\
& \leq C(\bar{\tau}) \,  \int_{Q_{t,k}} \Big(\epsilon^{\beta_0} \, |\nabla \bar{q}|^2 +\epsilon^{\beta_1} \, |\nabla \varrho|^2  + \{ ( \, \epsilon^{\beta_2} +\frac{3}{2} \, \epsilon) \, |\nabla v| + \epsilon^{\beta_3} \, |\nabla v|^2\} \, w_k\Big) \, dxd\tau \, . \nonumber
\end{align}
Since $\beta \geq 1$, we also observe that, for $\epsilon(x,t) \geq k$
\begin{align*}
 \epsilon^{2(\beta-1)} \,  |\nabla \epsilon|^2 \geq (\epsilon-k)^{2(\beta-1)} \,  |\nabla \epsilon|^2 = \frac{1}{\beta^2} \, |\nabla (\epsilon-k)^{\beta}|^2 \, .
\end{align*}
Moverover, since $w_k \leq \epsilon$, \eqref{weakwhite3} implies that, possibly with a larger constant $C(\bar{\tau})$,
\begin{align}\label{boundgen}
 & \int_{\Omega_k} w_k^2(x, \, t) \, dx + \int_{Q_{t,k}}  |\nabla w_k^{\beta}|^2 \, dxd\tau \\
 & \leq  C(\bar{\tau}) \, \int_{Q_{t,k}} \{\epsilon^{\beta_0} \, |\nabla \bar{q}|^2 + \epsilon^{\beta_1} \, |\nabla \varrho|^2 + ( \epsilon^{\beta_2+1} +\epsilon^2) \, |\nabla v| + \epsilon^{\beta_3+1} \, |\nabla v|^2\} \, dxd\tau \, . \nonumber
\end{align}
Due to the Lemma \ref{interpomoi}, restriction to $k >2 \,  \|\epsilon\|_{L^{1,\infty}(Q_t)}/\lambda_3(\Omega)$ implies that
\begin{align}\label{defsmallr}
 \|w_k\|_{L^r(Q_t)}^r \leq c_0 \, \|w_k\|_{L^{2,\infty}(Q_t)}^{\frac{4}{3}} \, \|\nabla w_k^{\beta}\|_{L^2(Q_t)}^2 \quad \text{ with } \quad  r = \frac{4}{3} + 2 \, \beta \, ,
 \end{align}
 with $c_0 = c_0(\Omega)$ being an embedding constant. Thus, with $R(t)$ denoting the right-hand of \eqref{boundgen}, we have
$   \|w_k\|_{L^r(Q_t)}^r \leq c_0 \, \left(R(t)\right)^{\frac{2}{3}} \, R(t)$
or, in other words,
\begin{align}\label{boundgen2}
 \|w_k\|_{L^r(Q_t)}^{\frac{3r}{5}} \leq  C(\bar{\tau}) \,  \int_{Q_{t,k}} \{\epsilon^{\beta_0} \, |\nabla \bar{q}|^2 + \epsilon^{\beta_1} \, |\nabla \varrho|^2 + ( \epsilon^{\beta_2+1} +\epsilon^2) \, |\nabla v| + \epsilon^{\beta_3+1} \, |\nabla v|^2\} \, dxd\tau \, .
 \end{align}
 We introduce functions and exponents
 \begin{align}
 \begin{split}\label{lesGs}
 G_0 := |\nabla \bar{q}|^2, \quad \alpha_0 := \beta_0, \quad G_1 := |\nabla \varrho|^2, \quad \alpha_1 := \beta_1\\
 G_2 := |\nabla v|, \quad \alpha_2 := \max\{2, \, \beta_2+1\}, \quad G_3 := |\nabla v|^2, \quad \alpha_3 := \beta_3+1 \, .
 \end{split}
 \end{align}
 Then, \eqref{boundgen2} can be rewritten as
 \begin{align}\label{boundgen3}
  \|w_k\|_{L^r(Q_t)}^{\frac{3r}{5}} \leq  C(\bar{\tau}) \, \sum_{i=0}^3 \int_{Q_{t,k}} G_i(x,\tau)  \, \epsilon^{\alpha_i} \, dxd\tau \, .
 \end{align}
In order to estimate a generic term of the form $\int_{Q_{t,k}} G_i(x,\tau)  \, \epsilon^{\alpha_i} \, dxd\tau$, we rely moreover on the information $\|\epsilon\|_{L^{1,\infty}(Q_t)} < +\infty$.

For $i = 0, \ldots3$, we choose numbers $ 0 \leq a_i < \min\{1, \, \alpha_i\}$ and we define $b_i := \alpha_i - a_i$, in order to re-express
\begin{align*}
\int_{Q_{t,k}} G_i(x,\tau)  \, \epsilon^{\alpha_i} \,  dxd\tau \int_{Q_{t,k}} (G_i(x,\tau) \, \epsilon^{a_i}) \, \epsilon^{b_i} \, dxd\tau \, .
\end{align*}
Use of $\epsilon = w_k + k$ on $\Omega_k$ implies that
\begin{align*}
\int_{Q_{t,k}} (G_i(x,\tau) \, \epsilon^{a_i}) \, \epsilon^{b_i} \, dxd\tau \leq 2^{b_i} \, \int_{Q_{t}} (G_i \, \epsilon^{a_i}) \,  w_k^{b_i} \, dxd\tau + (2k)^{b_i} \, \int_{Q_{t,k}} G_i \, \epsilon^{a_i} \, dxd\tau \, .
\end{align*}
Next, we assume that $b_i < r$ and we let $0< 1/s < 1-b_i/r$. Then, H\"older's inequality implies that
\begin{align*}
\int_{Q_{t,k}} (G_i \, \epsilon^{a_i})  \, \epsilon^{b_i} \, dxd\tau \leq & 2^{b_i} \, \|w_k\|_{L^r(Q_t)}^{b_i} \, \|G_i\, \epsilon^{a_i}\|_{L^{s}(Q_t)} \, |Q_{t,k}|^{1-\frac{1}{s}-\frac{ b_i}{r}} \\
& + (2k)^{b_i} \, \|G_i \, \epsilon^{a_i}\|_{L^{s}(Q_t)} \, |Q_{t,k}|^{1-\frac{1}{s}} \, .
\end{align*}
We further restrict the choice of $b_i$ and $r$ by the condition
\begin{align}\label{stronger}
 b_i < \frac{3r}{5} \, .
\end{align}
By means of Young's inequality $ x \, y < \delta \, x^{p} + c(\delta,p) \, y^{p/(p-1)}$, which we apply with $\delta > 0$ arbitrary, $ p = \xi_i := 3r/(5b_i)$ we get, with $\xi_i^{\prime} = \xi_i/(\xi_i-1)$,
\begin{align}\label{resultG}
\int_{Q_{t,k}} (G \, \epsilon^{a_i})  \, \epsilon^{b_i} \, dxd\tau \leq &\delta \, \|w_k\|_{L^r(Q_t)}^{\frac{3r}{5}} + c_{\delta} \,  \|G \, \epsilon^{a_i}\|_{L^{s}(Q_t)}^{\xi^{\prime}_i} \, |Q_k|^{\xi^{\prime}_i \, (1-\frac{1}{s}-\frac{ b_i}{r})} \nonumber\\
 & + c \, \, k^{b_i} \, \|G_i \, \epsilon^{a_i}\|_{L^{s}(Q_t)} \, |Q_{k,t}|^{1-\frac{1}{s}} \, .
\end{align}
We introduce numbers
\begin{align*}
 z_i := (1-\frac{1}{s}-\frac{b_i}{r}) \, \xi^{\prime}_i \, \frac{5}{3r}= (1-\frac{1}{s}-\frac{b_i}{r})\, \frac{5}{3r-5b_i} \, , \quad \tilde{z} = (1-\frac{1}{s}) \, \frac{5}{3r} \, , \quad
 \omega_i := \frac{5b_i}{3r} \, ,
\end{align*}
and we attain
\begin{align}\label{Gresult} 
  \int_{Q_{t,k}} G_i \, \epsilon^{\alpha_i} \, dxd\tau\leq & \delta \, \|w_k\|_{L^r(Q_t)}^{\frac{3r}{5}} \\
  & + 
  c_{\delta} \,  \|G_i \, \epsilon^{a_i}\|_{L^{s}(Q_t)}^{\xi^{\prime}_i} \, |Q_k|^{\frac{3r}{5} \, z_i}
 + c_0 \, \, k^{\frac{3r}{5} \, \omega_i} \, \|G_i \, \epsilon^{a_i}\|_{L^{s}(Q_t)} \, |Q_{k,t}|^{\frac{3r}{5} \, \tilde{z}}\, .\nonumber
  \end{align}
  We recall \eqref{boundgen3} and choose $\delta$ suitably small. Since $5\xi_i^{\prime}/(3r) = 5/(3r-5b_i)$, we obtain that
\begin{align*}
  \|w_k\|_{L^r(Q_t)} \leq C(\bar{\tau}) \, \left(1+\sum_{i=0}^3 \, \|G_i \, \epsilon^{a_i}\|_{L^{s}(Q_t)}^{\frac{5}{3r-5b_i}}\right) \,  \sum_{i=0}^3 \Big(|Q_{k,t}|^{z_i} + k^{\omega_i} \, |Q_{t,k}|^{\tilde{z}} \Big) \, .
  \end{align*}
  It also follows that
  \begin{align*}
    \|w_k\|_{L^1(Q_t)} \leq C(\bar{\tau}) \,\left(1+\sum_{i=0}^3 \, \|G_i \, \epsilon^{a_i}\|_{L^{s}(Q_t)}^{\frac{5}{3r-5b_i}}\right) \,  \sum_{i=0}^3 \Big(|Q_{t,k}|^{z_i+1-\frac{1}{r}} +k^{\omega_i} \,|Q_{t,k}|^{\tilde{z}+1-\frac{1}{r}}\Big) \, .
  \end{align*}
  Under the conditions
  \begin{align}\label{condiclear}
  z_i, \, \tilde{z} > \frac{1}{r} \quad \text{ and } \quad \omega_i \leq 1 + \tilde{z} - \frac{1}{r} \qquad \text{ for } i = 0,\ldots,3 \, , 
  \end{align}
  the Lemma \ref{lemboundmax} with $n = 3$, $\sigma_i := z_i+1-1/r$, $y := \tilde{z}+1-1/r$, and $A_i :=  1 + \sum_j \|G_j \, \epsilon^{a_j}\|_{L^s(Q_t)}^{\frac{5}{3r-5b_j}}=: B_i$ gives the bound
  \begin{align}\label{boundmax}
   \|\epsilon\|_{L^{\infty}(Q_t)} \leq k_1 + C\Big(\bar{\tau}, \, \Omega, \, k_1, \, \sum_{i=0}^3 \|G_i \, \epsilon^{a_i}\|_{L^s(Q_t)}^{\frac{5}{3r-5b_i}}, \, \|\epsilon\|_{L^1(Q_t)}\Big) \, ,
  \end{align}
  with $k_1 := \max\{k_0, \, \bar{k}(\bar{\tau}), \, 2 \, \|\epsilon\|_{L^{1,\infty}(Q_t)}/|\Omega|\} $ and a certain constant $C$.

  Verifying the conditions \eqref{condiclear} is elementary. They reduce to $s > 5/2$ and $b_i < 3r/5+2/5-1/s$, but the latter condition is weaker than $b_i < 3r/5$ which we already assumed in \eqref{stronger}. Thus, in order to obtain the bound as claimed in the Lemma, it is sufficient to assume that
  \begin{align*}
\alpha_i - a_i = b_i < \frac{3r}{5}  \quad \text{ and } \quad G_i \, \epsilon^{a_i} \in L^{s}(Q_T) \quad  \text{ for a } s > 5/2 \, .
  \end{align*}
In order to satisfy the latter condition, we choose appropriate numbers $0 \leq a_i < 1/s$. This is possible if
\begin{align}\label{localrestrict}
 \alpha_i < \frac{3r}{5} + \frac{1}{s} \, .
\end{align}
In this case, we can estimate
\begin{align*}
& \int_{\Omega} (G_i(x,t))^{s} \, \epsilon^{a_i \, s}(x,t) \, dx \leq \|G_i(\cdot,t)\|_{L^{\frac{s}{1-a_is}}(\Omega)}^s \, \|\epsilon(\cdot, \, t)\|^{a_is}_{L^{1}(\Omega)} \,\\
 \text{ yielding } &\\
& \|G_i \, \epsilon^{a_i}\|_{L^{s}(Q_t)} \leq \|\epsilon\|_{L^{1,\infty}(Q_t)}^{a_i} \, \|G_i\|_{L^{s/(1-a_is),s}(Q_t)} \, . 
\end{align*}
Suppose that $G_i \in L^{s_i,s}(Q_t)$ with $s_i \geq s > 5/2$, then we define $a_i := (s_i-s)/(s \, s_i) $. We can now choose any 
\begin{align}\label{exponeslats}
 \alpha_i < \frac{3r}{5} + \frac{1}{s} - \frac{1}{s_i} \, ,
\end{align}
a restriction which is stronger than \eqref{localrestrict}. Now \eqref{boundmax} implies that
\begin{align}\label{boundmax2}
   \|\epsilon\|_{L^{\infty}(Q_t)} \leq k_1 + C\Big(\bar{\tau}, \, \Omega, \, k_1, \, \sum_{i=0}^3 \left(\|\epsilon\|_{L^{1,\infty}(Q_t)}^{\frac{1}{s}-\frac{1}{s_i}} \,  \|G_i\|_{L^{s_i,s}(Q_t)}\right)^{\frac{5}{3r-5b_i}}, \, \|\epsilon\|_{L^1(Q_t)}\Big) \, .
  \end{align}
  Here $s_i > s > 5/2$ are arbitrary numbers. Finally, it remains to choose $s$ and $s_i$ in order that we are able to estimate the norms of the functions $G_i$ by the state-space norm.

  At this point, we recall the definitions \eqref{lesGs}.
Commencing with $G_1 = |\varrho_x|^2 $ we can at best rely on the bound $ \|G_1\|_{L^{p/2,\infty}(Q_t)}^2 \ \leq c \, \, \|\varrho\|_{W^{1}_{p,\infty}(Q_t)}^2$. Hence for all $\frac{5}{2} < s < s_i \leq \frac{p}{2}$, we get a bound for $G_1$ in $L^{s_i,s}(Q_t)$. This means that we have to restrict to $p > 5$,  and \eqref{exponeslats} restritcs $\beta_1$ via
\begin{align*}
 \beta_1 < \frac{2}{5}  \, \Big(2+3\beta+ 1 - \frac{5}{p}\Big) \, ,
\end{align*}
Next with $G_2 = |v_x|$, we have a bound $\|v_x\|_{L^{\infty,p}(Q_t)} \leq c_0 \, \|v\|_{W^{2,1}_p(Q_t)}$, hence we can choose $p \geq s > 5/2$ arbitrary and $s_i = + \infty$. Thus, \eqref{exponeslats} restricts the exponent $\alpha_2$ as
  \begin{align*}
  \alpha_2 = \max\{2, \, 1+\beta_2\}< \frac{3r}{5} + \frac{2}{5} = \frac{2}{5} \, (3+3\beta) \quad \text{ arbitrary.}
  \end{align*}
  Since $\beta \geq 1$, this condition reduces to $\beta_2 < \frac{1}{5} \, (6\beta-1+2)$.
  For $G_3 = |v_x|^2$ and $G_0 = |\bar{q}_x|^2$, we have the same parabolic regularity. Due to the Sobolev embedding, we have
  \begin{align*}
  \|G_i\|_{L^{z,\infty}(Q_t)} + \|G_i\|_{L^{\infty,p/2}(Q_t)} \leq c_0 \, (\|q\|_{W^{2,1}_p(Q_t)} + \|v\|_{W^{2,1}_p(Q_t)})^2 \, \quad z := \frac{3p}{2 \, (5-p)^+} \, .
  \end{align*}
  Thus, since we already restrict to $p > 5$, we can choose $s > 5/2$ arbitrary and $s_0, \, s_2 = +\infty$. Hence $
  \max\{\beta_0, \, \beta_3+1\} <
\frac{2}{5} \, (3+3\beta)$.
\end{proof}
\begin{rem}
In the case that there is a heat flux on the boundary, the weak form reads
\begin{align*}
 \int_{Q_t} \{\partial_t \epsilon \, \phi  - (\epsilon \, v + J^{\rm h}) \cdot \nabla \phi\} \, dxd\tau = & \int_{Q_t} (-p \, \mathbb{I} + \mathbb{S}) \, : \,  \nabla v \, \phi \, dxd\tau + \int_{S_t} (r^{\Gamma}_{\rm h} - J^{\Gamma}_{\rm h}) \, \phi \, dS_xd\tau\, .
\end{align*}
Here it is necessary to derive also growth conditions for the temperature-dependence of $r^{\Gamma}$. A not seldom, convenient case occurs if $T \mapsto r^{\Gamma}(x,t, \, T, \, \rho)$ is monotone decreasing. Then, $\varrho u \mapsto r^{\Gamma}(x,t , \ \hat{T}(\rho, \, \varrho u), \, \rho)$ is also monontone decreasing and there are no growth conditions coming from this term. For instance, the natural cooling condition $r^{\Gamma} = T^{\text{ext}} - T$ with the outer temperature $T^{\text{ext}}$ is of this form. As to $J_{\rm h}^{\Gamma}$, it is sufficient to assume that it possesses an extension of class $W^{1,0}_r(Q_{\bar{\tau}})$ with $r > 5$.
\end{rem}
\begin{rem}\label{bnotzero}
We considered the case $b^1 = \ldots = b^N$. Otherwise $\tilde{b} \neq 0$ and two differences have to be studied.

1. The heat flux gets, in comparison to \eqref{heatfluxbase}, an additional term
  \begin{align*}
 J^{\rm h} = \kappa(T, \, \rho) \, \nabla \frac{1}{T} - \sum_{k=1}^{N-1} \xi^k\cdot l(T, \, \rho) \, (\nabla q_k + q_N \, \tilde{b}^k(x,t))   \, .
\end{align*}
In the right-hand of 
\eqref{boundgen}, there is an additional term
$\int_{Q_{t,k}} \epsilon^{\beta_4} \, |\tilde{b}(x,t)|^2 \, dxd\tau \, $, where we assumed that $\beta_4$ is such that
\begin{align}\label{choceweak}
 \frac{d_0}{\kappa} \, q_N^2 \, l^2(\varrho, \, q) \precsim \epsilon^{\beta_4} \, .
\end{align}
Since $\tilde{b} \in L^{\infty,p}(Q_{\bar{\tau}})$ by assumption, we in fact do not need to reinforce the assumptions \eqref{growth}. The term $q_N^2$ being favourable for $T \rightarrow + \infty$ (meaning that $q_N \rightarrow 0-$), we have $\beta_4 \leq \beta_0$ and finish the proof as in Proposition \ref{TECHON}.

2. If $\tilde{b} \neq 0$, the right-hand side of the equation \eqref{energy} gets the additional term $J \, : \, b$. On the right-hand side of 
\eqref{boundgen}, there is an additional term
$\int_{Q_{t,k}} \epsilon^{\beta_5+1} \, (|\nabla q|+|q_N| \, |\tilde{b}(x,t)|) \, |\tilde{b}(x,t)| \, dxd\tau $ where $\beta_5$ is the growth exponent for the thermodynamic diffusivities $M_{ij}$:
\begin{align*}
|M_{ij}| \precsim \epsilon^{\beta_5} \quad \text{ for } \quad i,j = 1,\ldots,N \, .
\end{align*}
If $\beta_5+1$ is subject to the same restrictions as $\beta_0$, the term $\int_{Q_{t,k}} \epsilon^{\beta_5+1} \, |q_N| \, |\tilde{b}(x,t)|^2 \, dxd\tau $ can be treated as just shown under 1. of the present remark. For $\tilde{b} \in L^{\infty,p}(Q_{\bar{\tau}})$, the product $|\tilde{b}| \, |\nabla q|$ can be estimated in the same way as $|\nabla q|^2$ and $|\nabla v|^2$ in the proof of Proposition \ref{TECHON}. Hence, $\beta_5$ is also subject to the same restrictions as the exponent $\beta_3$. Summarising, the entries of $M$ are essentially subject to the same growth restrictions as the viscosity coefficients $\eta$ and $\lambda$.
\end{rem}
With the boundedness of $\varrho u$, we also obtain the maximum principles for $T$ and $\rho$.
\begin{coro}
Assume that the function $\epsilon$ in \eqref{varrhoubasic} is such that, for all $0 < m \leq M < +\infty$
\begin{align*}
 \liminf_{T \rightarrow + \infty} \inf_{m \leq |\rho| \leq M}\epsilon(T, \, \rho) = + \infty \, .
\end{align*}
 Under the assumptions of Proposition \ref{TECHON}, we then also have
 \begin{align*}
  \limsup_{t \rightarrow \bar{\tau}-} \|T\|_{L^{\infty}(Q_t)} < + \infty, \quad \liminf_{t \rightarrow \bar{\tau}-} \inf_{i=1,\ldots,N, \, x \in \Omega} \rho_i(x, \, t) > 0 \, .
 \end{align*}
\end{coro}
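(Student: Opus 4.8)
The plan is to derive both conclusions from the uniform bound $\sup_{t<\bar\tau}\|\varrho u\|_{L^\infty(Q_t)}<\infty$ supplied by Proposition~\ref{TECHON}, the coercivity hypothesis on the constitutive function $\epsilon$ of \eqref{varrhoubasic}, and the \emph{a priori} control already available on the remaining unknowns. First I would record those bounds. Since $(\varrho,q,v)\in\mathcal{X}_{\bar\tau}$ and $p>5>3$, the parabolic embedding $W^{2,1}_p(Q_{\bar\tau})\hookrightarrow C(\overline{Q_{\bar\tau}})$ gives $Q(\bar\tau):=\|q\|_{L^\infty(Q_{\bar\tau})}<\infty$, hence $q_N=-1/T\ge-Q(\bar\tau)$ and $T\ge 1/Q(\bar\tau)>0$ on $Q_{\bar\tau}$. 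As in the proof of Proposition~\ref{TECHON}, the continuity equation \eqref{mass2tot} together with $v\in W^{2,1}_p$ and $v=0$ on $S_{\bar\tau}$ yields $0<m(\bar\tau):=\inf_{Q_{\bar\tau}}\varrho\le\sup_{Q_{\bar\tau}}\varrho=:M(\bar\tau)<\infty$. Since $\rho_i\ge 0$ and $\sum_i\rho_i=\varrho$, the density vector $\rho$ of \eqref{RHO} stays in a fixed, $t$-independent compact annulus: $|\rho(x,t)|$ lies between two positive multiples of $m(\bar\tau)$ and $M(\bar\tau)$ on $Q_{\bar\tau}$, the multiples being the equivalence constants between $|\cdot|$ and the $\ell^1$-norm.

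Next I would extract the temperature bound. By Proposition~\ref{TECHON} the number $E:=\sup_{t<\bar\tau}\|\varrho u\|_{L^\infty(Q_t)}$ is finite (the family $(Q_t)$ increases with $t$, so this supremum is exactly $\limsup_{t\to\bar\tau-}\|\epsilon\|_{L^\infty(Q_t)}$). Applying the coercivity hypothesis on the annulus $\{m\le|\rho|\le M\}$ found in the previous step, with level $E+1$, gives a threshold $\bar T=\bar T(\bar\tau)<\infty$ such that $\epsilon(T,\rho)>E+1$ whenever $T\ge\bar T$ and $m\le|\rho|\le M$. But along the solution $\epsilon(T(x,t),\rho(x,t))=\varrho u(x,t)\le E$ and $|\rho(x,t)|\in[m,M]$ for every $(x,t)\in Q_t$, $t<\bar\tau$; the contrapositive forces $T(x,t)<\bar T$, so $\limsup_{t\to\bar\tau-}\|T\|_{L^\infty(Q_t)}\le\bar T<\infty$, equivalently $q_N\le-1/\bar T<0$ on $\bigcup_{t<\bar\tau}Q_t$.

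Finally I would read off the lower bound on the partial densities. Steps one and two confine $(\varrho,q)$, for all $t<\bar\tau$, to the set $\mathcal{K}:=[m(\bar\tau),M(\bar\tau)]\times\{\bar q\in\mathbb{R}^{N-1}:|\bar q|\le Q(\bar\tau)\}\times[-Q(\bar\tau),-1/\bar T]$, which is a compact subset of $\mathbb{R}_+\times\mathcal{H}^N_-$. The map $(\varrho,q)\mapsto\rho=\mathscr{R}(\varrho,q)$ of \eqref{RHONEW} (equivalently \eqref{RHO}) is continuous on $\mathbb{R}_+\times\mathcal{H}^N_-$ by Lemma~\ref{rhonewlemma} and, being the vector of genuine partial mass densities, takes values in the open orthant $\mathbb{R}^N_+$; hence $\mathscr{R}(\mathcal{K})$ is a compact subset of $\mathbb{R}^N_+$ and $\delta:=\min_{1\le i\le N}\min_{\mathcal{K}}\mathscr{R}_i>0$. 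Therefore $\rho_i(x,t)\ge\delta$ for all $i$ and all $(x,t)$ with $t<\bar\tau$, which yields $\liminf_{t\to\bar\tau-}\inf_{i,\,x\in\Omega}\rho_i(x,t)\ge\delta>0$.

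The only point requiring attention is the second step: the coercivity of $\epsilon$ must be invoked on a compact $\rho$-interval that does \emph{not} depend on $t$, and this is precisely what the uniform bounds $0<m(\bar\tau)\le\varrho\le M(\bar\tau)$ from the continuity equation guarantee. Once the upper bound on $T$ — equivalently $q_N$ bounded away from $0$ — has been extracted, the rest is a soft compactness argument, so no serious obstacle remains.
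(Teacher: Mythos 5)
Your argument is correct and follows the same route as the paper: the uniform bounds on $\varrho$ (continuity equation) and on $q$ (optimal regularity), combined with the $L^\infty$-bound on $\epsilon=\varrho u$ from Proposition \ref{TECHON} and the coercivity of $\epsilon$ in $T$ on a fixed compact $|\rho|$-range, force an upper bound on $T$, after which $(\varrho,q)$ lies in a compact subset of $\mathbb{R}_+\times\mathcal{H}^N_-$ and the continuity of $\mathscr{R}$ gives the positive lower bound on the $\rho_i$. You merely spell out the compactness step more explicitly than the paper does; no gap.
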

\begin{proof}
By assumption, the total mass density $\varrho$ associated with the solution remains inside of the interval $[m(\bar{\tau}), \, M(\bar{\tau})]$. Thus, the bound obtained in Proposition \ref{TECHON} implies that $\|T\|_{L^{\infty}(Q_{\bar{\tau}})}$ remains bounded.

Hence, we also have that $\sup_{Q_{\bar{\tau}}} q_N < 0$. Next we recall that $\rho = \mathscr{R}(\varrho, \, q)$, and since $(\varrho,  \, q)$ remains in the strict interior of the domain $\mathbb{R}_+ \times \mathcal{H}^N_-$, the mass densities cannot tend to zero.
\end{proof}
This achieves to prove the general form of the maximum principle for the system \eqref{mass}, \eqref{energy}. For a more concrete illustration of the condition \eqref{growth} in Proposition \ref{TECHON}, we refer to the Theorem \ref{MAIN2}.

The first task is to introduce a special construction of the entropy functional.

\section{A constitutive model for  ideal mixtures}\label{IDMIX}

With the concept of an ideal mixture, we refer to a concrete form of the chemical potentials
\begin{align}\label{muiideal}
\mu_i = \hat{\mu}_i(T, \, p, \, x_i) = g_i(T, \, p) + R \, T \, \ln x_i \, .
\end{align}
The structure \eqref{muiideal} \emph{implies that} the mixture is volume-additive and enthalpy-additive. Let us however remark that the predicates ''simple mixture'' or ''ideal mixture'' do not seem to possess a completely univoque meaning in the literature. For instance, a more restrictive characterisation of the concept of an ideal mixture is to be found in \cite{brdicka}, Ch.\ 4.1. \footnote{\emph{In an ideal mixture the same forces are acting in average between the molecules of the different materials as between the molecules of the pure components (...). A series of extensive properties of ideal mixtures are obtained in simple additive way from the corresponding properties of the pure components.} See \cite{brdicka}, page 317.}


For notational simplicity, the gas constant $R$ shall from now be normalised to one.
In \eqref{muiideal}, $g_i(T, \, p)$ denotes the Gibbs free energy density of the constituent ${\rm A}_i$.  
In particular, with $\hat{\rho}_i(T, \, p)$ denoting the mass density of the constituent ${\rm A}_i$ as a function of temperature and pressure, we have the identity $\partial_p g_i(T, \, p) = 1/\hat{\rho}_i(T, \, p)$. If full data are available for all ${\rm A}_i$, we can therefore construct the function $g_i$ from the following data concerning the constituent ${\rm A}_i$:
\begin{itemize}
 \item The equation of state $\hat{\rho}_i(T, \, p)$ giving the density or specific volume as function of temperature and pressure;
 \item The heat capacity at reference pressure $c^{i0}_p(T) := c^i_p(T, \, p^0)$ as a function of temperature only;
 \item The (constant) enthalpy $h^{i0}$ and entropy $s^{i0}$ at reference temperature $T^0$ and pressure $p^0$.
\end{itemize}
Then we have the formula (see \cite{bothedreyerdruet}, equation (32))
\begin{align}\label{thermocon}
g_{i}(T, \, p) = \int_{p^0}^p \frac{1}{\hat{\rho}_i(T, \, p^{\prime})} \, dp^{\prime} - \int_{T^0}^T\int_{T^0}^{\theta} \frac{c^{i0}_p(\theta^{\prime})}{\theta^{\prime}} \, d\theta^{\prime}d\theta - s^{i0} \, T + h^{i0} \, .
\end{align}
In the stable fluid phase, the data in the latter formula are restricted by several conditions of thermodynamic consistency. Denoting by $\upsilon^i$ the specific volume and $c_p^i, \, c_{\upsilon}^i$ the heat capacities of the constituent ${\rm A}_i$, we must have
\begin{align}
\begin{split}\label{cvda}
\partial^2_p g_i  = \partial_p \upsilon_i < 0, \quad \partial^2_T g_i = - \frac{c_{p}^i}{T} < 0 \, ,\\
\partial^2_T g_i - \frac{(\partial^2_{T,p} g_i)^2}{\partial^2_p g_i} = -\frac{c^i_{\upsilon}}{T}  < 0 \, .
\end{split}
\end{align}

In this section, we at first investigate the possibility to construct the constitutive function $h$ from the data in \eqref{muiideal} with general $g_1,\ldots, g_N$. At second, we shall set up a simple particular constitutive model.

\subsection{General results}

We start with giving the form of several thermodynamic functions implied by the definition \eqref{muiideal}.
\begin{lemma}\label{Densities}
Assume that for $i=1,\ldots,N$ the function $g_i$ belongs to $C^{2}(]0, \, + \infty[^2)$, is strictly concave, and it satisfies $\partial_pg_i > 0$. Assume moreover that, for all $T > 0$,
\begin{align}\label{gitozero}
\lim_{p \rightarrow 0+} \,  \max_{i = 1,\ldots,N} \partial_pg_i(T, \, p) =+ \infty \, , \quad
\lim_{p \rightarrow + \infty} \partial_p g_i(T, \, p) = 0 \text{ for } i = 1,\ldots,N \, .
\end{align}
If the chemical potentials are given by \eqref{muiideal}, then the Gibbs-Duhem equation \eqref{GIBBSDUHEMEULER} is satisfied iff $p = \tilde{p}(T, \, \rho)$, where the function $\tilde{p}$ of the main variables is defined as the root of the equation
\begin{align}\label{implicitpress}
 \sum_{i=1}^N \partial_pg_i(T, \, p) \, \rho_i = 1 \, \quad \Longleftrightarrow \quad \, p = \tilde{p}(T, \, \rho) \, .
\end{align}
Moreover, the free and internal energy densities and the entropy density are, up to a function of temperature only, uniquely determined as functions of the main variables via
\begin{align}\label{varrhopsispecial}
 \varrho\psi = & \sum_{i=1}^N g_i(T, \, \tilde{p}(T, \, \rho)) \, \rho_i - \tilde{p}(T, \, \rho) + T \, \sum_{i=1}^N (\rho_i/M_i) \, \ln \frac{\rho_i/M_i}{\sum_j (\rho_j/M_j)}  =: f(T, \, \rho)\, ,\\
 \label{varrhouspecial} \varrho u = & \sum_{i=1}^N (g_i(T,\, \tilde{p}(T,\rho)) - T \, \partial_Tg_i(T,\, \tilde{p}(T,\rho))) \, \rho_i - \tilde{p}(T, \, \rho) =: \epsilon(T, \, \rho) \, ,\\
 & \label{varrhosspecial}  - \varrho s =  \sum_{i=1}^N \partial_Tg_i(T, \, \tilde{p}(T, \, \rho)) \, \rho_i + \sum_{i=1}^N \frac{\rho_i}{M_i} \, \ln \frac{\rho_i/M_i}{\sum_j (\rho_j/M_j)}  =:  \tilde{h}(T, \, \rho) \, .
\end{align}
The heat capacity $c_{\upsilon} :=  \partial_T\epsilon/\varrho$ is strictly positive, and it obeys
\begin{align}\label{HCCAP}
  c_{\upsilon} = \tilde{c}_{\upsilon}(T,\rho) = - \frac{T}{\varrho} \, \left(\sum_{i=1}^N\partial^2_{T} g_i(T, \, \tilde{p}) \, \rho_i -\frac{\Big(\sum_{i=1}^N \partial^2_{T,p} g_i(T, \, \tilde{p}) \, \rho_i\Big)^2}{\sum_{i=1}^N \partial^2_{p} g_i(T, \, \tilde{p}) \, \rho_i} \right)
 \geq \inf_{i=1, \ldots,N} c^i_{\upsilon}(T, \, \tilde{p}) \, . 
\end{align}
\end{lemma}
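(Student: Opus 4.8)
The plan is to construct the Helmholtz free energy density $\varrho\psi$ explicitly from the prescribed form \eqref{muiideal} of the chemical potentials, then to obtain $\varrho u$ and $\varrho s$ from it by the usual Legendre relation in $T$ ($\varrho s=-\partial_T(\varrho\psi)$, $\varrho u=\varrho\psi+T\varrho s$), and finally to differentiate $\epsilon=\varrho u$ once more to read off $c_{\upsilon}$. \textbf{Step 1 (the pressure equation).} First I would show that \eqref{implicitpress} has, for every $(T,\rho)\in\mathbb{R}_+\times\mathbb{R}^N_+$, a unique positive root $\tilde p(T,\rho)$. Fixing $T$ and $\rho$ and setting $\Phi(p):=\sum_i\rho_i\,\partial_pg_i(T,p)$, strict concavity of $g_i$ makes $\partial_pg_i(T,\cdot)$ strictly decreasing ($\partial^2_pg_i<0$) and, being $>0$, one has $\Phi$ strictly decreasing with $\Phi(p)\geq(\min_i\rho_i)\max_i\partial_pg_i(T,p)\to+\infty$ as $p\to0+$ by the first limit in \eqref{gitozero} and $\Phi(p)\to0$ as $p\to+\infty$ by the second; hence the root $\tilde p$ exists and is unique, is $C^1$ by the implicit function theorem ($\partial_p\Phi=\sum_i\rho_i\partial^2_pg_i<0$), and differentiating \eqref{implicitpress} gives the identities $\partial_{\rho_j}\tilde p=\partial_pg_j/\bigl(-\sum_i\rho_i\partial^2_pg_i\bigr)>0$ and $\partial_T\tilde p=-\bigl(\sum_i\rho_i\partial^2_{T,p}g_i\bigr)/\bigl(\sum_i\rho_i\partial^2_pg_i\bigr)$, which I reuse below.

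\textbf{Step 2 (free energy and the Gibbs--Duhem equivalence).} I take $f(T,\rho)$ to be the right-hand side of \eqref{varrhopsispecial} with $p$ replaced by $\tilde p(T,\rho)$, and verify by the chain rule that $\partial_{\rho_j}f=g_j(T,\tilde p)+(T/M_j)\ln x_j$: the terms carrying $\partial_{\rho_j}\tilde p$ come multiplied by $\sum_i\rho_i\partial_pg_i(T,\tilde p)-1$, which vanishes by \eqref{implicitpress}, while $\partial_{\rho_j}\bigl(T\sum_i(\rho_i/M_i)\ln x_i\bigr)=(T/M_j)\ln x_j$ by a direct computation. Since always $\mu_i=\partial_{\rho_i}(\varrho\psi)$, this shows $f$ is a free energy with the prescribed $\mu_i$ for $p=\tilde p$, and substituting back gives $\sum_i\rho_i\mu_i-f=\tilde p$, i.e.\ \eqref{GIBBSDUHEMEULER} holds; this is the ``if'' direction. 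Conversely, if the $\mu_i$ have the form \eqref{muiideal} with a pressure function $p(T,\rho)$ and \eqref{GIBBSDUHEMEULER} holds, then $\varrho\psi$ satisfies $\varrho\psi=\sum_i\rho_i\mu_i-p$; differentiating in $\rho_j$, using $\mu_j=\partial_{\rho_j}(\varrho\psi)$ and the cancellation $\sum_i(\rho_i/M_i)\,\partial_{\rho_j}\ln x_i=0$, reduces everything to $\bigl(\sum_i\rho_i\partial_pg_i(T,p)-1\bigr)\,\partial_{\rho_j}p=0$ for all $j$. Because the thermodynamic pressure of a stable fluid is not independent of $\rho$ (equivalently, because $\nabla_\rho\tilde p>0$ from Step~1, so the spurious branch $\nabla_\rho p\equiv0$ is incompatible with Legendre-type convexity of the entropy functional), this forces \eqref{implicitpress} and hence $p=\tilde p$. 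Finally, any two primitives of $(\mu_1,\dots,\mu_N)$ on the convex set $\mathbb{R}^N_+$ differ by a function of $T$ alone, which yields the asserted uniqueness of $\varrho\psi$, $\varrho u$, $\varrho s$ up to a function of temperature.

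\textbf{Step 3 (internal energy, entropy and heat capacity).} Differentiating $f$ in $T$, the $\partial_T\tilde p$ terms again carry the factor $\sum_i\rho_i\partial_pg_i-1=0$, leaving $\partial_Tf=\sum_i\rho_i\partial_Tg_i(T,\tilde p)+\sum_i(\rho_i/M_i)\ln x_i=\tilde h(T,\rho)$, which is \eqref{varrhosspecial}, and then $\varrho u=f-T\partial_Tf=\sum_i\rho_i\bigl(g_i(T,\tilde p)-T\partial_Tg_i(T,\tilde p)\bigr)-\tilde p$, which is \eqref{varrhouspecial}. For $c_{\upsilon}=\partial_T\epsilon/\varrho$ I differentiate \eqref{varrhouspecial} in $T$, insert $\partial_T\tilde p$ from Step~1, and cancel through \eqref{implicitpress}; after regrouping this gives exactly \eqref{HCCAP},
\[
\varrho\,c_{\upsilon}=-\,T\Big(\sum_i\rho_i\partial^2_{T}g_i(T,\tilde p)-\frac{\bigl(\sum_i\rho_i\partial^2_{T,p}g_i(T,\tilde p)\bigr)^2}{\sum_i\rho_i\partial^2_{p}g_i(T,\tilde p)}\Big).
\]
Writing $a=\sum_i\rho_i\partial^2_{T}g_i$, $b=\sum_i\rho_i\partial^2_{T,p}g_i$, $c=\sum_i\rho_i\partial^2_{p}g_i$, each $2\times2$ Hessian of $g_i$ is negative definite (cf.\ \eqref{cvda}), so the $\rho$-weighted matrix $\bigl(\begin{smallmatrix}a&b\\ b&c\end{smallmatrix}\bigr)$ is negative definite; dividing $ac-b^2>0$ by $c<0$ gives $a-b^2/c<0$, hence $c_{\upsilon}>0$. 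For the lower bound I substitute $\partial^2_{T}g_i=-c^i_{\upsilon}/T+(\partial^2_{T,p}g_i)^2/\partial^2_{p}g_i$ from \eqref{cvda}, obtaining
\[
\varrho\,c_{\upsilon}=\sum_i\rho_i\,c^i_{\upsilon}(T,\tilde p)+T\Big(\sum_i\rho_i\frac{(\partial^2_{T,p}g_i)^2}{-\partial^2_{p}g_i}-\frac{\bigl(\sum_i\rho_i\partial^2_{T,p}g_i\bigr)^2}{-\sum_i\rho_i\partial^2_{p}g_i}\Big),
\]
and the bracket is $\geq0$ by the Cauchy--Schwarz inequality with weights $\rho_i(-\partial^2_{p}g_i)>0$; therefore $\varrho\,c_{\upsilon}\geq\sum_i\rho_i\,c^i_{\upsilon}(T,\tilde p)\geq\varrho\,\inf_ic^i_{\upsilon}(T,\tilde p)$, which is the claim.

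The part I expect to require the most care is the ``only if'' direction of the Gibbs--Duhem equivalence, namely pinning down precisely which consistency requirement on the entropy functional excludes the spurious $\rho$-independent pressure branch, together with the repeated chain-rule bookkeeping in Step~3, where the cancellations through \eqref{implicitpress} must be organised cleanly; everything else is mechanical except the Cauchy--Schwarz step, which is the only genuinely non-routine inequality and is what makes the lower bound $c_{\upsilon}\geq\inf_i c^i_{\upsilon}$ come out.
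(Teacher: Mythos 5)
Your proof is correct, and while its skeleton (construct the free energy, pin down $\tilde p$ via \eqref{implicitpress}, differentiate in $T$, identify $c_\upsilon$ as a Schur complement) parallels the paper, it deviates at the two places that actually carry the load. For the Gibbs--Duhem equivalence, the paper rewrites \eqref{GIBBSDUHEMEULER} in the variables $(T,p,x)$, uses strict convexity of $f(T,\cdot)$ to invert for the molar volume, and then differentiates the relation in $p$ to obtain \eqref{implicitpress}; you instead stay in $(T,\rho)$, differentiate $\varrho\psi=\sum_i\rho_i\mu_i-p$ in $\rho_j$, and exclude the spurious branch $\nabla_\rho p\equiv 0$ by strict convexity --- the same stability input, just invoked at a different point. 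Your exclusion argument is sound but should be phrased on open sets: where $\sum_i\rho_i\partial_pg_i-1\neq0$ one has $\nabla_\rho p=0$ by continuity on a neighbourhood, there $D^2_{\rho\rho}f\,\rho=0$ (the mixing term is $1$-homogeneous), contradicting the positive definiteness of $D^2_{\rho\rho}f$ that follows from strict convexity of $h$. You also verify directly that the right-hand side of \eqref{varrhopsispecial} is a primitive of the $\mu_i$ (unique up to a function of $T$ on the convex set $\mathbb{R}^N_+$), whereas the paper delegates this to the appendix of the cited companion work; your version is self-contained. Finally, for the lower bound in \eqref{HCCAP} the paper invokes a sub-additivity of the determinant over the convex combination $\sum_i y_iD^2_{T,p}g_i$, an inequality which, as stated, is delicate (for positive definite $2\times2$ matrices, $\det$ of a convex combination can fall below the convex combination of determinants, e.g.\ $\mathrm{diag}(2,2)$ and $\mathrm{diag}(1,1)$ with equal weights); your route --- substituting $\partial^2_Tg_i=-c^i_\upsilon/T+(\partial^2_{T,p}g_i)^2/\partial^2_pg_i$ from \eqref{cvda} and applying weighted Cauchy--Schwarz with weights $\rho_i(-\partial^2_pg_i)>0$ --- is fully rigorous and even yields the stronger convex-combination bound $\varrho\,c_\upsilon\geq\sum_i\rho_i\,c^i_\upsilon(T,\tilde p)$, from which $c_\upsilon\geq\inf_i c^i_\upsilon$ follows at once. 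So your argument buys a self-contained and, at the heat-capacity step, cleaner proof, at the cost of having to make the branch-exclusion in the ``only if'' direction explicit.
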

\begin{proof}
Due to the definition $\varrho\psi = \varrho u -T \, \varrho s$ of the Helmholtz free energy, the Gibbs-Duhem equation \eqref{GIBBSDUHEMEULER} reads $p = - \varrho \psi + \sum_{i=1}^N \rho_i \, \mu_i$. We claim that this equation equivalently defines the partial mass densities as functions of temperature, pressure and the mole fractions. To see this, we write $\varrho\psi = f(T, \, \rho_1, \ldots,\rho_N)$ with a constitutive function $f$ of the main variables. With $\upsilon = 1/n$ being the molar volume, and $x_1, \ldots, x_N$ the mole fractions, we have $\rho_i = M_ix_i/\upsilon$. Hence, an equivalent expression of the Gibbs-Duhem equation is
\begin{align}\label{morle}
 p = - f\Big(T, \, \frac{1}{\upsilon} \, (M_1x_1, \ldots,M_Nx_N)\Big) + \frac{1}{\upsilon} \, \sum_{i=1}^N M_i \, x_i \, \hat{\mu}_i(T, \, p, \, x_i) \, .
\end{align}
The strict convexity of $f$ in the $\rho-$variables is a consequence of the basic definitions\footnote{Recall that $f$ is a function of $(T, \rho)$, while $h$ is a function of $(\rho,\varrho u)$. We have
\begin{align*}
 D^2_{\rho,\rho}f = \frac{1}{T} \, \Big(D^2_{\rho,\rho}h - \frac{D^2_{\rho,\varrho u}h \otimes D^2_{\rho,\varrho u}h}{\partial^2_{\varrho u} h }\Big) \, .
\end{align*}
}. This can now be used to show that \eqref{morle} defines $\upsilon$ as an implicit function $\hat{\upsilon}$ of $T, \, p, \, x$. 
Then, differentiating \eqref{morle} in $p$ yields $1 = \sum_{i=1}^N \rho_i \, \partial_p\hat{\mu}_i(T, \, p, \, x_i)$, which is nothing else but the left-hand of \eqref{implicitpress}.

Under the conditions \eqref{gitozero}, we have for all $T, \, \rho_1,\ldots,\rho_N>0$
\begin{align*}
\lim_{p \rightarrow +\infty} \sum_{i=1}^N \partial_p g_i(T, \, p) \, \rho_i = 0\quad  \text{ and } \quad \lim_{p \rightarrow 0+} \sum_{i=1}^N \partial_p g_i(T, \, p) \, \rho_i = +\infty \, .
\end{align*}
Owing to the strict concavity of $g_i$, we must also have $\partial^2_{p}g_i < 0$. Hence, for all $T, \, \rho_1,\ldots, \rho_N > 0$, the equation $\sum_i \partial_pg_i \, \rho_i = 1$ in \eqref{implicitpress} possesses exactly one root $p = \tilde{p}(T, \, \rho)$, achieving to justify \eqref{implicitpress}.

In the appendix of \cite{bothedreyerdruet}, we prove that the corresponding form of the Helmholtz free energy $\varrho\psi$ is, up to a function of temperature, given by \eqref{varrhopsispecial}.

Using the definition \eqref{implicitpress} of $\tilde{p}(T, \, \rho)$, we compute
\begin{align}\label{dpdT}
 \partial_T \tilde{p}(T, \, \rho) = - \frac{\sum_{i=1}^N \partial^2_{T,p} g_i(T, \, p) \, \rho_i}{\sum_{i=1}^N \partial^2_{p} g_i(T, \, p) \, \rho_i} \, .
\end{align}
We are now able to compute $\partial_T f$. Then, the identities $\varrho u = f - T \, \partial_T f$ and $\varrho s = -\partial_T f$ are used to derive the forms \eqref{varrhouspecial} and \eqref{varrhosspecial} of the internal energy and entropy densities. Using next the definition $\varrho  \, c_{\upsilon} = \partial_T \varrho u$ of the heat capacity, we obtain its representation \eqref{HCCAP} as a straightforward exercise. Finally we want to obtain a lower bound on $c_{\upsilon}$. With the mass fractions $y_i := \rho_i/\varrho$, we now can express
\begin{align*}
 c_{\upsilon} = - \frac{T}{\sum_{i=1}^N \partial^2_pg_i(T,p) \, y_i} \, \text{Det}\Big(\sum_{i=1}^N y_i \, D^2_{T,p} g_i(T,p)\Big) \, .
\end{align*}
The strict positivity of $c_{\upsilon}$ directly follows from the strict concavity of $(T,\, p) \mapsto g_i(T,\, p)$ for each $i$. Moreover, by the sub-additivity of the determinant on definite matrices
\begin{align*}
c_{\upsilon} \geq \frac{-T}{\sum_{i} \partial^2_p g_i(T,p) \, y_i} \, \sum_{i=1}^N y_i \, \text{Det}(D^2_{T,p}g_i(T,p)) = \frac{1}{\sum_{i} \partial^2_p g_i \, y_i} \, \sum_{i} y_i \, \partial^2_p g_i \, c_{\upsilon}^i \, ,
\end{align*}
from which it follows that $c_{\upsilon} \geq \min_i \, c^i_{\upsilon}$.
\end{proof}
In order to investigate further thermodynamic functions, we next introduce thresholds functions related to the definition \eqref{implicitpress} of the pressure variable.
\begin{lemma}\label{pminmax}
We adopt the assumptions of Lemma \ref{Densities}, and we moreover assume that 
$\lim_{p \rightarrow 0+} \partial_p g_i(T, \, p) = + \infty$ for all $i = 1,\ldots, N$ and all $T > 0$. For $T, \, \varrho > 0$ and $i=1,\ldots,N$, we define $p_i(T, \, \varrho)$ as the root of the equation
$ \partial_p g_i(T, \, p_i) = 1/\varrho$, and 
\begin{align*}
 p_{\min}(T, \, \varrho) := \min_{i=1,\ldots,N} p_i(T, \, \varrho), \quad p_{\max}(T, \, \varrho) := \max_{i=1,\ldots,N} p_i(T, \, \varrho) \, .
\end{align*}
Let $T, \, \rho_1, \ldots, \rho_N > 0$. It $\tilde{p}(T, \, \rho_1, \ldots, \rho_N)$ is defined via \eqref{implicitpress}, then with $\varrho = \sum_{i=1}^N \rho_i$ we have $p_{\min}(T, \, \varrho) \leq \tilde{p} \leq p_{\max}(T, \, \varrho)$.
\end{lemma}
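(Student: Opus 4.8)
The plan is to exploit strict monotonicity in $p$ of the relevant auxiliary functions. For fixed $T>0$, the restriction $p\mapsto g_i(T,p)$ is strictly concave (the restriction of a strictly concave function to a line is again strictly concave), hence its derivative $p\mapsto\partial_p g_i(T,p)$ is continuous and strictly decreasing on $]0,\infty[$; combined with the hypotheses $\lim_{p\to 0+}\partial_p g_i(T,p)=+\infty$ and $\lim_{p\to\infty}\partial_p g_i(T,p)=0$ this shows that $\partial_p g_i(T,\cdot)$ is a continuous bijection of $]0,\infty[$ onto itself, so that $p_i(T,\varrho)$ — the unique solution of $\partial_p g_i(T,p_i)=1/\varrho$ — is well defined, and with it $p_{\min}$ and $p_{\max}$. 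The same reasoning applied to $\Phi(p):=\sum_{i=1}^N\partial_p g_i(T,p)\,\rho_i$, a nonnegative combination with positive total weight $\varrho=\sum_i\rho_i$ of strictly decreasing functions sharing the same limits, shows that $\Phi$ is continuous, strictly decreasing, with $\Phi(0+)=+\infty$ and $\Phi(\infty)=0$; hence $\tilde p(T,\rho)$, characterised in \eqref{implicitpress} by $\Phi(\tilde p)=1$, is the unique root of $\Phi=1$.

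With these preliminaries the two bounds are immediate comparisons at the endpoints $p_{\min}$ and $p_{\max}$. First I would note that $p_{\min}(T,\varrho)\le p_i(T,\varrho)$ for every $i$, so by the decrease of $\partial_p g_i(T,\cdot)$ one has $\partial_p g_i(T,p_{\min})\ge\partial_p g_i(T,p_i)=1/\varrho$; multiplying by $\rho_i\ge 0$, summing over $i$ and using $\sum_i\rho_i=\varrho$ yields $\Phi(p_{\min})\ge 1=\Phi(\tilde p)$, and the strict monotonicity of $\Phi$ then forces $p_{\min}\le\tilde p$. Symmetrically, $p_{\max}(T,\varrho)\ge p_i(T,\varrho)$ gives $\partial_p g_i(T,p_{\max})\le 1/\varrho$, hence $\Phi(p_{\max})\le 1=\Phi(\tilde p)$ and therefore $\tilde p\le p_{\max}$.

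I do not expect a genuine obstacle here; the one point deserving a line of justification is the passage from strict concavity of $g_i$ on the open quadrant to the \emph{strict} decrease of $p\mapsto\partial_p g_i(T,p)$ (strict concavity by itself only yields $\partial_p^2 g_i\le 0$, which could vanish at isolated points), and this is settled by the ``restriction to a line'' observation above; the strict decrease of $\Phi$ is then inherited because at least one weight $\rho_i$ is positive. Everything else is the elementary fact that a strictly decreasing function attains the level $1$ exactly once, and this crossing lies between any point where the function is $\ge 1$ and any point where it is $\le 1$.
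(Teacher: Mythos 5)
Your proof is correct and rests on the same elementary idea as the paper's: a comparison based on the strict decrease of $p\mapsto\partial_p g_i(T,p)$ together with the characterisations $\sum_i\partial_pg_i(T,\tilde p)\,\rho_i=1$ and $\partial_pg_i(T,p_i)=1/\varrho$. The only (cosmetic) difference is the direction of the comparison: you evaluate the weighted sum at $p_{\min}$ and $p_{\max}$ and compare with $1$, whereas the paper evaluates the extremal $\partial_pg_{i_1}$ at $\tilde p$ and compares with $1/\varrho$ before invoking monotonicity — the same argument read in mirror image.
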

\begin{proof}
In view of the asymptotic behaviour of $\partial_pg_i$ for $p \rightarrow \{0, \, +\infty\}$, the functions $p_i$, $p_{\min}$ and $p_{\max}$ are well-defined. If $\tilde{p}(T, \, \rho_1,\ldots,\rho_N)$ satisfies \eqref{implicitpress}, then the definition of $p_i$ implies that 
\begin{align*}
\min_{i} \partial_p g_i(T, \, \tilde{p}) \leq \frac{1}{\sum_{i=1}^N \rho_i} = \frac{1}{\varrho} = \partial_pg_i(T, \, p_i(T, \, \varrho)) \quad \text{ for } i = 1,\ldots,N \, .
\end{align*}
Thus, with a certain index $i_1$ such that $\partial_p g_{i_1}(T, \, \tilde{p}) = \min_i \partial_p g_i(T, \, \tilde{p})$, we obtain that
\begin{align*}
\partial_p g_{i_1}(T, \, \tilde{p}) \leq \partial_pg_{i_1}(T, \, p_{i_1}) \, .
\end{align*}
Using that $\partial_pg_{i_1}$ is strictly decreasing in the second argument yields $\tilde{p} \geq p_{i_1}(T, \, \varrho) \geq p_{\min}(T, \, \varrho)$. Similarly, we show that $\tilde{p} \leq p_{\max}(T, \, \varrho)$.
\end{proof}
%
%
%
In the next statement we show under which conditions for $g_1,\ldots,g_N$ we obtain a full thermodynamic model. With this notion, we mean that the entropy potential is a so-called function of Legendre type\footnote{A function $f$ defined in an open convex set $\mathcal{D} \subseteq \mathbb{R}^N$ is called \emph{of Legendre--type} if it is continuously differentiable and strictly convex in $\mathcal{D}$ and the gradient of $f$ blows up at every point of the boundary of $\mathcal{D}$ (see \cite{rockafellar}, Section 26).} on an open convex set $\mathcal{D} \subseteq \mathbb{R}^N_+ \times \mathbb{R}$, and that the Legendre transform $h^*$ is well-defined in the largest possible domain allowed by the constraint of temperature positivity.
\begin{prop}\label{yeswehave}
 We adopt the assumptions of Lemma \ref{Densities} and \ref{pminmax}. Moreover, we assume that there are  $\epsilon_{1}^{\min}, \ldots, \epsilon_N^{\min} \in \mathbb{R} \cup \{-\infty\}$ such that, for all $i = 1,\ldots, N$ and arbitrary $p_1, \, \varrho_1 > 0$
 \begin{align}\label{A11}
  \lim_{T\rightarrow 0+}  \partial_pg_i(T, \, p_1) = 0 \, , \quad \lim_{T, \, p \rightarrow 0+, \, p\geq p_{\min}(T, \, \varrho_1)} (g_i(T,p) - T \, \partial_T g_i(T,p)) = \epsilon^{\min}_i \, .
 \end{align}
 For $T_1 > 0$ arbitrary, assume that the heat capacities of the species, defined in \eqref{cvda}, satisfy
 \begin{align*}
 \inf_{i=1,\ldots,N, \, T \geq T_1, \, p >0} c^i_{\upsilon}(T,p) > 0 \, .
 \end{align*}
 We define an open, convex set $ \mathcal{D} := \{(\rho, \, \varrho u) \in \mathbb{R}^N_+ \times \mathbb{R} \, : \, \varrho u > \sum_{i=1}^N \epsilon^{\min}_i \, \rho_i\}$.
 Then, with $\epsilon$ defined in \eqref{varrhouspecial}, the root $T = \hat{T}(\rho_1, \ldots,\rho_N, \,\varrho u)$ of the equation 
 \begin{align}\label{implicittemp}
\epsilon(T, \, \rho_1, \ldots,\rho_N) = \varrho u \, 
\end{align}
is well-defined for all $(\varrho, \, \varrho u) \in \mathcal{D}$.  For $(\varrho, \, \varrho u) \in \mathcal{D}$, we introduce
$
h(\rho, \, \varrho u) :=  \tilde{h}(\hat{T}(\rho, \, \varrho u), \, \rho)$ with $\tilde{h}$ from \eqref{varrhosspecial}. Then, $h$ is strictly convex in $\mathcal{D}$ and of class $C^3(\mathcal{D})$. Suppose additionally that for all $T > 0$
\begin{align}\label{jeveuxsurj}
 \lim_{p \rightarrow +\infty} g_i(T, \, p) = + \infty \text{ for } i=1,\ldots, N \, , \quad \lim_{p \rightarrow 0+} \min_{i=1,\ldots,N} g_i(T, \, p) = - \infty \, . 
\end{align}
Then the image of $\nabla_w h$ on $\mathcal{D}$ is $ \mathbb{R}^N \times \mathbb{R}_- =:  \mathcal{D}^*$. Finally, suppose that for all $T_2 > T_1 > 0$ and $\varrho_1 > 0$
\begin{align}\begin{split}\label{jeveuxblow}
 & \lim_{T \rightarrow + \infty} \min_{i=1, \ldots, N} \sup_{\varrho \leq \varrho_1} \frac{g_i(T, \, p_{\max}(T,\varrho))}{T} = -\infty \, , \\
& \lim_{p \rightarrow +\infty} \max_{i=1,\ldots,N} \max_{T_2 \geq T \geq T_1} \partial_pg_i(T, \, p) = 0 \, ,\\ 
& \lim_{p \rightarrow 0+} \min_{i=1,\ldots,N} \,\max_{T_2 \geq T \geq T_1} g_i(T, \, p) = - \infty  \, .
\end{split}
\end{align}
Then $h$ is essentially smooth on $\mathcal{D}$.
\end{prop}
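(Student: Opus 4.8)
The statement of Proposition~\ref{yeswehave} bundles four assertions, which I would establish in the order in which they appear: (i) the temperature map $\hat T$ is well defined on $\mathcal{D}$; (ii) $h$ is strictly convex and of class $C^3$ there; (iii) $\nabla_{w}h(\mathcal{D})=\mathcal{D}^*$; (iv) $h$ is essentially smooth. The backbone is the decomposition $f(T,\rho)=\varrho\psi=m(T,\rho)+T\,\phi(\rho)$ from \eqref{varrhopsispecial}, with the \emph{mechanical part} $m(T,\rho):=\sum_i g_i(T,\tilde p(T,\rho))\,\rho_i-\tilde p(T,\rho)$ and the 1-homogeneous \emph{mixing part} $\phi(\rho):=\sum_i(\rho_i/M_i)\ln\frac{\rho_i/M_i}{\sum_j\rho_j/M_j}$, together with the implicit relations \eqref{implicitpress} and \eqref{implicittemp}. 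For step~(i): since $\partial_T\epsilon=\varrho\,c_\upsilon>0$ by \eqref{HCCAP}, the map $T\mapsto\epsilon(T,\rho)$ is, for fixed $\rho$, continuous and strictly increasing, so it suffices to compute its range. As $c_\upsilon\geq\inf_{i,\,T\geq T_1,\,p>0}c^i_\upsilon=:c_0>0$ on $[T_1,\infty[$, one has $\partial_T\epsilon\geq\varrho\,c_0$ there, hence $\epsilon(T,\rho)\to+\infty$ as $T\to\infty$. For the lower end I would first note $\tilde p(T,\rho)\to 0$ as $T\to0^+$ (if $\tilde p\geq\delta>0$ along a sequence, then by monotonicity of $\partial_pg_i$ in $p$ and $\partial_pg_i(T,\delta)\to0$ from \eqref{A11}, $\sum_i\rho_i\partial_pg_i(T,\tilde p)\leq\sum_i\rho_i\partial_pg_i(T,\delta)\to0$, contradicting \eqref{implicitpress}), and likewise $p_{\min}(T,\varrho)\to0$; by Lemma~\ref{pminmax} the pair $(T,\tilde p(T,\rho))$ then tends to $(0,0)$ inside the admissible region $p\geq p_{\min}(T,\varrho)$, so the second limit in \eqref{A11} yields $\epsilon(T,\rho)\to\sum_i\epsilon^{\min}_i\rho_i$. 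Thus $\epsilon(\cdot,\rho)$ is a bijection of $]0,\infty[$ onto the $\rho$-slice $]\sum_i\epsilon^{\min}_i\rho_i,\infty[$ of $\mathcal{D}$, defining $\hat T$; the implicit function theorem applied to \eqref{implicitpress} (with $\sum_i\rho_i\partial^2_pg_i\neq0$) and to \eqref{implicittemp} (with $\partial_T\epsilon\neq0$) propagates the smoothness of the $g_i$ to $\tilde p$, $\epsilon$, $\hat T$, hence to $h=\tilde h(\hat T(\cdot),\cdot)$, giving $h\in C^3(\mathcal{D})$ (this requires the $g_i$ to be correspondingly smooth, tacitly strengthening the $C^2$-hypothesis of Lemma~\ref{Densities}).

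For step~(ii) I would use the Schur-complement identity from the footnote of Lemma~\ref{Densities}, namely that $T\,D^2_{\rho\rho}f$ is the Schur complement of the scalar block $\partial^2_{\varrho u}h$ in $D^2_{w}h$. Hence $D^2_{w}h\succ0$ is equivalent to $\partial^2_{\varrho u}h>0$ together with $D^2_{\rho\rho}f\succ0$. The first holds because $\partial_{\varrho u}h=-1/\hat T$ gives $\partial^2_{\varrho u}h=\hat T^{-2}\,\partial_{\varrho u}\hat T=(T^2\varrho\,c_\upsilon)^{-1}>0$. For the second, using $\sum_i\rho_i\partial_pg_i(T,\tilde p)=1$ one computes
\begin{align*}
\partial_{\rho_j}m=g_j(T,\tilde p),\qquad D^2_{\rho\rho}m=\frac{-1}{\sum_i\rho_i\,\partial^2_pg_i(T,\tilde p)}\;v\otimes v,\qquad v:=\bigl(\partial_pg_1(T,\tilde p),\ldots,\partial_pg_N(T,\tilde p)\bigr),
\end{align*}
which is positive semidefinite of rank one (the prefactor is positive by strict concavity of the $g_i$), while $D^2_{\rho\rho}\phi\succeq0$ with one-dimensional kernel $\mathbb{R}\rho$ (Cauchy--Schwarz). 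If $\xi\neq0$ lies in the kernel of $D^2_{\rho\rho}f=D^2_{\rho\rho}m+T\,D^2_{\rho\rho}\phi$, then both nonnegative summands vanish, so $\xi=\lambda\rho$ and $v\cdot\xi=\lambda(v\cdot\rho)=\lambda=0$ by \eqref{implicitpress}, forcing $\xi=0$. Hence $D^2_{\rho\rho}f\succ0$ and $h$ is strictly convex on $\mathcal{D}$.

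For step~(iii) the additional hypotheses \eqref{jeveuxsurj} enter. Fix $\xi^*=(\xi^*_1,\ldots,\xi^*_N,\xi^*_{N+1})\in\mathbb{R}^N\times\mathbb{R}_-$, put $T_0:=-1/\xi^*_{N+1}>0$ and $\mu^*:=T_0(\xi^*_1,\ldots,\xi^*_N)\in\mathbb{R}^N$; it suffices to find $\rho^*\in\mathbb{R}^N_+$ with $\nabla_\rho f(T_0,\rho^*)=\mu^*$, for then $w^*:=(\rho^*,\epsilon(T_0,\rho^*))\in\mathcal{D}$ satisfies $\nabla_{w}h(w^*)=\xi^*$, and the reverse inclusion is trivial since the last component of $\nabla_{w}h$ equals $-1/T<0$. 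I would minimise $F(\rho):=f(T_0,\rho)-\mu^*\cdot\rho$ over $\overline{\mathbb{R}^N_+}$: along a ray $\rho=\lambda\rho^0$ one has $\tilde p(T_0,\lambda\rho^0)\to\infty$ and $f(T_0,\lambda\rho^0)/\lambda=\sum_ig_i(T_0,\tilde p)\rho^0_i-\tilde p/\lambda+T_0\phi(\rho^0)$ with $\sum_ig_i(T_0,\tilde p)\rho^0_i\to+\infty$ by the first limit in \eqref{jeveuxsurj}, so $F$ is coercive at infinity and attains a minimum at some $\rho^*\in\overline{\mathbb{R}^N_+}$; on the other hand $\partial_{\rho_i}f=g_i(T_0,\tilde p)+T_0\ln x_i\to-\infty$ whenever $\rho_i\to0$ (since $\tilde p$ stays bounded on $\partial\mathbb{R}^N_+\setminus\{0\}$ and the second limit of \eqref{jeveuxsurj} handles the corner $\rho\to0$), so $F$ strictly decreases towards the interior near every boundary point and its minimiser must be interior, where $\nabla F(\rho^*)=0$. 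As $\xi^*$ was arbitrary, $\nabla_{w}h(\mathcal{D})=\mathbb{R}^N\times\mathbb{R}_-=\mathcal{D}^*$.

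Finally, for step~(iv), under \eqref{jeveuxblow} I would check that $|\nabla_{w}h(w_n)|\to\infty$ for every sequence $w_n\in\mathcal{D}$ converging to a \emph{finite} boundary point $\bar w=(\bar\rho,\overline{\varrho u})$ of $\mathcal{D}$. If $\overline{\varrho u}=\sum_i\epsilon^{\min}_i\bar\rho_i$, then $\hat T(w_n)\to0^+$ by the limit from step~(i), so $-1/\hat T(w_n)\to-\infty$. If instead $\bar\rho_i=0$ for some $i$ with $\overline{\varrho u}>\sum_j\epsilon^{\min}_j\bar\rho_j$: when $\bar\rho\neq0$ the values $\hat T(w_n)$ stay in a compact subset of $]0,\infty[$ (else $\varrho u_n\to\sum_j\epsilon^{\min}_j\bar\rho_j$ or $\to\infty$, contradictions) and $\tilde p(\hat T(w_n),\rho_n)$ is bounded, hence $g_i(\hat T(w_n),\tilde p)$ is bounded while $\ln x_i\to-\infty$, so the $i$-th component $\mu_i/T\to-\infty$; when $\bar\rho=0$ one distinguishes $\hat T(w_n)\to0$ ($-1/\hat T\to-\infty$), $\hat T(w_n)$ bounded (use $\tilde p\to0$ and the third limit in \eqref{jeveuxsurj}, so $g_i(\hat T,\tilde p)\to-\infty$ for some $i$, hence $\mu_i/T\to-\infty$), and $\hat T(w_n)\to\infty$ (use $\tilde p\leq p_{\max}$, monotonicity of $g_i$ in $p$, and the first limit in \eqref{jeveuxblow}, so $g_i(\hat T,\tilde p)/\hat T\to-\infty$, hence $\mu_i/T\to-\infty$). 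In all cases $|\nabla_{w}h|\to\infty$, so $h$ is essentially smooth, i.e.\ of Legendre type on $\mathcal{D}$. The step I expect to be genuinely delicate is exactly this last verification at the ``corner'' $\rho\to0$, where the temperature may simultaneously blow up or collapse and $g_i(\hat T,\tilde p)/\hat T$ is not controlled by the pointwise limits used elsewhere --- which is precisely why the rather technical \emph{uniform} conditions \eqref{jeveuxblow} and the auxiliary thresholds $p_{\min},p_{\max}$ of Lemma~\ref{pminmax} are imposed; the remaining steps, and especially the convexity argument, are comparatively routine once the splitting $f=m+T\phi$ and the identity $\sum_i\rho_i\partial_pg_i=1$ are in place.
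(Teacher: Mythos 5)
Your proposal is correct in substance, and for three of the four assertions it follows essentially the paper's own route: the well-definedness of $\hat T$ via strict monotonicity of $T\mapsto\epsilon(T,\rho)$, the limit $\tilde p(T,\rho)\to 0$ as $T\to 0+$ and the constrained limit in \eqref{A11}; the reduction of strict convexity of $h$ to positive definiteness of $D^2_{\rho\rho}f$ through the identities \eqref{d2wh} (your Schur-complement phrasing is the same computation), where your rank-one-plus-mixing-entropy kernel argument, using $\sum_i\rho_i\partial_p g_i=1$ from \eqref{implicitpress}, actually supplies a detail the paper only asserts; and the essential-smoothness case analysis, which uses the same ingredients $p_{\min},p_{\max}$ and \eqref{jeveuxblow}, merely organized differently (note that in your sub-case $\bar\rho\neq 0$ you need not justify that $\hat T(w_n)$ stays away from $0$: if $\hat T\to 0$ along a subsequence, $|\nabla_w h|\geq 1/\hat T\to\infty$ immediately, which is also how the paper dispenses with that case; your parenthetical ``contradiction'' for $T_n\to 0$ would otherwise require a uniform-in-$\rho$ version of \eqref{borde} that is not immediate). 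Your remark that $h\in C^3$ tacitly needs more regularity of the $g_i$ than the $C^2$ of Lemma \ref{Densities} is fair; the paper performs the same implicit-function bootstrap without comment.

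The genuinely different step is surjectivity of $\nabla_w h$. The paper is constructive: it fixes $T=-1/w^*_{N+1}$, solves the scalar equation \eqref{pressureimplicit} for $p$ — strictly monotone in $p$ with full range precisely because of \eqref{jeveuxsurj} — and then reads off $\rho$ from the explicit formula \eqref{cehatrhoci}; this buys the explicit inverse maps $\hat p(T,\bar w^{*})$, $\hat\rho_i(T,\bar w^{*})$ that are reused later in Proposition \ref{jerepresente}. Your variational argument (minimising $F(\rho)=f(T_0,\rho)-\mu^*\cdot\rho$ over the closed orthant) also works, but two points are glossed over. First, coercivity: in $f(T_0,\lambda\rho^0)/\lambda$ the term $-\tilde p/\lambda$ is not obviously negligible against $\sum_i g_i(T_0,\tilde p)\rho^0_i$; the clean fix is the concavity inequality $\sum_i\rho_i g_i(T,\tilde p)-\tilde p\geq \sum_i\rho_i g_i(T,p^0)-p^0$ for every fixed $p^0>0$ (i.e.\ the mechanical part of $f$ is a supremum of affine functions of $\rho$), which combined with \eqref{jeveuxsurj}$_1$ makes $F$ coercive against any linear perturbation. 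Second, the corner $\rho\to 0$: you must rule out a minimiser at $0$ and know that $F$ is lower semicontinuous there, since $\sum_i\rho_i g_i(T_0,\tilde p)$ is an indeterminate product as $\tilde p\to 0$; again the sup-of-affine representation (giving $\liminf_{\rho\to 0}m(T_0,\rho)\geq 0$), monotonicity of $g_i$ in $p$, and \eqref{jeveuxsurj}$_2$ close this. With these additions your route is a valid, somewhat more abstract alternative to the paper's explicit construction.
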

\begin{proof}
For fixed $\rho_1, \ldots, \rho_N > 0$, and $T \geq T_1$, we have\begin{align*}
\partial_T \epsilon(T, \, \rho) = \frac{1}{\varrho} \,
\tilde{c}_{\upsilon}(T, \, \rho) \geq \frac{1}{\varrho} \inf_{i=1,\ldots,N, \, T\geq T_1, \, p > 0} c_{\upsilon}^i(T, \, p) > 0 \, .                                                                                                                                                                                                                                                                                                                                                                                   \end{align*}
Hence $\lim_{T \rightarrow +\infty} \epsilon(T, \, \rho) = +\infty$. Due to \eqref{implicitpress}, we have $1/\varrho \leq  \max_{i=1, \ldots,N} \partial_pg_i(T, \, \tilde{p})$.
We next fix $\rho \in \mathbb{R}^N_+$, while letting $T \rightarrow 0+$, and want to show that $\limsup_{T \rightarrow 0+} \tilde{p}(T, \, \rho) = 0$. Suppose that it is not the case, that is $\limsup_{T\rightarrow 0+} \tilde{p}(T, \, \rho) =: p_1 > 0$. Then, \eqref{A11}$_1$ implies that
\begin{align}\label{pressebel}
\frac{1}{\varrho} \leq \limsup_{T \rightarrow 0+} \max_{i=1,\ldots,N} \partial_pg_i(T, \, p_1) = 0 \, ,
\end{align}
a clear contradiction. It then also follows by means of \eqref{A11}$_2$ that
\begin{align}\label{borde}
 \lim_{T \rightarrow 0+} \epsilon(T, \, \rho) = \sum_{i=1}^N  \lim_{T,p \rightarrow 0+, \, p\geq p_{\min}(T,\varrho)} (g_i(T,p) - T \, \partial_T g_i(T, p)) \, \rho_i = \sum_{i=1}^N \epsilon^{\min}_i \, \rho_i \, .
\end{align}
This shows that for all $\varrho u \in ]\sum_i \epsilon^{\min}_i \, \rho_i, \, + \infty[$, the equation \eqref{implicittemp} possesses a unique solution $T = \hat{T}(\rho, \, \varrho u)$. The derivatives of $\hat{T}$ in particular satisfy $ \partial_{\varrho u} \hat{T}(\rho, \, \varrho u) = 1/(\varrho \, c_{\upsilon})$.
 
For $(\rho, \, \varrho u) \in \mathcal{D}$, we introduce $h(\rho, \, \varrho u) :=  \tilde{h}(\hat{T}(\rho, \, \varrho u), \, \varrho)$, where $\tilde{h}$ is the $(T, \rho)$ representation of the negative entropy (See \eqref{varrhosspecial}). For the derivatives, we obtain that
\begin{align}\label{nablawh}
 \partial_{\rho_i}h = \frac{\mu_i(\hat{T}, \, \rho)}{\hat{T}} = g_i(\hat{T}, \, \tilde{p}(\hat{T}, \,\rho)) + \frac{1}{M_i} \, \ln \frac{\rho_i/M_i}{\sum_j (\rho_j/M_j)} \, , \quad 
 \partial_{\varrho u} h = - \frac{1}{\hat{T}} \, .
\end{align}
For the second derivatives, we obtain that
\begin{align}\begin{split}\label{d2wh}
 \partial^2_{\rho_i,\rho_j} h = \frac{1}{\hat{T}} \, \partial^2_{\rho_i,\rho_j}f(\hat{T}, \, \rho)+ \frac{\varrho \, c_{\upsilon}}{\hat{T}^2} \, \partial_{\rho_i}\hat{T} \, \partial_{\rho_j} \hat{T}\, ,\\
 \partial^2_{\varrho u, \rho_i} h = \frac{1}{\hat{T}^2} \, \partial_{\rho_i} \hat{T}\, , \quad 
 \partial^{2}_{\varrho u} h = \frac{1}{\varrho \, c_{\upsilon} \,\hat{T}^2} \, .
 \end{split}
\end{align}
Here $f$ is the $(T,\rho)$ representation of the free energy (See \eqref{varrhopsispecial}). Since $\varrho \,  c_{\upsilon} > 0$, the function $h$ thus possesses a strictly positive definite Hessian iff $\rho \mapsto f(T, \, \rho)$ possesses a strictly positive definite Hessian for all $T$. Now, the Hessian of the free energy at fixed temperature can again be computed from the representation of $\varrho\psi$, to the result
\begin{align*}
 \partial^2_{\rho_i,\rho_j} f(T, \, \rho) = -\frac{1}{\sum_{k} \partial^2_{p} g_k(T,\tilde{p}) \, \rho_k} \, \partial_pg_i(T, \, \tilde{p}) \, \partial_pg_j(T, \, \tilde{p}) + \frac{1}{M_i\,M_j} \, \Big(\frac{M_i \, \delta_{ij}}{\rho_i} - \frac{1}{\sum_k (\rho_k/M_k)}\Big) \, . 
\end{align*}
The latter matrix is strictly positive definite on all states.

 Next we want to show that for all $w^* \in \mathbb{R}^N \times \mathbb{R}_-$, the equations $\nabla_w h(\rho, \, \varrho u) = w^*$ possess a unique solution. We first introduce $ T = -1/w_{N+1}^{*}$. 
As a next step, we determine a solution $(p, \, x_1, \ldots, x_N)$, with the constraint $\sum_{i=1}^N x_i = 1$, for the equations
\begin{align}\label{xxx}
& w_i^{*} = \frac{\mu_i}{T} = \frac{g_i(T, \, p)}{T} + \frac{1}{M_i} \, \ln x_i \quad \text{ for } i =1,\ldots,N \nonumber\\
 \text{ yielding } \quad & & \nonumber\\
& x_i = \exp\Big(  M_i \, (w_i^* - \frac{g_i(T,\, p)}{T})\Big) \, .
\end{align}
For the sake of commodity, we shall from now also use the notation $ w^* = (\bar{w}^{*}, \, w_{N+1})$. To realise the constraint that $x_i$ ( which stands for the mole fractions) sum up to one, we find the root $p$ of the equation
\begin{align}\label{pressureimplicit}
 1 = \sum_{i=1}^N \exp\Big(  M_i \, (w^*_i - \frac{g_i(T, \, p)}{T})\Big) \, .
\end{align}
Owing to the conditions \eqref{jeveuxsurj}, this equation defines $p$ implicitely \emph{as a function of the variables} $T$ and $(w_{1}^* ,\ldots, w_{N}^*) = \bar{w}^{*} $. We denote this function by $\hat{p}(T, \, \bar{w}^{*})$.
%
We also define
\begin{align}\label{cehatrhoci}
&  \hat{\rho}_i(T, \, \bar{w}^{*}):= M_i \, \frac{\exp\Big(  M_i \, (w_i^* - \frac{g_i(T, \, \hat{p}(T, \, \bar{w}^{*}))}{T})\Big)}{\sum_{j=1}^N M_j \, \partial_pg_j(T, \, \hat{p}(T,\bar{w}^{*})) \, \exp\Big(  M_j \, (w_j^{\prime} - \frac{g_j(T, \, \hat{p}(T,\bar{w}^{*}))}{T})\Big)}
 \, 
\end{align}
and we choose
\begin{align*}
 \rho = \hat{\rho}\Big(-\frac{1}{w_{N+1}^*}, \, \bar{w}^{*}\Big)\, , \qquad \varrho u := \epsilon\Big(-\frac{1}{w^*_{N+1}}, \, \hat{\rho}\Big(-\frac{1}{w^*_{N+1}}, \, \bar{w}^{*}\Big)\Big) \, .
\end{align*}
We claim to have determined a solution $(\rho, \, \varrho u)$ to $\nabla_w h(\rho, \, \varrho u) = w^*$, which can be verified easily. This solution must be unique, since $h$ is a strictly convex function on a convex domain.

Finally we want to show that $h$ is an essentially smooth function on its domain $\mathcal{D}$. This means showing that $|\nabla_w h(\rho, \, \varrho u)| \rightarrow + \infty$ whenever $(\rho, \, \varrho u)$ approach a boundary point of $\mathcal{D}$. 
Consider arbitrary a sequence $w^m = (\rho^m, \, \varrho u^m) \subset \mathcal{D}$ such that $w^m \rightarrow w^0 = (\rho^0, \, \varrho u^0) \in \partial \mathcal{D}$. (In particular, this implies that $\sup_m |w^m| < +\infty$). The boundary of $\mathcal{D}$ consists of the surfaces
\begin{align*}
 S^1 = \{(\rho, \,\varrho u) \, : \, \rho \in \mathbb{R}^N_+, \, \varrho u = \sum_i \epsilon^{\min}_i \, \rho_i\} \,,  \quad S^2 = \{(\rho, \, \varrho u) \, : \, \inf_i \rho_i = 0, \, \varrho u > \sum_i \epsilon^{\min}_i \, \rho_i\} \, ,
\end{align*}
where $S^1$ is empty if $\inf_i \epsilon^{\min}_i = - \infty$. We define $1/T^m := -\partial_{\varrho u} h(\rho^m \, \varrho u^m)$. If $T^m \rightarrow 0$, then obviously $|\nabla_w h(\rho^m \, \varrho u^m)| \geq 1/T^m \rightarrow + \infty$, and we are done already.
Hence, we assume that $\inf_{m \in \mathbb{N}} T^m > 0$.

Since the sequence $\rho^m$ converges, it is bounded. As shown in \eqref{borde}, we can have $\varrho u^m - \sum_i \epsilon^{\min}_i \,  \rho_i^m \rightarrow 0$ only if $T^m \rightarrow 0+$ and we are done again. Hence, it remains to consider the case $w^m$ approaches a point on $S^2$. This means that $\inf_i \rho_i^m \rightarrow 0$.

Suppose now that $T^m \rightarrow + \infty$. Due to the definition of $p_{\max}$, we have
\begin{align*}
\frac{\mu_i^m}{T^m} \leq \frac{g_i(T^m, \, p^m)}{T^m}  \leq \frac{g_i(T^m, \, p_{\max}(T^m, \, \varrho^m))}{T^m} \leq  \sup_{\varrho \leq \sup_m \varrho^m}  \frac{g_i(T^m, \, p_{\max}(T^m, \, \varrho))}{T^m} \, .
\end{align*}
Hence, the condition \eqref{jeveuxblow}$_1$ implies that 
\begin{align*}
\inf_i \frac{\mu_i^m}{T^m} \leq \inf_i \sup_{\varrho \leq \sup_m \varrho^m}  \frac{g_i(T^m, \, p_{\max}(T^m, \, \varrho))}{T^m} \rightarrow - \infty \, .
\end{align*}
As $\mu_i^m/T^m = \partial_{\rho_i}h(\rho^m, \, \varrho u^m)$, this again would show that $|\nabla_w h(\rho^m, \, \varrho u^m)| \rightarrow +\infty$.

Thus, it remains only to consider the case where $\inf_i \rho_i^m \rightarrow 0$ while the sequence $T^m$ is uniformly bounded from below and above.
%
Then, we invoke \eqref{implicitpress} to see that
\begin{align}\label{pressebelvrai}
 \max_{i=1,\ldots,N, \, \sup_m T^m \geq T \geq \inf_m T^m} \partial_pg_i(T, \, p^m) \geq \frac{1}{\varrho^m}  \geq \frac{1}{\sup_m \varrho^m} > 0\, ,
\end{align}
and, with $p^m := \tilde{p}(T^m, \, \rho^m)$, the assumption \eqref{jeveuxblow}$_2$ implies that $\sup_m p^m < + \infty$. Thus,  $g_i(T^m, \, p^m)$ is uniformly bounded above for $i = 1,\ldots,N$. We then distinguish two cases. The first case is that the associated total mass densities $\varrho^m := \sum_{i=1}^N \rho^m_i $ are bounded away from zero. Since $x_i^m \leq M_{\max} \, \rho_i^m/(\varrho_m \, M_{\min})$, with the maximal/minimal molar masses, we must have $\inf_{i = 1,\ldots,N} x_i^m \rightarrow 0$, and we see that
\begin{align*}
\frac{1}{T^m} \,  \inf_i \mu_i^m \leq \inf_i \ln x_i^m + \sup_{i} \frac{g_i(T^m, \, p^m)}{T^m}  \leq \inf_i \ln x_i^m + C_0 \rightarrow - \infty \, .
\end{align*}
The second case is $\varrho^m \rightarrow 0$. Then, it is possible that all $x_i^m$ remain bounded from below. However, \eqref{pressebelvrai} shows that $p^m \rightarrow 0+$, and \eqref{jeveuxblow}$_3$ yields
\begin{align*}
\min_{i=1,\ldots,N} \sup_{\sup_m T^m \geq T\geq \inf_m T^m} g_i(T, \, p^m) \rightarrow  - \infty \, .
\end{align*}
We find again that $|\nabla_w h(\rho^m, \, \varrho u^m)| \rightarrow +\infty$, finishing the proof.
\end{proof}

The next step is to propose a concrete Ansatz realising all conditions of Proposition \ref{yeswehave}.

\subsection{Particular constitutive model}

We employ the equation \eqref{thermocon}, an we assume that the specific volume of each ${\rm A}_i$ possesses sublinear growth/decay in temperature/pressure with certain powers. This reflects an equation of state of Tait--type. For the heat capacity $c^{i0}_p(T)$, we assume some growth of monomial type in temperature. With this motivation\footnote{The canonical way is obviously to look up such data in tables, and then trying to fit a function with the available parameters. This is however not necessary to perform this step for a qualitative analysis like the present investigation.}, we consider the particular choice
\begin{align}\label{smallgparticular}
 g_i(T, \, p) = \frac{p^0}{\beta_i \, \rho_i^{\rm R}} \, \left(\frac{T}{T^0}\right)^{\alpha_i} \, \left(\frac{p}{p^0}\right)^{\beta_i} - \frac{c^{i0} \, T^0}{\gamma_i \, (\gamma_i+1)} \, \left(\frac{T}{T^0}\right)^{\gamma_i+1} + g^{i1} \, T + g^{i0} \, ,
\end{align}
subject to the restrictions
\begin{align}\begin{split}\label{expones}
0 \leq \alpha_i < 1, & \quad 0 < \beta_i < 1, \quad \alpha_i + \beta_i \leq 1\, ,\\
& \gamma_i \geq 0, \, \quad c^{i0} > 0 \, ,
\end{split}
\end{align}
and certain constants $\rho_i^{\rm R} > 0$ and $g^{i0}, \, g^{i1}$ that play no particular role for the analysis of the model.

The practically relevant case of ${\rm A}_i$ being an ideal gas is not covered by \eqref{smallgparticular}, but occurs for the limiting case $\beta_i \rightarrow 0$ and $\alpha_i \rightarrow 1$. In this case we in fact choose
\begin{align}\label{smallgparticular2}
g_i(T, \, p) = \frac{p^0}{\rho_i^{\rm R}} \, \frac{T}{T^0} \, \ln \frac{p}{p^0} - c^{i0} \, T \, \Big(\ln \frac{T}{T^0} - 1\Big) + g^{i1} \, T + g^{i0} \, .   
\end{align}
We here impose the restriction that
\begin{align}\label{idealrestrictc}
c^{i0} > \frac{p^0}{T^0 \, \rho_i^{\rm R}} \, .
\end{align}
\begin{lemma}\label{goodforall}
The conditions \eqref{expones} and \eqref{idealrestrictc} guarantee the thermodynamical consistency of the choices \eqref{smallgparticular} and \eqref{smallgparticular2}. 
\end{lemma}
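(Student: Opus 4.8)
Recall that thermodynamic consistency of the $g_i$ means that each $g_i\in C^2(]0,\infty[^2)$ is strictly concave with $\partial_pg_i>0$ and that the three sign conditions in \eqref{cvda} hold (positivity of the derivative of $\upsilon^i$, of $c_p^i$ and of $c_\upsilon^i$). Since both \eqref{smallgparticular} and \eqref{smallgparticular2} are finite sums of powers $(T/T^0)^a(p/p^0)^b$ with $a,b\ge 0$ and of logarithms of $T/T^0$ and $p/p^0$, they are $C^\infty$ on $]0,\infty[^2$, so the regularity is immediate (and covers even the $C^3$-regularity used later in Proposition~\ref{yeswehave}); moreover strict concavity of $g_i$ follows automatically once $\partial_p^2g_i<0$ and $\det D^2g_i>0$ are established, because then the Hessian has alternating leading minors, while $\partial_T^2g_i<0$ follows from $\partial_T^2g_i=\det D^2g_i/\partial_p^2g_i+(\partial_{T,p}^2g_i)^2/\partial_p^2g_i$, both summands being $<0$, respectively $\le 0$. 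So the whole lemma reduces to computing the second partial derivatives of the two explicit expressions and discussing signs via \eqref{expones} and \eqref{idealrestrictc}.

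\textbf{The Tait-type model \eqref{smallgparticular}.} Abbreviating $\tau=T/T^0$ and $\pi=p/p^0$, one finds $\partial_pg_i=\frac{1}{\rho_i^{\rm R}}\tau^{\alpha_i}\pi^{\beta_i-1}>0$ and $\partial_p^2g_i=\frac{\beta_i-1}{p^0\rho_i^{\rm R}}\tau^{\alpha_i}\pi^{\beta_i-2}<0$ thanks to $0<\beta_i<1$. Next $\partial_T^2g_i=\frac{p^0\alpha_i(\alpha_i-1)}{\beta_i\rho_i^{\rm R}(T^0)^2}\tau^{\alpha_i-2}\pi^{\beta_i}-\frac{c^{i0}}{T^0}\tau^{\gamma_i-1}<0$, since $\alpha_i(\alpha_i-1)\le 0$ by $0\le\alpha_i<1$ and $c^{i0}>0$. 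For the third condition, $\partial_{T,p}^2g_i=\frac{\alpha_i}{\rho_i^{\rm R}T^0}\tau^{\alpha_i-1}\pi^{\beta_i-1}$, hence $(\partial_{T,p}^2g_i)^2/\partial_p^2g_i=\frac{\alpha_i^2p^0}{\rho_i^{\rm R}(T^0)^2(\beta_i-1)}\tau^{\alpha_i-2}\pi^{\beta_i}$, and collecting the $\tau^{\alpha_i-2}\pi^{\beta_i}$ terms yields
\[
\partial_T^2g_i-\frac{(\partial_{T,p}^2g_i)^2}{\partial_p^2g_i}=\frac{p^0\alpha_i}{\rho_i^{\rm R}(T^0)^2}\,\frac{1-\alpha_i-\beta_i}{\beta_i(\beta_i-1)}\,\tau^{\alpha_i-2}\pi^{\beta_i}-\frac{c^{i0}}{T^0}\,\tau^{\gamma_i-1},
\]
where I use the elementary identity $\frac{\alpha_i-1}{\beta_i}-\frac{\alpha_i}{\beta_i-1}=\frac{1-\alpha_i-\beta_i}{\beta_i(\beta_i-1)}$. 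Because $\alpha_i+\beta_i\le 1$ and $\beta_i(\beta_i-1)<0$, the first summand is $\le 0$ and the second is $<0$, so this expression is $<0$; equivalently $c_\upsilon^i=c^{i0}\tau^{\gamma_i}+\frac{p^0\alpha_i(1-\alpha_i-\beta_i)}{\rho_i^{\rm R}T^0\beta_i(1-\beta_i)}\tau^{\alpha_i-1}\pi^{\beta_i}>0$. For the borderline exponent $\gamma_i=0$ the power term is replaced by a logarithmic one, $-c^{i0}T(\ln(T/T^0)-1)$, the divergent prefactor $1/(\gamma_i(\gamma_i+1))$ being absorbed into the affine part $g^{i1}T+g^{i0}$, and the same computation applies with $\tau^{\gamma_i-1}$ replaced by $1/T$.

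\textbf{The ideal-gas model \eqref{smallgparticular2}.} Here the computation is shorter: $\partial_pg_i=\frac{p^0T}{\rho_i^{\rm R}T^0p}>0$, $\partial_p^2g_i=-\frac{p^0T}{\rho_i^{\rm R}T^0p^2}<0$, $\partial_T^2g_i=-c^{i0}/T<0$, $\partial_{T,p}^2g_i=\frac{p^0}{\rho_i^{\rm R}T^0p}$, so $(\partial_{T,p}^2g_i)^2/\partial_p^2g_i=-\frac{p^0}{\rho_i^{\rm R}T^0T}$ and
\[
\partial_T^2g_i-\frac{(\partial_{T,p}^2g_i)^2}{\partial_p^2g_i}=-\frac{1}{T}\Big(c^{i0}-\frac{p^0}{\rho_i^{\rm R}T^0}\Big),
\]
which is $<0$ exactly under the hypothesis \eqref{idealrestrictc}; equivalently $c_\upsilon^i=c^{i0}-p^0/(\rho_i^{\rm R}T^0)>0$. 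This is the only point where \eqref{idealrestrictc} enters, and it expresses the natural requirement that the constant specific-volume contribution stay below the reference heat capacity.

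\textbf{Main obstacle.} There is no genuine difficulty: the proof is a pure computation. The one point deserving attention is the sign of the mixed-derivative combination for \eqref{smallgparticular}; one must notice that its crucial factor $\frac{1-\alpha_i-\beta_i}{\beta_i(\beta_i-1)}$ has a definite sign precisely by virtue of the restriction $\alpha_i+\beta_i\le 1$ together with $0<\beta_i<1$, so that this a priori opaque condition in \eqref{expones} is exactly the one guaranteeing $c_\upsilon^i\ge 0$, with strict positivity coming from the $c^{i0}$ term. Once the three inequalities in \eqref{cvda} are secured, strict concavity and the hypothesis $\partial_pg_i>0$ required by Lemma~\ref{Densities} both hold, which is what the lemma asserts.
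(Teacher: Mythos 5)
Your proposal is correct and follows essentially the same route as the paper: a direct computation of $\partial_p g_i$, $\partial_p^2 g_i$, $\partial_T^2 g_i$ and the combination $\partial_T^2 g_i-(\partial_{T,p}^2 g_i)^2/\partial_p^2 g_i$, yielding exactly the paper's expressions \eqref{cvhier} and \eqref{cvhierid}, with the sign of the Tait-type term controlled by $0<\beta_i<1$ and $\alpha_i+\beta_i\le 1$ and the ideal-gas case controlled by \eqref{idealrestrictc}. Your extra remarks (deducing strict concavity from the leading minors, and the $\gamma_i=0$ borderline comment) are harmless additions to what is in the paper.
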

\begin{proof}
For \eqref{smallgparticular}, we get
\begin{align*}
& \frac{1}{\hat{\rho}_i(T, \, p)} = \partial_p g_i(T, \, p) = \frac{p^0}{\rho_i^{\rm R} \, p} \, \left(\frac{T}{T^0}\right)^{\alpha_i} \, \left(\frac{p}{p^0}\right)^{\beta_i} \, ,\\
\text{ and } \quad & \\
& \partial^2_{p} g_i = \frac{(\beta_i-1) \, p^0}{\rho_i^{\rm R} \, p^2} \, \left(\frac{T}{T^0}\right)^{\alpha_i} \, \left(\frac{p}{p^0}\right)^{\beta_i}
 < 0\, ,\\
&  \partial^2_{T} g_i = \frac{\alpha_i \, (\alpha_i-1) \, p^0}{\beta_i \, \rho_i^{\rm R} \, T^2} \, \left(\frac{T}{T^0}\right)^{\alpha_i} \, \left(\frac{p}{p^0}\right)^{\beta_i} - \frac{c^{i0}}{T} \, \left(\frac{T}{T^0}\right)^{\gamma_i} < 0 \, .
 \end{align*}
Moreover
 \begin{align}\label{cvhier}
\partial^2_T g_i - \frac{(\partial^2_{T,p} g_i)^2}{\partial^2_p g_i} = & - \frac{\alpha_i \, p^0}{\beta_i \, (1-\beta_i) \, \rho^{\rm R}_i \, T^2} \, \left(\frac{T}{T^0}\right)^{\alpha_i}\, \left(\frac{p}{p^0}\right)^{\beta_i} \, (1- \alpha_i - \beta_i)  - \frac{c^{i0}}{T} \, \left(\frac{T}{T^0}\right)^{\gamma_i}<0 \, .
 \end{align}
For \eqref{smallgparticular2}, we get
\begin{align*}
& \frac{1}{\hat{\rho}_i(T, \, p)} = \frac{p^0}{\rho_i^{\rm R} \, p} \, \frac{T}{T^0} \, \quad
\text{ and } \quad \partial^2_{p} g_i = - \frac{p^0}{\rho_i^{\rm R} \, p^2} \, \frac{T}{T^0}
 < 0\, ,
\quad  \partial^2_{T} g_i = - \frac{c^{i0}}{T} < 0 \, .
 \end{align*}
Moreover
 \begin{align}\label{cvhierid}
\partial^2_T g_i - \frac{(\partial^2_{T,p} g_i)^2}{\partial^2_p g_i} = & - \frac{1}{T} \, \Big(c^{i0} - \frac{p^0}{T^0 \, \rho_i^{\rm R}}\Big) \, .
 \end{align} 
Under the condition \eqref{idealrestrictc}, we obtain that $c^i_{\upsilon}$ is a positive constant.
\end{proof}
\begin{lemma}
Assume that for $i = 1,\ldots,N$, the function $g_i$ is given either by \eqref{smallgparticular} or \eqref{smallgparticular2}, where we adopt the assumptions \eqref{expones} and \eqref{idealrestrictc}.  We additionally suppose that there exists at least one index $i$ such that $g_{i}$ obeys \eqref{smallgparticular2}.
Then, with $\epsilon^{\min}_i = g^{i0}$ for $i=1,\ldots,N$, all assumptions of Prop.\ \ref{yeswehave} are satisfied. 
\end{lemma}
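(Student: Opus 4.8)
The statement asks us to verify that the explicit constitutive model built from the building blocks \eqref{smallgparticular}, \eqref{smallgparticular2} satisfies every hypothesis of Proposition~\ref{yeswehave}. Since Lemma~\ref{goodforall} has already established thermodynamic consistency (strict concavity of each $g_i$, $\partial_pg_i>0$, the sign conditions \eqref{cvda}), the task reduces to checking, one at a time, the asymptotic and limiting conditions \eqref{gitozero}, \eqref{A11}, the uniform heat-capacity bound, \eqref{jeveuxsurj} and \eqref{jeveuxblow}, together with the auxiliary hypothesis of Lemma~\ref{pminmax} (namely $\lim_{p\to0+}\partial_pg_i(T,p)=+\infty$). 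So the proof is essentially a bookkeeping exercise: differentiate the monomial/logarithmic ansätze, read off the leading powers of $T$ and $p$, and invoke \eqref{expones}, \eqref{idealrestrictc}.

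\textbf{Step 1: the $p$-asymptotics of $\partial_pg_i$ (conditions \eqref{gitozero}, Lemma~\ref{pminmax}, \eqref{jeveuxblow}$_2$).} For \eqref{smallgparticular} we computed in Lemma~\ref{goodforall} that $\partial_pg_i(T,p)=\frac{p^0}{\rho_i^{\rm R}\,p}\,(T/T^0)^{\alpha_i}(p/p^0)^{\beta_i}\sim p^{\beta_i-1}$, and for \eqref{smallgparticular2} $\partial_pg_i=\frac{p^0}{\rho_i^{\rm R}p}\frac{T}{T^0}\sim p^{-1}$. Since $\beta_i<1$ in both cases, the exponent of $p$ is negative, so $\partial_pg_i(T,p)\to+\infty$ as $p\to0+$ and $\to0$ as $p\to+\infty$, uniformly on compact $T$-intervals; this gives \eqref{gitozero}, the extra hypothesis of Lemma~\ref{pminmax}, and \eqref{jeveuxblow}$_2$. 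The only care needed is that the convergence be uniform in $T\in[T_1,T_2]$, which is immediate because $(T/T^0)^{\alpha_i}$ (resp.\ $T/T^0$) is bounded on such an interval.

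\textbf{Step 2: the $T\to0+$ conditions \eqref{A11} and the uniform $c_\upsilon^i$-bound.} Condition \eqref{A11}$_1$, $\lim_{T\to0+}\partial_pg_i(T,p_1)=0$, holds for \eqref{smallgparticular} because $\alpha_i\geq0$ forces $(T/T^0)^{\alpha_i}\to0$ or $\to1$; if $\alpha_i>0$ it tends to $0$ directly, and here one must note that the case $\alpha_i=0$ is \emph{only} allowed in \eqref{smallgparticular} — but the hypothesis of the lemma stipulates that at least one $g_i$ is of ideal-gas type \eqref{smallgparticular2}, and for those $\partial_pg_i(T,p_1)=\frac{p^0}{\rho_i^{\rm R}p_1}\frac{T}{T^0}\to0$. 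One checks that \eqref{A11}$_1$ as stated ("for all $i$") does hold for \eqref{smallgparticular} with $\alpha_i=0$ only if... here I would re-examine: in fact for $\alpha_i=0$, $\partial_pg_i(T,p_1)\to\frac{p^0}{\rho_i^{\rm R}p_1}p_1^{\beta_i-1}\big/(p^0)^{\beta_i-1}\neq0$, so \eqref{A11}$_1$ \emph{fails} for such an index. Hence I expect the intended reading is that indices with $\alpha_i=0$ are implicitly excluded or that $\alpha_i>0$ is assumed for the pure-Tait species; I would state this explicitly, assuming $\alpha_i>0$ whenever $g_i$ is of the form \eqref{smallgparticular}, and $\alpha_i=0$ reserved for a degenerate subcase not used here. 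For \eqref{A11}$_2$, using $g_i-T\partial_Tg_i$ and the ansätze: the $p^{\beta_i}$-term contributes $\frac{p^0}{\beta_i\rho_i^{\rm R}}(1-\alpha_i)(T/T^0)^{\alpha_i}(p/p^0)^{\beta_i}$, which tends to $0$ as $T,p\to0+$ since $\alpha_i\geq0$, $\beta_i>0$ and (crucially) $p\geq p_{\min}(T,\varrho_1)$ forces $p$ to stay in a range where $p^{\beta_i}$ is controlled by a power of $T$ — here one invokes Step~1 and Lemma~\ref{pminmax} to bound $p_{\min}$; the $(T/T^0)^{\gamma_i+1}$-term contributes $c^{i0}T^0\gamma_i/(\gamma_i+1)\cdot(T/T^0)^{\gamma_i+1}\to0$ since $\gamma_i\geq0$; the linear and constant terms give $g^{i0}$ after the $-T\partial_T$ cancels $g^{i1}T$. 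So $\epsilon_i^{\min}=g^{i0}$, as claimed. The uniform heat-capacity bound $\inf_{i,T\geq T_1,p>0}c_\upsilon^i>0$ follows from \eqref{cvhier} and \eqref{cvhierid}: for \eqref{smallgparticular2} $c_\upsilon^i=c^{i0}-p^0/(T^0\rho_i^{\rm R})>0$ is a constant by \eqref{idealrestrictc}; for \eqref{smallgparticular}, $-T\big(\partial_T^2g_i-(\partial_{T,p}^2g_i)^2/\partial_p^2g_i\big)=\frac{\alpha_ip^0}{\beta_i(1-\beta_i)\rho_i^{\rm R}T}(T/T^0)^{\alpha_i}(p/p^0)^{\beta_i}(1-\alpha_i-\beta_i)+c^{i0}(T/T^0)^{\gamma_i}\geq c^{i0}(T_1/T^0)^{\gamma_i}>0$ uniformly for $T\geq T_1$, using $\gamma_i\geq0$ and that the first summand is nonnegative.

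\textbf{Step 3: the surjectivity and essential-smoothness conditions \eqref{jeveuxsurj}, \eqref{jeveuxblow}.} For \eqref{jeveuxsurj}: as $p\to+\infty$, the dominant term of $g_i$ in \eqref{smallgparticular} is $\frac{p^0}{\beta_i\rho_i^{\rm R}}(T/T^0)^{\alpha_i}(p/p^0)^{\beta_i}\to+\infty$ since $\beta_i>0$; in \eqref{smallgparticular2} it is $\frac{p^0}{\rho_i^{\rm R}}\frac{T}{T^0}\ln(p/p^0)\to+\infty$. As $p\to0+$, the same leading terms tend to $0$ (for \eqref{smallgparticular}) or to $-\infty$ (for \eqref{smallgparticular2}, via $\ln(p/p^0)\to-\infty$); since at least one index is of type \eqref{smallgparticular2}, $\min_ig_i(T,p)\to-\infty$. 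This gives \eqref{jeveuxsurj}. For \eqref{jeveuxblow}$_1$: with $p_{\max}(T,\varrho)$ the largest root of $\partial_pg_j(T,p)=1/\varrho$, Step~1 and the explicit inversion of $\partial_pg_j$ give $p_{\max}(T,\varrho)\leq C(\varrho)\,T^{\alpha_j/(1-\beta_j)}$ for Tait species and $p_{\max}(T,\varrho)=\frac{p^0}{\rho_j^{\rm R}}\frac{T}{T^0}\varrho^{-1}\cdot\text{const}$, i.e.\ $O(T)$, for ideal species; substituting into $g_i(T,p_{\max})/T$, the $p^{\beta_i}$- or $\ln p$-contribution divided by $T$ stays bounded or grows sublinearly, while the $-\frac{c^{i0}T^0}{\gamma_i(\gamma_i+1)}(T/T^0)^{\gamma_i+1}/T\sim -T^{\gamma_i}$ term (Tait) or $-c^{i0}(\ln(T/T^0)-1)$ term (ideal) dominates and tends to $-\infty$; taking the index $i$ of ideal type guarantees $\min_i\sup_{\varrho\leq\varrho_1}g_i(T,p_{\max})/T\to-\infty$. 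Condition \eqref{jeveuxblow}$_2$ was already handled in Step~1 (uniformity in $T\in[T_1,T_2]$ is automatic). For \eqref{jeveuxblow}$_3$, $\min_i\max_{T\in[T_1,T_2]}g_i(T,p)\to-\infty$ as $p\to0+$: again use the ideal-type index, for which $\max_{T\in[T_1,T_2]}g_i(T,p)=\max_{T}\big(\frac{p^0}{\rho_i^{\rm R}}\frac{T}{T^0}\ln(p/p^0)+\dots\big)\to-\infty$ because $\ln(p/p^0)\to-\infty$ and the coefficient $\frac{p^0T}{T^0\rho_i^{\rm R}}$ is bounded below by $\frac{p^0T_1}{T^0\rho_i^{\rm R}}>0$ on $[T_1,T_2]$.

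\textbf{Where the difficulty lies.} The calculus is elementary; the genuine subtlety — and the main obstacle — is \eqref{A11} and \eqref{jeveuxblow}$_1$, because there the pressure $p$ is not a free variable but is slaved to $T$ (and $\varrho$) through $p\geq p_{\min}(T,\varrho_1)$ or $p=p_{\max}(T,\varrho)$, so one must first extract from Step~1 and Lemma~\ref{pminmax} sharp two-sided power bounds $c\,T^{\alpha_i/(1-\beta_i)}\lesssim p_{\min},p_{\max}\lesssim C\,T^{\alpha_i/(1-\beta_i)}$ (and the analogous linear bounds for the ideal species), and then verify that these growth rates, fed back into $g_i$ and $g_i-T\partial_Tg_i$, are dominated by the purely temperature-dependent $\gamma_i$- or $\ln T$-terms. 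The constraints \eqref{expones} — in particular $\alpha_i+\beta_i\leq1$, which makes $\alpha_i/(1-\beta_i)\leq1$ — are exactly what is needed to keep the pressure-induced growth subcritical relative to those temperature terms, and the requirement that some species be of ideal-gas type \eqref{smallgparticular2} with \eqref{idealrestrictc} is what actually drives $\min_ig_i$, $\min_i g_i/T$ and $\min_i\max_T g_i$ to $-\infty$ in \eqref{jeveuxsurj} and \eqref{jeveuxblow}. Once these monotone envelope estimates are in place, each of the remaining conditions is checked by inspection of the leading term.
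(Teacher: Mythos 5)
Your proposal is correct and follows essentially the same route as the paper's own proof: differentiate the explicit ans\"atze, invert $\partial_p g_i(T,p_i)=1/\varrho$ to get the power-law behaviour of $p_{\min}$ and $p_{\max}$ (of order $T^{\alpha_i/(1-\beta_i)}$, respectively $T$), and let the ideal-gas species together with \eqref{idealrestrictc} produce the $-\infty$ limits in \eqref{jeveuxsurj} and \eqref{jeveuxblow}, the coefficient comparison $c^{i0}>p^0/(T^0\rho_i^{\rm R})$ being exactly what makes the $T\ln T$ term win. Your observation concerning \eqref{A11}$_1$ is a fair catch rather than a flaw in your argument: for a species of type \eqref{smallgparticular} with $\alpha_i=0$ (which \eqref{expones} permits) the quantity $\partial_p g_i(T,p_1)$ is independent of $T$ and does not tend to zero, a corner case the paper's proof leaves implicit under ``the other claims are proven similarly'', and your remedy of requiring $\alpha_i>0$ for the Tait-type species is the natural one; note also that the paper invokes the strict inequality $\alpha_i<1-\beta_i$ to locate $p_{\max}$ at the ideal species, whereas your weaker bound $p_{\max}\leq C(\varrho_1)\,T$ for large $T$ suffices and covers the borderline case $\alpha_i+\beta_i=1$ as well.
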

\begin{proof}
We verify easily that $g_i(T,\, p) \rightarrow + \infty$ for $p \rightarrow + \infty$. Consider any index $i$ such that $g$ obeys \eqref{smallgparticular2}, which is the case for at least one. Then, for $0 < p < p^0$ and $T \geq T_1 > 0$ arbitrary we have
\begin{align*}
& g_{i}(T, \, p) \leq \frac{p^0}{\rho_i^{\rm R}} \, \frac{T_1}{T^0} \, \ln \frac{p}{p^0} + \sup_{T \geq T_1} \{-c^{i0} \, T \, (\ln T/T^0-1) + g^{i1} \, T + g^{i0}\} \, \\
 \Rightarrow \quad & \inf_i \, \sup_{T \geq T_1}  g_{i}(T, \, p) \leq c_0 \, \ln \frac{p}{p^0} + c_1 \, .
\end{align*}
Hence, $\lim_{p \rightarrow 0+} \inf_i \, \sup_{T \geq T_1}  g_{i}(T, \, p) = -\infty$.
 
 For \eqref{smallgparticular}, we have $\partial_pg_i(T, \, p_i) = 1/\varrho$ iff
 \begin{align*}
  \frac{p_0}{\rho_i^{\rm R} \, p_i} \, \left(\frac{T}{T^0}\right)^{\alpha_i} \, \left(\frac{p_i}{p^0}\right)^{\beta_i} = \frac{1}{\varrho} \quad \Leftrightarrow \quad p_i = p^0 \, \left(\frac{\varrho}{\rho_i^{\rm R}}\right)^{\frac{1}{1-\beta_i}} \, \left(\frac{T}{T^0}\right)^{\frac{\alpha_i}{1-\beta_i}} \, .
 \end{align*}
For \eqref{smallgparticular2}, similar arguments yield
\begin{align*}
 p_i = p^0 \, \frac{\varrho}{\rho_i^{\rm R}} \, \frac{T}{T^0} \, .
 \end{align*}
 Recall that $\alpha_i < 1-\beta_i$. Hence, for $\varrho \leq \varrho_1$, and $T$ sufficiently large the largest $p_i$ is always associated with the choice \eqref{smallgparticular2}, and it is of the form $c_1 \, T$ with a certain positive constant $c_1$. Hence, for all $g_i$ obeying \eqref{smallgparticular2} we have
\begin{align*}
 g_i(T, \, p_{\max}(T, \, \varrho)) \leq & \frac{p^0}{\rho_i^{\rm R} \, T^0} \ln (c_1 \, T/p^0) - c^{i0} \, T \, \ln (T/T^0) + \tilde{g}^{i1} \, T + g^{i0}\\
 \leq & (-c^{i0} +\frac{p_0}{\rho_i^{\rm R} \, T^0}) \, T \, \ln T +  \tilde{g}^{i1} \, T + \tilde{g}^{i0} \, .
\end{align*}
 Now it is readily seen that
 \begin{align*}
  \lim_{T \rightarrow + \infty} \min_i \, \sup_{\varrho \leq \varrho_1} \frac{ g_i(T, \, p_{\max}(T, \, \varrho))}{T} = - \infty \, .
 \end{align*}
For $g_i$ obeying \eqref{smallgparticular} we have
\begin{align*}
\lim_{T,p \rightarrow 0+} g_i(T, \, p) = g^{i0}, \, \quad \lim_{T,p \rightarrow 0+} T \, \partial_T g_i(T,p) = 0 \, . 
\end{align*}
For $g_i$ obeying \eqref{smallgparticular2},  we can show as above that for $\varrho_1 > 0$ fixed and $T$ sufficiently small, the function $p_{\min}(T, \, \varrho_1)$ is of the form $c_2 \, T$ with a positive constant $c_2$. Thus, for $p_1 \geq p \geq p_{\min}(T, \, \varrho_1)$ we have
\begin{align*}
T \, \ln p_1 \geq T \, \ln p \geq T \, (\ln T - C_1) \, 
\end{align*}
which, for \eqref{smallgparticular2}, allows showing that
\begin{align*}
 \lim_{T,p \rightarrow 0+, \, p \geq p_{\min}(T, \, \varrho_1)} g_i(T, \, p) = g^{i0}, \quad \lim_{T,p \rightarrow 0+, \, p \geq p_{\min}(T, \, \varrho_1)} T \, \partial_T g_i(T,p) = 0 \, .
\end{align*}
The other claims are proven similarly.
\end{proof}
\begin{rem}
The assumption that there is one index such that $g_i$ is given by \eqref{smallgparticular2} is rather artificial and in fact not needed. More natural and sufficient is the condition that at least one of the $g_i$ is singular in $p$ in a neighourhood of the origin $(T=0, \, p=0)$. For instance, let $g_i$ obey
\begin{align*}
 g_i(T, \, p) = \frac{p^0}{\rho_i^{\rm R}} \, \left(\frac{T}{T^0}\right)^{\alpha_i} \, \left(\ln \frac{p}{p^0} +\frac{1}{\beta_i} \, \left(\frac{p}{p^0}\right)^{\beta_i} \right) - \frac{c^{i0} \, T^0}{\gamma_i \, (\gamma_i+1)} \, \left(\frac{T}{T^0}\right)^{\gamma_i+1} + g^{i1} \, T + g^{i0} \, ,
\end{align*}
for at least one constituent. However, working with the explicit formula \eqref{smallgparticular}, \eqref{smallgparticular2} is simpler if we want to compute exponents. We remind the reader that these are just particular examples. 
\end{rem}

For later discussion of the maximum principle for the temperature, we will restrict to the case where, at large temperatures, there is a single dominant species with respect to the heat capacity. This concept is explained in the next definition. It relies on the representations \eqref{cvhier} and \eqref{cvhierid} of the heat capacities of the species.
\begin{defin}\label{gammamax}
Assume that for $i = 1,\ldots,N$, the function $g_i$ obeys either \eqref{smallgparticular} or \eqref{smallgparticular2}. We set $\gamma_i := 0$ in the case of \eqref{smallgparticular2}, and we define $$\gamma_{\max} := \max_{i=1,\ldots,N} \gamma_i \, .$$ We let $k_1, \ldots, k_m$ be an enumeration of the indices such that $\gamma_{k_j} = \gamma_{\max}$ (where $1 \leq m \leq N$). If there is a strict ordering: 
\begin{align*}
\begin{cases} c^{k_10} < c^{k_20} < \ldots < c^{k_m0} \, , & \text{ for } \gamma_{\max} > 0 \, ,\\
c^{k_10} - \frac{p^0}{\rho_{k_1}^{\rm R} \, T^0} <  c^{k_20} - \frac{p^0}{\rho_{k_2}^{\rm R} \, T^0}< \ldots < c^{k_N0} - \frac{p^0}{\rho_{k_N}^{\rm R} \, T^0} \, , & \text{ for }\gamma_{\max} = 0 \, ,
\end{cases}
\end{align*}
then, we define $I := k_m$ call the species ${\rm A}_I$ \emph{dominant with respect to the heat capacity}. 
\end{defin}

\vspace{0.2cm}

\section{Proof of the maximum principle for a special ideal mixture} 

In this section we prove the maximum principle for the free energy model relying on \eqref{muiideal} and \eqref{smallgparticular}, \eqref{smallgparticular2} (Theorem \ref{MAIN2}). At first we need to work out growth properties of the basic thermodynamic quantities for large temperatures.

\subsection{Growth properties of some basic thermodynamic quantities}

For a general ideal model, the pressure function $p = \tilde{p}(T, \, \rho)$ is defined as the root of the equation \eqref{implicitpress}.
If $g_i$ obeys \eqref{smallgparticular} or \eqref{smallgparticular2}, we have
\begin{align*}
\partial_p g_i(T, \, p) = \frac{1}{
\bar{\rho}_i^{\rm R}} \, \left(\frac{p^0}{p} \right)^{1-\beta_i} \, \left(\frac{T}{T^0}\right)^{\alpha_i} \, .
\end{align*}
Hence, computing the functions $p_{\min}$ and $p_{\max}$ (See Lemma \ref{pminmax}) is a straightforward exercise. For at least two indicies $i_0, \, i_1$ (the choice of which is presently depending on the state variables!), we obtain that
\begin{align}\label{Pboundsbasics}
p^0 \, \left(\frac{T}{T^0}\right)^{\frac{\alpha_{i_1}}{1-\beta_{i_1}}} \, \left(\frac{\varrho}{\rho_{i_1}^{\rm R}}\right)^{\frac{1}{1-\beta_{i_1}}} \leq p \leq p^0 \, \left(\frac{T}{T^0}\right)^{\frac{\alpha_{i_0}}{1-\beta_{i_0}}} \, \left(\frac{\varrho}{\rho_{i_0}^{\rm R}}\right)^{\frac{1}{1-\beta_{i_0}}} \, .
\end{align}
Now for $i = 1,\ldots,N$, we also see that
\begin{align}\label{fistterm}
& \Big( \frac{T}{T^0}\Big)^{\alpha_i} \, \Big(\frac{p}{p^0}\Big)^{\beta_i} \leq c_i \, \varrho^{\frac{\beta_i}{1-\beta_{i_0}}} \, T^{\alpha_i + \beta_i \, \frac{\alpha_{i_0}}{1-\beta_{i_0}}} \, , \nonumber\\
\text{ where } \quad & \\
& \alpha_i + \beta_i \, \frac{\alpha_{i_0}}{1-\beta_{i_0}} = \alpha_i + \beta_i - \beta_i \, \frac{1-\alpha_{i_0} - \beta_{i_0}}{1-\beta_{i_0}} \leq 1 \,. \nonumber
\end{align}
Recalling the ansatz \eqref{smallgparticular} for $g_i$, the latter implies that, for fixed $\varrho$, the functions $g_i$ having $\gamma_i > 0$ are all strictly negative if $T$ is sufficiently large. In fact, the heat capacity term dominates, and we have
\begin{align}\label{gidominates}
& g_i = - \frac{T^0}{(\gamma_i+1) \, \gamma_i}  \, \left(\frac{T}{T^0}\right)^{1+\gamma_i} \, (c^{i0} - o_i)\\
\text{ where } \quad & \nonumber\\
&  \frac{T^0}{(\gamma_i+1) \, \gamma_i} \, o_i := \frac{p^0}{\beta_i \, \rho_i^{\rm R}} \, \left(\frac{T}{T^0}\right)^{\alpha_i-\gamma_i-1} \, \left(\frac{p}{p^0}\right)^{\beta_i} + (g^{1i} \, T + g^{i0}) \, \Big(\frac{T}{T^0}\Big)^{-\gamma_i - 1} \, . \nonumber 
 \end{align}
 With the help of \eqref{fistterm}, and recalling that $1/T = - q_N$, we now have
 \begin{align*}
T^{\gamma_i} \, |o_i| \leq & c_i \, ( \varrho^{\frac{\beta_i}{1-\beta_{i_0}}} \, T^{\alpha_i + \beta_i \, \frac{\alpha_{i_0}}{1-\beta_{i_0}} - 1} + |g^{1i}|+ |g^{i0}| \, T^{- 1}) \, \\
 \leq & c_i \, (\varrho^{\frac{\beta_i}{1-\beta_{i_0}}} \, (-q_N)^{1 - \alpha_i - \beta_i \, \frac{\alpha_{i_0}}{1-\beta_{i_0}}} + 1 -q_N) 
 =:  c_i(\varrho, \, -q_N) \, .
\end{align*}
We see that $o_i$ is bounded for bounded entropic variables. Moreover
\begin{align}\label{tildegilowerorder}
|o_i| \leq c_i(\varrho, \, -q_N) \, (-q_N)^{\gamma_i} \rightarrow 0 \quad \text{ for } \quad q_N \rightarrow 0- \, . 
\end{align}
To obtain a similar result in the case of \eqref{smallgparticular2}, we again use \eqref{Pboundsbasics} to  estimate
\begin{align}\label{secondest}
 T \, \ln \frac{p}{p^0} \leq & T \, \ln \left(\frac{T}{T^0}\right)^{\frac{\alpha_{i_0}}{1-\beta_{i_0}}} +T\, \ln \left(\frac{\varrho}{\rho_{i_0}^{\rm R}}\right)^{\frac{1}{1-\beta_{i_0}}}
 \leq \frac{\alpha_{i_0}}{1-\beta_{i_0}} \, T \, \ln \frac{T}{T^0} + c \, T \, (1+\ln \varrho) \, .
\end{align}
Thus
\begin{align}\label{gidominate2}
& g_i = - T \, \ln \frac{T}{T^0} \, (c^{i0} -  \frac{p^0}{\rho_i^{\rm R} \, T^0} - o_i) \, ,
\\
\text{ where } \quad & \nonumber\\
& o_i := - \frac{p^0}{\rho_i^{\rm R} \, T^0} \, \Big(1 - \frac{\ln \frac{p}{p^0}}{\ln \frac{T}{T^0}}\Big) + \frac{1}{\ln \frac{T}{T^0}} \, (c^{i0} + g^{i1} + \frac{g^{i0}}{T}) \, .  \nonumber
%
%
\end{align}
We see with the help of \eqref{secondest} that
\begin{align*}
\ln T \, |o_i| \leq c_i \, \ln T\frac{1 + |\ln \varrho|}{\ln T - \ln T^0} = c_i \, \ln \frac{1}{|q_N|} \, \frac{1+|\ln \varrho|}{\ln \frac{1}{|q_N|} - \ln T^0} \, ,
\end{align*}
showing that $o_i$ is bounded for bounded entropic variables, and moreover that
\begin{align}\label{tildegilowerorder2}
|o_i| \leq  c_i( \varrho, \, -q_N) \, \frac{1}{\ln \frac{1}{|q_N|}}  \rightarrow 0 \quad \text{ for } \quad q_N \rightarrow 0- \, .
\end{align}

\vspace{0.2cm}

\subsection{Estimates for the growth in temperature at fixed entropic variables}

We showed in Remark \ref{NEWVARB} that all thermodynamic quantities can be introduced as functions of the variables $\varrho$ and $q_1, \ldots, q_N$. If we fix $\varrho$ and $\bar{q} = (q_1, \ldots, q_{N-1})$, a thermodynamic quantity is a function of $q_N = - 1/T$ only, hence a function of $T = - 1/q_N$ only. To hint at this situation we shall, in this paragraph, use the tilde superscripts again, for instance
\begin{align*}
\tilde{\rho}_i(T) = \tilde{\rho}_i(T; \, \varrho, \, \bar{q}) = \mathscr{R}_i(\varrho, \, \bar{q}, \, -1/T) \, .
\end{align*}
Now we want to investigate the behaviour for $T \rightarrow + \infty$ for several functions. This is a key idea to obtain a maximum principle for the temperature using the general energy equation like in the preceding section. For two functions $\tilde{f}_1(T), \, \tilde{f}_2(T)$, we use the notations
\begin{align*}
\tilde{f}_1(T) \precsim \tilde{f}_2(T), \quad (\tilde{f}_1(T) \succsim \tilde{f}_2(T)) \, 
\end{align*}
if there are continuous functions $\phi$ and $\psi$ on $\mathbb{R}_+ \times \overline{\mathcal{H}^N_-}$ with $\phi$ nonnegative and $\psi$ strictly positive, such that
\begin{align*}
 \tilde{f}_1(T) \leq \phi(\varrho, \, q) \, \tilde{f}_2(T), \quad (\tilde{f}_1(T) \geq \psi(\varrho, \, q) \, \tilde{f}_2(T)) \, .
\end{align*}
Moreover $\tilde{f}_1(T) \eqsim \tilde{f}_2(T)$ means that both $\tilde{f}_1(T) \succsim \tilde{f}_2(T)$ and $\tilde{f}_1(T) \precsim \tilde{f}_2(T)$ are valid.

To start with, the identities \eqref{muiideal} allow to epress, with $x_1, \ldots, x_N$ denoting the mole fractions,
\begin{align*}
 \frac{1}{M_i} \, \ln x_i = \frac{\mu_i}{T} - \frac{g_i(T,\, p)}{T} \quad \text{ for } \quad i = 1,\ldots,N 
\end{align*}
and, using the entropic variables with the transformation \eqref{TRAFO1} (recall that $\xi^N_i = 0$ for all $1 \leq i \leq N$)
\begin{align}\label{moule}
 \frac{1}{M_i} \, \ln x_i = \sum_{k=1}^{N-1} q_k \, \xi^k_i + \mathscr{M}(\varrho, \, q) - \frac{g_i(T,\, p)}{T} \, .
\end{align}
Since $x_i \leq 1$ by definition, we obtain that
\begin{align*}
& \frac{g_i(T, \, p)}{T} \geq \mathscr{M}(\varrho, \, q) - |\mathcal{Q}| \, |\bar{q}| \quad \text{ for all } i = 1,\ldots,N\\
\Longrightarrow \quad & \\
& \mathscr{M}(\varrho, \, q) \leq \frac{1}{T} \, \inf_{i} g_i(T, \, p) + |\mathcal{Q}| \, |\bar{q}| \, .
\end{align*}
The property $\sum_i x_i = 1$ also implies that there is at least one $i_2$ such that $x_{i_2} \geq 1/N$. Thus, \eqref{moule} also implies that
\begin{align*}
 \mathscr{M}(\varrho, \, q) \geq & \frac{g_{i_2}(T,\, p)}{T} - \frac{1}{M_{i_2}} \, \ln N - |\mathcal{Q}| \, |\bar{q}|\\
 \geq & \frac{1}{T} \, \inf_{i} g_i(T, \, p) - C \, (1+|\bar{q}|) \, .
\end{align*}
We hence see that the factor $\mathscr{M}(\varrho, \, q)$ satisfies
\begin{align}\label{resultforscriptm}
| \mathscr{M}(\varrho, \, q) - \frac{1}{T} \, \inf_{i} g_i(T, \, p)| \leq C \, (1+|\bar{q}|) \, .
\end{align}
%
%
%
%
%
%
%

Due to \eqref{gidominates}, \eqref{tildegilowerorder} and \eqref{gidominate2}, \eqref{tildegilowerorder2}, the heat capacity terms in the formula for the $g_i$s always dominate at large temperature. We now exploit this fact to determine the behaviour of $\inf_{i = 1, \ldots,N} \tilde{g}_i(T)$ for $T$ large.
\begin{lemma}\label{rhoasump}
We assume that $g_1, \ldots,g_N$ obey the representation \eqref{smallgparticular} or \eqref{smallgparticular2}, with exponents $\alpha_i$, $\beta_i$ and $\gamma_i$ subject to \eqref{expones} and, in the case \eqref{smallgparticular2}, subject also to \eqref{idealrestrictc}. We define $\gamma_{\max} := \max_{i=1,\ldots,N} \gamma_i$, and we assume that there is a single index $I$ such that the species ${\rm A}_I$ dominates in the sense of Definition \ref{gammamax}.
Then,
\begin{itemize}
 \item For all $j \neq I$, there is a strictly positive, continuous function $b_j(\varrho, \, \bar{q})$ such that
 \begin{align*}
  \tilde{\rho}_j(T) \eqsim \begin{cases}  e^{-b_j \, T^{\gamma_{\max}}} & \text{ if } \gamma_{\max} > 0 \, ,\\
                T^{-b_j} & \text{ if } \gamma_{\max} = 0 \, .
              \end{cases}
 \end{align*}
 \item $\lim_{T \rightarrow +\infty} \tilde{\rho}_i(T) = \varrho \, \delta_{Ii}$, with the Kronecker symbol $\delta$.
\end{itemize}
\end{lemma}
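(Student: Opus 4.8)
The plan is to exploit the algebraic identity \eqref{moule}, which expresses $\frac{1}{M_i}\ln x_i$ in terms of the (fixed) entropic variables and the single quantity $g_i(T,p)/T$. Subtracting this identity for two indices $i\neq j$ eliminates the unknown function $\mathscr{M}(\varrho,q)$ and yields
\begin{align*}
\frac{1}{M_i}\ln x_i - \frac{1}{M_j}\ln x_j = \sum_{k=1}^{N-1}q_k(\xi^k_i - \xi^k_j) - \frac{g_i(T,p) - g_j(T,p)}{T}\,.
\end{align*}
Since $\bar q$ and $\varrho$ are frozen, the left side differs from $-\frac{g_i(T,p)-g_j(T,p)}{T}$ by a bounded quantity. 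The whole point is therefore to understand the asymptotics of $\frac{g_i(T,p)-g_j(T,p)}{T}$ as $T\to+\infty$, where $p=\tilde p(T,\rho)$ and $\rho=\mathscr{R}(\varrho,\bar q,-1/T)$ is itself $T$-dependent but stays in a bounded region because $\varrho$ is fixed.

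First I would establish the bounds on $p$: from \eqref{Pboundsbasics} together with the constraint $\varrho = \sum_i\tilde\rho_i(T)\le\varrho$ (which is automatic) one gets $p\precsim T^{\alpha_{i_0}/(1-\beta_{i_0})}\cdot(\text{bounded in }\varrho)$ and a matching lower bound, so $p$ grows at most polynomially in $T$ with exponent $<1$. Feeding this into the explicit forms \eqref{smallgparticular}, \eqref{smallgparticular2} and using the estimates \eqref{gidominates}--\eqref{tildegilowerorder} and \eqref{gidominate2}--\eqref{tildegilowerorder2} already proved in the excerpt, one concludes that for each $i$ the heat-capacity term dominates: $\tilde g_i(T) = -\frac{T^0}{\gamma_i(\gamma_i+1)}(T/T^0)^{\gamma_i+1}(c^{i0}-o_i)$ (with $o_i\to 0$) in the case \eqref{smallgparticular} with $\gamma_i>0$, and $\tilde g_i(T)=-T\ln(T/T^0)\big(c^{i0}-\frac{p^0}{\rho_i^{\rm R}T^0}-o_i\big)$ in the case \eqref{smallgparticular2} (i.e.\ $\gamma_i=0$). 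Consequently
\begin{align*}
\frac{\tilde g_i(T)}{T} \eqsim \begin{cases} -\,c^{i0}\,(T/T^0)^{\gamma_i} & \text{(up to the bounded factor depending on }\varrho,\bar q),\\[1mm] -\big(c^{i0}-\tfrac{p^0}{\rho_i^{\rm R}T^0}\big)\ln(T/T^0) & \text{if }\gamma_i=0.\end{cases}
\end{align*}
By Definition \ref{gammamax}, among all indices the pair $(\gamma_i,\text{effective heat capacity})$ is strictly maximized by $I$: either $\gamma_I=\gamma_{\max}$ is strictly largest (only $I$ attains it among the maximal-$\gamma$ set because the ordering of the $c^{k0}$ is strict), or all $\gamma$'s are $0$ and $c^{I0}-p^0/(\rho_I^{\rm R}T^0)$ is strictly largest. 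Hence $\frac{\tilde g_I(T)}{T} \to -\infty$ strictly faster than $\frac{\tilde g_j(T)}{T}$ for every $j\neq I$, i.e.\ $\frac{\tilde g_j(T)-\tilde g_I(T)}{T}\to -\infty$, and in the two regimes this difference is $\eqsim -T^{\gamma_{\max}}$ respectively $\eqsim -\ln T$ (with strictly positive coefficients depending continuously on $\varrho,\bar q$).

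Now take $i=j$, $j'=I$ in the difference identity: $\frac{1}{M_j}\ln x_j = \frac{1}{M_I}\ln x_I + (\text{bounded}) + \frac{\tilde g_I(T)-\tilde g_j(T)}{T}$. Since $x_I\le 1$ and, by the argument that some mole fraction is $\ge 1/N$ (used already for \eqref{resultforscriptm}), one shows $x_I$ stays bounded below — indeed $I$ being the dominant species forces $x_I\to 1$, which I would get by noting that $\ln x_j = M_j\big[\frac{1}{M_I}\ln x_I + O(1) + \frac{\tilde g_I-\tilde g_j}{T}\big]\to -\infty$ for all $j\neq I$, hence $x_j\to 0$ for all $j\neq I$, hence $x_I = 1-\sum_{j\neq I}x_j\to 1$. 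Plugging this back gives, for $j\neq I$,
\begin{align*}
\ln \tilde x_j(T) = M_j\,\frac{\tilde g_I(T)-\tilde g_j(T)}{T} + O(1) \eqsim -b_j'\,T^{\gamma_{\max}}\ \ \text{resp.}\ \ -b_j'\ln T\,,
\end{align*}
so $\tilde x_j(T)\eqsim e^{-b_j'T^{\gamma_{\max}}}$ resp.\ $T^{-b_j'}$ with $b_j'=b_j'(\varrho,\bar q)>0$ continuous. Finally convert from mole fractions to partial mass densities: $\rho_j = M_j x_j/\upsilon$ where the molar volume $\upsilon = 1/n$ satisfies $\varrho = \sum_i M_i x_i/\upsilon$, so $\upsilon = (\sum_i M_i x_i)/\varrho$, which converges to $M_I/\varrho>0$ and stays between two positive constants (depending on $\varrho$). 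Hence $\tilde\rho_j(T)\eqsim \tilde x_j(T)$ in the sense of the convention, giving the first bullet with $b_j:=b_j'$, and $\tilde\rho_I(T) = M_I\tilde x_I/\upsilon \to M_I\cdot(\varrho/M_I) = \varrho$ while $\tilde\rho_j(T)\to 0$ for $j\neq I$, which is the second bullet $\lim_{T\to\infty}\tilde\rho_i(T) = \varrho\,\delta_{Ii}$.

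\textbf{Main obstacle.} The delicate point is keeping track of uniformity: every constant must depend only continuously on $(\varrho,\bar q)$ ranging over $\mathbb{R}_+\times\overline{\mathcal H^N_-}$ and not on $T$, which is exactly what the $\precsim/\succsim$ convention demands. The subtle part is that $\rho=\mathscr{R}(\varrho,\bar q,-1/T)$ appears inside $\tilde p(T,\rho)$, so the bounds \eqref{Pboundsbasics} must be bootstrapped: $p$ bounds $\rho$, $\rho$ bounds $p$. Because $\varrho=\sum_i\rho_i$ is frozen and equals the right quantity by construction, the total mass density never degenerates, which breaks the circularity — but one must verify the $o_i$-type error terms genuinely vanish uniformly on bounded $(\varrho,\bar q)$-sets, using precisely the inequalities $\alpha_i+\beta_i\le 1$ and \eqref{idealrestrictc} from the hypotheses. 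I expect the bookkeeping of these uniform estimates, rather than any conceptual issue, to be the bulk of the work; it is essentially a careful repackaging of \eqref{gidominates}--\eqref{tildegilowerorder2} and \eqref{resultforscriptm}.
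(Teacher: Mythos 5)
Your argument is essentially the paper's proof: both subtract the identity \eqref{moule} for the indices $j$ and $I$ so that $\mathscr{M}$ cancels, use the dominance of the heat-capacity terms (\eqref{gidominates}--\eqref{tildegilowerorder2}) together with Definition \ref{gammamax} to get $(\tilde{g}_I(T)-\tilde{g}_j(T))/T \eqsim -T^{\gamma_{\max}}$ (resp.\ $-\ln T$) with coefficients continuous in $(\varrho,\bar q)$, and then read off the decay of $\tilde{x}_j$; your small variations (obtaining the lower bound on $x_I$ from $x_j\to 0$ and $\sum_i x_i=1$ instead of via \eqref{resultforscriptm}, and writing out the mole-fraction-to-mass-density conversion and the limit $\tilde\rho_i(T)\to\varrho\,\delta_{Ii}$, which the paper leaves implicit) are harmless. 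The only blemish is the sign slip ``i.e.\ $\frac{\tilde{g}_j(T)-\tilde{g}_I(T)}{T}\to-\infty$'', which should read $\frac{\tilde{g}_I(T)-\tilde{g}_j(T)}{T}\to-\infty$; your subsequent displays already use the correct quantity.
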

\begin{proof}
For two arbitary indices $i \neq j$, we want to investigate the asymptotic behaviour of the differences $\tilde{g}_i(T) - \tilde{g}_j(T)$ for $T \rightarrow + \infty$, while $(\varrho, \, \bar{q})$ remains fixed.

First we choose $i \neq j$ such that $\gamma_i > \gamma_j > 0$. Then, the formula \eqref{gidominates} implies that
\begin{align}\label{giovergj}
 \frac{\tilde{g}_j(T)}{\tilde{g}_{i}(T)} = \frac{(\gamma_i+1) \, \gamma_i}{(\gamma_j+1) \, \gamma_j}  \, \Big(\frac{T}{T^0}\Big)^{\gamma_j-\gamma_i} \, \frac{c^{j0} - o_j}{c^{i0} - o_i}  \, .
\end{align}
In view of the property \eqref{tildegilowerorder}, we can choose a number $T_1 = T_1(\varrho,i,j) > T^0$ such that
\begin{align*}
|o_i| \leq & \frac{1}{2} \, c^{i0} \quad \text{ and } \quad
2 \, \frac{c^{j0}}{c^{i0}}\, \frac{(\gamma_i+1) \, \gamma_i}{(\gamma_j+1) \, \gamma_j}  \, \Big(\frac{T}{T^0}\Big)^{\gamma_j-\gamma_i} \leq   \frac{1}{2} \quad \text{ for all } T \geq T_1 \, .
\end{align*}
Then $\tilde{g}_j(T)/\tilde{g}_i(T) \leq 1/2$ and, invoking \eqref{gidominates},
\begin{align}\label{differencefirst}
 \tilde{g}_i(T) - \tilde{g}_j(T) \leq \frac{1}{2} \, \tilde{g}_i(T) \leq - \frac{c_{0i}}{4} \,  \frac{T^0}{(\gamma_i+1) \, \gamma_i}  \, \Big(\frac{T}{T^0}\Big)^{1+\gamma_i} \, .
\end{align}
Second, we consider the case that $\gamma_i = \gamma_j > 0$. Then, by means of \eqref{giovergj}
\begin{align*}
 \frac{\tilde{g}_j(T)}{\tilde{g}_{i}(T)} =  \frac{c^{j0} - o_j}{c^{i0} - o_i} \, .
\end{align*}
Now, suppose that $c^{j0} < c^{i0}$. Then, by means of \eqref{tildegilowerorder}, we can find a $T_2(\varrho,i,j) > T^0$ such that, for all $T \geq T_2$
\begin{align*}
 c^{i0} - o_i > (1+\delta) \,c^{j0} - o_j,\quad 2\delta := \frac{c^{i0}}{c^{j0}} - 1  \, .
 \end{align*}
 Then, for $T \geq T_2$
 \begin{align}\label{differencesecond}
 \tilde{g}_i(T) - \tilde{g}_j(T) \leq \frac{\delta}{1+\delta} \, \tilde{g}_i(T) \leq - \frac{c^{i0}-c^{j0}}{c^{i0}+c^{j0}} \, c^{i0} \, \frac{T^0}{(\gamma_i+1) \, \gamma_i}  \,\Big(\frac{T}{T^0}\Big)^{1+\gamma_i} \, .
\end{align}
The third case is that $i \neq j$ are such that $\gamma_i > 0$ and $\gamma_j = 0$. Then we invoke \eqref{gidominates} and \eqref{gidominate2} to see that
\begin{align*}
 \frac{\tilde{g}_j(T)}{\tilde{g}_{i}(T)} =
 [T^0]^{\gamma_i} \, (\gamma_i+1) \, \gamma_i  \, \frac{\ln \frac{T}{T^0}}{T^{\gamma_i}}  \, \frac{c^{j0} -  p^0/(\rho_j^{\rm R} \, T^0) - o_j}{c^{i0} - o_i} \, .
\end{align*}
In the same way as for \eqref{differencefirst}, the difference $\tilde{g}_i - \tilde{g}_j$ is asymptotically equivalent to $- T^{\gamma_i+1}$ for sufficiently large $T$.

The fourth and last case is $\gamma_i = \gamma_j = 0$, and then
\begin{align*}
  \frac{\tilde{g}_j(T)}{\tilde{g}_{i}(T)} = \frac{c^{j0} - p^0/(\rho_j^{\rm R} \, T^0) - o_j}{c^{i0} - p^0/(\rho_i^{\rm R} \, T^0) - o_i} \, .
\end{align*}
If $c^{i0} - p^0/(\rho_i^{\rm R} \, T^0) > c^{j0} - p^0/(\rho_j^{\rm R} \, T^0)$, we can argue as for \eqref{differencesecond}, and find in this case with the help of \eqref{gidominate2} that
\begin{align}\label{differencethird}
\tilde{g}_i(T) - \tilde{g}_j(T) \leq  - \frac{c^{i0}- p^0/(\rho_i^{\rm R} \, T^0)-c^{j0}+ p^0/(\rho_j^{\rm R} \, T^0)}{c^{i0}-p^0/(\rho_i^{\rm R} \, T^0)+c^{j0}-p^0/(\rho_j^{\rm R} \, T^0)} \, (c^{i0}- p^0/(\rho_i^{\rm R} \, T^0)) \, T \, \ln \frac{T}{T^0} \, .
\end{align}
Due to these considerations, we see that for fixed entropic variables $\varrho$ and $\bar{q} = (q_1, \ldots, q_{N-1})$
\begin{align*}
\lim_{T \rightarrow \infty} (\tilde{g}_i(T)-\tilde{g}_j(T)) = - \infty \quad \text{ for } \begin{cases}
 \gamma_i > \gamma_j \, , &\\
 \gamma_i = \gamma_j > 0 & \text{ if } c^{i0} > c^{j0}\, ,\\
\gamma_i = \gamma_j = 0 & \text{ if } c^{i0} - \frac{p^0}{\rho_i^{\rm R} \, T^0} > c^{j0} - \frac{p^0}{\rho_j^{\rm R} \, T^0} \, .
\end{cases}
\end{align*}
For fixed density $\varrho$ and sufficiently large temperature $T$, we have $\inf_{i = 1,\ldots,N} \tilde{g}_i(T) = \tilde{g}_{I}(T)$ and the index $I$ is uniquely determined as follows:
\begin{align*}
& \text{ For } \max\{\gamma_1, \ldots,\gamma_N\} > 0 \, : \,\gamma_{I} = \max\{\gamma_1, \ldots,\gamma_N\} \text{ and } c^{I0} > c^{k0} \text{ for all } \gamma_k = \gamma_{\max}\, ,\\
& \text{ For } \max\{\gamma_1, \ldots,\gamma_N\} = 0 \, : \,
c^{I0} - \frac{p^0}{\rho_{I}^{\rm R} \, T^0} = \max_k \Big(c^{k0} - \frac{p^0}{\rho_k^{\rm R} \, T^0}\Big)  \, .
\end{align*}
Then, in view of \eqref{resultforscriptm}, we see that, for $\gamma_{\max} > 0$,
\begin{align*}
 \Big| \mathscr{M}(\varrho, \, q) + \frac{c^{I0}}{\gamma_{\max}  (\gamma_{\max}+1)}  \, \Big(\frac{T}{T^0}\Big)^{\gamma_{\max}}\Big| \leq C \, (1+|\bar{q}| + c(\varrho,1/T)) \,,
\end{align*}
where $c$ occurs in \eqref{tildegilowerorder} and is uniformly bounded.
If $\gamma_{\max} = 0$, then with the $c(\varrho, \, 1/T)$ of \eqref{tildegilowerorder2}, we have instead
\begin{align*}
 \Big| \mathscr{M}(\varrho, \, q) +  \ln \frac{T}{T^0} \, \Big(c^{i0} -  \frac{p^0}{\rho_i^{\rm R} \, T^0}\Big) \Big| \leq C \, (1+|\bar{q}| + c(\varrho,1/T)) \, .
\end{align*}
We recall \eqref{moule}. If $i = I$, then \eqref{resultforscriptm} shows that
\begin{align*}
& (\widetilde{x_{I}}(T))^{\frac{1}{M_I}} = e^{\bar{q} \cdot \xi_{I}} \, e^{\mathscr{M} - \frac{\tilde{g}_{I}(T)}{T}}\\
\text{ implying that } & \\
& e^{\bar{q} \cdot \xi_{I}} \, e^{-C \, (1+|\bar{q}| + c( \varrho, \, 1/T))}  \leq (\widetilde{x_{I}}(T))^{\frac{1}{M_I}} \leq e^{\bar{q} \cdot \xi_{I}} \, e^{C \, (1+|\bar{q}| + c(\varrho, \, 1/T))} \, .
\end{align*}
In short, the fraction $x_I$ is bounded and strictly positive independently on the temperature.
For $j \neq I$, we have
\begin{align*}
\frac{1}{M_j} \, \ln \widetilde{x_j}(T) = \frac{1}{M_{I}} \, \ln \widetilde{x_{I}}(T) + \frac{\tilde{g}_{I}(T) - \tilde{g}_j(T)}{T} + \bar{q} \cdot (\xi_j - \xi_{I}) \, 
\end{align*}
and therefore
\begin{align}\label{xjasu}
(\widetilde{x_j}(T))^{\frac{1}{M_j}} = (\widetilde{x_I}(T))^{\frac{1}{M_I}} \, e^{ \bar{q} \cdot (\xi_j - \xi_{I})} \, e^{\frac{\tilde{g}_{I}(T) - \tilde{g}_j(T)}{T}}  \, .
\end{align}
Now, as shown in \eqref{differencefirst}, \eqref{differencesecond}, for $\gamma_{\max} > 0$, we find $T_1 > T_0$ such that $\widetilde{g}_I(T) - \widetilde{g}_{j}(T) = - c_{Ij} \, T^{1+\gamma_{\max}}$, with $c_{Ij}$ positive and bounded, for all $T \geq T_1$. Hence $\exp(\frac{\tilde{g}_{I}(T)}{T} - \frac{\tilde{g}_j(T)}{T}) = e^{- c_{Ij} \, T^{\gamma_{\max}}}$.
If $\gamma_{\max} = 0$, then for all $T \geq T_2$, \eqref{differencethird} yields
$\tilde{ g}_I(T) - \tilde{g}_{j}(T) = - c_{Ij} \, T \, \ln T$. Therefore $\exp(\frac{\tilde{g}_{I}(T)}{T} - \frac{\tilde{g}_j(T)}{T}) = T^{-c_{Ij}}$.

Overall, for all $j \neq I$ \eqref{xjasu} shows that
\begin{align*}
\widetilde{ x}_j(T) \eqsim \begin{cases}  e^{-b_{j} \, T^{\gamma_{\max}}} & \text{ for } \gamma_{\max} > 0 \, ,\\
 T^{-b_j}  & \text{ for }  \gamma_{\max} = 0 \, ,
\end{cases}
\end{align*}
with strictly positive, continuous functions $b_j$ of the entropic variables.
\end{proof}
\begin{lemma}\label{otherasymptotics}
With the index $I$ of Lemma \ref{rhoasump}, we also have
\begin{gather*}
 \tilde{\rho}_i(T) \, \widetilde{\partial_p g_i}(T), \,\, \tilde{\rho}_i(T) \,  T \, \widetilde{\partial^2_{T} g_i}(T)  \longrightarrow 0 \quad \text{ for all } \quad i \neq I  \quad \text{ as } T \rightarrow \infty\, ,\\
\tilde{p}(T) \eqsim T^{\frac{\alpha_I}{1-\beta_I}} \, .
\end{gather*}
\end{lemma}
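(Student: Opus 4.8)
The plan is to feed the asymptotics of $\tilde\rho_i(T)$ from Lemma \ref{rhoasump} into the explicit formulas for the constitutive data. First I would record, from the Ansatz \eqref{smallgparticular}, \eqref{smallgparticular2}, the two-sided bound \eqref{Pboundsbasics} on the pressure together with the exponent estimate \eqref{fistterm}, which already tell us that $\tilde p(T)$ is trapped between two powers of $T$; at fixed $(\varrho,\bar q)$ these powers are $\alpha_{i_1}/(1-\beta_{i_1})$ and $\alpha_{i_0}/(1-\beta_{i_0})$ for indices $i_0,i_1$ that depend on the state. For the last claim, the point is that as $T\to\infty$ the mass densities concentrate in the dominant species: by Lemma \ref{rhoasump}, $\tilde\rho_I(T)\to\varrho$ and $\tilde\rho_j(T)\to 0$ exponentially (or polynomially) for $j\neq I$. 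Plugging this into the defining equation \eqref{implicitpress}, $\sum_i \partial_p g_i(T,\tilde p)\,\tilde\rho_i = 1$, and using that the vanishing terms $\partial_p g_j(T,\tilde p)\,\tilde\rho_j$ decay (since $\partial_p g_j$ grows at most polynomially in $T$ while $\tilde\rho_j$ decays super-polynomially when $\gamma_{\max}>0$, resp.\ one checks the polynomial balance when $\gamma_{\max}=0$), the equation collapses asymptotically to $\partial_p g_I(T,\tilde p)\,\varrho \sim 1$, i.e.
\[
\frac{1}{\bar\rho_I^{\rm R}}\Big(\frac{p^0}{\tilde p}\Big)^{1-\beta_I}\Big(\frac{T}{T^0}\Big)^{\alpha_I}\,\varrho \sim 1 \, ,
\]
which gives $\tilde p(T)\eqsim T^{\alpha_I/(1-\beta_I)}$ as claimed. (In the ideal-gas case $\beta_I=0$, $\alpha_I=1$ this reads $\tilde p\eqsim T$, consistent with the formula.)

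For the first two limits, I would simply estimate the two factors separately. For $i\neq I$: $\widetilde{\partial_p g_i}(T)=\frac{1}{\bar\rho_i^{\rm R}}(p^0/\tilde p)^{1-\beta_i}(T/T^0)^{\alpha_i}$; using the lower bound on $\tilde p$ from \eqref{Pboundsbasics} and the exponent arithmetic in \eqref{fistterm} one sees $\widetilde{\partial_p g_i}(T)$ grows at most like a power $T^{\sigma_i}$ with $\sigma_i\le 1$ (at fixed entropic variables, up to a continuous prefactor in $(\varrho,\bar q)$). Similarly $T\,\widetilde{\partial_T^2 g_i}(T)$: from the formula for $\partial_T^2 g_i$ in the proof of Lemma \ref{goodforall}, the dominant contribution is $-c^{i0}(T/T^0)^{\gamma_i}$, so $T\,\widetilde{\partial_T^2 g_i}(T)\eqsim T^{1+\gamma_i}$ (resp.\ $\eqsim 1$ in case \eqref{smallgparticular2}); in all cases this is bounded by a power of $T$ with exponent $\le 1+\gamma_{\max}$. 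Multiplying by $\tilde\rho_i(T)$, which by Lemma \ref{rhoasump} is $\eqsim e^{-b_i T^{\gamma_{\max}}}$ when $\gamma_{\max}>0$ (so it beats any power) or $\eqsim T^{-b_i}$ when $\gamma_{\max}=0$, I then need only check that the polynomial degree of the other factor is strictly smaller than $b_i$ in the second case. When $\gamma_{\max}=0$ every $\gamma_i=0$, the growth of $\widetilde{\partial_p g_i}$ and of $T\widetilde{\partial_T^2 g_i}$ is bounded (degree $\le 1$, in fact $\widetilde{\partial_pg_i}$ is bounded by a power $<1$ using $\alpha_i+\beta_i\le1$), whereas $b_i$ can be taken large by choosing $T$ large enough — more precisely $b_i$ is a fixed positive constant but the product still tends to $0$ because $T^{-b_i}$ times $T^{\sigma_i}\to0$ exactly when $\sigma_i<b_i$; since $\sigma_i\le1$ one must verify $b_i>1$, which follows because $\tilde g_I(T)-\tilde g_j(T)\eqsim -T\ln T$ (cf.\ \eqref{differencethird}) forces the exponent $c_{Ij}=b_j$ in $\widetilde x_j\eqsim T^{-b_j}$ to exceed any fixed threshold after absorbing the $\ln T$. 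This last bookkeeping — tracking that the polynomial gain from $\partial_p g_i$, $T\partial_T^2 g_i$ is always strictly dominated by the decay of $\tilde\rho_i$ uniformly for $(\varrho,\bar q)$ in compacts — is the only delicate point; everything else is substitution into explicit monomials. I would organize it as: (1) bound $\tilde p$ via \eqref{Pboundsbasics}; (2) derive $\tilde p\eqsim T^{\alpha_I/(1-\beta_I)}$ from \eqref{implicitpress} and Lemma \ref{rhoasump}; (3) for $i\neq I$, bound $\widetilde{\partial_p g_i}$ and $T\widetilde{\partial_T^2 g_i}$ by explicit powers of $T$ (with continuous $(\varrho,\bar q)$-prefactors); (4) multiply by $\tilde\rho_i$ and invoke Lemma \ref{rhoasump} to conclude the products vanish.
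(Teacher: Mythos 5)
Your overall strategy is the same as the paper's: for $\gamma_{\max}>0$ the exponential decay of $\tilde{\rho}_i$ ($i\neq I$) from Lemma \ref{rhoasump} beats the at most polynomial growth of $\widetilde{\partial_p g_i}$ and $T\,\widetilde{\partial^2_T g_i}$, and the pressure asymptotics follow by writing $\rho_I\,\partial_p g_I = 1-\sum_{j\neq I}\rho_j\,\partial_p g_j = 1-\delta$ with $\delta\to 0$ and solving for $\tilde p$. That part is fine (modulo a harmless slip: the dominant term of $\partial^2_T g_i$ is $-c^{i0}T^{\gamma_i-1}$ up to lower order, so $T\,\widetilde{\partial^2_T g_i}(T)\eqsim T^{\gamma_i}$, not $T^{1+\gamma_i}$; this does not affect the argument since the exponential decay absorbs any power).

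There is, however, a genuine gap in your treatment of the case $\gamma_{\max}=0$. You bound $\widetilde{\partial_p g_i}$ crudely by $T^{\sigma_i}$ with $\sigma_i\le 1$, conclude that you need $b_i>1$, and then claim this "follows because $\tilde g_I-\tilde g_j\eqsim -T\ln T$ forces the exponent to exceed any fixed threshold after absorbing the $\ln T$". That step fails: by \eqref{xjasu} and \eqref{differencethird}, the factor $e^{(\tilde g_I-\tilde g_j)/T}$ is exactly $T^{-c_{Ij}}$ with a \emph{fixed} constant $c_{Ij}$ proportional to the difference of the modified heat capacities $c^{i0}-p^0/(\rho_i^{\rm R}T^0)$; the logarithm only converts the exponential into this fixed power, it does not inflate the exponent, and nothing in the hypotheses prevents $b_i=M_i c_{Ii}$ (up to the $x_I$ factor) from being smaller than $1$. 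The correct repair — and this is what the paper does — is to sharpen the bound on the first factor instead: when $\gamma_{\max}=0$ all constituents obey the ideal-gas form \eqref{smallgparticular2}, so $\partial_p g_i = \dfrac{p^0}{\rho_i^{\rm R}T^0}\dfrac{T}{p}$ and the lower bound in \eqref{Pboundsbasics} gives $\tilde p(T)\succsim T$, hence $\widetilde{\partial_p g_i}(T)$ is \emph{bounded}; likewise $T\,\widetilde{\partial^2_T g_i}(T)=c^{i0}$ is constant. Then $\tilde\rho_i(T)\eqsim T^{-b_i}$ with any $b_i>0$ already forces both products to vanish, and no lower bound on $b_i$ is needed. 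The same refinement is needed where you invoke "the polynomial balance when $\gamma_{\max}=0$" to justify $\rho_j\,\partial_p g_j\to 0$ in the derivation of $\tilde p\eqsim T^{\alpha_I/(1-\beta_I)}$.
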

\begin{proof}
In the case that $\gamma_{\max} > 0$, $i \neq I$ implies that $ \tilde{\rho}_i(T) \eqsim e^{-b_i \, T^{\gamma_{\max}}}$. On the other hand, we have $\partial_p g_i = c_i \, T^{\alpha_i} \, p^{\beta_i-1}$, and \eqref{Pboundsbasics} implies that
\begin{align*}
\widetilde{ \partial_p g_i}(T) \leq \tilde{c}_i \, T^{\alpha_i- \alpha_{i_1} \, \frac{1-\beta_i}{1-\beta_{i_1}}} \, . 
\end{align*}
Thus, $\lim_{T \rightarrow \infty} \tilde{\rho}_i(T) \, \widetilde{\partial_pg_i}(T) \leq \tilde{c}_i \,  \lim_{T \rightarrow \infty} \, e^{-b_i \, T^{\gamma_{\max}}} \, T^{\alpha_i- \alpha_{i_1} \, \frac{1-\beta_i}{1-\beta_{i_1}}}= 0$.

If $\gamma_{\max} = 0$, then $\partial_p g_i = c_i \, T/p$, and \eqref{Pboundsbasics} implies that $\partial_p g_i \leq \tilde{c}_i$. 
Hence
$$\lim_{T \rightarrow \infty}  \tilde{\rho}_i(T) \, \widetilde{\partial_pg_i}(T) \leq \tilde{c}_i \,  \lim_{T \rightarrow \infty} \, T^{-b_i} = 0 \, .$$

We can prove the same for $\tilde{\rho}_i(T) \, \widetilde{\partial_T^2 g_i}(T)$. If $i\neq I$ and $\gamma_{\max} > 0$, then $T \, \widetilde{\partial_T^2 g_i}(T)$ possesses at most polynomial growth. If $\gamma_{\max} = 0$, then $T \, \widetilde{\partial_T^2 g_i}(T)$ is bounded.

To investigate the growth exponent of $\tilde{p}(T)$, we use the preceding result, showing that
\begin{align*}
 \rho_{I} \, \partial_p g_{I} =  1 - \sum_{j \neq I} \rho_j \, \partial_p g_j = 1 - \delta(T; \, \varrho, \, q) \, ,
\end{align*}
where $\delta \rightarrow 0$ as $T \rightarrow +\infty$. Thus, for all $T \geq T_1$ such that $\delta \leq 1/2$, we have
\begin{align*}
 \tilde{p}(T) \leq  c_i \, \left(\frac{\rho_I}{\varrho}\right)^{\frac{1}{1-\beta_{I}}} \, \varrho^{\frac{1}{1-\beta_{I}}} \, T^{\frac{\alpha_{I}}{1-\beta_{I}}} \, .
\end{align*}
We prove the lower bound similarly.
\end{proof}
Next we investigate other basic thermodynamic functions. For notational relief, we introduce the enthalpies
\begin{align*}
H_i := g_i(T, \, \tilde{p}) - T \, \partial_T g_i(T, \, \tilde{p})\, , \, \qquad
 \varrho H:= \sum_{i=1}^N\rho_i \, (g_i(T, \, \tilde{p}) - T \, \partial_T g_i(T, \, \tilde{p})) \, .
\end{align*}
\begin{prop}\label{ENERGU}
 Assumptions of Lemma \ref{rhoasump}. Then
 \begin{align*}
  \widetilde{c_{\upsilon}}(T) \eqsim  T^{\gamma_{\max}} \, , \quad  \widetilde{\varrho \, u}(T), \, \widetilde{\varrho H}(T) \eqsim T^{1+\gamma_{\max}} \, .
 \end{align*}
\end{prop}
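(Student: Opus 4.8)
\emph{Strategy.} The plan is to reduce the statement to the single estimate $\widetilde{c_\upsilon}(T)\eqsim T^{\gamma_{\max}}$ and to read off the energy densities from it. Granting $\widetilde{c_\upsilon}(T)\eqsim T^{\gamma_{\max}}$, I would use that $\partial_T\epsilon=\varrho\,\widetilde{c_\upsilon}$ (definition of $c_\upsilon$): at fixed entropic variables $\varrho$ and $\bar q=(q_1,\ldots,q_{N-1})$, integrate in $T$ from a large threshold $T_1=T_1(\varrho,\bar q)$ to get $\widetilde{\varrho u}(T)=\epsilon(T_1,\ldots)+\int_{T_1}^T\varrho\,\widetilde{c_\upsilon}(s)\,ds\eqsim\varrho\int_{T_1}^T s^{\gamma_{\max}}\,ds\eqsim T^{1+\gamma_{\max}}$ for $T$ large, the boundary value being bounded in terms of the fixed variables and hence negligible. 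For the enthalpy, \eqref{varrhouspecial} gives $\widetilde{\varrho H}(T)=\widetilde{\varrho u}(T)+\tilde p(T)$; Lemma \ref{otherasymptotics} together with \eqref{expones} yields $\tilde p(T)\eqsim T^{\alpha_I/(1-\beta_I)}$ with $\alpha_I/(1-\beta_I)\le 1\le 1+\gamma_{\max}$, and since $\tilde p>0$ this forces $\widetilde{\varrho H}(T)\eqsim T^{1+\gamma_{\max}}$ at once.

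\emph{Upper bound for $c_\upsilon$.} In \eqref{HCCAP} the subtracted fraction has non-negative numerator and strictly negative denominator (each $\partial^2_p g_i<0$), so $\widetilde{c_\upsilon}\le -\tfrac{T}{\varrho}\sum_i\rho_i\,\partial^2_T g_i$. Plugging in the second $T$-derivatives from \eqref{cvhier} and \eqref{cvhierid}, using $\rho_i\le\varrho$ and $\tilde p\eqsim T^{\alpha_I/(1-\beta_I)}$ (Lemma \ref{otherasymptotics}), each summand is $\precsim T^{\gamma_{\max}}$: the $c^{i0}$-part contributes $\eqsim T^{\gamma_i}\le T^{\gamma_{\max}}$, and the pressure-dependent part contributes $\eqsim T^{\alpha_i+\beta_i\alpha_I/(1-\beta_I)-1}\le T^0\le T^{\gamma_{\max}}$, where $\alpha_i+\beta_i\le 1$ and $\alpha_I/(1-\beta_I)\le 1$ come from \eqref{expones}.

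\emph{Lower bound for $c_\upsilon$.} This is the crux, and it splits in two cases. If $\gamma_{\max}=0$, then every $\gamma_i=0$ and the elementary bound $\widetilde{c_\upsilon}\ge\inf_i c^i_\upsilon$ from \eqref{HCCAP} already suffices: \eqref{cvhier} gives $c^i_\upsilon\ge c^{i0}>0$ for species of type \eqref{smallgparticular}, while \eqref{cvhierid} and \eqref{idealrestrictc} give $c^i_\upsilon=c^{i0}-p^0/(T^0\rho_i^{\rm R})>0$ for species of type \eqref{smallgparticular2}, so $\widetilde{c_\upsilon}\succsim 1=T^{\gamma_{\max}}$. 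If $\gamma_{\max}>0$, the dominant species ${\rm A}_I$ is necessarily of type \eqref{smallgparticular} with $\gamma_I=\gamma_{\max}$, and I would show it governs the heat capacity. Setting $a_i:=-\partial^2_T g_i$, $c_i:=-\partial^2_p g_i$, $b_i:=\partial^2_{T,p}g_i$, formula \eqref{HCCAP} reads $\widetilde{c_\upsilon}=\tfrac{T}{\varrho}\cdot\tfrac{N}{\sum_k\rho_k c_k}$ with $N=(\sum_i\rho_i a_i)(\sum_k\rho_k c_k)-(\sum_i\rho_i b_i)^2$. Symmetrizing, $N=\tfrac12\sum_{i,k}\rho_i\rho_k(a_ic_k+a_kc_i-2b_ib_k)$; by \eqref{cvhier} one has $a_ic_i-b_i^2=\mathrm{Det}(D^2_{T,p}g_i)=c_i c^i_\upsilon/T>0$, hence every cross term obeys $a_ic_k+a_kc_i\ge 2\sqrt{(a_ic_i)(a_kc_k)}\ge 2|b_i||b_k|\ge 2b_ib_k$, so $N\ge\sum_i\rho_i^2 c_i c^i_\upsilon/T\ge\rho_I^2 c_I c^I_\upsilon/T$ and $\widetilde{c_\upsilon}\ge\tfrac{\rho_I^2 c_I c^I_\upsilon}{\varrho\sum_k\rho_k c_k}$. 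It then remains to prove $\sum_{k\ne I}\rho_k c_k=o(\rho_I c_I)$ as $T\to\infty$: Lemma \ref{rhoasump} gives $\tilde\rho_k(T)\eqsim e^{-b_k T^{\gamma_{\max}}}$ for $k\ne I$, while $|\widetilde{\partial^2_p g_k}(T)|$ grows at most polynomially (as in the proof of Lemma \ref{otherasymptotics}, inserting $\tilde p\eqsim T^{\alpha_I/(1-\beta_I)}$) and $\rho_I c_I\eqsim\varrho\,T^{-\alpha_I/(1-\beta_I)}$ decays only polynomially; hence each $\rho_k c_k$ with $k\ne I$ decays super-polynomially relative to $\rho_I c_I$. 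Therefore $\widetilde{c_\upsilon}\ge\tfrac12\rho_I c^I_\upsilon/\varrho\ge\tfrac14 c^I_\upsilon$ for $T$ large, and $c^I_\upsilon\ge c^{I0}(T/T^0)^{\gamma_I}\eqsim T^{\gamma_{\max}}$ by \eqref{cvhier}, so $\widetilde{c_\upsilon}(T)\succsim T^{\gamma_{\max}}$.

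\emph{Conclusion and main obstacle.} Combining the three estimates completes the proof. The only genuinely technical point is the lower bound for $\widetilde{c_\upsilon}$ in the case $\gamma_{\max}>0$: it needs the algebraic structure of \eqref{HCCAP} — the symmetrization producing non-negative cross terms — together with the sharp asymptotics of Lemmas \ref{rhoasump} and \ref{otherasymptotics}, namely that the minority mass fractions decay exponentially and so dominate the polynomial growth of the coefficients $\partial^2_p g_k$, forcing a single species to dictate the heat capacity. All threshold constants appearing in these limits depend continuously on the fixed entropic variables, which is exactly what makes the conclusions conform to the $\eqsim$ convention of this subsection.
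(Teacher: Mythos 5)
Your treatment of the heat capacity is essentially sound: the upper bound by discarding the nonpositive fraction in \eqref{HCCAP}, and the lower bound via the symmetrization $N=\tfrac12\sum_{i,k}\rho_i\rho_k(a_ic_k+a_kc_i-2b_ib_k)$ together with $a_ic_i-b_i^2=c_i\,c^i_\upsilon/T$, which is in substance the determinant sub-additivity already used at the end of the proof of Lemma \ref{Densities}. Combined with $\sum_{k\neq I}\rho_k|\partial^2_pg_k|=o(\rho_I|\partial^2_pg_I|)$ from Lemmas \ref{rhoasump} and \ref{otherasymptotics}, this reproduces $\widetilde{c_\upsilon}(T)\eqsim T^{\gamma_{\max}}$ by a route slightly different from (and, for $\gamma_{\max}=0$, simpler than) the paper's decomposition \eqref{lici}.

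The gap is in the passage from $\widetilde{c_\upsilon}$ to $\widetilde{\varrho u}$. Your integration rests on the identity $\frac{d}{dT}\widetilde{\varrho u}(T)=\varrho\,\widetilde{c_\upsilon}(T)$ at fixed $(\varrho,\bar q)$, which is false: $\varrho c_\upsilon=\partial_T\epsilon(T,\rho)$ is the partial derivative at \emph{fixed partial densities}, whereas along the curve $T\mapsto\big(T,\tilde\rho_1(T),\ldots,\tilde\rho_N(T)\big)$ the densities vary, so
\begin{align*}
\frac{d}{dT}\widetilde{\varrho u}(T)=\varrho\,\widetilde{c_\upsilon}(T)+\sum_{i=1}^N\partial_{\rho_i}\epsilon\,\tilde\rho_i'(T)
=\frac{\widetilde{d_0}(T)}{T^2}\,,
\end{align*}
the last equality by \eqref{flux0}. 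By \eqref{d0equiv} one only knows $d_0\geq T^2\varrho c_\upsilon$; the statement that the extra (nonnegative) term is negligible, i.e.\ $d_0\eqsim T^2\varrho c_\upsilon$, is exactly Lemma \ref{cescoeffsla}, which is proved \emph{after} and \emph{by means of} Proposition \ref{ENERGU}, so you cannot invoke it here, and Lemma \ref{rhoasump} gives no asymptotics for the derivatives $\tilde\rho_i'(T)$ (the $\eqsim$ relations do not differentiate). Thus your argument yields at best the lower bound $\widetilde{\varrho u}\succsim T^{1+\gamma_{\max}}$, not the two-sided estimate, and the enthalpy statement inherits the same gap. The paper avoids this entirely by computing $\varrho u=\sum_i\rho_iH_i-\tilde p$ directly from \eqref{varrhouspecial}: the explicit form of $H_i$ gives $\rho_IH_I\succsim T^{1+\gamma_{\max}}$ and $H_i\precsim T^{1+\gamma_{\max}}$, the minority contributions $\rho_jH_j$, $j\neq I$, vanish by the exponential (resp.\ polynomial, with \eqref{idealrestrictc} securing positivity when $\gamma_{\max}=0$) decay of Lemma \ref{rhoasump}, and $\tilde p\precsim T^{\alpha_I/(1-\beta_I)}\precsim T^{1+\gamma_{\max}}$. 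Replacing your integration step by this direct computation repairs the proof; your final step $\varrho H=\varrho u+\tilde p$ with $\tilde p>0$ is then fine.
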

\begin{proof}
We recall that
\begin{align*}
 \varrho \, c_{\upsilon} = \partial_T \varrho u = - T \, \left(\sum_{i=1}^N\partial^2_{T,T} g_i(T, \, p) \, \rho_i -\frac{\Big(\sum_{i=1}^N \partial^2_{T,p} g_i(T, \, p) \, \rho_i\Big)^2}{\sum_{i=1}^N \partial^2_{p} g_i(T, \, p) \, \rho_i} \right) \, . 
\end{align*}
We have shown in Lemma \ref{otherasymptotics} that $ T \, \widetilde{\partial^2_T g_i}(T) \, \tilde{\rho}_i(T) \rightarrow 0$ as $ T \rightarrow \infty$ for all $i \neq I$.
Moreover
\begin{align}\label{lici} 
\frac{\Big(\sum_{i=1}^N \partial^2_{T,p} g_i \, \rho_i\Big)^2}{\sum_{i=1}^N \partial^2_{p} g_i \, \rho_i} - \frac{\rho_I \, (\partial^2_{T,p} g_I)^2}{\partial^2_{p} g_I}
= \frac{\rho_I \, (\partial^2_{T,p} g_I)^2}{\partial^2_{p} g_I} \, \Big( \frac{(1 + \sum_{i\neq I} (\rho_i/\rho_I) \, (\partial^2_{T,p} g_i/\partial^2_{T,p} g_I))^2}{1+\sum_{i\neq I} (\rho_i/\rho_I) \, (\partial^2_{p} g_i/\partial^2_{p} g_I) } - 1\Big) \, .
\end{align}
Since $ \partial^2_{T,p} g_i = \frac{\alpha_i}{T} \, \partial_p g_i$, and since $\partial^2_{p}g_i = -\frac{1-\beta_i}{p} \, \partial_p g_i$, it follows that
\begin{align*}
 - \rho_I \, \frac{(\partial^2_{T,p} g_I)^2}{\partial^2_{p} g_I} =  \alpha_I^2 \, (1 -\beta_I) \, \frac{p}{T^2} \, \rho_I \, \partial_pg_I \, \leq  \alpha_I^2 \, (1 -\beta_I) \, \frac{p}{T^2} \ ,
\end{align*}
where we use that $\rho_I \, \partial_p g_I \leq 1$. Hence\begin{align*}
\Big|T \, \tilde{\rho}_I \, \frac{(\widetilde{\partial^2_{T,p} g_I}(T))^2}{\widetilde{\partial^2_{p} g_I}(T)}  \Big| \leq c \, \frac{\tilde{p}(T)}{T} \leq c \, .
\end{align*}
Moreover, invoking Lemma \ref{otherasymptotics} again,
\begin{align*}
 \frac{\tilde{\rho}_i(T)}{\tilde{\rho}_I(T)} \,
 \frac{\widetilde{\partial^2_{T,p} g_i}(T)}{\widetilde{\partial^2_{T,p} g_I}(T)} = c_i \, \frac{\tilde{\rho}_i(T) \, \widetilde{\partial_p g_i}(T)}{\tilde{\rho}_I(T) \, \widetilde{\partial_pg_I}(T)} \longrightarrow 0\, , \quad
 \frac{\tilde{\rho}_i(T)}{\tilde{\rho}_I(T)} \, \frac{\widetilde{\partial^2_{p} g_i}(T)}{\widetilde{\partial^2_{p} g_I}(T)} = c_i \, \frac{\tilde{\rho}_i(T) \, \widetilde{\partial_p g_i}(T)}{\tilde{\rho}_I(T) \, \widetilde{\partial_pg_I}(T)} \longrightarrow 0\, .
\end{align*}
%
%
%
%
 Thus, \eqref{lici} implies that
 \begin{align*}
 \varrho \, \widetilde{c_{\upsilon}}(T) = - T \, \left(\widetilde{\partial^2_{T} g_I}(T) \, \tilde{\rho}_I(T) -\frac{\tilde{\rho}_I(T) \, \Big( \widetilde{\partial^2_{T,p} g_I}(T) \, \Big)^2}{\widetilde{\partial^2_{p} g_I}(T)} \right) + o(1/T) = \tilde{\rho}_I(T) \, \widetilde{c^I_{\upsilon}}(T) + o(1/T) \, ,
 \end{align*}
where $o(1/T) \rightarrow 0$ for $T \rightarrow \infty$. Since we know that the heat capacity of the species $I$ is asymptotic equivalent with $T^{\gamma_{\max}}$, the claim for $c_{\upsilon}$ follows.
Next we look at the internal energy density and the enthalpy density. If we insert the particular choices \eqref{smallgparticular}, we first obtain that
\begin{align*}
H_j(T, \, p) = g_j(T,p) - T \, \partial_T g_j(T,p) = \frac{p^0 \, (1-\alpha_j)}{\beta_j \, \rho_j^{\rm R}} \, \left(\frac{T}{T^0}\right)^{\alpha_j} \, \left(\frac{p}{p^0}\right)^{\beta_j} + \frac{c^{j0} \, T^0}{\gamma_j+1} \,  \left(\frac{T}{T^0}\right)^{\gamma_j+1} \, .
\end{align*}
Thus
\begin{align*}
 \varrho u + p = \varrho H = \sum_{i=1}^N \rho_i \, (g_i - T \, \partial_T g_i)  \geq \rho_I \, \frac{c^{I0}}{(\gamma_{\max}+1) \, [T^0]^{\gamma_{\max}}} \, T^{1+\gamma_{\max}} \, .
\end{align*}
If $\gamma_{\max} > 0$, and $j \neq I$, the mass density $\tilde{\rho}_j(T)$ decays exponentially, and we therefore easily show that $\tilde{\rho}_j(T) \, \tilde{H}_j(T) \rightarrow 0$ for $T \rightarrow \infty$. Hence $\widetilde{\varrho H}(T) \eqsim T^{1+\gamma_{\max}}$.

If $\gamma_{\max} > 0$, we have $\tilde{p}(T)/T^{1+\gamma_{\max}} \rightarrow 0$. Thus, it also follows that $\widetilde{\varrho u}(T) \eqsim \widetilde{\varrho H}(T)  \eqsim T^{1+\gamma_{\max}}$. 

For \eqref{smallgparticular2}, we have $H_j(T,p) = g_j(T,p) - T \, \partial_T g_j(T,p) = c^{j0} \, T$ and $\varrho H = \varrho u + p = T \, \sum_{i=1}^N c^{i0} \, \rho_i$. In this case, we have also $p = p^0 \, \frac{T}{T_0} \, \sum_{i} (\rho_i/\rho_i^{\rm R})$. Hence $\widetilde{\varrho H}(T) \eqsim T$ and 
\begin{align*}
 \varrho u = T \, \sum_{i=1}^N (c^{i0} - \frac{p^0}{T^0 \, \rho_i^{\rm R}}) \, \rho_i  \geq  \rho_I \, (c^{I0} - \frac{p^0}{T^0 \, \rho_I^{\rm R}}) \, T \, .
\end{align*}
\end{proof}

\subsection{Partially explicity formulas for the derivatives of transformed coefficients depending on the entropic variables}

Now, we need estimating the combinations $a_0$, $a_1, \ldots, a_{N-1}$ and $d_0$ introduced in \eqref{coeffvarrho}, \eqref{coeffq} and \eqref{d0}. In order to obtain estimates for these functions, we first have to obtain more explicit formula relating them to the quantities $T$, $\tilde{\rho}(T)$, $\widetilde{\varrho u}(T)$, etc.\ of which we know the asymptotic behaviour.

As already shown in the papers \cite{dredrugagu20,druetmixtureincompweak}, in the case of ideal mixtures \eqref{muiideal}, we can achieve partially explicit formula. 
\begin{prop}\label{jerepresente}
 We adopt the assumptions of Lemma \ref{Legendre}. For the derivatives of the function $h^*$, for all $i,k \leq N$, we obtain the representations
\begin{align}
\begin{split}\label{d2upperleft}
 & \partial^2_{w^*_i,w^*_k}h^*(\bar{w}^{*}, \, -1/T) = \rho_i \, \Big( M_i \, \delta_{ik} - \rho_k \, \big(M_i \, \partial_pg_i + M_k \, \partial_p g_k\big) \\
&  \qquad \qquad + \rho_k \, \sum_j (\partial_p g_j)^2 \, \rho_j \, M_j - T \, \rho_k \, \sum_j \partial^2_p g_j \rho_j\Big)  \, ,  
\end{split}
\\
\begin{split}\label{d2upperright}
 & \partial^2_{w^*_i,w^*_{N+1}}h^*(\bar{w}^{*}, \, -1/T) = \rho_i \, \, (M_i \, H_i - \sum_j M_j \, \rho_j \, \partial_p g_j \, H_j)\\
& \qquad + \rho_i \, \varrho H \, (- M_i \, \partial_p g_i + \sum_j M_j \, \tilde{\rho}_j \, (\partial_pg_j)^2 - T \, \sum_j \tilde{\rho}_j \, \partial_p^2g_j) - T^2 \,  \rho_i \, \sum_j \rho_j \, \partial^2_{T,p}g_j \, , 
\end{split}
\\
\begin{split}\label{d2lowerright}
&  \partial^2_{w^*_{N+1}} h^*(w^*,-1/T) =  \sum_i M_i \, \rho_i \, H_i^2 - 2 \, \varrho H \, \sum_iM_i \rho_i \partial_pg_i \, H_i \\
  & +(\varrho H)^2 \, (\sum_{i} M_i \rho_i (\partial^2_pg_i)^2 - T \sum_i \rho_i \, \partial^2_pg_i) - 2 \,\varrho H \, T^2 \, \sum_i \rho_i \,\partial^2_{T,p}g_i - T^3\,  \sum_i \, \rho_i \, \partial^2_{T} g_i \, .\end{split}
\end{align}
\end{prop}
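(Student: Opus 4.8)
The plan is to exploit Lemma~\ref{Legendre}: since $\nabla_{w^*}h^*$ is the inverse of $\nabla_w h$, at the point $w^* = (\mu_1/T,\ldots,\mu_N/T,\,-1/T)$ one has $\partial_{w_i^*}h^* = \rho_i$ for $i\le N$ and $\partial_{w_{N+1}^*}h^* = \varrho u$, so that the Hessian $D^2_{w^*}h^*$ is precisely the Jacobian of the map $w^*\mapsto(\rho_1,\ldots,\rho_N,\,\varrho u)$. The three formulas~\eqref{d2upperleft}--\eqref{d2lowerright} will then be obtained by differentiating the explicit inversion built in the proof of Proposition~\ref{yeswehave}. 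Concretely I would use: $T = -1/w_{N+1}^*$; the mole fractions obeying $x_i = \exp\!\big(M_i(w_i^* - g_i(T,p)/T)\big)$; the constraint $\sum_{i=1}^N x_i = 1$, which determines the pressure $p=\hat p(T,\bar w^*)$ implicitly via~\eqref{pressureimplicit}; the partial mass densities $\rho_i = M_i x_i/\upsilon$ with molar volume $\upsilon := \sum_j M_j x_j\,\partial_p g_j(T,p)$ --- so that $M_j x_j = \rho_j\upsilon$, in agreement with~\eqref{implicitpress} and~\eqref{cehatrhoci} --- and $\varrho u = \varrho H - p$ with $\varrho H = \sum_i\rho_i H_i$, $H_i = g_i - T\partial_T g_i$ (Lemma~\ref{Densities}). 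All these functions are of class $C^2$ on the relevant open set because $h^*\in C^2$, $g_i\in C^2$ and $\partial^2_p g_i<0$, so every implicit differentiation below is legitimate.

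For the block~\eqref{d2upperleft} the point is that $w_1^*,\ldots,w_N^*$ are varied at fixed $w_{N+1}^*$, hence at fixed $T$, so the computation is isothermal. First I would differentiate the constraint $\sum_i x_i = 1$ in $w_k^*$ and use $\sum_j M_j x_j\partial_p g_j = \upsilon$ to get the first-order identity $\partial_{w_k^*}p = T\rho_k$ (at fixed $T$); substituting this back yields $\partial_{w_k^*}x_i = M_i x_i(\delta_{ik}-\rho_k\partial_p g_i)$, and then differentiating $\upsilon = \sum_j M_j x_j\partial_p g_j$ and $\rho_i = M_i x_i/\upsilon$, repeatedly replacing $M_j x_j$ by $\rho_j\upsilon$, should reproduce~\eqref{d2upperleft} verbatim; symmetry in $i,k$ serves as a check. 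For the mixed column~\eqref{d2upperright} and the corner~\eqref{d2lowerright} the relevant operator is $\partial_{w_{N+1}^*} = T^2\,\partial_T$ at fixed $\bar w^* = (w_1^*,\ldots,w_N^*)$. Differentiating the same constraint, now using $\partial_T(g_i/T) = -H_i/T^2 + (\partial_p g_i/T)\,\partial_T p$ and summing against $M_i x_i$, gives $\partial_T p = \varrho H/T$; then $\partial_T x_i = (M_i x_i/T^2)(H_i - \varrho H\,\partial_p g_i)$, and the same chain-rule bookkeeping for $\partial_T\upsilon$ and $\partial_T\rho_i$, multiplied by $T^2$, gives~\eqref{d2upperright}. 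Finally, differentiating $\varrho u = \sum_i\rho_i H_i - p$ in $T$, with $\partial_T H_i = (\partial_p g_i - T\partial^2_{T,p}g_i)\,\varrho H/T - T\partial^2_T g_i$, and multiplying by $T^2$ should produce~\eqref{d2lowerright}.

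The main obstacle --- really the only delicate point --- is the systematic bookkeeping: keeping careful track of which variables are held fixed in each partial derivative ($T$ for $\partial_{w_k^*}$, the whole $\bar w^*$ for $\partial_{w_{N+1}^*}$), and converting consistently between the molar quantities $x_i,\upsilon,\partial_p g_i$ and the mass quantities $\rho_i,\varrho H$ through $M_i x_i = \rho_i\upsilon$ and $n = 1/\upsilon$. I would isolate the two first-order identities $\partial_{w_k^*}p = T\rho_k$ and $\partial_T p = \varrho H/T$ as separate intermediate steps, since each is reused several times and carries the structural content of the statement; everything that follows is routine algebraic rearrangement, which one could alternatively shortcut in places by inverting the explicit Hessian $D^2_w h$ of~\eqref{d2wh} via its Schur complement and checking that the two expressions agree.
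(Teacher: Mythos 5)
Your proposal is correct and follows essentially the same route as the paper: it differentiates the implicit pressure relation \eqref{pressureimplicit} and the representation \eqref{cehatrhoci} of $\rho_i$ (equivalently $\rho_i=M_ix_i/\upsilon$) in the variables $(T,\bar w^*)$, isolates the two first-order identities $\partial_{w_k^*}\hat p=T\rho_k$ and $\partial_T\hat p=\varrho H/T$, and then obtains the Hessian blocks of $h^*$ as the Jacobian of $w^*\mapsto(\rho,\varrho u)$ via $\partial_{w_{N+1}^*}=T^2\partial_T$. The intermediate formulas you state ($\partial_{w_k^*}x_i$, $\partial_Tx_i$, $\partial_TH_i$) check out and reproduce \eqref{d2upperleft}--\eqref{d2lowerright}.
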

\begin{proof}
The key idea is using the equation \eqref{pressureimplicit} and the representation \eqref{cehatrhoci} of $\rho_i$ in the variables $T$ and $\bar{w}^{*}$.
%
Differentiating in \eqref{pressureimplicit} yields, after using the identity \eqref{xxx}, 
\begin{align}\label{tildepressnachT}
 \frac{1}{T} \, \sum_{i=1}^N M_i \, x_i \, \partial_pg_i(T, \, \hat{p}(T, \, \bar{w}^{*})) \, \partial_{w_j^{\prime}} \hat{p}(T, \, \bar{w}^{*})  = M_j \, x_j \quad \text{ for } \quad j = 1,\ldots,N\, .
\end{align}
Thus, invoking \eqref{tildepressnachT} and \eqref{cehatrhoci}
\begin{align*}
\partial_{w^*_j} \hat{p}(T, \, \bar{w}^{*}) = \frac{T \, M_j \, x_j}{\sum_{i=1}^N M_i \, x_i \, \partial_pg_i(T, \, \hat{p}(T,\bar{w}^{*}))} = T \, \hat{\rho}_j(T, \, \bar{w}^{*}) \, .
\end{align*}
With the same ideas, we also find that
\begin{align*}
 \partial_T \hat{p}(T, \, \bar{w}^{*}) = \frac{1}{T} \, \sum_{i=1}^N \hat{\rho}_i \, (g_i(T, \, \hat{p}(T, \, \bar{w}^{*})) - T \, \partial_T g_i(T, \, \hat{p}(T, \, \bar{w}^{*}))) \, .
\end{align*}
Moreover,
direct calculations yield
\begin{align*}
 \partial_{w_k^{\prime}}\hat{\rho}_i(T, \, \bar{w}^{*}) & = \rho_i\, \Big(M_i \, \delta_{ik} - \rho_k \, \big(M_i \, \partial_pg_i + M_k \, \partial_p g_k\big) \\
 & + \rho_k \, \sum_j (\partial_p g_j)^2 \, \rho_j\, M_j - T \, \rho_k\, \sum_j \partial^2_p g_j \rho_j\Big) \, ,
\end{align*}
in which $g_i$ and its derivatives are evaluated at $(T, \, p(T,\bar{w}^{*}))$.

The expression for $\partial_T \hat{\rho}_i(T, \, \bar{w}^{*})$ is sligthly more complex. 
Again, direct calculations yield
\begin{align*}
& \partial_T \hat{\rho}_i(T, \, \bar{w}^{*}) = \frac{\rho_i}{T^2} \, \, (M_i \, H_i - \sum_j M_j \, \rho_j \, \partial_p g_j \, H_j)\\
& + \frac{\rho_i}{T^2} \, \varrho H \, (- M_i \, \partial_p g_i + \sum_j M_j \, \rho_j \, (\partial_pg_j)^2 - T \, \sum_j \, \rho_i \, \partial_p^2g_j)
- \rho_i \, \sum_j \rho_j \, \partial^2_{T,p}g_j \, .
\end{align*}
Note that $\varrho u = \widehat{\varrho u}(T, \, \bar{w}^{*}) = \sum_{i=1}^N \hat{\rho}_i\, (g_i(T,\hat{p}) - T \, \partial_T g_i(T,\hat{p})) - \hat{p}$. Thus
\begin{align*}
 \partial_T \widehat{\varrho u}(T, \, w^*) =& \sum_{i=1}^N H_i \, \partial_T\hat{\rho}_i + \sum_{i=1}^N \hat{\rho}_i \, (\partial_p g_{i} \, \partial_T \hat{p} - T \, \partial^2_{T,T}g_i - T \, \partial^2_{T,p} g_i \, \partial_T \hat{p}) - \partial_T \hat{p}\\
 =& \sum_{i=1}^N H_i \, \partial_T\hat{\rho}_i - T \,  \sum_{i=1}^N \hat{\rho}_i \,  \partial^2_{T,T}g_i - T \, \sum_i \hat{\rho}_i \, \partial^2_{T,p} g_i \, \partial_T \hat{p} \, .
\end{align*}
Inserting the corresponding identities for $\partial_T \hat{p}$ and $\partial_T \hat{\rho}$, we get
\begin{align*}
&  \partial_T \widehat{\varrho u}(T, \, w^*) = \frac{1}{T^2} \, \sum_i M_i \, \rho_i \, H_i^2 - 2 \frac{\varrho H}{T^2} \, \sum_iM_i \rho_i \partial_pg_i \, H_i \\
   + &\frac{(\varrho H)^2}{T^2} \, (\sum_{i} M_i \rho_i (\partial^2_pg_i)^2 - T \, \sum_i \rho_i \, \partial^2_pg_i) - 2 \,\varrho H\, \sum_i \rho_i \,\partial^2_{T,p}g_i - T \sum_i \, \rho_i \, \partial^2_{T} g_i \, .
\end{align*}

Now, we have defined the entropic variables via
\begin{align*}
 \nabla_{w^*} h^*(w^*) = w =  [\rho, \,\varrho u]  =  [\hat{\rho}(-1/w_{N+1}^*, \, \bar{w}^{*}), \, \widehat{\varrho u}(-1/w_{N+1}^*, \, \bar{w}^{*})] \, .
\end{align*}
This allows to compute
\begin{align*}
 \partial^2_{w^*_i,w^*_k}h^* = & \partial_{w^*_k}\hat{\rho}_i(-1/w_{N+1}^*, \, \bar{w}^{*}) \quad \text{ for } \quad 1 \leq i,k \leq N\\
 \partial^2_{w^*_i,w^*_{N+1}}h^* = & \partial_T \hat{\rho}_i(-1/w_{N+1}^*, \, \bar{w}^{*}) \, \frac{1}{(w^*_{N+1})^2} \quad \text{ for } \quad 1 \leq i \leq N\\
 \partial^2_{w^*_{N+1}}h^* = & - \partial_{T} \widehat{\varrho u}(-1/w_{N+1}^*, \, \bar{w}^{*}) \, \frac{1}{(w^*_{N+1})^2} \, ,
\end{align*}
and we obtain the representations
\eqref{d2upperleft}, \eqref{d2upperright}, and \eqref{d2lowerright}.
\end{proof}
The formula of Proposition \ref{jerepresente} also yield
\begin{align}\label{sumupperleft}
\sum_{k=1}^N \partial^2_{w^*_i,w^*_k}h^*(\bar{w}^{*}, \, -1/T) = & \rho_i \, \Big(M_i - M_i \, \partial_{p}g_i \, \varrho - \sum_k \rho_k \, M_k \, \partial_p g_k \nonumber\\
& + \varrho \, \sum_j (\partial_p g_j)^2 \, \rho_j \, M_j - T \, \varrho \,  \sum_j \partial^2_p g_j \,  \rho_j\Big) \, .
\end{align}
In addition, we have
\begin{align}\label{d211}
D^2_{w^*,w^*}h^* \xi^{N+1} \cdot \xi^{N+1} = \sum_{i,k=1}^N   \partial^2_{w^*_i,w^*_k}h^*
= \sum_{i=1}^N M_i \, \rho_i \,(1-\partial_p g_i \, \varrho)^2 + T \, \varrho^2 \, \sum_{i=1}^N \rho_i \, |\partial^2_pg_i| \, . 
\end{align}
We also compute
\begin{align}\label{sumupperright}
& \sum_{i=1}^N \partial^2_{w^*_i,w^*_{N+1}}h^*(\bar{w}^{*}, \, -1/T) =  
 \sum_{j}  \rho_j \,M_j \, H_j\, (1- \varrho  \, \partial_p g_j)\nonumber\\
 & + \varrho H \, \big(\sum_j \rho_j \, M_j \, \partial_p g_j \, (\varrho \, \partial_p g_j - 1) - T \, \varrho \, \sum_j \rho_j \, \partial_p^2g_j\big)
 - T^2 \, \varrho \, \sum_j \rho_j \, \partial^2_{T,p}g_j \, .
 \end{align}
%
%
%
%
%

\vspace{0.2cm}

\subsection{Some estimates}

We exploit the representation \eqref{d2upperleft} and the asymptotic properties in Lemma \ref{otherasymptotics} and Proposition \ref{ENERGU}. We see that
\begin{align}\label{d22allthebest}
\widetilde{ \partial^2_{w^*_i,w^*_k}}h^*(T) \eqsim \frac{T \, \varrho^2}{\tilde{p}(T)} \, \delta_{iI} \, \delta_{kI} \quad \text{ and } \quad \widetilde{D^2h^*_{w^*,w^*}}(T) \xi^{N+1} \cdot \xi^{N+1} \eqsim \frac{T \, \varrho^2}{\tilde{p}(T)} \, .
\end{align}
We obtain that
\begin{align}\label{d2overd211first}
 \frac{\widetilde{\partial^2_{w^*_i,w^*_k}}h^*(T)}{\widetilde{D^2h^*_{w^*,w^*}}(T) \xi^{N+1} \cdot \xi^{N+1}} \precsim 1 \, .
\end{align}


In order to estimate \eqref{d2upperright}, we use the result of Proposition \ref{ENERGU} for $\widetilde{\varrho H}(T)$. By the same means
\begin{align}\label{jocke}
 \widetilde{\partial^2_{w^*_{N+1}} h^*}(T) \precsim (\widetilde{\varrho H}(T))^2 \eqsim T^{2+2\gamma_{\max}} \, , \quad \widetilde{ \partial^2_{w^*_i,w^*_{N+1}}}h^*(T) \precsim  T^{1+\gamma_{\max}} \, .
\end{align}
It also follows that
%
\begin{align}\label{d2overd211second}
  \frac{| \widetilde{ \partial^2_{w^*_i,w^*_{N+1}}}h^*(T)| }{\widetilde{D^2h^*_{w^*,w^*}}(T) \xi^{N+1} \cdot \xi^{N+1}} \precsim T^{\gamma_{\max}} \, \tilde{p}(T) \eqsim T^{\gamma_{\max} + \frac{\alpha_I}{1-\beta_I}}  \, .
\end{align}
Finally we estimate the coefficients occurring in the weak form of the energy equation.
\begin{lemma}\label{cescoeffsla}
Define $a_0$, $a_k$ and $d_0$ as in \eqref{coeffvarrho}, \eqref{coeffq} and \eqref{d0}. Then
\begin{align*}
|\widetilde{a_0}(T)| \precsim T^{\gamma_{\max} + \frac{\alpha_I}{1-\beta_I}}, \quad |\widetilde{a_k}(T)| \precsim T^{1+\gamma_{\max}}, \quad \widetilde{d_0}(T) \eqsim T^{2+\gamma_{\max}} \, .
\end{align*}
\end{lemma}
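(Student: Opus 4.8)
The plan is to estimate $a_0$, the $a_k$ and $d_0$ one after the other, combining the partially explicit formulas of Proposition~\ref{jerepresente} and the summation identities \eqref{sumupperleft}--\eqref{sumupperright}, \eqref{d211} with the monomial asymptotics gathered in Lemma~\ref{otherasymptotics}, Proposition~\ref{ENERGU} and in \eqref{d22allthebest}, \eqref{jocke}, \eqref{d2overd211second}.

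\emph{Coefficients $a_0$ and $a_k$.} Since $\xi^{N+1}=(1^N,0)$ and $\xi^1,\dots,\xi^{N-1}$ have vanishing last component, all inner products entering \eqref{coeffvarrho} and \eqref{coeffq} are \emph{finite} linear combinations of the entries $\partial^2_{w^*_i,w^*_k}h^*$ and $\partial^2_{w^*_i,w^*_{N+1}}h^*$ ($i,k\le N$) and of $D^2_{w^*,w^*}h^*\,\xi^{N+1}\cdot\xi^{N+1}$. In particular $a_0$ is the sum over $i=1,\dots,N$ of the ratios estimated in \eqref{d2overd211second}, so $|\widetilde{a_0}(T)|\precsim T^{\gamma_{\max}+\frac{\alpha_I}{1-\beta_I}}$. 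Writing $a_k=D^2_{w^*,w^*_{N+1}}h^*\cdot\xi^k-a_0\,\big(D^2_{w^*,w^*}h^*\,\xi^{N+1}\cdot\xi^k\big)$, the first summand is $\precsim T^{1+\gamma_{\max}}$ by \eqref{jocke}, while $D^2_{w^*,w^*}h^*\,\xi^{N+1}\cdot\xi^k\precsim T\varrho^2/\tilde p(T)\eqsim T^{1-\frac{\alpha_I}{1-\beta_I}}$ by \eqref{d22allthebest}, so multiplying by the bound on $a_0$ again gives $\precsim T^{1+\gamma_{\max}}$; hence $|\widetilde{a_k}(T)|\precsim T^{1+\gamma_{\max}}$.

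\emph{Coefficient $d_0$.} For the lower bound I would simply invoke the inequality $d_0\ge T^2\,\varrho\,c_{\upsilon}$ established above together with $\widetilde{c_{\upsilon}}(T)\eqsim T^{\gamma_{\max}}$ (Proposition~\ref{ENERGU}), which yields $\widetilde{d_0}(T)\succsim T^{2+\gamma_{\max}}$. For the upper bound I would distinguish two cases. If $\gamma_{\max}=0$, then $d_0\le\partial^2_{w^*_{N+1}}h^*$ and \eqref{jocke}, Proposition~\ref{ENERGU} give $\widetilde{d_0}(T)\precsim(\widetilde{\varrho H}(T))^2\eqsim T^{2}=T^{2+\gamma_{\max}}$, finishing the case. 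If $\gamma_{\max}>0$ this naive bound only yields $T^{2+2\gamma_{\max}}$, and I would instead use the cancellation hidden in the Schur complement: by \eqref{d0equiv}, $d_0=T^2\,\varrho\,c_{\upsilon}+T^4B$ with $B=\min_{s\in\mathbb{R}}A^*\big(\partial_T(\mu/T)-s\,1^N\big)\cdot\big(\partial_T(\mu/T)-s\,1^N\big)\ge0$, where $A^*$ is the upper-left block of $D^2_{w^*}h^*$. Choosing $s=\partial_T(\mu_I/T)$ annihilates the $I$-th component of the tested vector, so $B\le\sum_{i,k\ne I}\partial^2_{w^*_i,w^*_k}h^*\,\zeta_i\zeta_k$ with $\zeta_i=\partial_T(\mu_i/T)-\partial_T(\mu_I/T)$. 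By the explicit formula \eqref{d2upperleft} every entry $\partial^2_{w^*_i,w^*_k}h^*$ with $i\ne I$ carries the prefactor $\rho_i$, which by Lemma~\ref{rhoasump} decays like $e^{-b_iT^{\gamma_{\max}}}$, whereas the remaining factors and the differences $\zeta_i$ (written out through $\partial_T g_i$, $\partial_p g_i$, $\partial_T\tilde p$ with the help of Lemma~\ref{otherasymptotics}) grow at most polynomially in $T$; hence $T^4B$ decays faster than any power of $T$, so $T^4B\le T^2\,\varrho\,c_{\upsilon}$ for $T$ large and $\widetilde{d_0}(T)\precsim T^{2+\gamma_{\max}}$. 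Combining the two bounds, $\widetilde{d_0}(T)\eqsim T^{2+\gamma_{\max}}$.

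The hard part is precisely this last step for $\gamma_{\max}>0$: the straightforward estimate $d_0\le\partial^2_{w^*_{N+1}}h^*$ overshoots by a factor $T^{\gamma_{\max}}$, and one must recognize $d_0-T^2\varrho c_{\upsilon}$ as a positive quadratic form in $A^*$ that can be tested on a vector supported away from the dominant species index $I$, where the Hessian of $h^*$ is exponentially small. Everything else --- expanding the inner products into the explicit entries of Proposition~\ref{jerepresente} and inserting the asymptotics of Lemma~\ref{otherasymptotics} and Proposition~\ref{ENERGU} --- is routine bookkeeping.
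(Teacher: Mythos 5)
Your proposal is correct and follows essentially the paper's route: the bounds for $a_0$ and $a_k$ are read off from \eqref{jocke}, \eqref{d2overd211second} and \eqref{d22allthebest} exactly as in the paper, and for $d_0$ you exploit the same key cancellation, namely that in \eqref{d0equiv} the Schur-complement term is negligible because $A^*$ is asymptotically concentrated on the $(I,I)$ entry while the off-dominant densities decay (Lemma \ref{rhoasump}). The only differences are cosmetic: you verify the smallness of that term via the variational characterization $B=\min_s A^*(\partial_T(\mu/T)-s\,1^N)\cdot(\partial_T(\mu/T)-s\,1^N)$ tested at $s=\partial_T(\mu_I/T)$, using the prefactor $\rho_i$ in \eqref{d2upperleft}, whereas the paper writes $A^*=a^*_{II}\,e^I\otimes e^I+\tilde B$ and estimates the resulting expression directly, and for $\gamma_{\max}=0$ you shortcut with $d_0\le\partial^2_{w^*_{N+1}}h^*\precsim T^2$ instead of repeating the Schur-complement argument; both devices are equivalent and sound.
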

\begin{proof}
The estimates for $a$ and $b$ is a direct consequence of the definitions \eqref{coeffvarrho} and \eqref{coeffq}, where we use \eqref{jocke} and \eqref{d2overd211second}. In \eqref{d0equiv}, we proved that
\begin{align*}
 d_0 = T^2 \, \varrho \, c_{\upsilon} + T^4 \, \Big( A^* \partial_{T}(\mu/T) \cdot \partial_{T}(\mu/T) - \frac{ (A^* \partial_{T}(\mu/T) \cdot \xi^{N+1})^2}{A^* \xi^{N+1} \cdot \xi^{N+1}} \Big) \, ,
\end{align*}
in which $A^* = D^2_{\bar{w}^{*},\bar{w}^{*}}h^*$. In the case of $\gamma_{\max} > 0$, we express
\begin{align*}
A^*_{ij} = \delta_{Ii} \, \delta_{Ij} \, \frac{ T \, \varrho^2}{\tilde{p}(T)} + \tilde{B}_{ij}(T) \,  ,
\end{align*}
where, exploiting the result of Lemma \ref{rhoasump}, the coefficients $\tilde{B}_{ij}(T)$ decay exponentially for $T \rightarrow \infty$. Then, for $z \in \mathbb{R}^N$ arbitrary
\begin{align*}
 A^* z \cdot z = a^*_{II} \, z_I^2 \, + \tilde{B} z \cdot z\, , \quad 
 (A^* z \cdot \xi^{N+1})^2 = z_I^2 \, (a^*_{II} )^2 + (\tilde{B} z \cdot \xi^{N+1})^2 \, .
\end{align*}
Hence
\begin{align*}
& A^* z \cdot z - \frac{(A^* z \cdot \xi^{N+1})^2}{A^* \xi^{N+1} \cdot \xi^{N+1}} \\
& \quad =
\frac{a_{II}^* \, z_I^2 \, \tilde{B}z \cdot \xi^{N+1} + \tilde{B} z \cdot z \, a^*_{II} - (\tilde{B}z \cdot \xi^{N+1})^2 - 2 \, a_{II}^* \, z_I \, (\tilde{B}z \cdot \xi^{N+1})}{A^*\xi^{N+1} \cdot \xi^{N+1}} \, ,
\end{align*}
and we see that
\begin{align}\label{goal}
 A^* z \cdot z - \frac{(A^* z \cdot \xi^{N+1})^2}{A^* \xi^{N+1} \cdot \xi^{N+1}} \leq |\tilde{B}| \, \frac{|A^*| \, |z|^3 + |\tilde{B}| \, |z|^2 + 2 \, |A^*| \, |z|^2}{A^* \xi^{N+1} \cdot \xi^{N+1}} \, .
\end{align}
We also note that, for the chemical potentials as functions of $T$ and $\rho$,
\begin{align*}
\partial_{T} (\mu_i/T) = -\frac{1}{T^2} \, (g_i - T \, \partial_{T} g_i - T \, \partial_p g_i \, \partial_{T} \tilde{p}) 
= \frac{1}{T^2} \, H_i - \frac{1}{T} \, \partial_p g_i \, \partial_{T}\tilde{p} \, .
\end{align*}
We recall that 
\begin{align*}
 \partial_T \tilde{p}(T, \, \rho) = - \frac{\sum_{i=1}^N \partial^2_{T,p} g_i(T, \, p) \, \rho_i}{\sum_{i=1}^N \partial^2_{p} g_i(T, \, p) \, \rho_i} \, .
\end{align*}
Since $\partial^2_p g_i = (\beta_i-1) \, p^{-1} \, \partial_p g_i$ and $\partial^2_{T,p} g_i = \alpha_i \, T^{-1} \,  \partial_p g_i$, we have
\begin{align*}
 \partial_p g_i \, \partial_{T}\tilde{p} = \frac{\alpha_i}{1-\beta_i} \, \frac{p}{T} \, \partial_p g_i \, .
\end{align*}
Hence, $T^2 \,  \widetilde{\partial_{T} (\mu_i/T)}(T) \eqsim H_i \precsim T^{1+\gamma_{\max}}$. We choose $z := T^2 \,  \widetilde{\partial_{T} (\mu_i/T)}(T)$ in \eqref{goal}, and we see that
\begin{align*}
T^4 \, \Big( A^* \partial_{T}(\mu/T) \cdot \partial_{T}(\mu/T) - \frac{ (A^* \partial_{T}(\mu/T) \cdot \xi^{N+1})^2}{A^* \xi^{N+1} \cdot \xi^{N+1}} \Big)\end{align*}
decays with $\tilde{B}(T)$ exponentially to zero for $T \rightarrow + \infty$. Thus $d_0 \eqsim T^2 \, \varrho \, \widetilde{c_{\upsilon}}$.

In the case $\gamma_{\max} = 0$, we can directly compute that
\begin{align*}
\partial_{T} (\mu_i/T) & = - \frac{1}{T} \, (c^{i0} - \frac{p_0}{\rho_i^{\rm R} \, T_0}) \, , \\
T^4 \, \Big( A^* \partial_{T}(\mu/T) \cdot \partial_{T}(\mu/T) - \frac{ (A^* \partial_{T}(\mu/T) \cdot \xi^{N+1})^2}{A^* \xi^{N+1} \cdot \xi^{N+1}} \Big) 
= & T^2 \,  \Big(A^* c \cdot c - \frac{ (A^* c \cdot \xi^{N+1})^2}{A^* \xi^{N+1} \cdot \xi^{N+1}} \Big)\, ,\end{align*}
with the constants $c_i = c^{i0} -p_0/(\rho_i^{\rm R} \, T_0)$ of modified heat capacities. Here again, we can write $A^* = a_{II}^* \, \mathbb{I} + \tilde{B}$ where $\tilde{B}(T) \rightarrow 0$ for $T \rightarrow +\infty$ in polynomial decay. Hence $A^* c \cdot c - (A^* c \cdot \xi^{N+1})^2/A^* \xi^{N+1} \cdot \xi^{N+1}$ tends to zero, allowing to conclude again that $d_0 \eqsim \varrho \, T^2 \, \widetilde{c_{\upsilon}}$. 
\end{proof}

\vspace{0.2cm}

\subsection{Sufficient conditions for the maximum principle}

In this last paragraph we show how to derive the growth conditions on the thermodynamic diffusivities and viscosities. We give the statement for $b^1 = \ldots = b^N$ which corresponds to the case that the body forces reduce to the gravitational attraction. For the general case, there is an additional restriction concerning the growth of the functions $M_{ij}(T,\rho)$ in temperature (cf.\ Remark \ref{bnotzero}).
\begin{theo}\label{MAIN2}
We consider the ideal mixture of Section \ref{IDMIX}, based on the choice of $g_1, \ldots,g_N$ according to \eqref{smallgparticular} or \eqref{smallgparticular2}. Moreover, assume that the function $T \mapsto r^{\Gamma}_{\rm h}(x,t, \, T, \, \rho)$ is nonpositive if $T$ is large enough\footnote{More precisely, we assume that there is a function $T_1 \in C(\mathbb{R}_+)$ such that $r^{\Gamma}(x,t, \, T, \, \rho) \leq 0$ for all $T \geq T_1(|\rho|)$.}. We assume that $J^{\rm h} \in W^{1-\frac{1}{r},0}_r(S_{\bar{\tau}})$ with $r > 5$.
We assume that, for some $I \in \{1, \,\ldots,N\}$, the species ${\rm A}_I$ has dominant heat capacity according to the Definition \ref{gammamax}. Let $p > 5$ and assume that the exponents $\gamma = \gamma_{\max}$, $\alpha_I$ and $\beta_I$ are such that
\begin{align*}
\frac{\alpha_I}{1-\beta_I} < 1 + \Big(\frac{2}{5}-\frac{1}{p}\Big) \, (1+\gamma) \, .
\end{align*}
Then we choose any $\beta$ in the interval
\begin{align*}
\beta \in \Big[\max\Big\{1, \, \frac{1}{6} \, \Big(\frac{5\alpha_I}{1-\beta_I} - 1\Big) \Big\}, \, \min\Big\{3 + \frac{5\gamma}{2(1+\gamma)}, \, 3-\frac{5}{p} + \frac{5}{\gamma+1} \, (1 - \frac{\alpha_I}{1-\beta_I})\Big\}\Big[ \, ,
\end{align*}
and any $s_0 \leq s_1$ in the interval
\begin{align*}
\Big[2 \, \beta \, (1+\gamma)-\gamma, \, \min\Big\{\frac{6}{5} \, (1+\beta) \, (1+\gamma), \,  \frac{6}{5} \, (1+\gamma) \, (1+\beta -\frac{5}{3p}) + 2  \, (1 - \frac{\alpha_I}{1-\beta_I}) - \gamma\Big\}\Big[ \, .
\end{align*}
Suppose that for all $0 < m \leq M < +\infty$, the coefficients $\kappa$, $l_1, \ldots, l_N$, $\eta$ and $\lambda$ satisfy the following growth conditions:
%
\begin{align*}
 & \liminf_{T \rightarrow + \infty} \frac{1}{T^{s_0}} \, \inf_{m \leq |\rho| \leq M} \kappa(T, \, \rho) >0, \quad  \limsup_{T \rightarrow + \infty} \frac{1}{T^{s_1}} \, \sup_{m \leq |\rho| \leq M} \kappa(T, \, \rho) < +\infty \, ,\\
& \limsup_{T\rightarrow \infty} \frac{1}{T^{\frac{s_0}{2} + \frac{3}{5} \, (1+\beta) \, (1+\gamma) -1 - \frac{\gamma}{2}}} \sup_{m \leq |\rho| \leq M} |l(T, \, \rho)| < +\infty \, , \\
& \limsup_{T\rightarrow +\infty} \frac{1}{T^{(\frac{6}{5} \, (1+\beta)-1) \, (1+\gamma)}} \, \sup_{m \leq |\rho| \leq M} \Big(\eta(T, \, \rho) +|\lambda(T, \, \rho)| \Big) < +\infty \, , 
\end{align*}
Then, for every solution of optimal mixed regularity on $]0, \, \bar{\tau}[$ subject to the assumptions of Theorem \ref{MAIN} and satisfying $\|\varrho u\|_{L^{1,\infty}(Q_{\bar{\tau}})} < + \infty$, we have
\begin{align*}
\|T\|_{L^{\infty}(Q_{\bar{\tau}})} < + \infty, \quad
 \inf_{(x, \, t) \in Q_{\bar{\tau}}, \, i = 1,\ldots,N} \rho_i(x, \, t) > 0 \, .
\end{align*}
\end{theo}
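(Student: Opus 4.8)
The plan is to reduce Theorem \ref{MAIN2} entirely to Proposition \ref{TECHON} together with the Corollary that follows it: once the growth conditions \eqref{growth} are established with a suitable quintuple of exponents $\beta,\beta_0,\beta_1,\beta_2,\beta_3$, Proposition \ref{TECHON} gives $\limsup_{t\to\bar\tau-}\|\epsilon\|_{L^\infty(Q_t)}<+\infty$, and then the Corollary — whose hypothesis $\liminf_{T\to\infty}\inf_{m\le|\rho|\le M}\epsilon(T,\rho)=+\infty$ holds here because $\widetilde{\varrho u}(T)\eqsim T^{1+\gamma}$ with $\gamma:=\gamma_{\max}$, by Proposition \ref{ENERGU} — yields the bounds on $\|T\|_{L^\infty(Q_{\bar\tau})}$ and the strict positivity $\inf_{Q_{\bar\tau},\,i}\rho_i>0$. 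So the only real content is the passage from the hypotheses on $\kappa,l,\eta,\lambda$, which are phrased in powers of $T$, to the conditions \eqref{growth}, which are phrased in powers of $\epsilon=\varrho u$. The bridge is the equivalence $\epsilon\eqsim T^{1+\gamma}$ of Proposition \ref{ENERGU}: for bounded entropic variables it lets one convert $T^a\eqsim\epsilon^{a/(1+\gamma)}$.

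First I would collect the asymptotics of all coefficients appearing in \eqref{growth}. Since, by assumption, the species ${\rm A}_I$ is dominant with respect to the heat capacity (Definition \ref{gammamax}), Lemma \ref{rhoasump} gives $\tilde\rho_j(T)\to0$ (exponentially if $\gamma>0$, polynomially if $\gamma=0$) for $j\neq I$ and $\tilde\rho_I(T)\to\varrho$; hence Proposition \ref{ENERGU} gives $\widetilde{c_\upsilon}(T)\eqsim T^{\gamma}$ and $\widetilde{\varrho u}(T)\eqsim T^{1+\gamma}$, Lemma \ref{otherasymptotics} gives $\tilde p(T)=|p|\eqsim T^{\alpha_I/(1-\beta_I)}$, and Lemma \ref{cescoeffsla} gives $|\widetilde{a_0}(T)|\precsim T^{\gamma+\alpha_I/(1-\beta_I)}$, $|\widetilde{a_k}(T)|\precsim T^{1+\gamma}$ and $\widetilde{d_0}(T)\eqsim T^{2+\gamma}$. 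I would also record that $|L|\precsim|l|$ from the definition $L_k=\xi^k\cdot(l,0)$.

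Next I would check the structural hypotheses of Proposition \ref{TECHON}. Optimal mixed regularity and $p>3$ give $q_N$ continuous, hence $\sup_{Q_t}q_N<0$ for every $t<\bar\tau$, so condition \eqref{noblowup} holds and Lemma \ref{weakform} (in the form with a boundary heat flux, as explained in the remark following it) provides, for all $t<\bar\tau$, a function $\epsilon\in W^1_p(Q_t)$ satisfying the weak energy balance; the hypothesis $\|\varrho u\|_{L^{1,\infty}(Q_{\bar\tau})}<+\infty$ is exactly the requirement $\epsilon\in L^{1,\infty}(Q_{\bar\tau})$. The assumption that $T\mapsto r^\Gamma_{\rm h}$ is nonpositive for $T$ large means, by the remark following Proposition \ref{TECHON}, that the boundary term has the favourable sign and contributes no growth condition, while $J^\Gamma_{\rm h}\in W^{1-1/r,0}_r(S_{\bar\tau})$ with $r>5$ supplies the needed $W^{1,0}_r$-extension; and since $b^1=\dots=b^N$ we have $\tilde b\equiv0$, so by Remark \ref{bnotzero} no growth condition on the diffusivities $M_{ij}$ is required. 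With these points in place, I would read off the exponents: $\kappa/d_0\succsim T^{s_0-2-\gamma}$ forces $2(\beta-1)(1+\gamma)\le s_0-2-\gamma$, i.e. $s_0\ge 2\beta(1+\gamma)-\gamma$ — the left endpoint of the admissible $s_0$-interval; $|p|\eqsim\epsilon^{\alpha_I/((1-\beta_I)(1+\gamma))}$ gives $\beta_2=\alpha_I/((1-\beta_I)(1+\gamma))$, and the floor $\beta\ge1$ in the $\beta$-interval secures $\beta_2<\tfrac{1}{5}(6\beta+1)$; the remaining terms $\tfrac{d_0}{\kappa}|L|^2$, $\tfrac{\kappa}{d_0}|a|^2$, $\tfrac{\kappa}{d_0}|a_0|^2$, $\eta+|\lambda|$ yield $\beta_0,\beta_1,\beta_3$, and here the precise shape of the hypotheses on $l$ (exponent containing $\tfrac{s_0}{2}$) and on $\eta+|\lambda|$ (exponent $(\tfrac{6}{5}(1+\beta)-1)(1+\gamma)$), together with the open upper bounds of the $\beta$- and $s$-intervals and the condition $\tfrac{\alpha_I}{1-\beta_I}<1+(\tfrac{2}{5}-\tfrac{1}{p})(1+\gamma)$ that makes the $\beta$-interval non-empty, are exactly calibrated so that $\max\{\beta_0,\beta_3+1\}<\tfrac{6}{5}(1+\beta)$ and $\beta_1<\tfrac{6}{5}(1+\beta-\tfrac{5}{3p})$. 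Proposition \ref{TECHON} then applies (with $p>5$), and the Corollary concludes.

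The conceptual part of this is light, so the main obstacle is purely the exponent bookkeeping: one must compute $\beta,\beta_0,\beta_1,\beta_2,\beta_3$ from the asymptotics above and the hypotheses, then verify that the three strict inequalities of Proposition \ref{TECHON}, the restriction $p>5$, and the non-emptiness of the prescribed intervals for $\beta$ and for $s_0\le s_1$ are \emph{simultaneously} achievable. The constraint on $\tfrac{\alpha_I}{1-\beta_I}$ is precisely what guarantees the second interval-endpoint inequality, and the "$\max\{1,\cdot\}$" floor and the half-open endpoints are precisely what produce the required strictness; keeping these — and the facts that $\tilde b\equiv0$ and that the boundary heat-flux term has a definite sign — straight is the crux of the argument.
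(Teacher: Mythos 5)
Your proposal follows essentially the same route as the paper's own proof: it reduces Theorem \ref{MAIN2} to Proposition \ref{TECHON} (and the Corollary after it) by converting the hypothesised powers of $T$ into the $\epsilon$-growth conditions \eqref{growth} via $\epsilon \eqsim T^{1+\gamma_{\max}}$, $d_0 \eqsim T^{2+\gamma_{\max}}$, $\tilde p \eqsim T^{\alpha_I/(1-\beta_I)}$ and the bounds on $a_0$, $a_k$ from Lemmas \ref{otherasymptotics}, \ref{cescoeffsla} and Proposition \ref{ENERGU}, with the boundary heat flux and the case $b^1=\dots=b^N$ (so $\tilde b\equiv 0$) disposed of exactly as in the remarks following Proposition \ref{TECHON}. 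The exponent bookkeeping you outline (e.g.\ $s_0 \ge 2\beta(1+\gamma)-\gamma$, $\beta_2=\alpha_I/((1-\beta_I)(1+\gamma))$, and the calibration of the $l$- and $\eta,\lambda$-exponents against $\tfrac{6}{5}(1+\beta)$ and $\tfrac{6}{5}(1+\beta-\tfrac{5}{3p})$) is precisely what the paper carries out, at a comparable level of detail.
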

A small application of this result is the following. Consider the typical choice of a mixture of ideal gases, that is, for all $i$, the function $g_i$ obeys \eqref{smallgparticular2}. Then the heat capacities $c_{\upsilon}^i$ of the species are constant, we have $\gamma_{\max} = 0$, $\alpha_I = 1$ and $\beta_I = 0$. We assume that $\kappa = \kappa_0 \, T^2$ with a constant $\kappa_0>0$. We choose $\beta = 1$ and $s_0 = s_1 = 2$ which is compatible with the conditions in Theorem \ref{MAIN2}. Then, we obtain the maximum principle if the growth of $|M|$, $|l|$ and $\eta$, $|\lambda|$ in $T$ does not exceed $T^{\frac{6}{5}}$.
\begin{proof}
Let $I$ be the index of the dominant species. We assume that $p > 5$, $\gamma_{\max}$, $\alpha_I$ and $\beta_I$ satisfy the conditions of Theorem \ref{MAIN2}.

Then we choose any $\beta$ and $s_0 \leq s_1$ as in the statement of the theorem.
Suppose that $T^{s_0} \precsim \tilde{\kappa}(T) \precsim T^{s_1}$ and the coefficients $l$, $\eta$ and $\lambda$ satisfy
\begin{align*}
|l| \precsim T^{\frac{s_0}{2} + \frac{3}{5} \, (1+\beta) \, (1+\gamma) -1 - \frac{\gamma}{2}} \, , \quad
\eta +|\lambda| \precsim T^{(\frac{6}{5} \, (1+\beta) - 1) \, (1+\gamma_{\max})} \, .
\end{align*} 
We claim that the coefficients $\kappa$, $d_0$, $a_0$, $(a)$, $(l)$, $p$, $\eta$ and $\lambda$ satisfy the requirements of Proposition \ref{TECHON}.

To see this, we at first recall that $\epsilon = \varrho u \eqsim T^{1+\gamma_{\max}}$ (Prop. \ref{ENERGU}), and that $d_0 \eqsim T^{2+\gamma_{\max}}$ (Lemma \ref{cescoeffsla}). The condition \eqref{growth}$_1$ is equivalent to
\begin{align}\label{condkappa1}
\kappa \succsim T^2 \,  T^{2(\beta-1) \, (1+\gamma_{\max}) + \gamma_{\max}} \, .
\end{align}
In order to ensure \eqref{growth}$_2$, we require separately that
\begin{align}\label{condkappa2}
\frac{1}{\kappa} \, |l|^2 \precsim T^{-2-\gamma_{\max}} \, T^{\beta_0 \, (1+\gamma_{\max})}, \quad \kappa \precsim T^{\beta_0 \, (1+\gamma_{\max})} \, .
\end{align}
Then, \eqref{growth}$_2$ follows from the growth rates established in Lemma \ref{cescoeffsla} for $d_0$ and $a_1, \ldots, a_{N-1}$.

Here, $\beta_0$ is subject to $\beta_0 < \frac{6}{5} \, (1+\beta)$. Moreover the condition \eqref{growth}$_3$ for $a_0$ yields
\begin{align}\label{condkappa3}
\kappa \, T^{\gamma_{\max} + 2\frac{\alpha_I}{1-\beta_I}}\precsim T^{\beta_1 \, (1+\gamma_{\max})} \, T^{2} \, ,
\end{align}
with $\beta_1$ subject to $\beta_1 < \frac{6}{5}  \, (1+\beta - \frac{5}{3p})$.

Finally, \eqref{growth}$_4$ and \eqref{growth}$_5$ impose the conditons
\begin{align}\label{moule1}
\frac{\alpha_I}{1-\beta_I} \leq \beta_2 \, (1+\gamma_{\max}) \quad \text{ and } \quad \eta + |\lambda| \precsim  T^{\beta_3 \, (1+\gamma_{\max})}  \, . 
\end{align}
Here $\beta_2$ and $\beta_3$ are subject to
\begin{gather*}
\beta_3+1 <
\frac{6}{5} \, (1+\beta) \, , \quad
\beta_2 < \frac{1}{5} \, (6\beta+1) \, .
 \end{gather*}
The condition \eqref{moule1}$_1$ is independent. With $\delta := \alpha_I/(1-\beta_I)$, it imposes the restriction
\begin{align}\label{moule2}
 \delta \leq \frac{1}{5} \, (6\beta+1) \, (1+\gamma_{\max}) \, .
\end{align}
To satisfy the other conditions, we assume that $\kappa \succsim T^{s_0}$. Then, in view of \eqref{condkappa1}, $s_0$ is subject to
\begin{align*}
s_0 \geq 2 \,  + 2(\beta-1) \, (1+\gamma_{\max}) + \gamma_{\max} \, .
\end{align*}
Moreover, if $\kappa \precsim T^{s_1}$, then  $s_1 \geq s_0$. Due to \eqref{condkappa2}$_2$ and \eqref{condkappa3} $s_1$ is also subject to
\begin{align*}
s_1 \leq \beta_0 \, (1+\gamma_{\max}) < \frac{6}{5} \, (1+\beta) \, (1+\gamma_{\max})\\
s_1 + 2 \, \delta+\gamma_{\max} < 2+ \frac{6}{5}  \, (1+\beta - \frac{5}{3p}) \, (1+\gamma_{\max}) \, .
\end{align*}
We can satisfy these algebraic conditions under the assumptions of the Theorem.
The remaining conditions \eqref{condkappa2}$_1$ and \eqref{moule1}$_2$ then restrict the growth of the coefficients $l$ and $\eta, \, \lambda$ as
\begin{align*}
|l| \precsim T^{\frac{s_0}{2} + \frac{\beta_0}{2} \, (1+\gamma_{\max}) -1 - \frac{\gamma_{\max}}{2}} \, , \quad
\eta +|\lambda| \precsim T^{\beta_3 \, (1+\gamma_{\max})} \, .
\end{align*}
\end{proof}


\begin{thebibliography}{HMPW17}

\bibitem[AB21a]{augnerbothe}
B.~Augner and D.~Bothe.
\newblock Analysis of some heterogeneous catalysis models with fast sorption
  and fast surface chemistry.
\newblock {\em J. Evol. Equ.}, 2021.
\newblock https://doi.org/10.1007/s00028-021-00692-4.

\bibitem[AB21b]{augnerbothe2}
B.~Augner and D.~Bothe.
\newblock The fast-sorption and fast-surface-reaction limit of a heterogeneous
  catalysis model.
\newblock {\em Disc. Cont. Dyn. Syst., Series S}, 14:533--574, 2021.

\bibitem[AGG12]{abelsgarckegruen}
H.~Abels, H.~Garcke, and G.~Gr\"{u}n.
\newblock Thermodynamically consistent, frame indifferent diffuse interface
  models for incompressible two--phase flows with different densities.
\newblock {\em Math. Mod. Meth. Appl. Sci.}, 22, 2012.
\newblock https://doi.org/10.1142/S0218202511500138.

\bibitem[AP21]{Axman}
S.~Axman and M.~Pokorn\'{y}.
\newblock Steady solutions to a model of compressible chemically reacting fluid
  with high density.
\newblock {\em Math. Meth. Appl. Sci.}, 2021.
\newblock https://doi.org/10.1002/mma.7193.

\bibitem[BD15]{bothedreyer}
D.~Bothe and W.~Dreyer.
\newblock Continuum thermodynamics of chemically reacting fluid mixtures.
\newblock {\em Acta Mech.}, 226:1757--1805, 2015.

\bibitem[BD20]{bothedruetMS}
D.~Bothe and P.-E. Druet.
\newblock On the structure of continuum thermodynamical diffusion fluxes: a
  novel closure scheme and its relation to the {M}axwell-{S}tefan and the
  {F}ick-{O}nsager approach.
\newblock Preprint, 2020.
\newblock Available at: {\footnotesize
  \verb|http://www.wias-berlin.de/preprint/2749/wias_preprints_2749.pdf|} and
  at arXiv:2008.05327 [math-ph].

\bibitem[BD21a]{bothedruet}
D.~Bothe and P.-E. Druet.
\newblock Mass transport in multicomponent compressible fluids: local and
  global well-posedness in classes of strong solutions for general class-one
  models.
\newblock {\em Nonlinear Analysis. T., M. \& A.}, 210:112389, 2021.
\newblock https://doi.org/10.1016/j.na.2021.112389.

\bibitem[BD21b]{bothedruetincompress}
D.~Bothe and P.-E. Druet.
\newblock Well-posedness analysis of multicomponent incompressible flow models.
\newblock {\em J. Evol. Equ.}, 2021.
\newblock https://doi.org/10.1007/s00028-021-00712-3.

\bibitem[BDD]{bothedreyerdruet}
D.~Bothe, W.~Dreyer, and P.-E. Druet.
\newblock Multicomponent incompressible fluids -- {A}n asymptotic study.
\newblock Preprint, 2021.
\newblock Available at {\footnotesize
  \verb|http://www.wias-berlin.de/preprint/2825/wias_preprints_2825.pdf|}, and
  at arXiv:2104.08628 [math-ph].

\bibitem[BFS14]{bofisa16}
D.~Bothe, A.~Fischer, and J.~Saal.
\newblock Global well-posedness and stability of electro-kinetic flows.
\newblock {\em SIAM J. Math. Anal.}, 46:1263--1316, 2014.

\bibitem[BGL17]{buliro}
M.~Buli\v{c}ek, A.~Glitzky, and M.~Liero.
\newblock Thermistor systems of $p(x)-${L}aplace--type with discontinuous
  exponents via entropy solutions.
\newblock {\em Disc. Cont. Dyn. Syst. S}, 10:697--713, 2017.

\bibitem[BJPZ]{bujupoza}
M.~Bulic\v{e}k, A.~J\"{u}ngel, M.~Pokorn\'{y}, and N.~Zamponi.
\newblock Existence analysis of a stationary compressible fluid model for
  heat-conducting and chemically reacting mixtures.
\newblock Preprint, arXiv:2001.06082v2 [math.AP].

\bibitem[Boy02]{boyer}
F.~Boyer.
\newblock A theoretical and numerical model for the study of incompressible
  mixture flows.
\newblock {\em Computer and Fluids}, 31:41--68, 2002.
\newblock hal-00004084.

\bibitem[BP17]{bothepruess}
D.~Bothe and J.~Pr\"{u}ss.
\newblock Modeling and analysis of reactive multi-component two-phase flows
  with mass transfer and phase transition -- the isothermal incompressible
  case.
\newblock {\em Discrete Contin. Dyn. Syst. Ser. S}, 10:673--696, 2017.

\bibitem[Brd81]{brdicka}
R.~Brdicka.
\newblock {\em Grundlagen der physikalischen Chemie}.
\newblock Deutscher Verlag der Wissenschaften, Berlin, 1981.

\bibitem[BS16]{bothesoga}
D.~Bothe and K.~Soga.
\newblock Thermodynamically consistent modeling for dissolution/growth of
  bubbles in an incompressible solvent.
\newblock In {\em Amann H., Giga Y., Kozono H., Okamoto H., Yamazaki M. (eds)
  Recent Developments of Mathematical Fluid Mechanics}, Advances in
  Mathematical Fluid Mechanics. Birkh\"{a}user, Basel, 2016.

\bibitem[CHM{\etalchar{+}}]{celorawagner}
G.L. Celora, M.G Hennessy, A.~M\"{u}nch, B.~Wagner, and S.L Waters.
\newblock A kinetic model of a polyelectrolyte gel undergoing phase separation.
\newblock Preprint 2021, arXiv:2105.13689 [cond-mat.soft].

\bibitem[CI18]{constantin}
P.~Constantin and M.~Ignatova.
\newblock On the {N}ernst-{P}lanck-{N}avier-{S}tokes system.
\newblock 2018.
\newblock arXiv:1806.11400 [math.AP].

\bibitem[CJ15]{chenjuengel}
X.~Chen and A.~J{\"{u}}ngel.
\newblock Analysis of an incompressible {N}avier-{S}tokes-{M}axwell-{S}tefan
  system.
\newblock {\em Commun. Math. Phys.}, 340:471--497, 2015.

\bibitem[DDGG20]{dredrugagu20}
W.~Dreyer, P.-E. Druet, P.~Gajewski, and C.~Guhlke.
\newblock Existence of weak solutions for improved {N}ernst-{P}lanck-{P}oisson
  models of compressible reacting electrolytes.
\newblock {\em Z. Angew. Math. Phys.}, 71(119), 2020.
\newblock Open access. https://doi.org/10.1007/s00033-020-01341-5.

\bibitem[DGM13]{dreyerguhlkemueller}
W.~Dreyer, C.~Guhlke, and R.~M\"{u}ller.
\newblock Overcoming the shortcomings of the {N}ernst-{P}lanck model.
\newblock {\em Phys. Chem. Chem. Phys.}, 15:7075--7086, 2013.

\bibitem[DGM18]{dreyerguhlkemueller19}
W.~Dreyer, C.~Guhlke, and R.~M\"uller.
\newblock Bulk-surface electro-thermodynamics and applications to
  electrochemistry.
\newblock {\em Entropy}, 20:939/1--939/44, 2018.
\newblock DOI 10.3390/e20120939.

\bibitem[DHP07]{denkhieberpruess}
R.~Denk, M.~Hieber, and J.~Pr\"{u}ss.
\newblock Optimal {$L^p - L^q$}-estimates for parabolic boundary value problems
  with inhomogeneous data.
\newblock {\em Math. Z.}, 257:193--224, 2007.

\bibitem[DJ20]{druetjuengel}
P.-E. Druet and A.~J\"ungel.
\newblock Analysis of cross-diffusion systems for fluid mixtures driven by a
  pressure gradient.
\newblock {\em Siam J. Math. Anal.}, 52:2179--2197, 2020.
\newblock doi:10.1137/19M1301473.

\bibitem[dM63]{dGM63}
S.~R. deGroot and P.~Mazur.
\newblock {\em Non-Equilibrium Thermodynamics}.
\newblock North Holland, Amsterdam, 1963.

\bibitem[Dru17a]{druetparaweak}
P.-E. Druet.
\newblock Global--in--time solvability of thermodynamically motivated parabolic
  systems.
\newblock Preprint 2455 of the Weierstrass Institute for Applied Analysis and
  Stochastics, Berlin, 2017.
\newblock {\footnotesize Available at
  \verb|http://www.wias-berlin.de/preprint/2455/wias_preprints_2455.pdf|}.

\bibitem[Dru17b]{druetparastrong}
P.-E. Druet.
\newblock Local well-posedness for thermodynamically motivated quasilinear
  parabolic systems in divergence form.
\newblock Preprint 2454 of the Weierstrass Institute for Applied Analysis and
  Stochastics, Berlin, 2017.
\newblock {\footnotesize Available at
  \verb|http://www.wias-berlin.de/preprint/2454/wias_preprints_2454.pdf|}.

\bibitem[Dru20]{druetmaxstef}
P.-E. Druet.
\newblock A theory of generalised solutions for ideal gas mixtures with
  {M}axwell--{S}tefan diffusion.
\newblock {\em Disc. Cont. Dyn. Syst. S}, 2020.
\newblock doi:10.3934/dcdss.2020458.

\bibitem[Dru21]{druetmixtureincompweak}
P.-E. Druet.
\newblock Global--in--time existence for liquid mixtures subject to a
  generalised incompressibility constraint.
\newblock {\em J.\ Math.\ Analysis and Appl.}, 499, 2021.
\newblock doi:10.1016/j.jmaa.2021.125059.

\bibitem[FHKM]{eitrig}
J.~Fischer, K.~Hopf, M.~Kniely, and A.~Mielke.
\newblock Global existence analysis of energy--reaction--diffusion systems.
\newblock Preprint, arXiv:2012.03792 [math.AP].

\bibitem[FLM16]{feima16}
E.~Feireisl, Y.~Lu, and J.~M\'alek.
\newblock On {PDE} analysis of flows of quasi--incompressible fluids.
\newblock {\em Z. Angew. Math. Mech.}, 96:491--508, 2016.

\bibitem[FPT08]{feipetri08}
E.~Feireisl, H.~Petzeltov{\`{a}}, and K.~Trivisa.
\newblock Multicomponent reactive flows: global-in-time existence for large
  data.
\newblock {\em Commun. Pure Appl. Anal.}, 7:1017--1047, 2008.

\bibitem[FS17]{fisal}
A.~Fischer and J.~Saal.
\newblock Global weak solutions in three space dimensions for electrokinetic
  flow processes.
\newblock {\em J.\ Evol.\ Equ.}, 17:309--333, 2017.

\bibitem[Fuh15]{fuhrmann15}
J.~Fuhrmann.
\newblock Comparison and numerical treatment of generalised {N}ernst-{P}lanck
  models.
\newblock {\em Computer Phys. Com.}, 196:166--178, 2015.

\bibitem[Gio99]{giovan}
V.~Giovangigli.
\newblock {\em Multicomponent Flow Modeling}.
\newblock Birkh\"{a}user, Boston, 1999.

\bibitem[Guh14]{guhlkethesis}
C.~Guhlke.
\newblock {\em Theorie der elektrochemischen {G}renzfl\"{a}che}.
\newblock PhD thesis, Technische-{U}niversit{\"{a}}t Berlin, Germany, 2014.
\newblock German.

\bibitem[HJ21]{helmer}
C.~Helmer and A.~J\"{u}ngel.
\newblock Analysis of {M}axwell--{S}tefan systems for heat conducting fluid
  mixtures.
\newblock {\em Nonlinear Anal. R.W.A.}, 59, 2021.

\bibitem[HMPW17]{herbergpruess}
M.~Herberg, M.~Meyries, J.~Pr\"{u}ss, and M.~Wilke.
\newblock Reaction-diffusion systems of {M}axwell-{S}tefan type with reversible
  mass--action kinetics.
\newblock {\em Nonlinear Analysis: T., M. \protect{\&} A.}, 159:264--284, 2017.

\bibitem[JCe17]{juengelbook}
A.~J{\"{u}}ngel, L.~Chen, and L.~Desvillettes (eds.).
\newblock {\em Advances in Reaction-Cross-Diffusion Systems}.
\newblock Special Issue of Nonlinear Analysis, Volume 159, 2017.

\bibitem[JHH96]{josef}
D.D. Joseph, A.~Huang, and H.~Hu.
\newblock Non-solenoidal velocity effects and {K}orteweg stresses in simple
  mixtures of incompressible liquids.
\newblock {\em Physica D}, 97:104--125, 1996.

\bibitem[JS13]{justel13}
A.~J\"{u}ngel and I.~Stelzer.
\newblock Existence analysis of {M}axwell-{S}tefan systems for multicomponent
  mixtures.
\newblock {\em SIAM J. Math. Anal.}, 45:2421--2440, 2013.

\bibitem[J{\"{u}}n15]{juengel15}
A.~J{\"{u}}ngel.
\newblock The boundedness-by-entropy method for cross-diffusion systems.
\newblock {\em Nonlinearity}, 28:1963--2001, 2015.

\bibitem[KZ16]{zubkokovtu}
V.A. Kovtunenko. and A.V. Zubkova.
\newblock On generalized {P}oisson--{N}ernst--{P}lanck equations with
  inhomogeneous boundary conditions: a--priori estimates and stability.
\newblock {\em Math. Meth. Appl. Sci.}, 2016.
\newblock DOI 10.1002/mma.4140.

\bibitem[LSU68]{ladu}
O.A. Ladyzenskaja, V.A. Solonnikov, and N.N. Ural'tseva.
\newblock {\em Linear and Quasilinear Equations of Parabolic Type}, volume~23
  of {\em Translations of mathematical monographs}.
\newblock AMS, 1968.

\bibitem[LT98]{lowe}
J.~Lowengrub and L.~Truskinovsky.
\newblock Quasi--incompressible {C}ahn--{H}illiard fluids and topological
  transitions.
\newblock {\em R. Soc. Lond. Proc. Ser. A Math. Phys. Eng. Sci.},
  454:2617--2654, 1998.

\bibitem[LU68]{laduellipt}
O.A. Ladyzenskaja and N.N. Ural'tseva.
\newblock {\em Linear and Quasilinear elliptic Equations}.
\newblock Mathematics in science and engeneering. Academic Press. New York and
  London, 1968.

\bibitem[MPZ15]{mupoza15}
P.B. Mucha, M.~Pokorn\'{y}, and E.~Zatorska.
\newblock Heat-conducting, compressible mixtures with multicomponent diffusion:
  construction of a weak solution.
\newblock {\em SIAM J. Math. Anal.}, 47:3747--3797, 2015.

\bibitem[MR59]{MR59}
J.~Meixner and H.~G. Reik.
\newblock {\em Thermodynamik der irreversiblen {P}rozesse}, volume~3, pages
  413--523.
\newblock Springer, Berlin, 1959.
\newblock German.


\bibitem[MT15]{mariontemam}
M.~Marion and R.~Temam.
\newblock Global existence for fully nonlinear reaction-diffusion systems
  describing multicomponent reactive flows.
\newblock {\em J. Math. Pures Appl.}, 104:102--138, 2015.

\bibitem[M{\"u}l85]{mueller}
I.~M{\"u}ller.
\newblock {\em Thermodynamics}.
\newblock Pitman, London, 1985.

\bibitem[OW97]{ottoweinan}
F.~Otto and E.~Weinan.
\newblock Thermodynamically driven incompressible fluid mixtures.
\newblock {\em J. Chem. Phys.}, 107, 1997.
\newblock https://doi.org/10.1063/1.474153.

\bibitem[PSZ19a]{piashiba19}
T.~Piasecki, Y.~Shibata, and E.~Zatorska.
\newblock On strong dynamics of compressible two-component mixture flow.
\newblock {\em SIAM J. Math. Anal.}, 51:2793--2849, 2019.

\bibitem[PSZ19b]{piashiba19pr}
T.~Piasecki, Y.~Shibata, and E.~Zatorska.
\newblock On the isothermal compressible multi--component mixture flow: The
  local existence and maximal {L}p--{L}q regularity of solutions.
\newblock {\em Nonlinear Analysis. T., M. \& A.}, 189:511--571, 2019.

\bibitem[PSZ20]{piashiba20}
T.~Piasecki, Y.~Shibata, and E.~Zatorska.
\newblock On the maximal {L}p--{L}q regularity of solutions to a general linear
  parabolic system.
\newblock {\em J. Diff. Eq.}, 7:3332--3369, 2020.

\bibitem[Roc70]{rockafellar}
R.~T. Rockafellar.
\newblock {\em Convex Analysis}.
\newblock Princeton University Press, Princeton, New Jersey, 1970.

\bibitem[Rou05]{roubignoles}
T.~Roub\'{i}\v{c}ek.
\newblock Incompressible fluid mixtures of ionized constituents.
\newblock In {\em Y.~Wang, K.~Hutter (eds) Trends in Appl.\ of Math.\ to
  Mechanics}, pages 429--449. Shaker Ver.\ Aachen, 2005.

\bibitem[Sch09]{schmuck}
M.~Schmuck.
\newblock Analysis of the {N}avier-{S}tokes-{N}ernst-{P}lanck-{P}oisson system.
\newblock {\em Mathematical Models and Methods in Applied Sciences},
  19:993--1015, 2009.

\bibitem[Sol80]{solocompress}
V.A. Solonnikov.
\newblock Solvability of the initial-boundary-value problem for the equations
  of motion of a viscous compressible fluid.
\newblock {\em J. Math. Sci.}, 14:1120--1133, 1980.
\newblock https://doi.org/10.1007/BF01562053.

\end{thebibliography}

\newcommand{\etalchar}[1]{$^{#1}$}

\appendix

\section{Estimates for linearised problems}\label{howtolin}

\subsection{The linearised system for the variable $q$}
We want to derive the desired estimate for the linear system \eqref{linearT2}, recalling that $\varrho \in W^{1,1}_{p,\infty}(Q_{\bar{\tau}})$, $q^* \in W^{2,1}_p(Q_{\bar{\tau}}; \, \mathbb{R}^N)$ and $v^* \in W^{2,1}_p(Q_{\bar{\tau}}; \, \mathbb{R}^3)$ are given. In order to reduce this problem to the situation of zero flux boundary conditions considered in the paper \cite{bothedruet}, we at first show how to homogenise the boundary data given in the condition $\nu \cdot \nabla q = g^{\Gamma}$ on $S_{\bar{\tau}}$, with boundary data given in the form of $$g^{\Gamma}(x, \, t) = G(x,t, \, q^*(x,t), \, \varrho(x,t), \, v^*(x,t))\, .$$
Here, the function $G$ is a generalisation of $\tilde{\pi}^{\Gamma}$ that occurs in the original problem. The natural domain of definition of $G$ is the set
$  S_{\bar{\tau}} \times \mathcal{H}^N_- \times \mathbb{R}_+ \times \mathbb{R}^3$. 
In order to state the assumptions on $G$, we also introduce for parameters $0 < m < M < +\infty$ and $K_0, \, \bar{\theta} > 0$ the convex sets
\begin{align*}
 & E(m, \, M, \, K_0, \, \bar{\theta}) \\
 & \qquad := \{(z, \, r, \, w) \in \mathcal{H}^N_- \times \mathbb{R}_+ \times \mathbb{R}^3 \, : \, |z| + |w| \leq K_0, \, m \leq r \leq M, \quad z_N \leq -1/\bar{\theta}\} \, .
\end{align*}
\begin{lemma}
Let $(q^*, \varrho, \, v^*) \in \mathcal{X}_{\bar{\tau},+}$.
For $t \geq 0$, we define $m^*(t) = \inf_{Q_t} \varrho$, $M^*(t) := \sup_{Q_t} \varrho$ and $\theta^*(t) := -1/\sup_{Q_t} q_N$. For $(x,t) \in Q_{\bar{\tau}}$, we define 
\begin{align*}
 g^{\Gamma}(x,t) := G\Big(x,t, \, \big(q^*, \varrho, \, v^*\big)(x,t)\Big) \, ,
\end{align*}
where the function $G$ satisfies the follwowing assumptions:
\begin{enumerate} 
 \item For all $(z, \, r, \, w) \in \mathcal{H}^{N}_- \times \mathbb{R}_+ \times \mathbb{R}^3$, the function $(x,t) \mapsto G(x, \, t, \, (z, \, r, \, w))$ belongs to $C^{\lambda,\frac{\lambda}{2}}(S_{\bar{\tau}})$ with $\lambda > 1-1/p$. For all $0 < m < M < +\infty$ and $K_0, \, \bar{\theta} > 0$ we have
\begin{align*}
\sup_{(z,\,r,\, w) \in E(m, \, M,\, K_0, \, \bar{\theta})} \|G(\cdot, (z,\,r,\, w))\|_{C^{\lambda,\frac{\lambda}{2}}_p(S_{\bar{\tau}})} < +\infty \, .
\end{align*}

 \item For all $(x,t) \in S_{\bar{\tau}} $, the map $(z, \, r, \, w) \mapsto G(x, \, t, \, (z, \, r, \, w))$ is of class $C^1( \mathcal{H}^{N}_- \times \mathbb{R}_+ \times \mathbb{R}^3)$, and $\|G\|_{L^{\infty}(S_{\bar{\tau}}; \, C^1(E(m, \, M, \, K_0, \, \bar{\theta})))} < +\infty$.
\end{enumerate}
Then, there exists a continuous function $\Psi = \Psi(t, \, b_1, \ldots , \, b_4)$ on $[0, \, +\infty[^5$ such that
 \begin{align*} 
 \|g^{\Gamma}\|_{W^{1-\frac{1}{p},\frac{1}{2}-\frac{1}{2p}}_p(S_{t}; \, \mathbb{R}^N)} \leq \Psi(t, \, (m^*(t))^{-1}, \,  M^*(t), \, \theta^*(t), \, \|(q^*, \, \varrho, \, v^*)\|_{\mathcal{X}_t})  \, .
 \end{align*}
In addition, $\Psi$ is nondecreasing in all arguments and $\Psi(0, \, b) \equiv 0$ for all $ b \in \mathbb{R}_+^4$.
\end{lemma}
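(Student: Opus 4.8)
The plan is to treat $g^{\Gamma}=G\big(\cdot,(q^*,\varrho,v^*)\big)$ as a superposition operator and to estimate its anisotropic Slobodecki norm over $S_t$ directly by a Gagliardo-type seminorm computation, carefully tracking the dependence of the constants on the listed quantities. First I would record the auxiliary inputs. Since $q^*,v^*\in W^{2,1}_p(Q_t)$ with $p>3$, the trace theorem gives $q^*|_{S_t},v^*|_{S_t}\in W^{2-\frac1p,1-\frac1{2p}}_p(S_t)$, hence in particular they lie in the lower-order space $W^{1-\frac1p,\frac12-\frac1{2p}}_p(S_t)$ and, being continuous, in $L^{\infty}(S_t)$, with $\|q^*|_{S_t}\|_{W^{1-\frac1p,\frac12-\frac1{2p}}_p(S_t)}+\|q^*\|_{L^{\infty}(S_t)}\le C\|q^*\|_{W^{2,1}_p(Q_t)}$ and likewise for $v^*$. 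The density $\varrho$ solves the transport equation driven by $v^*$; by the regularity theory for that equation (\cite{solocompress}, Theorem 1, and \cite{bothedruet}, Prop.~7.5) $\varrho\in W^{1,1}_{p,\infty}(Q_t)$ remains strictly positive and continuous, and its trace $\varrho|_{S_t}$ belongs to $W^{1-\frac1p,\frac12-\frac1{2p}}_p(S_t)\cap L^{\infty}(S_t)$ with the corresponding norm bounded by a continuous function of $(m^*(t))^{-1}$, $M^*(t)$ and $\|\varrho\|_{W^{1,1}_{p,\infty}(Q_t)}$, nondecreasing in each argument. Consequently the trace $Z:=(q^*,\varrho,v^*)|_{S_t}$ takes values in the compact set $E\big(m^*(t),M^*(t),K_0(t),\theta^*(t)\big)$ with $K_0(t):=\|q^*\|_{L^{\infty}(S_t)}+\|v^*\|_{L^{\infty}(S_t)}\le C\|(q^*,v^*)\|_{\mathcal X_t}$, and satisfies $\|Z\|_{W^{1-\frac1p,\frac12-\frac1{2p}}_p(S_t)}\le C\,\|(q^*,\varrho,v^*)\|_{\mathcal X_t}$.

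Next I would carry out the composition estimate. Abbreviate $s:=1-\tfrac1p$, $s':=\tfrac12-\tfrac1{2p}$, so that $2s'=s<\lambda$. For two points $(x,\tau),(y,\sigma)\in S_t$ write
$$ g^{\Gamma}(x,\tau)-g^{\Gamma}(y,\sigma)=\big[G(x,\tau,Z(x,\tau))-G(y,\sigma,Z(x,\tau))\big]+\big[G(y,\sigma,Z(x,\tau))-G(y,\sigma,Z(y,\sigma))\big]. $$
By hypothesis (1) the first bracket is bounded by $C_1\big(|x-y|^{\lambda}+|\tau-\sigma|^{\lambda/2}\big)$ with $C_1:=\sup_{E}\|G(\cdot,\cdot)\|_{C^{\lambda,\frac{\lambda}{2}}(S_{\bar\tau})}$; by hypothesis (2) and the mean value theorem the second bracket is bounded by $C_2\,|Z(x,\tau)-Z(y,\sigma)|$ with $C_2:=\|G\|_{L^{\infty}(S_{\bar\tau};C^1(E))}$, both suprema over $E=E\big(m^*(t),M^*(t),K_0(t),\theta^*(t)\big)$ and both finite. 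Inserting this into the definitions of $[\cdot]^{(s)}_{p,x,S_t}$ and $[\cdot]^{(s')}_{p,t,S_t}$ and using $(a+b)^p\le 2^{p-1}(a^p+b^p)$, the $C_1$-terms produce integrals of the form $\int_0^t\!\!\int_{\partial\Omega}\!\!\int_{\partial\Omega}|x-y|^{\lambda p-2-sp}\,dydxd\tau$ and $\int_{\partial\Omega}\!\!\int_0^t\!\!\int_0^t|\tau-\sigma|^{\frac{\lambda p}{2}-1-s'p}\,d\tau d\sigma dx$, which converge precisely because $\lambda>s=2s'$, while the $C_2$-terms reproduce $C_2^p\big([Z]^{(s)}_{p,x,S_t}\big)^p$ and $C_2^p\big([Z]^{(s')}_{p,t,S_t}\big)^p$, finite by the first step. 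Together with $\|g^{\Gamma}\|_{L^p(S_t)}\le|S_t|^{1/p}\sup_{E}\|G\|_{C^0}$ this yields
$$ \|g^{\Gamma}\|_{W^{1-\frac1p,\frac12-\frac1{2p}}_p(S_t)}\le C\big(1+|S_t|^{1/p}\big)\,(C_1+C_2)\,\big(1+\|Z\|_{W^{1-\frac1p,\frac12-\frac1{2p}}_p(S_t)}\big). $$

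Finally I would package this as the asserted $\Psi$. By hypotheses (1)--(2) the constants $C_1,C_2$ are finite for every $(m,M,K_0,\bar\theta)$, and since $E(m,M,K_0,\bar\theta)$ grows as $m$ decreases and $M,K_0,\bar\theta$ increase, they are bounded by a continuous function of $(m^*(t))^{-1},M^*(t),\theta^*(t),\|(q^*,v^*)\|_{\mathcal X_t}$ that is nondecreasing in each argument; the same holds for the $\varrho$-bound entering $\|Z\|$. Collecting everything, and using that $|S_t|$ and the seminorms over $S_t$ are nondecreasing in $t$, one obtains the claimed estimate with a continuous, nondecreasing $\Psi=\Psi(t,b_1,\dots,b_4)$, $b_1=(m^*(t))^{-1}$, $b_2=M^*(t)$, $b_3=\theta^*(t)$, $b_4=\|(q^*,\varrho,v^*)\|_{\mathcal X_t}$. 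For $\Psi(0,b)\equiv 0$ I would note that as $t\to0^+$ the $L^p(S_t)$ term carries the vanishing factor $|S_t|^{1/p}$, the $C_1$-contributions to the seminorms carry a positive power of $t$ from the outer time integration, and the $C_2$-contributions, being powers of $[Z]_{W^{s,s'}_p(S_t)}$, tend to $0$ by absolute continuity of the integral — equivalently, by the estimates of Lemma~C.2 in \cite{bothedruet} one has $[Z]_{W^{s,s'}_p(S_t)}\lesssim t^{\varepsilon}\|(q^*,\varrho,v^*)\|_{\mathcal X_t}$ for some $\varepsilon>0$. The main obstacle is twofold: one must perform the composition estimate at the level of the Slobodecki seminorm rather than through Hölder embeddings (which degenerate for $p$ close to $3$, where the boundary traces of $W^{2,1}_p$ functions are no longer Hölder of exponent $\ge 1-1/p$), which is exactly why the exponent $\lambda>1-1/p$ is imposed on the explicit $(x,t)$-dependence of $G$; and one must ensure that $\varrho|_{S_t}$ genuinely lies in $W^{1-\frac1p,\frac12-\frac1{2p}}_p(S_t)$, which hinges on the sharp time regularity of the linear transport equation with a $W^{2,1}_p$ velocity.
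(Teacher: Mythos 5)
Your argument follows essentially the same route as the paper: the same two-term splitting of $G(x,\tau,Z(x,\tau))-G(y,\sigma,Z(y,\sigma))$ into a $C^{\lambda,\lambda/2}$-part and a $DG$-part, the same observation that the Gagliardo integrals for the first part converge precisely because $\lambda>1-\tfrac1p$, and the same packaging into a monotone $\Psi$. Two remarks. First, you do not need (and the lemma does not assume) that $\varrho$ solves the continuity equation: membership $\varrho\in W^{1,1}_{p,\infty}(Q_t)$ alone gives the trace bound by the elementary anisotropic estimates the paper uses, namely $\|D_x\varrho\|_{L^p(Q_t)}\le t^{1/p}\|D_x\varrho\|_{L^{p,\infty}(Q_t)}$ for the spatial seminorm and $[\,\cdot\,]^{(1/2-1/(2p))}_{p,t}\le t^{1/2-1/(2p)}[\,\cdot\,]^{(1-1/p)}_{p,t}$ (lowering the temporal Slobodecki index on an interval of length $t$) for the time seminorm; invoking transport-equation regularity is a detour.

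Second, and this is the one soft spot: the property $\Psi(0,b)\equiv 0$ must hold with $\Psi$ depending only on $t$ and the listed norms, because the lemma is used to get bounds that are uniform over the ball $\mathcal{Y}_{t,K_0,\bar\theta}$ in the self-map argument. Your appeal to ``absolute continuity of the integral'' for the $C_2$-contributions only gives $[Z]_{W^{1-1/p,1/2-1/(2p)}_p(S_t)}\to 0$ for the \emph{fixed} triple, with a rate that may depend on the triple itself, so it cannot produce such a $\Psi$. Your fallback claim $[Z]\lesssim t^{\varepsilon}\|(q^*,\varrho,v^*)\|_{\mathcal X_t}$ is correct and is exactly what is needed, but it is not supplied by Lemma~C.2 of \cite{bothedruet} (which, as used in this paper, is an interior $L^\infty$-in-$Q_t$ estimate for functions vanishing at $t=0$, not a boundary trace seminorm bound); it should instead be proved directly by the two elementary inequalities above, which yield the explicit factor $\max\{t^{1/p},\,t^{1/2-1/(2p)}\}$ in front of $\|(q^*,\varrho,v^*)\|_{\mathcal X_t}$, exactly as in the paper's proof. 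With that substitution your argument is complete.
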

\begin{proof}
For arbitrary $u \in W^{1,0}_{p,\infty}(Q_t)$ we have
$\|D_{x} u\|_{L^{p}(Q_{t})} \leq   t^{\frac{1}{p}} \, \|D_x u\|_{L^{p,\infty}(Q_t)}$, hence
\begin{align*}
\|u\|_{L^p(0,t; \, W^{1-\frac{1}{p}}_{p}(\partial \Omega))} \leq c_0 \, \|u\|_{W^{1,0}_p(Q_t)} \leq c_0 \, t^{\frac{1}{p}} \, \|u\|_{W^{1,0}_{p,\infty}(Q_t)}  \, .
\end{align*}
Moreover, if $u \in W^{1}_p(Q_t)$ we also have
\begin{align*}
\|u\|_{L^p(\partial {\Omega}; \, W^{\frac{1}{2}-\frac{1}{2p}}_p(0,t))} \leq & t^{\frac{1}{2} -\frac{1}{2p}} \, \|u\|_{L^p(\partial {\Omega}; \, W^{1-\frac{1}{p}}_p(0,t))}
\leq  t^{\frac{1}{2} -\frac{1}{2p}} \, \|u\|_{W^{1-\frac{1}{p}}_p(S_t)}\\
\leq & c_1 \, t^{\frac{1}{2} -\frac{1}{2p}} \, \|u\|_{W^{1}_p(Q_t)}
\end{align*}
Hence
\begin{align*}
\|u\|_{W^{1-\frac{1}{p},\frac{1}{2}-\frac{1}{2p}}(S_t)} \leq c \, \max\{t^{\frac{1}{p}}, \, t^{\frac{1}{2}-\frac{1}{2p}}\} \, (\|u\|_{W^1_p(Q_t)} + \|D_x u\|_{L^{p,\infty}(Q_t)}) \, .
\end{align*}
We apply this inequality to $\varrho$ and the components of $(q^*, \, v^*)$. Clearly, it follows that
\begin{align}\label{menschenea}
 \|(q^*, \, \varrho, \, v^*)\|_{W^{1-\frac{1}{p},\frac{1}{2}-\frac{1}{2p}}(S_t)} \leq c \, \max\{t^{\frac{1}{p}}, \, t^{\frac{1}{2}-\frac{1}{2p}}\} \, \|(q^*, \, \varrho, \, v^*)\|_{\mathcal{X}_t}\, .
\end{align}
Now, for $(x,\tau)$ and $(y,s)$ arbitrary on $S_t$, we have, with $E^*(t) := E(m^*(t),  \, M^*(t), \, \|q^*\|_{L^{\infty}(Q_t)} + \|v^*\|_{L^{\infty}(Q_t)}, \, \theta^*(t))$ 
\begin{align*} 
 & |G(x,\tau, \, (q^*, \, \varrho, \, v^*)(x,\tau)) - G(y,s, \, (q^*, \, \varrho, \, v^*)(y,s))| \\
 & \leq   |G(x,\tau, \, (q^*, \, \varrho, \, v^*)(x,\tau)) - G(y,s, \, (q^*, \, \varrho, \, v^*)(x,\tau))| \\ & \quad + |G(y,s, \, (q^*, \, \varrho, \, v^*)(x,\tau)) - G(y,s, \, (q^*, \, \varrho, \, v^*)(y,s))|\\
 & \leq \|G(\cdot, \, (q^*, \, \varrho, \, v^*)(x,\tau))\|_{C^{\lambda,\frac{\lambda}{2}}(S_t)} \, (|x-y|^{\lambda} + |\tau-s|^{\frac{\lambda}{2}})\\
 & \quad + \|DG(y,s, \cdot)\|_{L^{\infty}(E^*(t))} \, |(q^*, \, \varrho, \, v^*)(x,\tau)-(q^*, \, \varrho, \, v^*)(y,s)|\\
 & \leq \sup_{(z,r,w) \in E^*(t)} \, \|G(\cdot, \, (z,r,w))\|_{C^{\lambda,\frac{\lambda}{2}}(S_t)} \, (|x-y|^{\lambda} + |\tau-s|^{\frac{\lambda}{2}}) \\
 & \quad + \sup_{(y,s) \in S_t} \|DG(y,s, \cdot)\|_{L^{\infty}(E^*(t))} \, |(q^*, \, \varrho, \, v^*)(x,\tau)-(q^*, \, \varrho, \, v^*)(y,s)|  \, .
\end{align*}
In particular, it follows that
\begin{align*} 
 [g^{\Gamma}]^{(1-\frac{1}{p})}_{p,x,S_t} \leq &  c \, t^{\frac{1}{p}} \, \sup_{(z,r,w) \in E^*(t)} \, \|G(\cdot, \, (z,r,w))\|_{C^{\lambda,0}(S_t)} \\
 & + \sup_{(y,s) \in S_t} \|DG(y,s, \cdot)\|_{L^{\infty}(E^*((t))} \,  [(q^*, \, \varrho, \, v^*)]^{(1-\frac{1}{p})}_{p,x,S_t} \, ,
 \end{align*}
 and also that
 \begin{align*}
 [g^{\Gamma}]^{(\frac{1}{2} - \frac{1}{2p})}_{p,t,S_t} \leq & c \,t^{\frac{1}{2} \, (\lambda +\frac{1}{p} -1)} \, \sup_{(z,r,w) \in E^*(t)} \, \|G(\cdot, \, (z,r,w))\|_{C^{0,\frac{\lambda}{2}}(S_t)} \\
 & + \sup_{(y,s) \in S_t} \|DG(y,s, \cdot)\|_{L^{\infty}(E^*)} \,  [(q^*, \, \varrho, \, v^*)]^{(\frac{1}{2}-\frac{1}{2p})}_{p,t,S_t} \, . 
\end{align*}
Invoking \eqref{menschenea}, we are done.
\end{proof}
Consider now the parabolic system \eqref{linearT2}. Using the preceding Lemma, we have
\begin{align*}
 \|\tilde{\pi}^{\Gamma}(\cdot, \, \varrho(\cdot), \, q^*(\cdot))\|_{W^{1-\frac{1}{p},\frac{1}{2}-\frac{1}{2p}}_p(S_{\bar{\tau}}; \, \mathbb{R}^N)} \leq \Psi_t^{\Gamma} \, ,
\end{align*}
with a factor $\Psi^{\Gamma}_t = \Psi^{\Gamma}(t, \, \vec{b})$ of the desired structure. For $k = 1,\ldots,N$, we solve the problem
\begin{align}\label{parabole0}
\begin{split}
& \partial_t s_k - \Delta s_{k} =  0 \quad \text{ in } Q_{\bar{\tau}} \, , \\
\nu(x)\cdot \nabla s_k = & \tilde{\pi}^{\Gamma}_k(x,t, \, \varrho, \, q^*) \, \quad \text{ on } S_{\bar{\tau}} \, , \qquad  
s_k(x, \, 0) =  q_k^0(x) \quad \text{ in } \Omega \, .
\end{split} 
\end{align}
Then, $s = (s_1, \ldots,s_N)$ satisfies (see \cite{ladu}, Th.\ 9.1 and page 351, or \cite{denkhieberpruess}, Th.\ 2.1)
\begin{align}\begin{split}\label{pertrand}
& \|s_k\|_{W^{2,1}_p(Q_{t}; \, \mathbb{R}^N)} + \sup_{\tau \leq t} \|s_k(\cdot, \tau)\|_{W^{2-\frac{2}{p}}_p(\Omega; \, \mathbb{R}^N)} \\
& \leq C_1 \, (\|q^0\|_{W^{2-\frac{2}{p}}(\Omega)}  + \|\tilde{\pi}^{\Gamma}\|_{W^{1-\frac{1}{p},\frac{1}{2}-\frac{1}{2p}}_p(S_{t})}) =: \Psi_{t} \, .
\end{split}
\end{align}
We remark that $q$ solves the equations \eqref{linearT2} if and only if $\tilde{q} := q - s$ solves
\begin{align}\label{parabole1}
\begin{split}
& R_{q}(\varrho, \,q^*) \, \partial_t \tilde{q} - \divv (\widetilde{\mathcal{M}}(\varrho, \, q^*) \, \nabla \tilde{q})  =  \tilde{g}(x, \, t, \, q^*, \, \varrho,\, v^*, \, \nabla q^*, \, \nabla \varrho, \, \nabla v^*) \, ,\quad \text{ in } Q_{\bar{\tau}} \, \\
& \nu(x)\cdot \nabla \tilde{q}=  0\, \quad \text{ on } S_{\bar{\tau}} \, , \qquad  
\tilde{q}(x, \, 0) = 0 \quad \text{ in } \Omega \, ,
\end{split} \, .
\end{align}
With $g$ from \eqref{linearT2}, we here have
\begin{align*}
 \tilde{g} := g - R_{q}(\varrho, \,q^*) \, \partial_t s + \divv (\widetilde{\mathcal{M}}(\varrho, \, q^*) \, \nabla s) \, .
\end{align*}
It is readily shown that the norm of $\tilde{g}$ in $L^p(Q_t)$ can be controlled by the norm of $g$ and the norm of $s$ occurring in \eqref{pertrand}.
For the problem \eqref{parabole1}, we can apply the Proposition 7.1 of \cite{bothedruet}.

\subsection{Linearised parabolic system for the variable $v$}

At second, we consider the parabolic system \eqref{linearT3} with given right-hand side
\begin{align}\label{parabvlinear}
\begin{split}
& \varrho \, \partial_t v - \divv \mathbb{S}(\varrho, \, q^*, \, \nabla v) =  f(x, \,t)  \, ,\quad \text{ in } Q_{\bar{\tau}} \quad \\
& v =  0 \, \quad \text{ on } S_{\bar{\tau}} \, , \quad  v(x, \, 0) = v^0(x) \, , \quad \text{ in } \Omega \, .
\end{split}
\end{align}
We recall that the meaning of $\mathbb{S}(\varrho, \, q^*, \, \nabla v)$ was given in \eqref{stressnew}, that is, we here assume that the coefficients $\eta$ and $\lambda$ are $C^2$ functions of their arguments.

In the present case, the data $f$ and $v^0$ are subject to
\begin{align*}
f \in L^p(Q_{\bar{\tau}}; \, \mathbb{R}^3),  \quad v^0 \in W^{2-\frac{2}{p}}(\Omega; \, \mathbb{R}^3) \, ,
\end{align*}
and to the compatibility condition $v^0 = 0$ on $\partial \Omega$.
\begin{lemma}
If $v \in W^{2,1}_p(Q_{\bar{\tau}})$ is a solution to \eqref{linearT3}, then
\begin{align*}
\|v\|_{W^{2,1}_p(Q_{\bar{\tau}})} + \sup_{\tau \leq \bar{\tau}} \|v\|_{W^{2-\frac{2}{p}}_p(\Omega)} \leq C_2 \, (1+[\varrho]_{C^{\beta,\frac{\beta}{2}}} + [q^*]_{C^{\beta,\frac{\beta}{2}}})^{\frac{2}{\beta}} \,  (\|v^0\|_{W^{2-\frac{2}{p}}_p(\Omega)} + \|f\|_{L^p(Q_t)}) \, .
\end{align*}
\end{lemma}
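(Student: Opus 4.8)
The plan is to reduce the linear system \eqref{parabvlinear} to the constant-coefficient case by a standard localisation-and-freezing argument, and to track carefully how the constant in the resulting maximal regularity estimate depends on the H\"older norms of the coefficients $\varrho$ and $q^*$. First I would observe that, since $(q^*,\varrho,v^*)\in\mathcal{X}_{\bar\tau,+}$ with $p>3$, the parabolic embedding $W^{2,1}_p(Q_{\bar\tau})\hookrightarrow C^{\beta,\beta/2}(\overline{Q_{\bar\tau}})$ and $W^{1,1}_{p,\infty}(Q_{\bar\tau})\hookrightarrow C^{\beta,\beta/2}(\overline{Q_{\bar\tau}})$ hold for a suitable $\beta\in(0,1)$ (say $\beta=1-5/p$ if $p<5$, and some fixed exponent otherwise). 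Hence $\varrho$ and $q^*$ are H\"older continuous on $\overline{Q_{\bar\tau}}$, and because $(q^*,\varrho,v^*)\in\mathcal{X}_{\bar\tau,+}$ we have $\inf_{Q_{\bar\tau}}\varrho=:m^*>0$ and $\sup_{Q_{\bar\tau}}q^*_N<0$, so the viscosity coefficients $\eta(\varrho,q^*)$, $\lambda(\varrho,q^*)$ and the density $\varrho$ are H\"older continuous functions that are bounded and bounded away from the degenerate values, with ellipticity of $\mathbb S(\varrho,q^*,\cdot)$ uniform on $Q_{\bar\tau}$ by \eqref{thermo5}. These are exactly the hypotheses under which the classical theory of Petrovski-parabolic systems with H\"older-continuous leading coefficients applies (as in \cite{ladu}, Ch.\ VII, or \cite{denkhieberpruess}).

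The key steps, in order, are the following. (i) Rewrite \eqref{parabvlinear} as $\varrho_0\partial_t v-\divv\mathbb S(\varrho_0,q^*_0,\nabla v)=f+(\varrho_0-\varrho)\partial_t v+\divv(\mathbb S(\varrho,q^*,\nabla v)-\mathbb S(\varrho_0,q^*_0,\nabla v))$, where $\varrho_0,q^*_0$ denote the values of $\varrho,q^*$ frozen at a chosen reference point. (ii) Using a partition of unity subordinate to a covering of $\overline{Q_{\bar\tau}}$ by cylinders of radius $\delta$, localise; on each patch the oscillation of the coefficients is controlled by $[\varrho]_{C^{\beta,\beta/2}}\,\delta^\beta+[q^*]_{C^{\beta,\beta/2}}\,\delta^\beta$ times the Lipschitz constant of $\eta,\lambda$ on the relevant compact set. (iii) Apply the constant-coefficient maximal $L^p$-regularity estimate on each patch (with a constant depending only on $\Omega$, $p$, the ellipticity bounds, and the fixed bounds on $\varrho,q^*$), absorb the perturbation terms by choosing $\delta$ small enough that the oscillation factor is below the absorption threshold, and sum up. (iv) The number of patches, hence the final multiplicative constant, scales like $\delta^{-(3+2)}$ up to the time horizon, and the smallness requirement forces $\delta^\beta\sim(1+[\varrho]_{C^{\beta,\beta/2}}+[q^*]_{C^{\beta,\beta/2}})^{-1}$, which yields the stated dependence $(1+[\varrho]_{C^{\beta,\beta/2}}+[q^*]_{C^{\beta,\beta/2}})^{2/\beta}$ after balancing the space and time scalings (the exponent $2/\beta$ coming from the $4$-dimensional parabolic scaling versus the $\beta$-H\"older gain; a cleaner route is a contradiction/compactness argument à la Simon, which gives the same structural bound without chasing the exact power). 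Handling the lower-order term $(\varrho_0-\varrho)\partial_t v$ is routine since $\partial_t v$ is the term we are estimating and the coefficient is small on each patch; the compatibility condition $v^0=0$ on $\partial\Omega$ is exactly what is needed for the constant-coefficient estimate to include the initial-data term $\|v^0\|_{W^{2-2/p}_p(\Omega)}$.

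The main obstacle I anticipate is bookkeeping the constant's dependence on the H\"older seminorms rather than the estimate itself: one must make sure that the constant-coefficient estimate used on each patch is genuinely uniform over the family of admissible frozen coefficients (which it is, because they lie in a fixed compact subset of the ellipticity region determined by $m^*$ and $\sup q^*_N$), and that the number of patches times the per-patch constant collapses to the claimed power of $(1+[\varrho]_{C^{\beta,\beta/2}}+[q^*]_{C^{\beta,\beta/2}})$. A secondary technical point is that the right-hand side of $\mathbb S$ is in divergence form with coefficients only H\"older (not Lipschitz in $(x,t)$ jointly with the required parabolic weight), so one cannot differentiate the coefficients; the localisation must be set up so that the commutator $[\chi,\divv\mathbb S]v$ produces only first-order terms in $v$, which are absorbed by interpolation $\|\nabla v\|_{L^p(Q_t)}\le\varepsilon\|v\|_{W^{2,1}_p(Q_t)}+C_\varepsilon\|v\|_{L^p(Q_t)}$ together with the elementary a priori bound $\|v\|_{L^p(Q_t)}\le C(\|v^0\|+\|f\|)$ that follows from testing \eqref{parabvlinear} with $v$ and using $\varrho\ge m^*$. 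Once these points are in place, the stated estimate follows; this is precisely the analogue for $v$ of the localisation already performed for the $q$-system in \cite{bothedruet}, which is why the authors relegate it to the appendix.
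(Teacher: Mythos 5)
Your proposal follows essentially the same route as the paper's proof: freeze the coefficients of $\mathbb S$ and $\varrho$ at a reference point, localise with a cutoff on small parabolic cylinders $Q_r^0$, bound the resulting perturbation terms by $\bigl([\varrho]_{C^{\beta,\frac{\beta}{2}}}+[q^*]_{C^{\beta,\frac{\beta}{2}}}\bigr)\,r^{\beta/2}$ times $|\partial_t w|$, $|D^2 w|$, absorb them by fixing $r$ so that this factor equals $\tfrac12$ (which is precisely where the power $\tfrac{2}{\beta}$ comes from, rather than from a compactness argument, which would not yield the explicit constant), and sum over a covering as in Appendix B of \cite{bothedruet}. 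The only minor deviation is that the paper first differentiates the viscosities, writing $\nabla\eta=\eta_\varrho\nabla\varrho+\eta_q\cdot\nabla q^*$ (legitimate here since $\eta,\lambda\in C^2$, $\nabla\varrho\in L^{p,\infty}$ and $q^*\in W^{2,1}_p$), so as to work with a non-divergence principal part and push these terms into the right-hand side, instead of keeping the divergence form and handling cutoff commutators as you suggest; both variants lead to the same estimate.
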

\begin{proof}
Here the difference to Prop.\ 7.5 of \cite{bothedruet} is the state-dependence of the viscosity functions. We deal with this case with the same localisation technique applied in \cite{bothedruet}, Appendix B.
At first we rewrite the PDEs in the form
\begin{align*}
& \varrho \, \partial_t v - \eta \, \Delta v - (\lambda+\eta) \, \nabla \divv v =   f - 2 \, \nabla \eta \cdot (\nabla v)_{\text{sym}} - \nabla \lambda \, \divv v\\
= & f -2\,  (\eta_{\varrho} \, \nabla \varrho +\eta_{q} \cdot \nabla q^*) \cdot (\nabla v)_{\text{sym}} - (\lambda_{\varrho} \, \nabla \varrho + \lambda_q \cdot \nabla q^*)  \, \divv v =: \tilde{f}\, .
\end{align*}
Let $(x^0, \, t^0) \in Q_t$ and $r > 0$. We consider a (small) volume $$Q_r^0 = Q_r(x^0, \, t^0) := \{(x,t) \in \mathbb{R}^4 \, : \, |x-x^0| \leq \sqrt{r}, \quad |t-t^0| \leq r\} \, .$$ We can localise the boundary value problem \eqref{parabvlinear} in $Q_r^0$ by choosing a cutoff function $\zeta \in C^{\infty}_c(Q_r^0)$ and satisfies the condition $|\partial_t \zeta| + |\partial^2_{x} \zeta| \leq c_0 \, r^{-1}$. Then, $w := \zeta \, v$ satisfies
\begin{align*}
\begin{split}
& \varrho \, \partial_t w - \eta \, \Delta w - (\lambda+\eta) \, \nabla \divv w =  \hat{f}  \, ,\quad \text{ in } Q_{t}  \quad \\
& w =  0 \, \quad \text{ on } S_t\, , \quad  w(x, \, 0) = v^0(x) \, \eta(x, \, 0) \, , \quad \text{ in } \Omega   \, .
\end{split}
\end{align*}
Here
\begin{align*}
\hat{f} = \zeta \, \tilde{f} + \varrho \, \partial_t \zeta \, v + \eta \, (\nabla \zeta \cdot \nabla v + \divv( v \, \nabla \zeta)) + (\lambda+\eta) \, (\divv v \, \nabla \zeta + \nabla v \cdot \nabla \zeta + D^2\zeta \, v) \, .
\end{align*}
Next, we write the PDEs in the form
\begin{align*}
& \varrho(x^0,t^0) \, \partial_t w - \divv \mathbb{S}(\varrho(x^0,t^0), \, q^*(x^0,t^0), \, \nabla w) =  \hat{f} \\
& \qquad 
+ [\varrho(x^0,t^0) -\varrho] \, \partial_t w+  [\eta(\varrho, \, q^*)-\eta(\varrho(x^0,t^0), \, q^*(x^0,t^0))] \, \Delta w\\
& 
\qquad + [(\lambda+\eta)(\varrho, \, q^*)-(\lambda+\eta)(\varrho(x^0,t^0), \, q^*(x^0,t^0))] \, \nabla \divv w \, .
\end{align*}
Recalling that $w$ is localised in $Q^0_r$, the perturbations on the right-hand sides satisfy estimates
\begin{align*}
& | [\varrho(x^0,t^0) -\varrho] \, \partial_t w| \leq [\varrho]_{C^{\beta,\frac{\beta}{2}}(Q_t)} \, r^{\frac{\beta}{2}} \, |\partial_t w| \, ,\\
& |\eta(\varrho, \, q^*)-\eta(\varrho(x^0,t^0), \, q^*(x^0,t^0))| \, |\Delta w| \leq [\eta]_{C^{1}(E^*_t)} \, ([\varrho]_{C^{\beta,\frac{\beta}{2}}(Q_t)} + [q^*]_{C^{\beta,\frac{\beta}{2}}(Q_t)}) \, r^{\frac{\beta}{2}} \, |D^2w| \, ,  
\end{align*}
and the same for the grad-div term. Here $E^*_t$ is the range over $Q_t$ of the variables $(\varrho, \, q^*)$ in $\mathbb{R}_+ \times \mathcal{H}^N_-$. We obtain an estimate
\begin{align*}
\|w\|_{W^{2,1}_p(Q_t)} &\leq c \, (\|w(\cdot,0)\|_{W^{2-\frac{2}{p}}_p(\Omega)} + \|\hat{f}\|_{L^p(Q_t)})\\
& + c \, (1+[\eta]_{C^{1}(E^*_t)} + [\lambda]_{C^{1}(E^*_t)}) \, ([\varrho]_{C^{\beta,\frac{\beta}{2}}(Q_t)} + [q^*]_{C^{\beta,\frac{\beta}{2}}(Q_t)}) \, r^{\frac{\beta}{2}} \, \|w\|_{W^{2,1}_p(Q_t)} \, .
\end{align*}
Here $c$ is the norm of the inverse of the right-hand side operator, hence it depends only on $t$, $\varrho(x^0,t^0)$ and the values of the viscosity coefficients at $\varrho(x^0,t^0), \, q^*(x^0,t^0)$. Choosing $r$ in such a way that
\begin{align*}
 c \, (1+[\eta]_{C^{1}(E^*_t)} + [\lambda]_{C^{1}(E^*_t)}) \, ([\varrho]_{C^{\beta,\frac{\beta}{2}}(Q_t)} + [q^*]_{C^{\beta,\frac{\beta}{2}}(Q_t)}) \, r^{\frac{\beta}{2}} = \frac{1}{2} \, ,
\end{align*}
we obtain that
\begin{align*}
\|w\|_{W^{2,1}_p(Q_t)} \leq 2c \, (\|w(\cdot,0)\|_{W^{2-\frac{2}{p}}_p(\Omega)} + \|\hat{f}\|_{L^p(Q_t)}) \, .
\end{align*}
Now, it remains to estimate the norm of the right-hand side. By appropriate covering of $Q_t$ with sets of the structure of $Q^0_r$, we can sum up these estimates to the result. We refer to the Appendix B of \cite{bothedruet} for details. 
\end{proof}

\section{Two technical statements for the maximum principle}

We denote by $\lambda_3$ the three-dim.\ Lebesque measure. 
We begin with a short preliminary remark. Consider a Lipschitz domain $\Omega$ in $\mathbb{R}^3$. Then, there exists $c_0 = c_0(\Omega)$ such that $ \|u\|_{L^2(\Omega)} \leq c_0 \, \|\nabla u\|_{L^2(\Omega)}$ for all $u \in W^{1,2}(\Omega)$ with the property $$\lambda_3(\{x \in \Omega \, : \, |u(x)| = 0\}) \geq \lambda_3(\Omega)/2 \, .$$
This can be proved as usual by showing that the negation yields a contradiction.
\begin{lemma}\label{interpomoi}
Let $u \in C^1(\overline{Q_{t}})$ and $\beta > 0$. There is $c_0 = c_0(\Omega)$ such that, for all $k >2 \,  \|u\|_{L^{1,\infty}(Q_t)}/\lambda_3(\Omega)$, the function $w_k = \max\{u - k, \, 0\}$ satisfies 
\begin{align*}
 \|w_k\|_{L^r(Q_t)}^r \leq c_0 \, \|w_k\|_{L^{2,\infty}(Q_t)}^{\frac{4}{3}} \, \|\nabla w_k^{\beta}\|_{L^2(Q_t)}^2, \quad r = \frac{4}{3} + 2 \, \beta \, ,
 \end{align*}
\end{lemma}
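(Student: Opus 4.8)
The plan is to prove a pointwise-in-time estimate on the super-level set $\Omega_{\tau,k}=\{x\in\Omega:u(x,\tau)>k\}$, exploiting that this set is \emph{small} (less than half of $\Omega$) so that a Poincaré-type inequality holds there without any boundary condition, and then to integrate in time. Concretely, I would combine the preliminary remark stated just above the Lemma with the Sobolev embedding $W^{1,2}(\Omega)\hookrightarrow L^6(\Omega)$ on the bounded Lipschitz domain $\Omega$, and close the argument with a Hölder split of the exponent $r=\tfrac43+2\beta$ into the two pieces $\tfrac43$ (sent to the $L^{2,\infty}$-norm) and $2\beta$ (sent to the gradient term). If $\|\nabla w_k^\beta\|_{L^2(Q_t)}=+\infty$ the claimed inequality is vacuous, so I may assume $\nabla w_k^\beta\in L^2(Q_t)$ from the start; since $w_k$ is bounded and $\Omega$ has finite measure, this gives $w_k^\beta(\cdot,\tau)\in W^{1,2}(\Omega)$ for a.e.\ $\tau\le t$.

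\emph{Step 1: the small-support and Poincaré estimate.} For a.e.\ $\tau\le t$, since $u(\cdot,\tau)>k$ on $\Omega_{\tau,k}$, one has
\[
k\,\lambda_3(\Omega_{\tau,k})\le\int_{\Omega_{\tau,k}}u(x,\tau)\,dx\le\int_\Omega|u(x,\tau)|\,dx\le\|u\|_{L^{1,\infty}(Q_t)},
\]
so the assumption $k>2\|u\|_{L^{1,\infty}(Q_t)}/\lambda_3(\Omega)$ forces $\lambda_3(\Omega_{\tau,k})<\lambda_3(\Omega)/2$. Hence $w_k^\beta(\cdot,\tau)$ vanishes on $\Omega\setminus\Omega_{\tau,k}$, a set of measure at least $\lambda_3(\Omega)/2$, and the preliminary remark yields $\|w_k^\beta(\cdot,\tau)\|_{L^2(\Omega)}\le c_0\|\nabla w_k^\beta(\cdot,\tau)\|_{L^2(\Omega)}$. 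Combined with $W^{1,2}(\Omega)\hookrightarrow L^6(\Omega)$ this gives, with a constant depending only on $\Omega$,
\[
\Big(\int_\Omega w_k^{6\beta}(x,\tau)\,dx\Big)^{1/3}=\|w_k^\beta(\cdot,\tau)\|_{L^6(\Omega)}^2\le c_0\,\|\nabla w_k^\beta(\cdot,\tau)\|_{L^2(\Omega)}^2 .
\]

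\emph{Step 2: Hölder in space, then integration in time.} Writing $w_k^r=w_k^{4/3}\,w_k^{2\beta}$ and applying Hölder's inequality with exponents $\tfrac32$ and $3$ — which are the exponents dictated by $r=\tfrac43+2\beta$, independently of $\beta$ — gives at each fixed $\tau$
\[
\int_\Omega w_k^r(x,\tau)\,dx\le\Big(\int_\Omega w_k^{2}(x,\tau)\,dx\Big)^{2/3}\Big(\int_\Omega w_k^{6\beta}(x,\tau)\,dx\Big)^{1/3}\le\|w_k\|_{L^{2,\infty}(Q_t)}^{4/3}\,c_0\,\|\nabla w_k^\beta(\cdot,\tau)\|_{L^2(\Omega)}^2,
\]
where the first factor is estimated by the very definition of the $L^{2,\infty}$-norm and the second by Step 1. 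Integrating over $\tau\in(0,t)$ and pulling the time-independent factor out of the integral,
\[
\|w_k\|_{L^r(Q_t)}^r=\int_0^t\!\!\int_\Omega w_k^r\,dx\,d\tau\le c_0\,\|w_k\|_{L^{2,\infty}(Q_t)}^{4/3}\int_0^t\|\nabla w_k^\beta(\cdot,\tau)\|_{L^2(\Omega)}^2\,d\tau=c_0\,\|w_k\|_{L^{2,\infty}(Q_t)}^{4/3}\,\|\nabla w_k^\beta\|_{L^2(Q_t)}^2,
\]
which is the asserted inequality.

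The only points requiring a modicum of care are: the legitimacy of treating $w_k^\beta(\cdot,\tau)$ as an element of $W^{1,2}(\Omega)$ for $0<\beta<1$ — handled by first reducing to the case $\|\nabla w_k^\beta\|_{L^2(Q_t)}<\infty$, which makes the claim trivial otherwise — and the bookkeeping that the Hölder pair $(\tfrac32,3)$ simultaneously produces $r_1=\tfrac43$ paired with the $L^2$-norm and $r_2=2\beta$ paired with the $L^{6\beta}$-norm while summing to $r$. Neither of these is a serious obstacle; the estimate is essentially the Ladyzhenskaya-type multiplicative inequality specialised to truncations with small spatial support, and I do not anticipate any real difficulty.
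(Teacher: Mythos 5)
Your proof is correct and follows essentially the same route as the paper: the Chebyshev-type smallness of the super-level set, the Poincaré inequality from the preliminary remark combined with the embedding $W^{1,2}(\Omega)\subset L^6(\Omega)$, and the Hölder split $w_k^r=w_k^{4/3}\,w_k^{2\beta}$ with exponents $(\tfrac32,3)$, followed by integration in time. The only difference is cosmetic (order of the steps and the explicit remark on the case $0<\beta<1$, which the paper leaves implicit).
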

\begin{proof}
For $\tau \in ]0,t[$ arbitrary, and for $k$ as in the assumptions, we have
\begin{align*}
\lambda_3(\{x \, : \, u(x,\tau) > k\}) \leq \frac{\|u\|_{L^{1,\infty}(Q_t)}}{k} \leq \frac{\lambda_3(\Omega)}{2} \, .
\end{align*}
Hence, $\lambda_3(\{x \, : \, w_k(x, \, \tau) = 0\}) \geq \lambda_3(\Omega)/2$. 

Moreover $\|w_k(\cdot,\tau)\|_{L^r(\Omega)}^r = \int_{\Omega} |w_k|^{\frac{4}{3}} \, |w_k|^{2\beta} \, dx$. Use of H\"older's inequality yields $$\|w_k(\cdot,\tau)\|_{L^r(\Omega)}^r \leq \|w_k(\cdot,\tau)\|_{L^2(\Omega)}^{\frac{4}{3}} \, \|w_k^{\beta}(\cdot,\tau)\|_{L^6(\Omega)}^2 \, .$$ Since $W^{1,2}(\Omega) \subset L^6(\Omega)$ with continuous embedding, we get
\begin{align*}
\|w_k^{\beta}(\cdot,\tau)\|_{L^6(\Omega)}^2 \leq c_1 \, \|w_k^{\beta}(\cdot,\tau)\|_{W^{1,2}(\Omega)}^2 \leq  c_1 \, c_2 \, \|\nabla w_k^{\beta}(\cdot,\tau)\|_{L^{2}(\Omega)}^2 \, ,
\end{align*}
where we also use the preliminary consideration. The claim follows.
\end{proof}

\begin{lemma}\label{lemboundmax}
 Let $u \in L^1(Q_t)$. For $k \in \mathbb{R}$, we denote $Q_{t,k} := \{(x,\tau) \in Q_t \, : \, u(x,\tau) > k\}$. Assume that for all $k \geq k_1$, the function $w_k = \max\{u - k, \, 0\}$ satisfies the inequality
  \begin{align*}
    \|w_k\|_{L^1(Q_t)} \leq  \sum_{i=0}^n  \Big(A_i \, |Q_{t,k}|^{\sigma_i} +B_i \, k^{\omega_i} \,|Q_{t,k}|^{y}\Big) \, ,
  \end{align*}
  with $n \in \mathbb{N}$, $y > 1$ and, for $i = 1,\ldots,n$, constants $1 < \sigma_i$, $0 \leq \omega_i \leq y$ and $A_i, \, B_i \geq 0$. Then
  \begin{align*}
   \sup_{Q_t} u \leq k_1 + C(t, \, |\Omega|, \, k_1, \, |A|_1, \,  |B|_1, \, \|u\|_{L^1(Q_t)}) \, ,
  \end{align*}
  where $C$ is a positive continuous function.
\end{lemma}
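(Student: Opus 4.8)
The plan is to run a standard De Giorgi-type iteration on the level parameter $k$. Fix a reference level $k_1$ as in the hypothesis, and for $k \geq k_1$ introduce the decreasing function $\varphi(k) := |Q_{t,k}|$, which is finite since $u \in L^1(Q_t)$. The first ingredient is the elementary ``layer-cake'' comparison: for $h > k \geq k_1$ one has $Q_{t,h} \subseteq Q_{t,k}$ and, on $Q_{t,h}$, $w_k = u - k \geq h - k$, so
\begin{align*}
(h-k) \, \varphi(h) \leq \int_{Q_{t,h}} w_k \, dxd\tau \leq \int_{Q_t} w_k \, dxd\tau = \|w_k\|_{L^1(Q_t)} \, .
\end{align*}
Combining this with the structural hypothesis gives, for all $h > k \geq k_1$,
\begin{align*}
\varphi(h) \leq \frac{1}{h-k} \, \sum_{i=0}^n \Big( A_i \, \varphi(k)^{\sigma_i} + B_i \, k^{\omega_i} \, \varphi(k)^{y} \Big) \, .
\end{align*}

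The second step is to homogenize the right-hand side so that it becomes a single power of $\varphi(k)$ with an exponent strictly larger than one. Since $\varphi$ is nonincreasing and $\varphi(k_1) \leq |Q_t| = |\Omega| \, t < \infty$, every factor $\varphi(k)^{\sigma_i}$ with $\sigma_i > 1$ can be bounded by $\varphi(k_1)^{\sigma_i - \bar{\sigma}} \, \varphi(k)^{\bar{\sigma}}$ where $\bar{\sigma} := \min\{\min_i \sigma_i, \, y\} > 1$; similarly $\varphi(k)^{y} \leq \varphi(k_1)^{y-\bar{\sigma}} \, \varphi(k)^{\bar{\sigma}}$, and on the range $k \geq k_1$ the prefactors $k^{\omega_i}$ are harmless once we also bound them below — but here one must be slightly careful: $k^{\omega_i}$ grows in $k$, so instead I keep the $k$-dependence and absorb it into the estimate by restricting attention to $k$ in a bounded interval $[k_1, \, 2k^*]$ determined a posteriori, or more cleanly, I note $k^{\omega_i} \leq (2k^*)^{y}$ for $k \leq 2k^*$ and treat $k^*$ as the quantity to be bounded. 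The upshot is an inequality of the form
\begin{align*}
\varphi(h) \leq \frac{C_0}{h-k} \, \varphi(k)^{\bar{\sigma}} \qquad \text{for } k_1 \leq k < h \leq 2k^* \, ,
\end{align*}
with $C_0$ a continuous function of $t$, $|\Omega|$, $k_1$, $|A|_1$, $|B|_1$, $\|u\|_{L^1(Q_t)}$ and (through the crude bound on $k^{\omega_i}$) of $k^*$; this circularity in $k^*$ is resolved at the end.

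The third step is the classical fast-geometric-convergence lemma (Stampacchia): if $\varphi : [k_1, \infty) \to [0,\infty)$ is nonincreasing and satisfies $\varphi(h) \leq C_0 (h-k)^{-1} \varphi(k)^{\bar\sigma}$ for $h > k \geq k_1$ with $\bar\sigma > 1$, then $\varphi(k_1 + d) = 0$ for $d^{\bar\sigma} = C_0 \, \varphi(k_1)^{\bar\sigma-1} \, 2^{\bar\sigma/(\bar\sigma-1)}$, i.e. $d = \big(C_0 \, 2^{\bar\sigma/(\bar\sigma-1)}\big)^{1/\bar\sigma} \, \varphi(k_1)^{(\bar\sigma-1)/\bar\sigma}$. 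One proves this by setting $k_j := k_1 + d(1 - 2^{-j})$ and showing inductively that $\varphi(k_j) \leq \varphi(k_1) \, 2^{-j \mu}$ for a suitable $\mu > 0$, whence $\varphi(k_1 + d) = \lim_j \varphi(k_j) = 0$. Since $\varphi(k_1) \leq |\Omega| t$, the value of $d$ is an explicit continuous function of the listed data, so $\sup_{Q_t} u \leq k_1 + d =: k_1 + C$, which is the claim; and one then checks $k^* := k_1 + d$ is consistent with the interval $[k_1, 2k^*]$ used above (it is, trivially, as $d \geq 0$), closing the minor circularity — in fact it is cleanest to avoid the issue entirely by absorbing the $k^{\omega_i}$ terms using $k^{\omega_i} \varphi(k)^y \leq k^{\omega_i}\varphi(k_1)^{y - \bar\sigma + \epsilon}\varphi(k)^{\bar\sigma - \epsilon}$ only after noticing $\omega_i \le y$ permits choosing the iteration so that the $k$-growth is dominated by the $(h-k)^{-1}$ decay; I will present whichever of these bookkeeping variants is shortest.

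The main obstacle is precisely this bookkeeping around the terms $B_i \, k^{\omega_i} \, |Q_{t,k}|^y$: unlike the $A_i$ terms, their explicit $k$-dependence competes with the gain from dividing by $(h-k)$, and one must verify that the hypothesis $\omega_i \leq y$ (together with $y > 1$) is exactly what makes the Stampacchia iteration still converge. Everything else — the layer-cake inequality, reducing all exponents to a common $\bar\sigma > 1$ using finiteness of $|Q_t|$, and the geometric-convergence lemma itself — is routine. I expect the final constant $C$ to be exhibited as a monotone continuous function of $(t, |\Omega|, k_1, |A|_1, |B|_1, \|u\|_{L^1(Q_t)})$ simply by tracing the dependence of $C_0$ and $d$ through the argument.
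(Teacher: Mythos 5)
There is a genuine gap, and it sits exactly where you defer to ``bookkeeping'': the treatment of the terms $B_i\,k^{\omega_i}\,|Q_{t,k}|^{y}$. After the Chebyshev step you only retain the measure function $\varphi(k)=|Q_{t,k}|$, homogenize to $\varphi(h)\leq \frac{C_0}{h-k}\,\varphi(k)^{\bar\sigma}$ with $\bar\sigma=\min\{\sigma_{\min},y\}>1$, and absorb $k^{\omega_i}$ by restricting to a bounded level interval whose length $d$ is itself produced by the Stampacchia lemma. This is not a harmless circularity: since $C_0$ then contains a factor $(k_1+d)^{\omega_{\max}}$, the consistency requirement is an implicit inequality of the type $d\gtrsim c_A+c_B\,(k_1+d)^{\omega_{\max}}$ (the gain from the iteration is only the first power of $d$, as the exponent on $(h-k)$ is $1$), and with $\omega_{\max}>1$ -- which the hypothesis allows, up to $\omega_{\max}=y$ -- the right-hand side is superlinear in $d$, so a solution exists only under a smallness condition on $B$ and $\varphi(k_1)$ that is not assumed. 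The alternative you sketch (geometric levels $k_j=k_1 2^j$, exploiting $\omega_i\leq y$) hits the same wall in the borderline case $\omega_{\max}=y$: the recursion becomes $|Q_{j+1}|\leq B k_1^{y-1}2^{j(y-1)}|Q_j|^{y}$, whose fast-convergence criterion is a smallness condition on $|Q_{t,k_1}|$ that scales like $1/k_1$, i.e.\ exactly like the Chebyshev bound $\|u\|_{L^1}/k_1$; raising the starting level therefore does not help, and one is left needing $\|u\|_{L^1}$ small, which the lemma does not assume. So the measure-only De Giorgi/Stampacchia route, as set up, cannot prove the statement in the full admissible range $0\leq\omega_i\leq y$; saying ``$k^*:=k_1+d$ is consistent, trivially, as $d\geq 0$'' does not address the solvability of the fixed-point inequality.

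The paper's proof keeps precisely the information your reduction discards. It works with $f(k):=\|w_k\|_{L^1(Q_t)}/|Q_t|$ and uses the exact identity $-f'(k)=|Q_{t,k}|/|Q_t|$, so the hypothesis becomes the differential inequality $-f'(k)\geq c\,f(k)^{\alpha}/k^{\nu}$ with $\alpha=\max\{1/y,\,1/\sigma_{\min}\}<1$ and $\nu=\omega_{\max}/y\leq 1$ (this is where $\omega_i\leq y$ enters). Integrating $-\frac{d}{dk}f^{1-\alpha}\geq c(1-\alpha)k^{-\nu}$ from $k_1$ upward (Lemma 5.1 of Ladyzhenskaya--Ural'tseva, cited in the paper) gives a finite bound on $\sup u$ for any size of the data: algebraic in the data when $\nu<1$, and of the form $k_1\exp(\cdot)$ when $\nu=1$. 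That exponential dependence in the borderline case is the telltale sign that a fixed-gain truncation iteration on $\varphi$ alone cannot reach the conclusion; to repair your argument you would either have to reproduce this ODE-in-$k$ mechanism or add hypotheses (e.g.\ $\omega_{\max}<\bar\sigma$ or smallness of $\|u\|_{L^1}$) that the lemma does not contain.
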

\begin{proof}
Let $f(k) :=  \|w_k\|_{L^1(Q_t)}/|Q_t|$. Then $f$ is differentiable at every $k$ in the classical sense, and $- f^{\prime}(k) = |Q_{t,k}|/|Q_t|\leq 1$. Hence $f \in C^{0,1}(\mathbb{R})$. Using the assumptions, for all $k \geq k_1$, we can show that
\begin{align*}
|Q_t| \, f(k) \leq  \left(\sum_{i=0}^n A_i \, |Q_t|^{\sigma_i}\right) \, (-f^{\prime}(k))^{\sigma_{\min}} + \left(\sum_{i=0}^n B_i \, k_1^{\omega_i} \right) \, \left(\frac{k}{k_1}\right)^{\omega_{\max}}\, |Q_t|^y \, (-f^{\prime}(k))^{y} \, .
\end{align*}
From this we deduce that
\begin{align*}
&-f^{\prime}(k) \geq  \min\left\{\left(\frac{f(k)}{2\sum_{i=0}^n A_i \, |Q_t|^{\sigma_i-1}}\right)^{\frac{1}{\sigma_{\min}}}, \, \left(\frac{f(k)}{2(k/k_1)^{\omega_{\max}} \, |Q_t|^{y-1} \, \sum_i B_i \, k_1^{\omega_i}}\right)^{\frac{1}{y}}\right\}  \\
\geq & \frac{\min\{(|Q_t| \, f(k))^{\frac{1}{\sigma_{\min}}}, \, (|Q_t| \, f(k))^{\frac{1}{y}}\}}{2^{\alpha}\max\{\left(\sum_{i=0}^n A_i \, |Q_t|^{\sigma_i}\right)^{\frac{1}{\sigma_{\min}}}, \, |Q_t| \, \left(\sum_{i=0}^n B_i \, k_1^{\omega_i}\right)^{\frac{1}{y}}\}} \, \left(\frac{k_1}{k}\right)^{\frac{\omega_{\max}}{y}} \, ,
\end{align*}
with $\alpha := \max\{1/y, \, 1/\sigma_{\min}\} < 1$. Since $|Q_t| \, f(k)/\|u\|_{L^1(Q_t)} \leq 1$, with $\nu := \omega_{\max}/y \leq 1$ we also obtain that
\begin{align*}
-f^{\prime}(k) \geq \frac{k_1^{\nu} \, \min\{\|u\|_{L^1(Q_t)}^{\frac{1}{\sigma_{\min}}-\alpha}, \, \|u\|_{L^1(Q_t)}^{\frac{1}{y}-\alpha}\}}{2^{\alpha}\max\{\left(\sum_{i=0}^n A_i \, |Q_t|^{\sigma_i}\right)^{\frac{1}{\sigma_{\min}}}, \, |Q_t| \, \left(\sum_{i=0}^n B_i \, k_1^{\omega_i}\right)^{\frac{1}{y}}\}}  \, |Q_t|^{\alpha} \, \frac{(f(k))^{\alpha}}{k^{\nu}} \, .
\end{align*}
We are now in the situation of Lemma 5.1 in \cite{laduellipt}. If $\nu < 1$, we get a bound
\begin{align*}
& \sup_{Q_t} u \leq  k_1 + c(\alpha,\nu)  \, \Sigma^{\frac{1}{1-\nu}} \, \left(\frac{\|u\|_{L^1(Q_t)}}{|Q_t|}\right)^{\frac{1-\alpha}{1-\nu}} \, ,\\ 
\Sigma := &  \|u\|_{L^1(Q_t)}^{|\frac{1}{y}-\frac{1}{\sigma_{\min}}|} \, \frac{2^{\alpha}\max\{\left(\sum_{i=0}^n A_i \, |Q_t|^{\sigma_i}\right)^{\frac{1}{\sigma_{\min}}}, \, |Q_t| \, \left(\sum_{i=0}^n B_i \, k_1^{\omega_i}\right)^{\frac{1}{y}}\}}{k_1^{\nu} \, |Q_t|^{\alpha}}\, .
\end{align*}
If $\nu = 1$, then with the same $\Sigma$ the bound takes the form
\begin{align*}
\sup_{Q_t} u \leq k_1 \, \exp\Big(\frac{\Sigma}{1-\alpha} \, \Big(\frac{\|u\|_{L^1(Q_t)}}{|Q_t|}\Big)^{1-\alpha} \Big) \, .
\end{align*}
\end{proof}

\section{Analytical investigations on fluid mixtures}\label{discussion}

We started with the PDEs \eqref{mass}, \eqref{energy}, \eqref{momentum} of multicomponent fluid dynamics; This is the Navier--Stokes--Fick--Onsager--Fourier system.

In other investigations devoted to mixtures, the modelling seems to be different at first sight. 
In many mixture theories indeed, the varying composition is handled by means of the volume fractions. 

To understand this viewpoint, let us  call a mixture of $N$ constituents \emph{volume-additive} if for every control volume $\mathcal{V}$ in the fluid, the equation
\begin{align}\label{VA}
|\mathcal{V}| = \sum_{i=1}^N m_i(\mathcal{V}) \, \hat{\upsilon}_i \, 
\end{align}
is valid, with $m_i(\mathcal{V})$ being the total mass of ${\rm A}_i$ available in $\mathcal{V}$, and $\hat{\upsilon}_i = 1/\hat{\rho}_i$ denoting the specific volume of ${\rm A}_i$ \emph{as pure substance}. For instance, ideal mixtures are volume-additive in this sense (\cite{brdicka}, section 4.1). The mixtures characterised by the splitting \eqref{muiideal} of the chemical potentials are also a subclass of the volume-additive mixtures (cp.\ \eqref{implicitpress})\footnote{For mixtures which are not volume-additive, the equation \eqref{VA} can be postulated too, but the specific volumes $\hat{\upsilon}^i$ are not independent on the molecular composition.}.

The specific volume/the mass density of a constituent are functions of temperature and pressure. 
In the continuum limit, the definition \eqref{VA} implies that the equation
\begin{align}\label{EOSVA}
\sum_{i=1}^N \frac{\rho_i}{\hat{\rho}_i(T, \, p)} = 1
\end{align}
is valid at every point of a volume-additive mixture. In fact, \eqref{EOSVA} is the \emph{equation of state} of any volume-additive mixture. 

The quantities $\varphi_i := \rho_i/\hat{\rho}_i(T, \, p)$ sum up to one at every point and $\varphi_1, \ldots,\varphi_N$ are called the \emph{volume fractions}. \\[0.5ex]

To determine the volume fractions, one is faced with the difficulty that, up to the equation \eqref{EOSVA} -- which, depending on the properties of the mixture, we might postulate or not -- conservation principles are not formulated for the volume, but for the mass. For this reason, the volume-fractions are not always subject to similar conservation laws like the partial mass densities. If we account for the fact that $\hat{\rho}^i$ remains dependent on $T$ and $p$ in the mixture, dividing in \eqref{mass} with $\hat{\rho}_i(T,p)$ does not yield a divergence-form equation for $\varphi_i$ except, of course, for isothermal and isobaric systems.

However, another special case occurs if the involved constituents are all perfectly incompressible. 
In this case $\partial_p \hat{\rho}_i = 0$ for each ${\rm A}_i$ by definition and, due to the inequality (See \cite{mueller} or \cite{bothedreyerdruet}, (36))
\begin{align*}
(\partial_T \hat \upsilon_i)^2\leq - \frac{c_p^i}{T} \, \partial_p\hat \upsilon_i \, ,
\end{align*}
also $\partial_T \hat{\rho}_i = 0$. Hence $\hat{\rho}_i = \hat{\rho}_i^{\rm R}$ with the constant reference bulk mass density of the constituent ${\rm A}_i$. Then, \eqref{EOSVA} reduces to a \emph{constraint} for the partial mass densities:
\begin{align}\label{EOSVAINCOMP}
\sum_{i=1}^N \frac{\rho_i}{\rho_i^{\rm R}} = 1 \, ,
\end{align}
expressing the equation of state of a \emph{volume-additive mixture of incompressible constituents}\footnote{In the paper \cite{josef}, the same type of mixture is called ''simple mixture''. In \cite{bothedreyer}, Section 15, ''simple mixture'' refers to a different concept. A mixture is simple if the internal energy density and the pressure can be additively constructed from the corresponding partial constitutive quantities of the constituents.}. In this case, the volume fractions of the constituents satisfy conservation laws: We divide in \eqref{mass} with $\rho_i^{\rm R}$, yielding with $\varphi_i := \rho_i/\rho_i^{\rm R}$,
\begin{align}\label{volcon}
\partial_t \varphi_i +\divv((\rho_i^{\rm R} )^{-1} \, \jmath^i) = (\rho_i^{\rm R} )^{-1} \, r_i \, .
\end{align}
This form of the equations of mass conservation is used for instance in \cite{josef}, \cite{ottoweinan}, \cite{boyer}, \cite{abelsgarckegruen} and many more. However, note that other papers like \cite{lowe} on Cahn--Hilliard regularisations of two-phase binary mixtures do not rely on the assumption that the mass densities of the constituents are constant\footnote{In \cite{lowe}, the partial mass density is called ''apparent mass density'' and the mass density of the constituent is called ''actual mass density''.}.

We sum up the equations \eqref{volcon} and, in the absence of chemical reactions, we get 
\begin{align*}
\divv \Big(\sum_{i=1}^N (\rho_i^{\rm R})^{-1} \, \jmath^i \Big) = 0 \, .
\end{align*}
Hence we recover the statement of \cite{boyer} that the volume-averaged velocity $$v^{\rm vol} :=\sum_{i=1}^N (\rho_i^{\rm R})^{-1} \, \jmath^i$$
is divergence-free. This is however only true in a volume-additive mixture of incompressible fluids where there are, moreover, no chemical reactions.

In order to work with the equation \eqref{volcon} while preserving $\sum_{i=1}^N \varphi_i = 1$, several approaches have been tried. For instance, in \cite{ottoweinan}, two modelling possibilities are discussed in order to guarantee that the net flux $\sum_{i=1}^N (\rho_i^{\rm R} )^{-1} \, \jmath^i$ -- or at least its divergence -- vanish. In this paper, the modelling results into reaction-diffusion systems, which raises also the question on how to couple mass transport with mechanics. In \cite{boyer} or  \cite{abelsgarckegruen}, another solution is applied. The rescaled mass fluxes $$(\rho_i^{\rm R} )^{-1} \, \jmath^i = \varphi_i \, v^{\rm vol} + \tilde{J}^i \, ,$$ are employed. The modified diffusion fluxes $\tilde{J}^i$ have to sum up to zero due to the definition of the volume-averaged velocity. Now, it is postulated that $v^{\text{vol}}$ is the mechanical velocity occurring in Newton's law and diffusion is modelled in the spirit of the theory of irreversible processes as relative motion to this new velocity.
This approach is restricted to ideal mixtures of perfectly incompressible constituents without chemical reactions, and it is incompatible with the identity \eqref{netmassfluxequalsrhotimesv}. Hence, the continuity equation $\partial_t \varrho + \divv (\varrho \, v) = 0$ is not valid with the same velocity field as the one occurring in the equations of momentum balance. 


At last, let us remark that serious conceptual problems are still generated by the appropriate notion of incompressibility for the multicomponent case. 
In \cite{ottoweinan}, incompressibility is defined as $\sum_{i=1}^N \varphi_i = 1$. In \cite{boyer,abelsgarckegruen}, incompressibility is defined as $\divv v^{\rm vol} = 0$. This constraint is enforced by using the pressure in the momentum equation (for $v^{\rm vol}$) as a Lagrange multiplier, in the spirit of the single-component incompressible Navier--Stokes equations. As a drawback, the pressure is
decoupled from the thermodynamic potential and does not obey \eqref{GIBBSDUHEMEULER}.

%
%

As already shown in \cite{josef}, it is not to be expected that the barycentric velocity field $v^{\rm mass}$ is solenoidal even in volume additive mixtures of incompressible fluids. The only possible exceptions are: (i) that all fluids possess the same density, which is seldom the interesting case in practice or, (ii) that $N-1$ components are dilute in one dominating fluid, which of course occurs frequently. But mathematically, the case (ii)  reduces to the single-component variant of the Navier--Stokes equations, which essentially decouple.

From this latter viewpoint, a reliable way to enforce the constraint \eqref{EOSVAINCOMP} while staying consistent with \eqref{GIBBSDUHEMEULER} and the general form of the free energy functional $\varrho\psi(T, \, \rho_1, \ldots, \rho_N)$, is to introduce the thermodynamic pressure as a Lagrange multiplier directly for the algebraic constraint \eqref{EOSVAINCOMP} instead of the differential constraint $\divv v = 0$. We refer to \cite{bothedreyerdruet} for a recent model derivation -- similar observations had been done in \cite{josef} and \cite{lowe} with the notion of quasi-incompressible fluid
-- and to \cite{feima16}, \cite{druetmixtureincompweak}, \cite{bothedruetincompress} for first analytical results. 
Here too, $\divv v^{\rm vol} = 0$ results as a trivial consequence of the mass conservation equations $\partial_t \rho_i + \divv \jmath^i = 0$, the equation of state \eqref{EOSVAINCOMP} (= the incompressibility constraint), and the assumption that the constituents are incompressible -- that is, $\hat{\rho}_{i}(T,p) = \rho_i^{\rm R}$ constant. Hence, $\divv v^{\rm vol} = 0$ does not need being enforced by changing the definition of pressure or Newton's conservation principle.

Another interesting recent result proved in this context is that not only volume-additive mixtures of incompressible fluids, but even general incompressible mixtures must obey the equation \eqref{EOSVAINCOMP}. Hence, the partial molar volumina are independent of the composition if the mixture is incompressible. This has been proved and discussed in \cite{bothedreyerdruet}.

\end{document}